\newcommand{\loc}{\text{loc}}
\newcommand{\XPhi}[1]{{}^{#1}\Phi}
\newcommand{\hyp}{\mathcal{H}}
\newcommand{\gl}{\mathcal{G}}
\newcommand{\ellip}{\mathcal{E}}
\newcommand{\bv}{\mathbf{v}}
\newcommand{\action}{S}
\newcommand{\rc}{\mathsf{r}}
\newcommand{\rtot}{\mathsf{R}}
\newcommand{\kk}{{k_0}}
\newcommand{\vf}{\mathsf{v}}
\newcommand{\hamvf}{\mathsf{H}}
\newcommand{\pib}{\pi_b}
\newcommand{\Tbstar}{{}^{b}T^*}
\newcommand{\Tb}{{}^{b}T}
\newcommand{\Psibh}{\Psi_{b,h}}
\newcommand{\lspec}{\mathrm{l}\text{-}\mathrm{Spec}}
\DeclareMathOperator{\Tr}{Tr}
\newcommand{\Comp}{\mathrm{comp}}
\newcommand{\REF}{\mathsf{R}}
\newcommand{\INC}{\mathsf{I}}
\newcommand{\TRAN}{\mathsf{T}}
\def\smath#1{\text{\scalebox{1}{$#1$}}}
\def\sfrac#1#2{\smath{\frac{#1}{#2}}}
\newcommand{\WFh}{\operatorname{WF}_h}
\newcommand{\Psih}{\Psi_h}
\newcommand{\WFbh}{\mathrm{WF}_{\mathrm{b},h}}
\newcommand{\opWF}{\mathrm{WF}'}
\newcommand{\qcal}{\mathcal{Q}}
\newcommand{\pcal}{\mathcal{P}}
\newcommand{\Pcal}{\mathcal{P}}
\newcommand{\Ccal}{\mathcal{C}}
\newcommand{\F}{\mathcal{F}}
\newcommand{\shorttime}{T_1}
\newcommand{\indexset}{\mathcal{I}}
\newcommand{\indexY}{\pa}
\newcommand{\indexint}{\circ}
\newcommand{\gammabase}{{\gamma}}
\newcommand{\patan}{\pa_\flat}
\DeclareMathOperator{\fun}{Fun}
\newcommand{\Omegaq}{\Omega_{\text{per}}}
\DeclareMathOperator{\ind}{ind}
\DeclareMathOperator{\sgn}{sgn}
\DeclareMathOperator{\Id}{Id}
\title[Singular Gutzwiller]{A Gutzwiller trace formula for singular potentials}
\author{Jared Wunsch}
\email{jwunsch@math.northwestern.edu}
\address{Department of Mathematics, Northwestern University, Evanston, IL, USA}
\author{Mengxuan Yang}
\email{yangmx@princeton.edu}
\address{Department of Operations Research \& Financial Engineering, Princeton University, Princeton, NJ 08544 USA}
\author{Yuzhou Joey Zou}
\email{yzou@oakland.edu}
\address{Department of Mathematics and Statistics, Oakland University, Rochester, MI, USA}
\begin{document}

\maketitle

\begin{abstract}
  The Gutzwiller trace formula relates the asymptotic spacing of
  quantum-mechanical energy levels in the semiclassical limit to the
  dynamics of periodic classical particle trajectories.  We generalize this
  result to the case of non-smooth potentials, for which there is
  partial reflection of energy from derivative discontinuities of the
  potential.  It is the periodic trajectories of an associated branching
  dynamics that contribute to the trace asymptotics in this more general
  setting; we obtain a precise description of their
  contribution.
    \end{abstract}

\section{Introduction}
\label{sec:intro}

The Gutzwiller trace formula \cite{Gu:71} relates the asymptotic
spacing in the classical limit of the energy
levels $E_{j,h}$ of a quantum system with quantum
Hamiltonian \begin{equation}\label{P} P_h=-h^2\Lap_g+V\end{equation} to the dynamics of classical particles
moving according to the Hamilton flow of the principal symbol
$\abs{\xi}^2_g+V$.  This result, a semiclassical counterpart to the
celebrated Duistermaat--Guillemin trace formula for the trace of the
half-wave propagator \cite{DuGu:75} (cf.\ \cite{Ch:74}
and \cite{Co:72}), has received rigorous mathematical treatment by a
number of authors \cite{GuUr:89}, \cite{PaUr:91}, \cite{Me:92},
\cite{CoRaRo:99}, \cite{StZe:21}. The formula concerns the
asymptotics of a smoothed  Fourier mode 
of the
local density of states:
$$
g_\rho(E,h) := \sum_j \chi(E_{j,h}) \rho\big( \frac{E-E_{j,h}}h\big),
$$
where $\hat\rho$ is supported near the length of a family of closed classical
orbits (i.e., is isolated near a particular frequency); the leading order asymptotics of $g_\rho$ as $h \to 0$
are influenced by dynamical invariants of these orbits.

If we allow non-smooth coefficients, there are changes to the
quantum dynamics due to \emph{diffractive effects} that are not
visible to the naivest prescriptions of geometric optics.  If the
potential $V$ is non-smooth but still, say, $\mathcal{C}^2$, then the
solutions to Hamilton's equations of motion exist and are unique, but
it is known that  energy propagating in phase space, as measured
by \emph{semiclassical wavefront set}, may partially reflect off singularities
of $V$ \cite{GaWu:23a}, \cite{GaWu:21}.  In this paper, we establish a
Gutzwiller trace formula for a class of non-smooth $V$ (\emph{conormal
  potentials}), which shows
that closed trajectories that are allowed to reflect off
singularities of $V$ along a \emph{branching} flow do contribute to the oscillations in the
density of states, with amplitudes that are smaller (in terms of
powers of $h$) for smoother potentials and for
orbits with more reflections.  The detailed description of
the contributions of these orbits (Theorem~\ref{thm:trace} below)
has classical dynamical ingredients including a linearized Poincar\'e
map restricted to the symplectic orthocomplement of a closed orbit
cylinder and a Maslov factor whose interpretation as the Morse index
for a periodic (reflected) variational problem relies on the authors'
previous work \cite{WuYaZo:24}.

\emph{Our main results are as follows} (with precise statements of the
two main theorems following as Theorem~\ref{thm:poisson} and
Theorem~\ref{thm:trace} below).  For $T$ not the length of a closed
branching orbit with energy in $\supp \chi$, given any $M \in \NN$, if
$\hat \rho$ is supported sufficiently near
near $T$, 
$$
g_\rho(E,h)=O(h^M).
$$
This is a ``Poisson relation,'' that tells us that 
the trace of the Schr\"odinger propagator is nontrivial as $h
\downarrow 0$ only at times given by lengths of closed branching orbits.  The next
results concerns the asymptotics at such times.

If $T=T(E)$ is the length of a single nondegenerate closed branching
orbit
$\gamma=\gamma_E$ at each
energy $E \in \supp \chi$ then
$$
        g_\rho(E,h) \sim \frac{1}{2\pi} h^{\kk N} i^{-\sigma_\gamma}\chi(E)  \hat{\rho}(T_\gamma(E)) e^{\frac{i}{h}(ET(E)+S_\gamma)}
      \frac{T^\sharp_\gamma}{\abs{\det(I-\Pcal)}^{\frac{1}{2}}}
     \prod_{j=1}^N \frac{i^\kk J_j}{(2\xi_N^j)^{\kk+2}},
$$
        where
        \begin{itemize}
          \item $N$ is the number of reflections along $\gamma$.
        \item $\Pcal$ is the linearized Poincar\'e map.
        \item $S_\gamma$ is the classical action along $\gamma$.
        \item $T^\sharp$ it the primitive length of $\gamma$.
          \item $\sigma_\gamma$ is the Morse index of the periodic
            variational problem along $\gamma$ (see
            Appendix~\ref{appendix:periodic} and \cite{WuYaZo:24}).
            \item $\xi^j_N$ is the normal momentum at the $j$'th
              reflection.
              \item $\kk$ is given by regularity of the potential
                $V\in \mathcal{C}^{\kk-1}\backslash \mathcal{C}^\kk$ (see the discussion below).
            \item $J_j$ is a reflection coefficient given by the
                jump in the $\kk$'th normal derivative of $V$ at the
                $j$'th reflection point.
          \end{itemize}
The method of proof is a close analysis of the propagation of
singularities for and trace asymptotics of the
frequency-localized Schr\"odinger propagator
$$
\chi(P_h) e^{-itP_h/h};
$$
the connection with $g_\rho$ is given by
$$
g_\rho(E,h)= \frac{1}{2\pi} \int \hat\rho(t) \Tr (\chi(P_h) e^{-itP_h/h}) e^{iEt/h} \, dt.
$$

We now describe our hypotheses in detail, with particular
attention to the singularities of $V$, which are required to lie along
a smooth hypersurface.

Let $(X,g)$ be a smooth Riemannian manifold of dimension $n$, either compact and
without boundary or else the interior of a
scattering manifold in the sense of \cite{Me:94} (e.g.,
Euclidean space or a manifold with Euclidean ends).
Let $V: X \to \RR$ be
a potential defined on $X.$ Let $Y \subset X$ be a (possibly
disconnected) smooth compact embedded hypersurface, and assume
(as in \cite{GaWu:23a}) that
$$
V \in I^{[-1-\kk]}(Y)\subset \mathcal{C}^{\kk-1}(X)
$$
meaning that $V$ is \emph{conormal} to $Y$ and is locally given by the
inverse Fourier transform of a Kohn--Nirenberg symbol of order
$-1-\kk$ in a transverse direction to $Y$. 
Assume further that each point of $Y$ is contained in a small metric ball $U$
such that $U \backslash Y$ consists of two components $\Omega_\pm$ and
$$
V \in \CI(\overline{\Omega_\pm}).
$$
In particular, a transverse $\kk$'th derivative of $V$ exists from each side of $V$, but
may jump across $V$; tangential derivatives are all continuous. \emph{Throughout, we will take $\kk \geq
  2.$}  (An instructive example is $V=x_+^\kk
V_0$, locally near $Y$, with $V_0\in \CI(X)$ and $x$ a defining function for $Y$.) We further assume, when $X$ is
a scattering manifold, that $V$ is a symbol of positive order,
tending to $+\infty$ at spatial infinity; this makes the energy
surfaces compact.


For $y\in Y$, given $(y,\vf) \in SN (Y)\simeq Y \times \{-1,1\},$ let
$J(y,\vf)$ be the difference of $\kk$'th normal derivatives across the
interface from above and below (with orientation specified by $\vf$):
take Riemannian normal coordinates $(x_1,x')$ around $Y$, hence $Y=\{x_1=0\}$, oriented so
that $\pa_{x_1}\rvert_y=\vf$.
Then set
$$
J(y,\vf)=\pa_{x_1}^\kk(V)_+-\pa_{x_1}^\kk(V)_-
$$

Let $\hamvf_p$ denote the Hamilton vector field of
$p=\abs{\xi}_g^2+V=\sigma_h(P_h)$ with $P_h$ given by \eqref{P}.  For $E \in \RR$, let $\Sigma_{E}$
denote the characteristic set (energy surface) \begin{equation}\label{SigmaE}\Sigma_E=\{(x,\xi) \in T^*X: p(x,\xi)-E=0\}.\end{equation}

Let $\pi: T^*X\to X$ denote the projection to the base and $\pib:
T^*X\to \Tbstar X$ denote the projection map to the ``b-cotangent bundle''
discussed below in Section~\ref{section:b}, given in normal
coordinates $x=(x_1,x')$, by
$$
\pib\colon (x_1,x',\xi_1,\xi')\mapsto (x_1,x',x_1\xi_1, \xi').
$$

\begin{definition}
\label{def:bnb}
  A \emph{branching null bicharacteristic} (with energy $E$) is a potentially discontinuous
  curve $\gamma(s)$ in $\Sigma_{E} \subset T^* X$ such that at each
  $s_0$ either  
  \begin{itemize}
      \item $\gamma$ is differentiable with $\dot \gamma=\hamvf_p$ (i.e., $\gamma$ is a null bicharacteristic), or else
      \item $\pi(\gamma(s_0)) \in Y$ and there exists $\ep>0$ such
        that $\gamma$ is a null bicharacteristic on $(s_0-\ep,
        s_0)\cup (s_0, s_0+\ep),$ with $\pib\circ \gamma$ continuous across $s=s_0.$
  \end{itemize} 
  \end{definition}
  Thus at points over $Y$ where the curve is moving transversely to $Y$, the normal momentum $\xi_1$ (which is not
  constrained by the continuity of $\pib\gamma$) may jump in a manner
  consistent with the conservation of $\xi'$ and $p=\sigma_h(P)$: this
  is specular reflection (see Figure \ref{fig:branchingflow}).  Note that at points of tangency with $Y$, these are
  just ordinary bicharacteristic curves.
  
  \begin{figure}[h]
    \centering
    \includegraphics[width=0.46\linewidth]{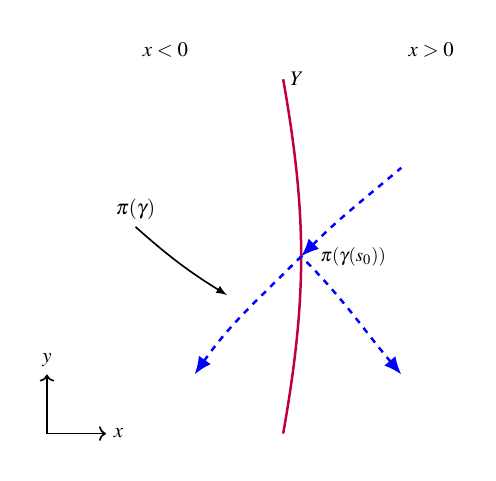}
    \quad
    \includegraphics[width=0.49\linewidth]{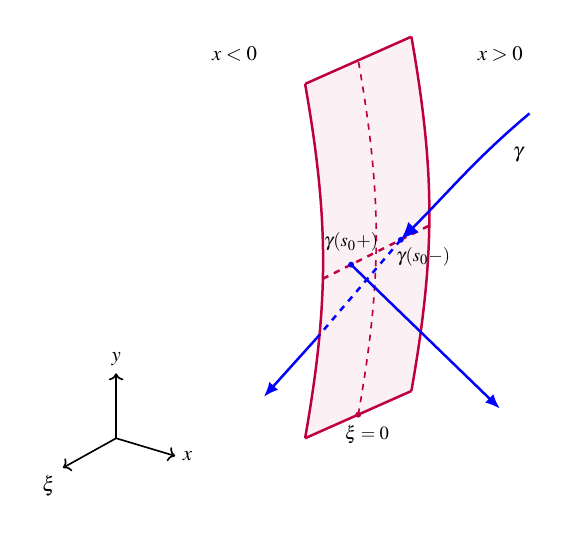}
    \caption{The picture on the left illustrates branching null bicharacteristic projected to the physical space, where $\pi(\gamma(s_0-)) = \pi(\gamma(s_0+))\in Y$. The picture on the right illustrates branching null bicharacteristic in phase space with $(x,\xi,y)$-coordinates, where there is a jump in $\xi$-coordinates in the reflective part of the branching bicharacteristic (from $\gamma(s_0-)$ to $\gamma(s_0+)$) at time $t=s_0$; note that $\eta$-variable are projected out (as there is no jump in $\eta$-variable).}
    \label{fig:branchingflow}
\end{figure}
  
  \begin{definition}
A \emph{closed branching orbit} is a periodic branching null bicharacteristic.  The \emph{segments} of a branching closed orbit are the closures of the maximal open intervals along the curve on which it is an integral curve of $\hamvf_p.$
\end{definition}

For $E \in \RR,$ $I \subset \RR$, let 
\begin{align*}
\lspec_{E} &:= \{0 \} \cup\{ \pm L: L \text{ is the period of a
        closed branching orbit in } \Sigma_E\},\\
          \lspec_{I} &:= \bigcup_{E \in I} \lspec_{E};
        \end{align*}
for $N \in \NN$
\begin{align*}
\lspec^N_{E} &:= \{0 \} \cup\{ \pm L: L \text{ is the period of a
        closed branching orbit in } \Sigma_E\\  &\quad\quad\text{with no more than $N$ reflections}\},\\
          \lspec^N_{I} &:= \bigcup_{E \in I} \lspec^N_{E}.
\end{align*}

In what
follows, we will employ a compactly supported frequency cutoff
$\chi(P_h)$ (or, as we will show is equivalent, $\chi(-hD_t)$). We will assume throughout that the
energy cutoff function $\chi$ satisfies $dp \neq 0$ on $\Sigma_E$, $E
\in \supp \chi$, as well as the following commonly-invoked dynamical
hypothesis, which seems likely to be merely technical, but vastly simplifies
the analysis by ruling out bicharacteristics moving along $Y$:
\begin{equation}\label{contact}
  \text{For every } E \in \supp \chi \text{ the bicharacteristic flow
    on } \Sigma_E \text{ makes finite order contact with } Y.
\end{equation}
This condition means that in coordinates, $\hamvf_p^\ell(x_1) \neq 0$ for some
$\ell \in \NN$.
One important consequence of this assumption is that every bicharacteristic lies over
$Y$ only at a discrete set of times.

We begin by stating the \emph{Poisson relation} for the Schr\"odinger
propagator. The notation $-0$ means $-\ep$ for all $\ep>0$.
    \begin{theorem}
      \label{thm:poisson}
      Assume the dynamical assumption \eqref{contact}
      holds for $\chi \in \CI_c(\RR).$
      If $T \notin \lspec_{\supp\chi}$ then for each $M$ there exists an open
      interval $I \ni T$ such that
      $$
\Tr \chi(P_h) e^{-itP_h/h}=O(h^M) \text{ on } I.
      $$
      If $T \notin \lspec^N_{\supp\chi}$ there exists an open
      interval $I \ni T$ such that
      $$
\Tr \chi(P_h) e^{-itP_h/h}=O(h^{(N+1)\kk-n-0}) \text{ on } I.
      $$
    \end{theorem}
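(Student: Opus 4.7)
The plan is to decompose the semiclassical propagator $\chi(P_h)e^{-itP_h/h}$ into pieces labeled by branching patterns, analyze the trace of each piece via the non-stationary/stationary phase dichotomy, and handle the tail by a remainder estimate. The first assertion of the theorem will then follow from the second: given $M$, choose $N$ with $(N+1)\kk - n > M$, and use $\lspec^N_{\supp\chi} \subset \lspec_{\supp\chi}$ to apply the refined statement.

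For each $N$, I would build an approximation
\begin{equation*}
\chi(P_h)\, e^{-itP_h/h} = \sum_{\beta \in \mathcal{B}_N} U_{\beta}(t) + R_N(t),
\end{equation*}
where $\beta$ ranges over admissible sequences recording, at each transverse crossing of $Y$, the choice of transmission or reflection. Each $U_\beta$ is a semiclassical FIO whose canonical relation is the graph of the broken Hamilton flow with pattern $\beta$, and whose principal symbol carries an amplitude factor of order $h^{\kk\, r(\beta)}$, where $r(\beta)$ is the number of reflections in $\beta$. The gain of $h^\kk$ per reflection reflects the order of the reflection coefficient built from the jump $J(y,\vf)$, following the propagation analysis of \cite{GaWu:23a,GaWu:21}. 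The finite-contact hypothesis \eqref{contact}, together with compactness of the energy surfaces $\Sigma_E$ for $E\in\supp\chi$, ensures that $\mathcal{B}_N$ is finite for $t$ in any bounded interval. The remainder $R_N(t)$ is supported microlocally on paths with more than $N$ reflections; iterating the basic reflection estimate gives an operator bound that, combined with the Weyl-type trace estimate $\Tr\chi(P_h)=O(h^{-n})$, yields $|\Tr R_N(t)| = O(h^{(N+1)\kk - n - 0})$ uniformly on bounded $t$-intervals.

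Next I would analyze $\Tr U_\beta(t)$, which (as a function of $t$ and $h$) is a semiclassical oscillatory integral over the diagonal whose stationary points correspond exactly to closed branching orbits of period $t$, energy in $\supp\chi$, and reflection pattern $\beta$. If $T\notin\lspec^N_{\supp\chi}$, then for each $\beta\in\mathcal{B}_N$ there is an open interval $I_\beta\ni T$ containing no such period, and non-stationary phase---equivalently, the absence of $T$ from the semiclassical wavefront set of the kernel of $U_\beta$ restricted to the diagonal---gives $\Tr U_\beta(t) = O(h^M)$ on $I_\beta$ for every $M$. Taking $I$ to be the intersection of the finitely many $I_\beta$ and combining with the remainder bound yields the second, and hence the first, assertion.

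The main obstacle I expect is the rigorous construction and bookkeeping of the branching parametrix: defining $U_\beta$ as an FIO with the correct principal symbol at each reflection (in the style of the amplitudes $(2\xi_N^j)^{-\kk-2}$ and Morse-index phases foreshadowed in the statement of Theorem~\ref{thm:trace}), and obtaining a clean uniform operator bound on $R_N$. This requires working microlocally on the b-cotangent bundle---since reflection preserves $\pib\gamma$ but not $\gamma$---and controlling the behavior at small normal momentum $\xi_N$, which the assumption $dp\neq 0$ together with \eqref{contact} should tame. The $-0$ loss in the exponent most plausibly arises here, through a polyhomogeneous versus classical-symbol mismatch near $Y$, or from the need to work in a symbol class slightly larger than the classical one when packaging reflection coefficients into $h^\kk$ gains.
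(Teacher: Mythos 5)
Your proposal takes a genuinely different route from the paper's, and it runs into a gap the paper was explicitly designed to avoid.

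The paper does not build an FIO decomposition of $\chi(P_h)U(t)$ indexed by branching patterns, nor does it invoke any stationary-phase analysis, to prove the Poisson relation. Instead (Section~\ref{sec:poisson}), it constructs a microlocal partition of unity $\{A_k\}\subset\Psibh(X)$ of the relevant compressed characteristic set, uses cyclicity of the trace to write $\Tr[\chi(P_h)U(t)]\equiv\sum_k\Tr[A_k\chi(P_h)U(t)A_k]$ (Proposition~\ref{prop:cyclicity}), refines the partition via the dynamical Lemma~\ref{lemma:approximateflow} so that no $\leq N$-fold branching bicharacteristic of length near $T$ returns to any $\WF'A_k$, and then applies propagation of singularities (Corollary~\ref{cor:propofsings}) to conclude $A_k\chi(P_h)U(t)A_k=O_{L^2\to L^2}(h^{(N+1)\kk-0})$. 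Combining with the crude Weyl bound $\Tr W'=O(h^{-n})$ on an elliptic compactly microsupported factor gives the result. This uses only the mapping properties of the propagator and no structural description of it as a Lagrangian distribution; indeed Remark~\ref{remark:mismatch} makes a point of this.

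The concrete obstacle in your route is the parametrix near glancing. You posit a decomposition $\chi(P_h)U(t)=\sum_{\beta\in\mathcal{B}_N}U_\beta(t)+R_N(t)$ where each $U_\beta$ is an FIO recording a transmission/reflection pattern, and you then claim $R_N$ is ``supported microlocally on paths with more than $N$ reflections'' with the corresponding $h^{(N+1)\kk}$ gain. But the paper's parametrix (Sections~\ref{sec:int-pgtr}--\ref{sec:multi}) is only built microlocally near hyperbolic points. Near glancing --- where $\xi_N\to 0$, the reflection coefficient $\propto(2\xi_N)^{-\kk-2}$ blows up, and the FIO machinery breaks down --- all one has is the energy-estimate propagation result of Theorem~\ref{theo:diffractiveimprovements}(2). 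So if you try to make the sum over $\mathcal{B}_N$ into an honest operator identity, $R_N$ must absorb the whole near-glancing region, and its trace is then not controlled by iterating ``the basic reflection estimate''; the gliding-ray phenomenon of Remark~\ref{remark:inscribed} shows that near-glancing there are arbitrarily many reflections at essentially unit amplitude before the $h^\kk$ per-reflection gain kicks in. This is exactly why the paper uses a partition of unity plus cyclicity rather than a parametrix: the partition approach needs no description of the propagator at glancing, only the operator-norm decay of Corollary~\ref{cor:propofsings}. Also note that even in proving the actual trace formula (Theorem~\ref{thm:trace}), where the parametrix \emph{is} used, the paper does not attempt to compute boundary traces directly; it uses cyclicity to push the trace evaluation into the interior. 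Your stationary-phase step at the end is also more than the Poisson relation requires; once the operator-norm bound $O(h^{(N+1)\kk-0})$ is in hand on each microlocal piece, the trace estimate follows from a pure Weyl bound, no oscillatory integral needed.

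Finally, a small correct point: your reduction of the first claim to the second (choose $N$ with $(N+1)\kk-n>M$ and use $\lspec^N_{\supp\chi}\subset\lspec_{\supp\chi}$) is exactly the one the paper uses.
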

\noindent (Recall that $n$ is the spatial dimension.)
    
Now we Fourier analyze the asymptotic singularities of the energy-localized
propagator via semiclassical Fourier transform, localized near a
single point in the length spectrum. For any branching
bicharacteristic $\gamma,$ let $S_\gamma$ denote the classical action
along $\gamma$---see \eqref{actiondef} below.  We say that a
closed branching orbit $\gamma$ is \emph{nondegenerate} if
the multiplicity of $1$ as an eigenvalue of the linearized first
return map is exactly $2$ (which is the smallest value possible, owing
to the existence of orbit cylinders); see Section~\ref{sec:poincare} for details.

For the following analysis of the singularities of the trace, we
assume further that:
\begin{enumerate}
  \item For a time interval 
  $J \subset (0,\infty)$
  the only closed branching orbits in
  $\Sigma_E$ for $E \in\supp \chi$ with period in $J$ are nondegenerate
  (hence, by compactness of the energy surface, finite in number), and
  are simple or iterates of a simple bicharacteristic.
\item
  There is at most one closed branching orbit of
  energy in $\supp\chi$ and length in $J$ passing through any
  given point in $\Tbstar X$.
\item
  No point along one of these trajectories is conjugate to itself in
  the sense of Definition~4.6 of \cite{WuYaZo:24}.
\end{enumerate}
    \begin{theorem}
    \label{thm:trace}  Assume the dynamical assumption \eqref{contact}
    holds.
      Let $\hat\rho \in \CI_c(\RR)$ be supported in $J$ containing
      only lengths of orbit cylinders satisfying the hypotheses
      enumerated above.
      Then the inverse Fourier transform of the localized wave
      trace\footnote{We use the normalization conventions of
        \cite{Ho:90} for the Fourier transform.}
      is
      \begin{equation*}
      \begin{split}
          g_\rho(E,h):= & \frac{1}{2\pi} \int \hat\rho(t) \Tr (\chi(P_h) e^{-itP_h/h}) e^{\frac{i}{h}Et} \, dt
        \\ \sim & \sum_{\gamma}
            \frac{1}{2\pi} h^{\kk N} i^{-\sigma_\gamma}\chi(E)  \hat{\rho}(T_\gamma(E)) e^{\frac{i}{h}(ET(E)+S_\gamma)}
            \frac{T^\sharp_\gamma}{\abs{\det(I-\Pcal)}^{\frac{1}{2}}}
            \prod_{j=1}^N \frac{i^\kk J_j}{(2\xi_N^j)^{\kk+2}}
      \end{split}
      \end{equation*}
      where the sum is over all orbit cylinders $\gamma$ with lengths in $J$; 
      $N$ is the number of diffractions along $\gamma$; $\gamma_1,\dots, \gamma_N$ are segments of the closed branching orbit 
      $\gamma$ with length $T_\gamma$ and primitive length
      $T^\sharp_\gamma$; $k_0$ is given by the regularity of the
      potential; $\sigma_{\gamma}$ denotes the Morse index of the
      closed branching orbit; $\Pcal$ is the Poincar\'e map defined in Section \ref{sec:poincare};
      $\xi_N^j$ denotes the normal momentum to $Y$ at the $j$'th
      reflection and $J_j$ denotes the value of $J(z,v)$ at the
      point and direction of contact.
\end{theorem}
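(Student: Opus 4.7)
The plan is to express $g_\rho(E,h)$ as a finite sum (modulo $O(h^\infty)$) of oscillatory integrals via a parametrix for the frequency-localized propagator $\chi(P_h)e^{-itP_h/h}$, and then extract its asymptotics by stationary phase. The key observation is that, because $V$ has a jump in its $\kk$'th normal derivative across $Y$, the propagator admits a decomposition into semiclassical Lagrangian pieces indexed by reflection sequences: at each transverse encounter of a bicharacteristic with $Y$, one obtains a transmitted continuation along the smooth Hamilton flow plus a reflected branch carrying an extra amplitude factor of shape $h^{\kk} \cdot i^\kk J/(2\xi_N)^{\kk+2}$. This factor is exactly what arises from stationary phase in the normal variable applied to the jump part of $V$, in the spirit of the propagation analysis of \cite{GaWu:23a, GaWu:21}.

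First I would construct this branching parametrix. The smooth segments are built from the standard semiclassical FIO calculus generated by the Hamilton flow of $p$, while the reflection operator at $Y$ is modeled as a semiclassical FIO whose canonical relation is the specular reflection map and whose principal symbol is the stated reflection coefficient. Composing these ingredients for every admissible sequence of transmission/reflection and truncating at $N$ branchings produces a parametrix whose trace-level error is of size $O(h^{(N+1)\kk - n - 0})$, consistent with Theorem~\ref{thm:poisson}; hence only finitely many branching configurations contribute to each order of the asymptotic expansion.

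Next I would compute the trace. Inserting the parametrix into $\Tr(\chi(P_h)e^{-itP_h/h})$ and integrating against $\hat\rho(t)e^{iEt/h}/(2\pi)$ gives a sum of oscillatory integrals whose total phase is, for each closed branching configuration, of the form $t(E-p)$ plus the action $S_\gamma$ along the reflected path. Under hypotheses (1)--(3) and the support assumption on $\hat\rho$, the critical manifold of the phase for the summand indexed by $\gamma$ is exactly the orbit cylinder of $\gamma$. Stationary phase along this cylinder yields $\chi(E)\hat\rho(T_\gamma(E))\, T^\sharp_\gamma\, e^{i(ET + S_\gamma)/h}/(2\pi)$, the primitive length $T^\sharp_\gamma$ appearing as the length of the invariant direction. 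The transverse Hessian, restricted to the symplectic orthocomplement of the tangent to the cylinder and expressed via the first return map, produces $|\det(I-\Pcal)|^{-1/2}$; this is nondegenerate exactly by the assumption that $1$ has multiplicity $2$ as an eigenvalue of $\Pcal$'s ambient block. The Maslov factors accumulated at the caustics along the smooth segments and at each reflection then combine, via the authors' identification of their sum with the Morse index of the periodic reflected variational problem in \cite{WuYaZo:24}, into the single factor $i^{-\sigma_\gamma}$.

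I expect the principal obstacle to be this last step: tracking and packaging the individual Maslov contributions from the smooth Keller--Maslov indices and from each of the $N$ reflections into the single global Morse index $\sigma_\gamma$ of the closed branching orbit. This is where the no-self-conjugacy hypothesis (3) and the index theorem of \cite{WuYaZo:24} are essential. A secondary technical point is verifying that the reflection symbol propagates through the trace computation with exactly the normalization $i^\kk J/(2\xi_N^j)^{\kk+2}$ claimed; this reduces to a careful bookkeeping of conventions in the normal-variable stationary phase at each reflection point, together with the standard factor-of-$T^\sharp_\gamma$ argument that ensures iterated orbits contribute with the correct primitive length.
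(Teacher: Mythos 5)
Your proposal captures the right high-level skeleton (branching FIO parametrix, stationary phase on the orbit cylinder, Poincar\'e determinant from the transverse Hessian, Morse index from \cite{WuYaZo:24}), and your identification of the Maslov bookkeeping as the main delicate point is reasonable. However, there is a genuine gap in the step ``inserting the parametrix into $\Tr(\chi(P_h)e^{-itP_h/h})$.'' The trace is an integral over the full diagonal, including points over or near $Y$. You propose a global oscillatory-integral parametrix for $\chi(P_h)U(t)$ and then integrate it against $\hat\rho e^{iEt/h}$, but the paper has no such global parametrix: at glancing points over $Y$ the propagator is controlled only by energy estimates, not by an explicit FIO description (the reflective parametrix of Section~\ref{sec:ref-pgtr} is constructed only near hyperbolic points, and the introduction/Related Work explicitly identifies this as the obstruction to the Duistermaat--Guillemin approach). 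So the contributions to $\Tr$ from the diagonal near $Y$ cannot be computed directly from your parametrix, and your $O(h^{(N+1)\kk-n-0})$ error bound cannot be justified for those pieces without further argument.

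What the paper actually does here is structurally different. It builds a microlocal partition of unity $\sum A_j^*A_j \equiv I$ on the microsupport of $\chi(P_h)$ with the cutoffs split into ``interior'' and ``near-$Y$'' classes with carefully tuned dynamical properties (Section~\ref{sec:partitionnew}). The interior terms $\Tr[A_j\chi(P_h)U(t)A_j^*]$ are handled exactly as you describe. The near-$Y$ terms are then \emph{not} evaluated directly; instead, cyclicity of the trace is used to insert a short-time propagation $U(T_1)$ that flows the near-$Y$ cutoff away from $Y$ (using the finite-order-contact assumption~\eqref{contact} and the b-microlocality of $\chi(P_h)$, Proposition~\ref{prop:chi}), reducing the computation to compositions of interior and single-reflection parametrices only. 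Your proposal, as written, would need either a parametrix uniform up to glancing (which is not available) or this push-away mechanism; without one of these, the evaluation of $\Tr_\pa$ is not justified. You also implicitly need the b-calculus machinery (compact microsupport and b-microlocality of $\chi(P_h)$) to make the partition and propagation arguments work, which your sketch does not address.
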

Note that since the bicharacteristic lives on the energy surface, we
may replace $\xi^j_N$ by
$$
(E-V(x^j))^{1/2}\cos \theta_j
$$
where $V(x^j)$ denotes the potential evaluated at the $j$'th point of contact
with $Y$ and $\theta_j$ denotes the angle made with the normal at the
point of contact.

See Remark~\ref{remark:mismatch} below for a discussion of the
relationship (or lack thereof) of the powers of $h$ in Theorems~\ref{thm:poisson} and \ref{thm:trace}.

Of the dynamical hypotheses made on the orbit cylinders, the nondegeneracy is
essential, while the additional hypotheses of non-self-conjugacy and
simplicity seem to be merely technical here, and could probably be relaxed.
The latter hypothesis, as well as the requirement that there be at
most one orbit cylinder passing through a given point, could be
relaxed by employing microlocalized propagators (as in
Section~\ref{sec:multiplereflection}) rather than passing to the
ordinary propagator microlocalized only at start- and end-points in
computing traces. The former could be addressed by extending our
ansatz for the propagator to allow for non-projectable semiclassical
Lagrangian distributions.  We leave both of these extensions for
future work.

\vspace{-1.5mm}

\section*{Related work}
This work relies on the previous analysis of semiclassical Schr\"odinger equations with
conormal coefficients of the first author and Oran Gannot
\cite{GaWu:23a}, \cite{GaWu:21}, which was in turn influenced by prior work of De
Hoop--Uhlmann--Vasy on wave equations with such coefficients
\cite{DeUhVa:15}.  Prior results in the setting of Schr\"odinger
operators were only available in one dimension \cite{Be:82}.
Recently, Demanet--Lafitte \cite{demanet2023reflection} have obtained
results on reflection coefficients for semiclassical problems at a
conormal interface that are closely related to the results of
Section~\ref{sec:ref-pgtr} below; their paper also provides
some discussion of various physical motivations of such models.  The
third author \cite{Zo:24} has moreover done a calculation in the explicit example
of ``bathtub potentials'' in one dimension, with singularities of the
form $cx_+^2$, which illustrates the influence of potential
singularities on spectral asymptotics analyzed here in a more general setting.

One might hope to prove a trace formula in this context using the
global Fourier integral operator tools of Duistermaat--Guillemin
\cite{DuGu:75}, as was done in the smooth case e.g.\ by Meinrenken
\cite{Me:93}.  The singularities of the propagator at the interface
$Y$, however, makes this approach seem quite difficult: the branching
propagation of singularities means that the description of the
propagator over $Y$ is considerably more complicated.  We have a more
or less explicit description at hyperbolic points, where we compute
reflection coefficients, but near glancing points all we have is
energy estimates.  We therefore employ an alternate approach that
decomposes the trace via a partition of unity in phase space, and
depends only on piecing together the descriptions of the propagator
for short times.  Crucially, we are able to use the cyclicity of the
trace to ``push away'' the boundary contributions to the trace into
the interior, where we are able to analyze it as a
semiclassical Fourier integral operator.  

Among the technical
novelties here are: the description of the propagation of
singularities for the Cauchy problem, which requires establishing the
b-microlocality of $\chi(P_h)$; the computation of the reflection
coefficients for our problem via a delicate analysis of high-order
transport equations; the analysis of the dynamics of a novel branching
flow, which has to be filtered by the number of branching points; the
employment of the propagation argument to push the trace computation
away from the boundary; and the recognition of the coefficients
arising in the final trace computation in terms of dynamical
quantities, especially the linearized Poincar\'e map (which here,
unlike in \cite{DuGu:75} is for an \emph{orbit cylinder} rather than
an isolated orbit) and the Maslov/Morse index (which employs the
authors' previous work on the dynamics of reflected bicharacteristics
in \cite{WuYaZo:24}).

\vspace{-1.5mm}

\section*{Outline}
The paper is structured as follows.  We begin
(Section~\ref{sec:prelim}) with a review of
propagation of singularities results for the Schr\"odinger equation
with conormal potentials.  This entails the introduction of techniques
involving the b-calculus of pseudodifferential operators.  We
obtain some purely dynamical results on the branching flow along which
singularities globally propagate.  We then
obtain results for propagation of singularities for the Cauchy
problem (rather than spacetime singularities).
Section~\ref{sec:poincare} contains further dynamical preliminaries:
an account of the relationship of the Poincar\'e map for an orbit
cylinder to the Hessian of the action that will eventually occur in
our stationary phase computations.  In Section~\ref{sec:int-pgtr} we
study the structure of the semiclassical Schr\"odinger propagator,
first recalling its form in the smooth case (i.e., away from $Y$), and then obtaining a
parametrix for the propagator along a singly-reflected
bicharacteristic near $Y$.  Section~\ref{sec:multi} extends this
construction to allow for long times and multiple reflections.  In
Section~\ref{sec:poisson} we obtain the Poisson relation
(Theorem~\ref{thm:poisson}) by decomposing the trace using a
microlocal partition of unity.  Finally in Section~\ref{sec:tracenew} we
prove the trace formula, Theorem~\ref{thm:trace}, by using a more
refined partition of unity that enables us to push the trace
computations away from $Y$.  Appendices cover
some further dynamical ingredients: the modifications to the results
of \cite{WuYaZo:24} required to deal with the variational problem of
periodic branching bicharacteristics without fixed period, and the
composition of van Vleck determinants arising in stationary phase computations.

\vspace{-1.5mm}

\section*{Acknowledgements} The authors thank Dmitri Vassiliev for helpful conversations,
and for introducing them to the terminology ``branching bicharacteristics.''
The contributions of Oran Gannot to this paper are fully at the level
of a coauthor, as he is responsible for the computation in
Section~\ref{sec:ref-pgtr} of the reflection coefficients, and the
authors are grateful for his permission to use this work.

The first author thanks the NSF for partial support from the
grants DMS-2054424 and DMS-2452331, and the Simons Foundation, for partial support
from the grant MPS-TSM-00007464. The second author acknowledges support from the AMS-Simons Travel Grant and NSF grant DMS-2509989.

\section{Propagation of singularities}
\label{sec:prelim}

\subsection{Geometry of branching rays and propagation of semiclassical singularities}

Propagation of singularities for the stationary Schr\"odinger equation
$$
(-h^2\Lap+V-E)u=0
$$
with a potential $V$ as given above was previously treated in
\cite{GaWu:23a}.  This was then extended to the setting of the
time-dependent Schr\"odinger equation as discussed here in
\cite{GaWu:21}.  In this section, we review the definitions and propagation results
that will be needed below.

We define the spacetime manifolds $$M = \RR\times X,\quad Y_M=\RR\times Y.$$

Let $$u\in L^\infty (\RR; L^2(X))$$ solve the time-dependent Schr\"odinger equation
$$
h \pa_t u =-iP_h u,
$$
i.e.,
$$
Q_h u := (hD_t +P_h )u=0.
$$
Let $$q=\sigma_h(Q_h) = \tau +\abs{\xi}_g^2+V(x)$$ denote the
symbol of the time-dependent semiclassical Schr\"odinger operator, and
let
$\Sigma=\{q=0\}$ denote its characteristic set.

We consider the semiclassical wavefront set
$$
\WFh u\subset T^*X.
$$
Away from $Y$, where $V$ is smooth, standard results in semiclassical analysis (cf.~\cite[Appendix~E]{DyZw:19}) constrain this wavefront
set: we know that
$$
\WFh u \subset \Sigma
$$
and that $\WFh u$ is invariant under the flow generated by the
Hamilton vector field
$$
\hamvf_q = \pa_t +\hamvf_p
$$
with $$p=\abs{\xi}^2_g +V(x)$$ the symbol of the stationary operator.  The same holds for $\WFh^k u$ for each
$k$, where we recall that by definition,
$$
\mu \notin \WFh^k u
$$
iff there exists $A \in \Psih(\RR\times X)$, elliptic at $\mu$ with
$$
A u =O_{L^2}(h^k).
$$
Note that $\tau$ is conserved under the
$\hamvf_q$-flow.  Let us study the portion of the characteristic set in $\tau=-E$,
and turn to the question of what happens over $Y_M$.

Fix Riemannian normal coordinates $(x_1,x')$ near $Y$ so that locally $$g =dx_1^2 + k_{ij}(x_1,x')
dx_i' d x_j'.$$
Thus, setting
$$
r(x,\xi',E) = E-V(x) -\ang{K(x)\xi',\xi'}
$$
with $K$ the inverse matrix to the positive-definite matrix $k_{ij}$,
we have
$$
\begin{aligned}
p(x,\xi)-E&= (\xi_1)^2 + \ang{K(x) \xi',\xi'} + V(x)-E= (\xi_1)^2
-r(x,\xi',E),\\
q(x,\xi) &= \tau+ (\xi_1)^2 + \ang{K(x) \xi',\xi'} + V(x)= (\xi_1)^2
-r(x,\xi',-\tau)
\end{aligned} $$
We respective define the \emph{elliptic}, \emph{glancing}, and
\emph{hyperbolic} sets in $T^*Y$ as
\begin{gather}
    \hyp_E=\{(x',\xi')\colon r(0,x',\xi',E)>0\}\subset T^*Y,\\
    \gl_E=\{(x',\xi')\colon r(0,x',\xi',E)=0\}\subset T^*Y,\\
    \ellip_E=\{(x',\xi')\colon r(0,x',\xi',E)<0\}\subset T^*Y.
\end{gather}
Note that these are projections from $T_{Y}^*X$ of covectors that are
respectively transverse to $Y$ and in $\Sigma_E$, tangent to $Y$ and in
$\Sigma_E$, and outside $\Sigma_E$ (with $\Sigma_E$ given by \eqref{SigmaE}).

Define the lift of $\hyp_E$ to $T^*Y_M$ by 
\[
\hat \hyp_E = \{(t,x',-E,\xi') : (x',\xi') \in \hyp_E\} \subset T^*Y_M,
\]
with the analogous definition for $\hat \gl_E$. 
We also write $\iota^* : T^*_{m} M \rightarrow T^*_m Y_M$ for the
canonical projection whenever $m \in Y_M$.

The main result about propagation across the interface $Y$ is
given by the following theorem.  The first part say that $h^s$ wavefront set
at a hyperbolic point can be caused by direct propagation of $h^s$ wavefront set, or by a
reflection of a point in $h^{s-\kk}$ wavefront set; thus, the
solution gains $h^{\kk}$ regularity upon reflection.  The second
part of the theorem says that at glancing points, where the flow is
tangent to $Y$, singularities propagate along ordinary
bicharacteristics, unaffected by diffraction (see Figure~\ref{fig:bichar}).  Note in particular that
this rules out the possibility that semiclassical singularities stick
to $Y$ as it curves.
\begin{theorem}{(\cite{GaWu:21})}\label{theo:diffractiveimprovements}
	Let $w = w(h)$ be $h$-tempered in $H^1_{h,\loc}(M)$ such that $Q_h w = \mathcal{O}(h^\infty)_{L^2_\loc}$.
	\begin{enumerate} \itemsep6pt 
		\item 
		If $\mu_0 \in \hat \hyp_E$, let $\mu_\pm \in \{q=0,
                \tau = -E\}$ be the preimages of $\mu_0$ under $\iota^*$ with opposite normal momenta. If $\mu_+ \in \WF_h^s(w)$
                for some $s\in \RR$, then there exists $\varepsilon >
                0$ such that 
		\[
		\exp_{-t'\hamvf_{q}}(\mu_+) \subset \WF_h^s(w), \text{ or } \exp_{-t'\hamvf_{q}}(\mu_-) \subset \WF_h^{s-\kk}(w),
		\]
		or both, for all $ t'\in (0,\varepsilon)$.
		
              \item If $\mu_0 \in \hat \gl_E$, let
                $\mu \in \{q=0, \tau = -E\}$ be the unique preimage of
                $\mu_0$ under $\iota^*$ (necessarily with vanishing normal
                momentum). If $\mu \in \WF_h^s(w)$ for some
                $s \in \RR$, then there exists $\varepsilon > 0$ such
                that
		\[
		\exp_{-t'\hamvf_{q}}(\mu) \subset \WF_h^s(w)
		\] 
		for all $t' \in (0,\varepsilon)$.
	\end{enumerate}
      \end{theorem}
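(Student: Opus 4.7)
The plan is to work locally in Riemannian normal coordinates $(x_1,x')$ about a point of $Y$, so that $Y=\{x_1=0\}$ and
$$Q_h=hD_t+(hD_{x_1})^2-R(x,hD_{x'})+V(x),$$
with $R$ tangential of smooth principal symbol $r$, and $V=V_{\mathrm{sm}}+V_{\mathrm{sing}}$, where $V_{\mathrm{sing}}\in I^{[-1-\kk]}(Y)$ carries the conormal jump of $\pa_{x_1}^\kk V$ across $Y$. Away from $Y$ the coefficients are smooth, so standard semiclassical propagation of singularities preserves $\WFh^s$ along $\hamvf_q$; only a short interval across a single passage through $Y$ needs analysis.

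For part~(1), microlocalize to a conic neighborhood of $\mu_0\in\hat\hyp_E$ where $r>0$, and factor
$$(hD_{x_1})^2-R=(hD_{x_1}-B)(hD_{x_1}+B)+h\cdot(\text{l.o.t.}),$$
with $B$ tangential of principal symbol $\sqrt r$. For the smoothed operator $Q_h-V_{\mathrm{sing}}$, this factorization decouples the two characteristic sheets $\{\xi_1=\pm\sqrt r\}$, and positive commutator estimates applied to each factor separately transport $\WFh^s$ regularity along the incoming and outgoing sheets through $Y$ without loss. The sheet coupling comes only through $V_{\mathrm{sing}}w$, treated as a source. I would then construct a commutant $A$ whose symbol is concentrated near the outgoing branch from $\mu_+$ and grows backwards in time along both the direct preimage $\exp_{-t'\hamvf_q}(\mu_+)$ and the reflected preimage $\exp_{-t'\hamvf_q}(\mu_-)$. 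The positive commutator inequality then expresses $\WFh^s$ at $\mu_+$ in terms of $\WFh^s$ along the direct preimage, plus an error of the form $|\langle [A,V_{\mathrm{sing}}]w,w\rangle|$, and the conormal order $-1-\kk$ of $V_{\mathrm{sing}}$ converts this error to $h^\kk$ times a microlocal norm measuring $\WFh^{s-\kk}$ along the reflected preimage, yielding the stated dichotomy.

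For part~(2), the finite-contact hypothesis \eqref{contact} gives $\hamvf_q^\ell x_1\neq 0$ for some $\ell$ at $\mu$, so the bicharacteristic through $\mu$ meets $Y$ only at $t'=0$ and lies off $Y$ for $0<t'<\varepsilon$. A Melrose--Sj\"ostrand-style positive commutator argument with a weight propagating regularity along this tangent ray suffices: since $V\in\mathcal{C}^{\kk-1}$ with $\kk\geq 2$, the commutator with $V$ is of lower order than the commutator with the kinetic part and can be absorbed. The main obstacle is in part~(1): constructing a commutant that simultaneously separates the incoming, outgoing, and reflected branches while extracting the sharp $h^\kk$ factor from $V_{\mathrm{sing}}$. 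This is most cleanly carried out in the b-calculus framework of Section~\ref{section:b}, in which the conormal order of $V_{\mathrm{sing}}$ translates directly into the $h^\kk$ gain by symbol calculus up to $Y$.
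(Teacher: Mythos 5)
This theorem is cited from \cite{GaWu:21} and the present paper gives no proof of it, so there is no in-paper argument to compare against; I can only assess whether your sketch could be carried through.

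The serious gap is the central claim that the positive-commutator error term $\abs{\langle [A, V_{\mathrm{sing}}] w, w \rangle}$ is bounded by $h^\kk$ times a microlocal norm on the reflected branch ``because $V_{\mathrm{sing}}$ is conormal of order $-1-\kk$.'' The semiclassical expansion of $[A, V_{\mathrm{sing}}]$, with $A$ a zeroth-order commutant, begins with $(h/i)\{a, V_{\mathrm{sing}}\} = (h/i)\,\partial_\xi a \cdot \partial_x V_{\mathrm{sing}}$; since $V_{\mathrm{sing}} \in \mathcal{C}^{\kk-1}$ with $\kk \geq 2$, the first $x$-derivative of $V_{\mathrm{sing}}$ is bounded and this term is $O(h)$, not $O(h^\kk)$. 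Each further term in the expansion gains one power of $h$ per $x$-derivative of $V_{\mathrm{sing}}$, and you run out of bounded derivatives after $\kk-1$ of them, so even the most optimistic reading of this mechanism tops out at $h^{\kk-1}$; the extra order does not come for free from the conormal exponent paired against a single commutator, and switching to the b-calculus does not by itself fix this, since the b-Poisson bracket $\{a, V_{\mathrm{sing}}\}$ is likewise only $O(h)$.

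The mechanism that actually produces $h^\kk$, as is visible in the parametrix calculation of Section~\ref{sec:ref-pgtr} of this paper, is a cancellation across $\kk$ successive orders of the transport hierarchy: the matching conditions \eqref{abc} combined with continuity of $V, \partial_{x_1}V, \ldots, \partial_{x_1}^{\kk-1}V$ across $Y$ force $a^\REF_j \equiv 0$ for all $j < \kk$, with the first nonvanishing reflected amplitude $a^\REF_{\kk}$ given by \eqref{eq:ref-coeff}. Converting that order-by-order vanishing into a commutator (or iterated Duhamel) gain, while simultaneously keeping the incoming, outgoing, and reflected sheets separate near a hyperbolic point, is precisely the hard part that \cite{GaWu:21} carries out and your sketch elides. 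A secondary issue: your factorization $(hD_{x_1})^2 - R = (hD_{x_1}-B)(hD_{x_1}+B) + h\cdot(\text{l.o.t.})$ decouples the two sheets only modulo $O(h)$ errors which re-couple them, so this must be reconciled with the $h^\kk$ bookkeeping rather than silently dropped. Your sketch of part (2) is, by contrast, structurally sound: only $\mathcal{C}^1$ regularity of $V$ is needed there, the $V$-commutator really is a benign $O(h)$ error, and no reflection gain is being claimed.
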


\begin{figure}[h]
    \centering
    \includegraphics[width=0.45\linewidth]{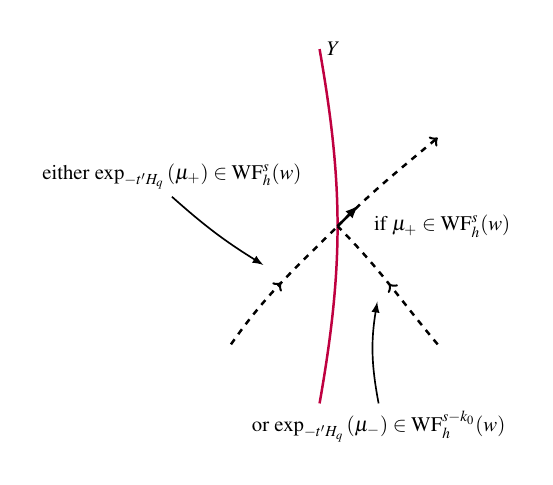}
    \quad
    \includegraphics[width=0.45\linewidth]{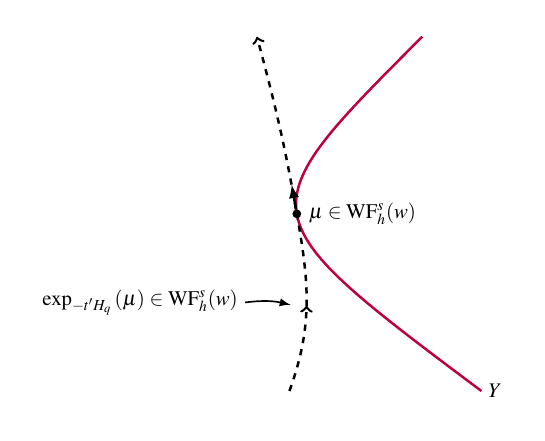}
    \caption{The picture on the left illustrates part (1) of the theorem, and the picture on the right illustrates part (2) of the theorem. Red curves are the hypersurface $Y$ where the conormal singularities are. }
    \label{fig:bichar}
\end{figure}

(Note that we also have an \emph{elliptic} estimate on $\ellip_E$,
analogous to Proposition~7.5 of \cite{GaWu:23a}; here the regularity
does not propagate, but we simply have an a priori estimate on $u$
limited by the regularity of $V$.)

A consequence is that regularity globally propagates forward along ordinary
bicharacteristic flow except at such times as it hits $\hat\hyp_E,$
where reflected singularities from other bicharacteristics may come
into play (i.e., branching may occur).
\begin{corollary}{(\cite{GaWu:21})}\label{cor:propagationawayfromhyp}
	Let $w = w(h)$ be $h$-tempered in $H^1_{h,\loc}(M)$ such that $Q w = \mathcal{O}(h^\infty)_{L^2_\loc}$. Let $s\in \RR$ and $T \geq 0$. If 
 \[
 \mu \in \{q = 0, \, \tau = -E\}
  \]
 is such that $\exp_{-t'\hamvf_{q}}(\mu)$ is disjoint from $(\iota^*)^{-1}(\hat \hyp_E)$ for each $t' \in [0,T]$, then
 \[
 \exp_{-T\hamvf_{q}}(\mu) \notin \WF_h^s(v) \Longrightarrow \mu \notin \WF_h^s(v).
 \]
\end{corollary}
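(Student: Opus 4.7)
The plan is a connectedness (open/closed) argument along the bicharacteristic, combining the standard semiclassical propagation of singularities in the smooth region $M \setminus Y_M$ with part (2) of Theorem~\ref{theo:diffractiveimprovements} to handle the moments (if any) when the bicharacteristic meets $Y_M$. Parametrize the bicharacteristic forward from the regular endpoint by setting $\tilde\gamma(s) = \exp_{s\hamvf_{q}}(\exp_{-T\hamvf_{q}}(\mu))$ for $s \in [0,T]$, so that $\tilde\gamma(0) \notin \WFh^s(v)$ by hypothesis and $\tilde\gamma(T) = \mu$. By the hypothesis that $\tilde\gamma$ avoids $(\iota^*)^{-1}(\hat\hyp_E)$, together with $\tilde\gamma \subset \{q=0,\tau=-E\}$, at any $s$ for which $\tilde\gamma(s)$ lies over $Y_M$ the projection $\iota^*\tilde\gamma(s)$ must lie in $\hat\gl_E$---hyperbolic being excluded by assumption, and elliptic being off the characteristic set.

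Let $S = \{s \in [0,T] : \tilde\gamma(s) \notin \WFh^s(v)\}$. Continuity of $\tilde\gamma$ and closedness of $\WFh^s(v)$ give that $S$ is open in $[0,T]$, and $0 \in S$. The core ingredient to be established is the following backward propagation statement along $\tilde\gamma$: if $s_0 \in (0,T]$ and $\tilde\gamma(s_0) \in \WFh^s(v)$, then $\tilde\gamma(s_0 - t') \in \WFh^s(v)$ for all sufficiently small $t' > 0$. When $\tilde\gamma(s_0)$ lies in the smooth region off $Y_M$, this follows from the bicharacteristic invariance of $\WFh^s$ for solutions of $Q_h v = \mathcal{O}(h^\infty)$ by standard semiclassical propagation. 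When $\tilde\gamma(s_0)$ lies over $Y_M$, its projection is glancing by the observation in the previous paragraph, so part (2) of Theorem~\ref{theo:diffractiveimprovements} applies and yields exactly this claim.

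With backward propagation in hand, the corollary follows by the familiar continuity argument. Set $\alpha = \sup\{\beta \in [0,T] : [0,\beta) \subset S\}$. If $\alpha < T$, then $\tilde\gamma(\alpha) \in \WFh^s(v)$ (otherwise openness of $S$ would extend $S$ past $\alpha$), and backward propagation at $s_0 = \alpha$ produces $\tilde\gamma(\alpha - t') \in \WFh^s(v)$ for small $t' > 0$, contradicting $[0,\alpha) \subset S$. Hence $\alpha = T$, and applying the same dichotomy at $s_0 = T$ forces $\mu = \tilde\gamma(T) \notin \WFh^s(v)$. The only delicate step in this scheme is verifying backward propagation at points over $Y_M$: it is precisely the exclusion of hyperbolic preimages in the hypothesis that ensures only the glancing case of Theorem~\ref{theo:diffractiveimprovements} is ever needed, so no reflected (branching) contributions from other bicharacteristics intervene.
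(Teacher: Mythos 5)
Your proof is correct, but note that the paper does not actually prove this corollary: it is attributed to \cite{GaWu:21}, and the present paper only quotes the statement as ``a consequence'' of Theorem~\ref{theo:diffractiveimprovements}. Your open/closed continuity argument is the natural derivation of the global result from the local propagation statements, and you have the key ingredients right: exclusion of hyperbolic preimages combined with $\tilde\gamma\subset\Sigma$ forces any point of $\tilde\gamma$ over $Y_M$ to be glancing (elliptic being ruled out because it is off the characteristic set), part~(2) of Theorem~\ref{theo:diffractiveimprovements} then supplies backward propagation at such points while standard semiclassical propagation handles the complement of $Y_M$, and closedness of $\WF_h^s$ makes the good set $S$ open so that the supremum/infimum argument closes the loop.
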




  \subsection{\mbox{b}-Geometry and propagation of b-wavefront set}\label{section:b}

Semiclassical wavefront set is the natural tool for describing the
smoothing effect of reflected propagation.
There are, however, limitations to our ability to study global effects
by ordinary semiclassical microlocalization in this setting. In part, this
arises because, as alluded to above, the elliptic estimate (which we
have not stated here, but is
analogous to Proposition~7.5 of \cite{GaWu:23a})
 would not allow us to conclude that for solutions to
$Qu=0$, there is no wavefront set over elliptic points over $Y$: the
conormal singularities of the potential create semiclassical
singularities in the solution.  This defect is remedied, however, if
we study instead the \emph{semiclassical b-wavefront set}, which is
adapted to the conormal singularities that we deal with here.
  
We start by recalling the basics of \emph{b-geometry} and analysis relative to the
interior hypersurface $Y \subset X$; the more common setting, as described e.g.\
in \cite{Me:93}, would have $Y=\pa X$.  The concepts of b-geometry and
analysis were introduced by Richard Melrose, and we refer the reader to
his \cite{Me:93} for a comprehensive introduction, together with
applications in elliptic PDE.  The semiclassical b-calculus used here
received its first treatment (to our knowledge) in \cite[Appendix
A]{HiVa:18}.
  See \cite[Section
3]{GaWu:23a} for an introduction to the semiclassical b-calculus with
respect to an interior hypersurface relevant to the presentation here.

We let $\mathcal{V}_b(M,Y_M)$ denote the Lie algebra of vector fields on $M$
tangent to $Y_M$; in normal coordinates, these are just the
$\mathcal{C}^\infty(M)$-span of $x_1 \pa_{x_1}, \pa_{x'},\pa_t$.  They are the
space of sections of a vector bundle, denoted $\Tb(M,Y_M).$  The dual
bundle, denoted $\Tbstar(M;Y_M),$ has as smooth sections the
$\mathcal{C}^\infty(M)$-span of $dx_1/x_1, dx', dt$.  Every smooth one-form is
a b-one-form, so there is a canonical projection map
$$
\pib: T^* M \to \Tbstar (M, Y_M);
$$
in normal coordinates near $Y_M$, then,
$$
\pib (\xi_1 \, dx_1 +\xi'\, dx'+\tau\, dt) = (x_1 \xi_1 \, \frac{dx_1}{x_1}+\xi'\, dx' +\tau\, dt ),
$$
i.e., in canonical dual coordinates (and using bars on
$T^*M$ coordinates to distinguish them from the $\Tbstar M$ coordinates)
$$\begin{aligned}
\xi_1\circ \pib(x_1,x',t,\bar\xi_1,\bar\xi',\bar\tau)&= x_1\bar\xi_1,\\
\xi'\circ \pib(x_1,x',t,\bar\xi_1,\bar\xi',\bar\tau) &= \bar\xi',\\ \tau\circ \pib(x_1,x',t,\bar\xi_1,\bar\xi',\bar\tau) &= \bar\tau.
\end{aligned}$$

An analogous story holds in the time-independent setting, with $\pib:
T^*X \to \Tbstar X$, and we will use this version as well.

The
\emph{semiclassical b-differential operators} are those that are sums
of smooth coefficients times products of $h x_1 D_{x_1}$, $h D_{x'}, hD_t$. The
algebra of \emph{semiclassical b-pseudo\-differential operators,}
denoted $\Psibh(M, Y_M)$, microlocalizes this algebra, and elements of
this algebra can be formally written as
$$
a(x_1,x',t,h x_1 D_{x_1}, h D_{x'}, hD_t),
$$
where $a$ lies in a suitable symbol space.  Associated to this
calculus of pseudodifferential operators is a notion of
\emph{b-wavefront set} denoted $\WFbh u$.  A quantitative version is
$\WFbh^s u$: we define
$$
\mu \notin \WFbh^s u
$$
iff there exists $A \in \Psibh(M, Y_M)$, elliptic at $\mu$, such that
$A u=O_{L^2}(h^s)$.  When $s$ is omitted, it is taken to be
$s=\infty$.  In \cite{GaWu:23a}, a variant of this is
employed, where regularity is measured with respect to $H^1_h$ instead
of $L^2$.  This is very natural from the point of view of using
propagation estimates, where we constantly use the quadratic form
$\ang{P_h u,u}$, but as remarked in Section 3.5 of that paper (prior to
Lemma 3.1), is
immaterial from the point of view of stating the main results.
Analogous constructions of course exist (and will be employed below)
with the manifolds $(M,Y_M)$ replaced by $(X,Y)$ (i.e., fixing time).

\emph{For brevity of notation, in what follows, we will drop the $Y_M$
  from notations such as $\Tbstar (M,Y_M)$ resp.\  $\Psibh(M,Y_M)$ and
  refer simply to $\Tbstar M$ resp.\ $\Psibh(M)$ without any danger of ambiguity.}

We would like to have
\begin{equation}\label{cpctWF}
\WFbh^s u=\emptyset \text{ iff } u=O_{L^2}(h^s),
\end{equation}
but this entails an extension of the wavefront set to fiber-infinity,
(so as to deal with examples such as $e^{ix/h^2}$, even for the
ordinary semiclassical calculus).  Conveniently, however, we will only
be dealing with \emph{compactly-microsupported} functions $u$.
\begin{definition}
A semiclassical family $u$ is compactly microsupported if there exists
$A \in \Psibh(M)$ with compactly supported total symbol (thus also
said to be ``compactly microsupported'') such that
$u=Aw+O_{L^2}(h^\infty)$ for some $w \in L^2$.  The definition on the
spatial manifold $X$
is analogous.
  \end{definition}
The point here is that \emph{if $u$ is compactly microsupported, then \eqref{cpctWF} does hold.}
  
Note that $Q$ is not a b-differential operator, as $h^2 D_{x_1}^2$ is not
the square of a b-vector field; this part of the operator is singular
with respect to the b-structures we have introduced.
\begin{definition}
Let $$\dot \Sigma_b(q)=\pib(\Sigma)\subset \Tbstar M$$  (recall that $\Sigma=\{q=0\}
\subset T^*M$); we call this the
\emph{compressed} characteristic set, and equip it with the subspace
  topology as a subset of $\Tbstar M$.  Likewise, for any $E \in \RR$ we let
  $$
\dot\Sigma^E_b(p)=\pib(\{p-E=0\}) \subset \Tbstar X.
  $$
  \end{definition}
     The projection is a diffeomorphism away from $Y_M$, hence in this
     region the
     compressed characteristic set is just a copy of the ordinary one,
     but on the other hand 
\begin{gather*}
\dot\Sigma_b(q)\cap \Tbstar_{Y_M} M = \big\{x_1=0,x',t ,\xi_1=0,
\xi',\tau): \tau+\abs{\xi'}^2_g +V(x) \leq 0\big\}\\
\dot\Sigma^E_b(p)\cap \Tbstar_{Y} X = \big\{x_1=0,x',\xi_1=0,
\xi'): -E+\abs{\xi'}^2_g +V(x) \leq 0\big\}.
\end{gather*}

Now we consider bicharacteristics in the b-setting.  First, we note
that we can easily extend Definition~\ref{def:bnb} to the spacetime
setting, by letting a branching spacetime null bicharacteristic be a
curve in $\Sigma \subset T^*M$ tangent to $\hamvf_q$ away from $Y_M$
and continuous at $Y_M$ after application of $\pib$.  The compressed
branching bicharacteristics, in $X$ and $M$, are just the continuous
curves obtained by
projection
to the b-cotangent bundles of the counterparts in $T^*X$ resp.\ $T^*M$:
  \begin{definition}   
  If $\gamma$ is a branching (spacetime) bicharacteristic then
  $\pib(\gamma)$ is a \emph{compressed} branching (spacetime) bicharacteristic. 
\end{definition}
One virtue of this definition is that the compressed branching bicharacteristics
are continuous, since the normal momentum (here denoted $\overline{\xi}_1$) which jumps at points of reflection
(see Figure~\ref{fig:branchingflow}),  is
zeroed out by the projection map $\pib$.\footnote{Note that the first
  author's previous work with Gannot, e.g., \cite{GaWu:23a}, used the
  terminology ``generalized broken bicharacterstics'' (``GBB'') for
  the most general curves along which b-wavefront set propagates.  The
  branching curves considered here are a special case of these GBBs
  that takes into account the lack of ``sticking'' at glancing points
  proved in \cite{GaWu:23a}; the only multivalued aspect is thus the
  bifurcation of the flow at hyperbolic points.  We have adopted the
  ``branching'' terminology to match that used by Vassiliev--Safarov \cite{VaSa:88}.}

The main propagation result in the b-setting can now be readily
described.
\begin{theorem}\label{theorem:propofsings}
Assume the dynamical assumption \eqref{contact} holds.  	Let $w = w(h)$ be $h$-tempered in $H^1_{h,\loc}(M)$ such that
        $Q w = \mathcal{O}(h^\infty)_{L^2_\loc}$.  Then $\WFbh (w)
        \subset \dot \Sigma_b(q)$.  Moreover, for $\mu \in \dot
        \Sigma_b(q)$, and for any $\ep>0$, if there does not exist $\mu'$ with
        $t(\mu')\in (t(\mu)-\ep, t(\mu))$ such that $\mu' \in \WFbh^s(w)$ and with
        $\mu'$ and $\mu$ lying on a single compressed branching
        bicharacteristic, then $\mu \notin \WFbh^s(w).$
  \end{theorem}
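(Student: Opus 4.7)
\smallskip

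The plan is to prove the two assertions separately: (i) the containment $\WFbh(w) \subset \dot\Sigma_b(q)$ is a b-elliptic estimate, and (ii) the propagation statement reduces, via a case analysis driven by the dynamical assumption \eqref{contact}, to the results of \cite{GaWu:21} recalled as Theorem~\ref{theo:diffractiveimprovements} above, supplemented by a b-positive commutator argument at hyperbolic points over $Y_M$.

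For (i), fix $\bar\mu \in \Tbstar M \setminus \dot\Sigma_b(q)$. If $\bar\mu$ lies off $Y_M$, then the b-calculus coincides with $\Psi_h$ locally, and the claim is standard semiclassical ellipticity applied to $Q_h$. The interesting case is $\bar\mu = (0,x',t,0,\xi',\tau)$ with $\tau+|\xi'|^2_g+V(0,x')>0$, i.e., the complement of the compressed characteristic set over $Y_M$. Here the ellipticity is not that of $Q_h$ itself (which contains the non-b operator $h^2 D_{x_1}^2$) but rather of its tangential part $hD_t + V + \ang{K(x)\xi',\xi'}$; one constructs a b-parametrix whose principal symbol inverts this tangential part, following the elliptic estimate of \cite[Proposition~7.5]{GaWu:23a}. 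Using $Q_h w = O(h^\infty)_{L^2_{\loc}}$ and compact microsupport then gives $\bar\mu \notin \WFbh(w)$.

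For (ii), fix $\bar\mu \in \dot\Sigma_b(q)$ and assume no point $\mu'$ with $t(\mu')\in(t(\bar\mu)-\ep,t(\bar\mu))$ on a compressed branching bicharacteristic through $\bar\mu$ lies in $\WFbh^s(w)$. If $\bar\mu$ is not over $Y_M$, then by \eqref{contact} the backward bicharacteristic from $\bar\mu$ avoids $Y_M$ on a short time interval, so $\WFbh^s$ and $\WFh^s$ agree along it, and ordinary semiclassical propagation of singularities concludes. If $\bar\mu$ lies over $Y_M$ at a glancing point, it has a unique preimage $\mu \in T^*M$, and Theorem~\ref{theo:diffractiveimprovements}(2) together with \eqref{contact} (which ensures the bicharacteristic from $\mu$ leaves $Y_M$ instantly) propagates the absence of $\WFh^s$ from the backward branch to $\mu$, hence to $\bar\mu$. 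The remaining case is $\bar\mu$ hyperbolic over $Y_M$: the two preimages $\mu_\pm \in T^*M$ live on opposite sides of $Y_M$ under the backward flow, and $\bar\mu \notin \WFbh^s(w)$ is equivalent to $\mu_\pm \notin \WFh^s(w)$. Applying Theorem~\ref{theo:diffractiveimprovements}(1) at $\mu_+$ rules out the ``transmitted'' alternative by the hypothesis applied to the bichar $\pib\circ\gamma_+$; the ``reflected'' alternative, which only yields $\exp_{-t'\hamvf_q}(\mu_-)\in\WFh^{s-\kk}$, is weaker than what the hypothesis directly contradicts.

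Thus the hard part is this hyperbolic case, where the $\WFbh^s$ propagation statement is \emph{strictly stronger} than what follows from applying Theorem~\ref{theo:diffractiveimprovements}(1) once to each of $\mu_\pm$. The resolution is to run a positive commutator argument directly in $\Psibh(M)$: choose a b-symbol $a$ whose square is supported microlocally near $\bar\mu$ with escape function structure along $\pib\circ\gamma_\pm$ backward in time, and compute $i[Q_h, A^*A]/h$ in the b-calculus, extracting a positive principal part together with commutator contributions supported where the hypothesis provides $O(h^s)$ control. The non-b-portion $h^2 D_{x_1}^2$ of $Q_h$ contributes boundary terms at $Y$ (integration by parts in $x_1$), which are handled exactly as in the propagation arguments of \cite{GaWu:23a,GaWu:21} adapted to the time-dependent setting: the insensitivity of b-principal symbols to the value of $\xi_1$ at $x_1=0$ allows the commutator to simultaneously control the transmitted and reflected contributions, yielding $\WFbh^s$-propagation without any $\kk$-loss. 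Iteration using \eqref{contact} handles global propagation, and completes the proof.
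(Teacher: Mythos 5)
Your overall architecture—a b-elliptic estimate for the containment, a case analysis driven by \eqref{contact}, and a positive-commutator argument in $\Psibh(M)$ over $Y_M$—matches the intent of the paper's sketch, which cites exactly that commutator machinery as Theorem~1 of \cite{GaWu:23a} (with time-dependent adaptations in \cite{GaWu:21}), and then adds a glancing-point improvement via Theorem~3 of \cite{GaWu:23a} and Proposition~7.10 there. However, two of the steps you use to motivate the commutator argument are not sound.

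First, your claim that Theorem~\ref{theo:diffractiveimprovements}(1) is insufficient because the reflected alternative only gives $\exp_{-t'\hamvf_q}(\mu_-)\in\WF_h^{s-\kk}(w)$ has the monotonicity backwards. Since $s-\kk<s$ we have $\WF_h^{s-\kk}(w)\subset\WF_h^s(w)$, so the hypothesis (backward branches free of $\WF_h^s$, hence of $\WF_h^{s-\kk}$) rules out \emph{both} disjuncts, and the contrapositive of part (1) already yields $\mu_\pm\notin\WF_h^s(w)$ with no $\kk$-gap. Second, and more importantly, the step you take as free—``$\bar\mu\notin\WFbh^s(w)$ is equivalent to $\mu_\pm\notin\WF_h^s(w)$''—is precisely the nontrivial bridge between ordinary and b-wavefront sets over $Y_M$: a b-operator elliptic at $\bar\mu$ is not a $\Psi_h$-operator elliptic at $\mu_\pm$, and the passage from $\WF_h^s$-absence at the $T^*M$-preimages to $\WFbh^s$-absence at $\bar\mu$ is exactly the content of results like Proposition~7.10 of \cite{GaWu:23a}; you elide the analogous bridge at glancing points as well (``hence to $\bar\mu$''). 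Your proposed resolution—a direct commutator argument in $\Psibh(M)$—is the right machinery, but if you do that in full you have reproved GBB propagation and the reduction to Theorem~\ref{theo:diffractiveimprovements}(1) becomes superfluous; the paper instead cites that prior commutator result wholesale and concentrates its new work on the glancing improvement from GBBs (which may stick to $Y_M$) to the compressed branching bicharacteristics used here, which is available thanks to \eqref{contact}.
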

In other words, $\WFbh (w)$ propagates along the \emph{branching}
bicharacteristic flow: turned around in time, this says that a point
in the wavefront set of $w$ continues along \emph{some} branching
bicharacteristic.  Note that we have not included the improvement
along reflected trajectories in this crude statement; we return to
this suppression of reflected waves below.

We make some remarks on the proof, which is essentially in
\cite{GaWu:23a}, but is packaged there in different (primarily, more general) form.

\noindent\emph{Sketch of Proof.}
The proof of Theorem~\ref{theorem:propofsings} mainly follows verbatim the proofs
of the corresponding propositions in \cite{GaWu:23a} (see in
particular Theorem~1); the minor
modifications needed
 are explicitly addressed in the Appendix of \cite{GaWu:21}.  The one major
 change we have made here, however, concerns the behavior at the
 glancing set, where the geometry of rays is potentially subtle, and
 where the fact that we have stated the theorem not for generalized
 broken bicharacteristics as in \cite{GaWu:23a} but rather for the
 compressed branching bicharacteristics discussed here, makes a difference.
  Theorem~1 of \cite{GaWu:23a} applies down to quite a low regularity of
 $V$, being valid e.g.\ for $V=(x_1)_+^\alpha$ for all $\alpha>0$;
 correspondingly, the propagation at glancing points is in principle
 along all ``generalized broken bicharacteristic curves,'' which may
 be permitted to stick to the interface $Y$.  Here, however, we work
 in higher regularity so that Theorem~3 of \cite{GaWu:23a}
 additionally applies. This latter result concerns \emph{ordinary}
 semiclassical wavefront set rather than b-wavefront set, and says that
on the (unique!) bicharacteristic curve $\gamma(s)$ through a
 glancing point $\gamma(0)$, if $\WF_h^r (w) \cap \gamma((-\ep,0))$ is
 disjoint from $\WF_h^r(w)$ then $\gamma(0) \notin \WF_h^r(w)$.  Thus,
 the ordinary wavefront set propagates along ordinary
 bicharacteristics at such points, with no limits on the order $r$.

 Owing to our dynamical assumption \eqref{contact}, if $\gamma(0) \in
 \gl_E$ for some $E$ then for $\ep>0$ small enough, $\gamma((-\ep,0))$
 is away from the interface $Y$, hence in a region where $\WFbh^s(w)$
 and $\WF_h^s(w)$ agree.  Moreover, at a glancing points $\mu$ over
 $Y,$ absence of $\WF_h^s(w)$ \emph{implies} absence of $\WFbh^s(w)$
 by Proposition~7.10 of \cite{GaWu:23a} (again, cf.\ remarks in the
 Appendix of \cite{GaWu:21}).\qed
  
In order to make more global statements about propagation of
singularities, we now define a relation along the relevant compressed branching
bicharacteristic flow, both in the spatial variables and in spacetime.
\begin{definition}
For $\nu \in \Tbstar X$ let
\begin{align*}
\XPhi{E}_t(\nu) =\{\nu': &\text{ there exists a compressed branching
                        bicharacteristic in }\dot\Sigma^E_b\\ &\text{ with } \gamma(0)=\nu,\ \gamma(t)=\nu'\}.
\end{align*}
For $\mu \in \Tbstar M$ let
\begin{align*}
\Phi_T(\mu) =\{\mu': &\text{ there exists a compressed branching
                        bicharacteristic}\\ &\text{ containing
                                              }\mu,\mu',\  \text{with
                                              } t(\mu')=T \}.
\end{align*}
                                            \end{definition}
In both cases, the flow is thus allowed to branch via reflection or continuation
upon arriving transverse to the interface $Y$: these maps are
relations, not functions.  
Note that fixing the energy $E$ is
important to make the branching of the flow $\XPhi{E}$ discrete: if we did not fix
the energy, then at times when the flow is over $Y$, its location in
the b-cotangent bundle has ``forgotten'' about the normal momentum
(since we simply have $\xi_1=0$) and it is only the constraint on the
energy that determines the normal momentum of the reflected or
transmitted continuation.  By contrast, away from $Y$ the energy is
specified by a point in $\Tbstar X$, and we will omit the $E$ from the
notation in considering flowouts in this region.

We introduce a further refinement of the notation for the flow that
will be of use in our parametrix constructions below.  Near a
branching bicharacteristic $\gamma$ whose endpoint is away from $Y$
there is a locally-defined single-valued flow map, where we define the
flowout of a perturbation of $\gamma(0)$ to
be the flow along the bicharacteristic that reflects where $\gamma$
did and is transmitted where $\gamma$ was transmitted (where $\gamma$ was glancing).  We denote this
locally-defined flow $\XPhi{E}^\gamma_t$.

\begin{remark}\label{remark:inscribed}
To motivate what follows, consider the example of the Euclidean plane with $Y=S^1$ and
$V=(\abs{x}-1)_+^{\kk}.$  Regular polygons inscribed in $S^1$ are projections
of compressed branching bicharacteristics; we can find a family of these
limiting to the ``gliding'' curve that circulates around $Y.$
This latter curve is not (the projection of) a compressed branching bicharacteristic
curve, and semiclassical singularities do not propagate along it.
Correspondingly, the polygonal curves approaching $S^1$ must have more
and more reflections; since under Schr\"odinger propagation each reflection gains a factor of
$h^{\kk}$ by Theorem~\ref{theo:diffractiveimprovements}, these
approximating curves can only carry less and less energy.
\end{remark}
If we are interested in only wavefront set up to a given semiclassical
order, then, it makes sense to filter the flow lines by maximum number
of reflections.
\begin{definition}
\label{def:branchingflow}
For $\nu \in \Tbstar X$ and $N \in \NN$ let
\begin{multline*}
\XPhi{E}^N_t(\nu) =\{\nu': \text{ there exists a compressed branching
  bicharacteristic in } \dot \Sigma^E_b  \text{ with }\\ \text{at most $N$ reflections and } \gamma(0)=\nu,\ \gamma(t)=\nu' \},
\end{multline*}
and for $\mu \in \Tbstar M$ let
\begin{multline*}
\Phi^N_T(\mu) =\{\mu': \text{ there exists a compressed branching
  bicharacteristic with }\\ \text{at most $N$ reflections between }
\mu,\mu',\ t(\mu')=T \}.
\end{multline*}
\end{definition}
Then Theorem~\ref{theo:diffractiveimprovements} yields the following
description of propagation of regularity along both this finitely-reflected
flow and the full flow.  We reiterate that the importance of the former is
that limits of finitely-reflected rays include gliding rays along $Y$;
while no singularities propagate along these, they are nonetheless in
the closure of the flow if we allow infinitely many reflections, and
this complicates the study of the dynamics.
\begin{corollary}\label{cor:propofsings}
Assume the dynamical assumption~\eqref{contact} holds.  	Let $w = w(h)$ be $h$-tempered in $H^1_{h,\loc}(M)$ such that
        $Q w = \mathcal{O}(h^\infty)_{L^2_\loc}$ and
        ${w(h)} \in L^2_{\loc}$ uniformly for all $h$.
  Let $\mu \in \dot\Sigma_b(q)$ and suppose $s< (N+1)\kk$, and
  $$
  \Phi^N_{T}(\mu) \cap \WFbh^{s} (w) =\emptyset.
  $$
  Then $$\mu
\notin\WFbh^{s} (w).
$$

Likewise, if
  $$
  \Phi_{T}(\mu) \cap \WFbh (w) =\emptyset.
  $$
  Then for all $s \in \RR$, $$\mu
\notin\WFbh^s (w).
$$

\end{corollary}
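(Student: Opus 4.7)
My plan for both statements is a contrapositive argument that traces a hypothetical singularity at $\mu$ backwards in time along a compressed branching bicharacteristic, applying Theorem~\ref{theorem:propofsings} to propagate b-wavefront set and Theorem~\ref{theo:diffractiveimprovements} to keep track of the $\kk$-order improvement at each reflection (translating the latter from $\WFh$ to $\WFbh$ via the fact that these agree away from $Y$).

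For the filtered statement, assume $\mu \in \WFbh^s(w)$ with $s < (N+1)\kk$. Iterating Theorem~\ref{theorem:propofsings} backwards in time from $\mu$ along a compressed branching bicharacteristic produces, at each step, an ancestor in some b-wavefront space. At transverse crossings of $Y$ the dichotomy in Theorem~\ref{theo:diffractiveimprovements} forces the ancestor to lie either on the direct (transmitted) branch, with no loss in order, or on the reflected branch at order $\kk$ lower; along segments away from $Y$ and at glancing crossings the order propagates unchanged. After $k$ reflections have been used, the ancestor therefore lies in $\WFbh^{s - k\kk}(w)$. Because $w$ is uniformly bounded in $L^2_{\loc}$, any compactly-microsupported test operator satisfies $Aw = O(1)_{L^2}$, so $\WFbh^{s'}(w) = \emptyset$ for every $s' \leq 0$. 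The reflected option is therefore impossible once $(k+1)\kk \geq s$, so at most $\lfloor s/\kk \rfloor \leq N$ reflections can be used under the hypothesis $s < (N+1)\kk$. The resulting ancestor at time $T$ lies in $\Phi^N_T(\mu) \cap \WFbh^{s - k\kk}(w) \subset \Phi^N_T(\mu) \cap \WFbh^s(w)$, contradicting the hypothesis. The second (unfiltered) statement requires no level accounting: since $\WFbh(w) = \bigcup_s \WFbh^s(w)$, we may assume $\mu \in \WFbh^s(w)$ for some finite $s$ and iterate Theorem~\ref{theorem:propofsings} alone, with no reflection budget, to produce an ancestor in $\WFbh(w)$ along some compressed branching bicharacteristic through $\mu$.

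To extend these local backward-tracing statements to the full interval $[T, t(\mu)]$ I would use a standard open/closed subset argument on the set $I \subset [T, t(\mu)]$ of times at which the trace-back succeeds. The set $I$ contains $t(\mu)$; it is open by the iterated applications of Theorems~\ref{theorem:propofsings} and~\ref{theo:diffractiveimprovements}; and it is closed by closedness of the b-wavefront set together with compactness of the energy-fixed slice of $\dot\Sigma_b(q)$ (using compactness of $X$ or, in the scattering case, that $V\to\infty$ at spatial infinity). The main technical obstacle is this closedness step, which requires passing to a limit of compressed branching bicharacteristics with uniformly bounded reflection counts and verifying that the limit is again a compressed branching bicharacteristic with no more reflections than the approximants. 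This is where the finite-order contact assumption~\eqref{contact} plays its essential role, precluding the reflection times from clustering or the limit curve from sticking tangentially to $Y$.
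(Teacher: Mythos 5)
Your proposal is correct and follows essentially the same route as the paper: trace a hypothetical singularity backwards along compressed branching bicharacteristics, use the $h^\kk$ gain from Theorem~\ref{theo:diffractiveimprovements} at each reflection (transferred to $\WFbh$ away from $Y$), exhaust the reflection budget using uniform $L^2$-boundedness ($\WFbh^{s'}(w)=\emptyset$ for $s'\le 0$), and extend to the full time interval by a supremum/open-closed argument whose continuity step rests on the finitely-branching flow and the finite-order contact hypothesis~\eqref{contact}. Two small imprecisions worth noting: the paper formalizes the per-reflection bookkeeping via the exactly-$j$-reflections sets $\Psi^j_T$ and proves the slightly sharper implication~\eqref{sharptheorem}, and the identity $\WFbh(w)=\bigcup_s\WFbh^s(w)$ you invoke actually requires a closure on the right-hand side (cf.\ the paper's remark following the corollary) — though this does not affect your deduction of the unfiltered statement, which uses only the containment $\WFbh^s(w)\subset\WFbh(w)$.
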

\begin{remark}
  To understand the numerology of the first part of the theorem, note
that keeping
track only of the $N$-fold branching flow leaves open the possibility
that branching bicharacteristics with $N+1$ or more reflections reach
$\mu$; since we have $L^2$ background regularity, and the
singularities along these curves gain $h^{\kk}$ with each reflection,
after $N+1$ reflections these contributions are $O(h^{(N+1)\kk-0})$,
hence do not contribute to $\WFbh^s(w)$ for $s$ in the given range.

We remark that there is a subtle difference between $\mu \notin\WFbh^s
(w)$ for all $s$, and $\mu \notin\WFbh (w)$; note that
$$
\WFbh(w) =\overline{\bigcup_s\WFbh^s(w)},
$$
and that the statement is false without the closure.
\end{remark}
\begin{proof}
We will in fact prove a sharper result than the one stated above (at
the cost of a slightly more complex statement).  Let
$\Psi_T^j(\mu)$ be the subset of $\Phi_T^j(\mu)$ consisting of points
that have undergone \emph{exactly} $j$ reflections.  The sharp result
we will prove is that
\begin{equation}\label{sharptheorem}
\Psi_T^j(\mu) \cap \WFbh^{s-j\kk}w=\emptyset,\ j=0,\dots, \lceil
\frac s{\kk}\rceil-1\Longrightarrow \mu \notin \WFbh^sw.
\end{equation}
Note that  $N :=\lceil \frac s{\kk}\rceil-1$ is more easily
described as the greatest
integer strictly smaller than $s/\kk$.  Thus the stated theorem
follows from this sharper version, since (giving up the subtlety of
different regularity hypotheses on the different sets $\Psi_T^j(\mu)$) the hypothesis implies that
$$
\Phi_T^N(\mu) \cap \WFbh^{s} w=\emptyset
$$
with the given choice of $N$.  The restriction in the hypothesis that $N$ is (at least) the largest integer less
than $s/\kk$ is equivalent to $N+1>s/\kk$, which is the
restriction on $s$ in the hypothesis
of the theorem.  Note that of course $\Phi_T^N(\mu)=\bigcup_{j=0}^N \Psi_T^j(\mu).$

To prove this sharper result, let us consider the case $T<0$, as this
is the usual mode in which we apply the theorem (and the case $T>0$
follows by time-reversal symmetry).  The result holds tautologically
for $T=0$ so let $T_0$ be the supremum of $T'$ such that
\eqref{sharptheorem} holds for all $T \in [-T',0]$.  We will show that
$T_0=\infty$, by assuming it is finite and deriving a contradiction.

Given fixed $\mu$, we may now fix $\ep>0$ so that along all
backwards bicharacteristics from $\mu$ with up to $j$ reflections, at most one reflection can
occur between times $-T_1-\ep$ and $-T_1$: this follows from the
finite-order contact assumption, since at both hyperbolic and glancing
points the flow leaves the boundary immediately.  Moreover, we may
obtain this $\ep>0$ locally uniformly as $\mu$ ranges over an open
set, owing to the continuity of the finitely broken flow, a purely
dynamical result which we prove below in Section~\ref{sec:dynamics}.

Now fix any $t \in (-T_1-\ep,-T_1)$.
Suppose that
\begin{equation}\label{regularity.back}
\Psi_t^j(\mu) \cap \WFbh^{s-j\kk}w=\emptyset,\ j=0,\dots, \lceil
\frac s{\kk}\rceil-1.
\end{equation}
It may be
that $t$ is exactly the time at which one branching
bicharacteristic through $\mu$ lies exactly over $Y$, either at a
hyperbolic or glancing point.  If this is so, we note that we may
increase $t$ so as to ensure that $\Psi_t^j(\mu)$ all lies over $M
\backslash Y_M$, since the basic propagation of b-singularities
(Theorem~\ref{theorem:propofsings}) implies that the same holds for
slightly larger $t$, and our geometric assumptions ensure that we can
make these points lie away from $Y_M$.  Thus we assume without loss of
generality that no points in the finite set $\Psi_t^j(\mu)$ lie over $Y$.

Now note that
$$
\Psi_t^j(\mu) = \Psi_t^1\circ \Psi_{-T_1}^{j-1}(\mu)\cup \Psi_t^0\circ \Psi_{-T_1}^{j}(\mu),
$$
since either zero or one reflections may occur between time $t$ and
$-T_1$.  Our hypothesis \eqref{regularity.back} can thus be rewritten
as
$$
\begin{aligned}
  \Psi_t^1\circ \Psi_{-T_1}^{j-1}(\mu) \cap \WFbh^{s-j\kk}w
  &=\emptyset, j \leq \lceil
\frac s{\kk}\rceil-1,\\
  \Psi_t^0\circ \Psi_{-T_1}^{j}(\mu) \cap \WFbh^{s-j\kk}w &=\emptyset,\ j \leq \lceil
\frac s{\kk}\rceil-1.
\end{aligned}
$$
Reindexing gives
\begin{equation}\label{foo.6.9.25}
\begin{aligned}
  \Psi_t^1\circ \Psi_{-T_1}^{j}(\mu) \cap \WFbh^{s-(j+1)\kk-0}w
  &=\emptyset, j \leq \lceil
\frac s{\kk}\rceil-2,\\
  \Psi_t^0\circ \Psi_{-T_1}^{j}(\mu) \cap \WFbh^{s-j\kk}w &=\emptyset, j \leq \lceil
\frac s{\kk}\rceil-1.
\end{aligned}
\end{equation}
For $j = \lceil\frac s{\kk}\rceil-1$ we crucially note that
since $s-(j+1)\kk\leq 0$, $\WFbh^{s-(j+1)\kk}w=\emptyset$ (as $w$ is
uniformly $L^2$-bounded), hence in fact
both lines of \eqref{foo.6.9.25} apply for the whole range $j \leq \lceil
\frac s{\kk}\rceil-1$.

Now Theorem~\ref{theo:diffractiveimprovements}, applied between times $t$ and $-T_1$,
implies that
\begin{equation}\label{propagation.6.12}
\Psi_{-T_1}^j(\mu) \cap \WFbh^{s-j\kk}w,\ j \leq \lceil
\frac s{\kk}\rceil-1;
\end{equation}
note that since the points we are dealing with lie over $M \backslash
Y_M$, there is no distinction between $\WFbh^r w$ and $\WF_h^r w$, which
is why we have taken the trouble to stay away from the boundary at
this step. 

By construction of $T_1$ \eqref{propagation.6.12}
implies that\footnote{If $\mu$ itself happens to be over the boundary, we back up
slightly along the (unbroken) flow to leave the boundary and apply \eqref{propagation.6.12}, and then
employ Corollary~\ref{cor:propofsings} to find that $\mu \in \WFbh^rw$
iff these nearby interior points are in $\WF_h^r w$}.
$$
\mu \notin \WFbh^s (w).
$$
Thus, the implication \eqref{sharptheorem} holds for a range of times
$t<-T_1$. As $\ep>0$ is locally uniform in $\mu \in \Tbstar M$ and the
energy surface over $t\in [-A,A]$  is
compact for $A \gg T_1$, we may take a single $\ep>0$ for which this result holds, contradicting our assumption that $T_1$ was finite, and the
proof of the first part of the theorem is complete.

The version of the theorem with the infinitely-branching flow then
follows directly.
  \end{proof}

  \subsection{Dynamics of branching flow}
  \label{sec:dynamics}
In this section we collect some purely dynamical results, establishing a
kind of weak continuity for the finitely branching flows (already used above) as well as an
approximability result.

We begin by noting that
branching may only occur finitely many times along a given orbit:
\begin{lemma}
Let $I$ be a compact interval and let $\gamma:I \to \Tbstar M$ a
compressed branching bicharacteristic with
energy $E$.  Then $\gamma$ reflects at most finitely many times.
\end{lemma}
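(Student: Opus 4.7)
The plan is to argue by contradiction. Suppose $\gamma$ reflects at infinitely many times $t_1<t_2<\cdots$ in the compact interval $I$. By compactness, after passing to a subsequence we may assume $t_j\to t_*\in I$, and by further passing to a monotone subsequence that $t_j\uparrow t_*$. Since $\pib\circ\gamma$ is continuous by Definition~\ref{def:bnb} and $\gamma$ lies in the energy surface $\Sigma_E$, the base points $\pi\gamma(t_j)\in Y$ and tangential momenta $\xi'(\gamma(t_j))$ converge, and energy conservation on each smooth segment then forces $|\xi_1(\gamma(t_j^\pm))|$ to converge as well. Denote the limit in $T^*X$ by $\mu_*$; its base point lies on $Y$.

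The first task is to show $\mu_*$ lies in the glancing set $\gl_E$. On each smooth segment $(t_j,t_{j+1})$, $\gamma$ is an integral curve of $\hamvf_p$, and the function $t\mapsto x_1(\pi\gamma(t))$ is smooth with zeros at both endpoints; Rolle's theorem provides $s_j\in(t_j,t_{j+1})$ where $2\xi_1(\gamma(s_j))=\tfrac{d}{dt}x_1(\pi\gamma(t))|_{s_j}=0$. Since $s_j\to t_*$ and $\xi_1$ is continuous along each segment, $\xi_1(\mu_*)=0$, so $\mu_*\in\gl_E$ and in particular $\beta_j:=\xi_1(\gamma(t_j^+))\to 0$.

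The key step is to invoke the finite order contact assumption \eqref{contact} at $\mu_*$: there is a minimal integer $\ell_0\geq 2$ with $\hamvf_p^{\ell_0}x_1(\mu_*)\neq 0$, a condition that persists in a neighborhood by continuity. Expanding $F(s,\mu):=x_1(\phi_s(\mu))$ in Taylor series in $s$ about $(0,\mu_*)$, with $\phi_s$ the smooth Hamilton flow on the side containing the relevant segment, I extract the following quantitative picture for $\mu$ on $\{x_1=0\}$ near $\mu_*$ with small $\xi_1(\mu)=\beta$: the next positive zero of $F(\cdot,\mu)$ occurs at a time $\tau$ of order $|\beta|^{1/(\ell_0-1)}$ uniformly in the neighborhood, and $\xi_1$ at that zero equals $-\beta+O(|\beta|^{\ell_0/(\ell_0-1)})$. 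After specular reflection, for $j$ large one has $\big||\beta_{j+1}|-|\beta_j|\big|\leq C|\beta_j|^{\ell_0/(\ell_0-1)}$ and $t_{j+1}-t_j\geq c|\beta_j|^{1/(\ell_0-1)}$ for uniform $c,C>0$, hence $\big||\beta_{j+1}|-|\beta_j|\big|\leq (C/c)|\beta_j|(t_{j+1}-t_j)$. Choosing $j_0$ large enough that $\gamma(t_j)$ stays in this neighborhood for $j\geq j_0$ and that $|\beta_{j_0}|=\sup_{j\geq j_0}|\beta_j|$ (which is achieved since $|\beta_j|\to 0$), telescoping gives
\[
|\beta_{j_0}|\leq\sum_{j\geq j_0}\big||\beta_{j+1}|-|\beta_j|\big|\leq (C/c)|\beta_{j_0}|(t_*-t_{j_0}),
\]
so $t_*-t_{j_0}\geq c/C>0$, contradicting $t_{j_0}\to t_*$.

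The main obstacle is the quantitative Taylor analysis underlying the third step: extracting the scaling $\tau\sim|\beta|^{1/(\ell_0-1)}$ and the preservation of $|\beta|$ to order $|\beta|^{\ell_0/(\ell_0-1)}$ across a short bicharacteristic segment with specular reflection. This requires uniform control on the intermediate order derivatives $\partial_s^kF(0,\mu)$ for $k<\ell_0$; by minimality of $\ell_0$ these vanish at $\mu_*$, hence are of size $O(|\beta|)$ for nearby $\mu$ with $\xi_1(\mu)=\beta$, keeping them subdominant to the main balance between $2\beta s$ and $\hamvf_p^{\ell_0}x_1(\mu_*)\,s^{\ell_0}/\ell_0!$ and thereby justifying the claimed scalings.
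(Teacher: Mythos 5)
Your overall skeleton matches the paper's: argue by contradiction, show the accumulation point of reflection times is a glancing point (you use Rolle's theorem between consecutive reflections; the paper argues directly that a hyperbolic limit would leave the nearby segments with normal momentum bounded away from zero, so they could not return to $Y$ — equivalent content), then derive a contradiction from the finite-order contact hypothesis \eqref{contact}. The paper's final step is soft: it observes that near a glancing point \eqref{contact} forces the flow to leave $Y$ on a punctured time interval, incompatible with the definition of a branching bicharacteristic (Definition~\ref{def:bnb}) at the accumulation time. You instead attempt a much more ambitious quantitative bouncing-ball/telescoping estimate.

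The gap is in the Taylor estimate that you yourself flag as the main obstacle. You assert that the intermediate derivatives $\partial_s^kF(0,\mu)=\hamvf_p^k(x_1)(\mu)$ for $1<k<\ell_0$ are $O(|\beta|)$ for $\mu$ near $\mu_*$ with $\xi_1(\mu)=\beta$, ``by minimality of $\ell_0$.'' Minimality only tells you these quantities vanish at $\mu_*$, hence are $O(|\mu-\mu_*|)$; that is a bound in the full phase-space distance, not in $|\beta|$. On the energy surface over $Y$ one has $\beta^2=r(0,x',\xi',E)$ with $r(\mu_*)=0$, so $|\beta|^2\lesssim|\mu-\mu_*|$ — but your estimate needs the reverse inequality $|\mu-\mu_*|\lesssim|\beta|$, and there is no reason for it to hold: the reflection points $\gamma(t_j^+)$ may approach $\mu_*$ nearly tangentially to the glancing set $\{r=0\}$, in which case the intermediate coefficients are of size comparable to $|\mu-\mu_*|$ and therefore much larger than $|\beta_j|$. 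In that regime the balance between $2\beta s$ and $\hamvf_p^{\ell_0}(x_1)(\mu_*)\,s^{\ell_0}/\ell_0!$ is not the dominant one, so the claimed scalings for $\tau$ and for $\bigl||\beta_{j+1}|-|\beta_j|\bigr|$ are unjustified and the telescoping collapses. (When $\ell_0=2$ there are no intermediate terms and your computation survives; the general case needs a different idea, such as the paper's direct appeal to the two alternatives in Definition~\ref{def:bnb} at the accumulation time.)
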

\begin{proof}
If the assertion fails, then the reflection times have a limit point
$t_0$.  We must have $\gamma(t_0) \in \gl_E$, since otherwise there is
a nonvanishing normal momentum and there exists $\delta>0$ such that $\pi(\gamma(t)) \notin Y$ for $t\in
(t_0-\delta, t_0+\delta)$.  But now our finite-order contact hypothesis \eqref{contact}
guarantees that
the same holds near a glancing point as well, so we have
arrived at a contradiction.
\end{proof}
We also remark that the branching flowout of a single point is always
finite:
\begin{lemma}
For all $\mu \in \dot \Sigma_b^E$ and all $t$, $\XPhi{E}_t(\mu)$ is a
finite set.
  \end{lemma}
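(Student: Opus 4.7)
The plan is to encode the branching structure as a rooted tree and apply K\"onig's lemma to deduce finiteness from the previous lemma. Construct a rooted tree $T_\mu$ as follows: the root is $\mu$, and starting from $\mu$ we flow forward under $\hamvf_p$ in $\dot\Sigma^E_b$ until either time $t$ is reached (declaring the corresponding node a leaf) or the projection reaches $Y$ transversely in $\hyp_E$, i.e.\ at a hyperbolic point; at such a branching event we insert a node and attach two children, corresponding to the reflected and transmitted continuations (by our dynamical hypothesis \eqref{contact} a glancing contact is not a branching event, since the compressed branching bicharacteristic continues uniquely through it). Recursively repeat the construction at each child. Each vertex has at most two children, so $T_\mu$ is finitely branching.

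Each maximal root-to-leaf path in $T_\mu$ is by construction the sequence of segments of a single compressed branching bicharacteristic $\gamma \colon [0,t] \to \dot\Sigma^E_b$ with $\gamma(0) = \mu$, obtained by making a definite reflect-or-transmit choice at every branching point. By the preceding lemma, applied with compact interval $I = [0,t]$, any such $\gamma$ reflects only finitely many times; equivalently, every maximal root-to-leaf path in $T_\mu$ has finite length. In particular, $T_\mu$ contains no infinite path.

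K\"onig's lemma now applies: a finitely branching rooted tree with no infinite path is itself finite. Hence $T_\mu$ has only finitely many leaves, and every element of $\XPhi{E}_t(\mu)$ is realized as $\gamma(t)$ for some such leaf (and at most two leaves share an endpoint if $t$ itself happens to coincide with a hyperbolic reflection time of some branch, which does not affect finiteness). Thus $\XPhi{E}_t(\mu)$ is finite; the case $t<0$ follows by time reversal.

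The main conceptual point—and the only place where a substantive input is used—is the preceding lemma, which rules out accumulation of reflection times along any individual compressed branching bicharacteristic; without this, one could a priori imagine an infinite K\"onig path corresponding to ever more frequent reflections, and the tree would be infinite. Once every path is known to be finite, the finite-branching structure together with K\"onig's lemma immediately converts the pointwise finiteness statement for each branch into global finiteness of the branching flowout.
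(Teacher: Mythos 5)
Your approach via K\"onig's lemma is genuinely different from the paper's, which instead runs an open-and-closed connectedness argument on the set of times for which $\XPhi{E}_t(\mu)$ is finite, using a short-time trichotomy (interior, hyperbolic, glancing) for openness and continuity of the unbroken flow for closedness. The tree-plus-K\"onig route is a clean alternative and correctly isolates the substantive ingredient, namely that reflection times cannot accumulate along a single branch.

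There is, however, a logical slip in the middle. You argue that every maximal root-to-leaf path has finite length and then say ``in particular, $T_\mu$ contains no infinite path.'' That inference does not follow: an infinite path never reaches a leaf, so it is not a root-to-leaf path, and finiteness of root-to-leaf paths by itself says nothing about infinite paths. To close the gap you must argue directly that an infinite path is impossible. An infinite path produces a nested sequence of branching bicharacteristics on $[0,t_k]$ with reflection times $t_1<t_2<\cdots$ accumulating at some $t^*\le t$; the induced limiting curve on $[0,t^*)$ extends continuously (in the compressed picture) to $t^*$, but it need not satisfy Definition~\ref{def:bnb} at $t^*$, so the preceding lemma does not literally apply to it on the compact interval $[0,t^*]$. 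One must instead re-run the argument in that lemma's \emph{proof}: the normal momenta at the accumulating reflection times must tend to zero (else the trajectory escapes $Y$ for a time bounded below), hence the limit lies in $\gl_E$, and then the finite-order contact hypothesis \eqref{contact} forbids contact with $Y$ on a punctured neighborhood of $t^*$, giving the contradiction. Your final paragraph shows you see that this is the crux; the fix is to spell out the limiting argument rather than citing the lemma for a curve that falls just outside its hypotheses.
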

  \begin{proof}
    The assertion is true for $\abs{t}$ sufficiently small, since
    \begin{enumerate}
    \item If $\pi(\mu)\notin Y$ then the flow is unique for short time.
      \item If $\mu$ is a hyperbolic point then there are two possible
        flowouts for sufficiently short time, lying over $X \backslash
        Y$ except at $t=0$.
        \item If $\mu$ a glancing point, the flowout is unique for
          short time (it leaves $Y$ owing to the assumption \eqref{contact}).
          \end{enumerate}
Thus, it suffices to show that the set $\mathcal{J}$ of $t$ for which the finiteness
holds is both open and closed.  That $\mathcal{J}$ is closed follows from
the continuity of the unbranching flow.  That $\mathcal{J}$ is open follows
by picking any $t \in \mathcal{J}$ and examining individually finitely many
points in the flowout at that time.  For each of these points there is
a time such that the flowout from that point for short time remains finite, by examination of the same trichotomy
discussed above.
\end{proof}

Note that the gliding
ray construction in Remark~\ref{remark:inscribed} above shows that we may have $\mu_j\to \mu$, even while
points in $\XPhi{E}_t(\mu_j)$ lie nowhere near $\XPhi{E}_t(\mu)$; the
essential problem here is that the relation of branching flow (without
control on the number of reflections) is not closed. This does not
occur for finitely branching flows.  We begin with a lemma
that establishes this fact for short times, starting at glancing points
(where the difficulties lie).

\begin{lemma}\label{lemma:continuity}
  Suppose $I$ is an open neighborhood of $[0,T]$ and  $\gammabase:I\to
  \Tbstar M$ is a non-branching bicharacteristic
  curve of energy $E$ with $\gamma(0) \in \gl_E$ and $\pi(\gammabase(t))\notin
  Y$ for $t \neq 0$. Let $V$ be an open neighborhood of $\gammabase(T)$, and
  $N\in\mathbb{N}$. Then there exists a neighborhood $U\subset \Tbstar
  M$ of
  $\gammabase(0)$ and $\epsilon>0$ such that if $\gammabase^N$ is any
  branching bicharacteristic of energy $E$ with at most $N$
  reflections and energy $E' \in (E-\ep, E+\ep)$ and with
  $\gammabase^N(0)\in U$, then
\[\gammabase^N(t)\in V\text{ for all }t\in (T-\epsilon,T+\epsilon).\]
\end{lemma}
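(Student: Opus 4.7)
The plan is to split $[0,T]$ into a short initial segment $[0,t_0]$, on which $\gammabase$ is close to the glancing point $\gammabase(0)$ and all reflections of nearby branching bicharacteristics may occur, and a terminal segment $[t_0,T]$, on which $\gammabase$ is bounded uniformly away from $Y$. Since $\pi(\gammabase(t))\notin Y$ for $t\in(0,T]$, we may choose $t_0>0$ small enough that $\mathrm{dist}(\gammabase(t),Y)\geq c>0$ on $[t_0,T]$. By standard continuous dependence of the smooth Hamilton flow $\hamvf_p$ on initial data and on the energy parameter, there exist a neighborhood $V'$ of $\gammabase(t_0)$ and some $\epsilon_1>0$ such that any non-reflecting bicharacteristic of energy $E'\in(E-\epsilon_1,E+\epsilon_1)$ starting in $V'$ at time $t_0$ remains at distance $\geq c/2$ from $Y$ throughout $[t_0,T+\epsilon_1]$ and lies in $V$ for every $t\in(T-\epsilon_1,T+\epsilon_1)$.

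The main work is then to produce a neighborhood $U$ of $\gammabase(0)$ and $\epsilon_2\in(0,\epsilon_1]$ such that every branching bicharacteristic $\gammabase^N$ of energy $E'\in(E-\epsilon_2,E+\epsilon_2)$ with at most $N$ reflections and $\gammabase^N(0)\in U$ has all of its (finitely many) branches in $V'$ at time $t_0$ and clear of $Y$ on $[t_0,T]$. To control the initial segment I would lift the analysis to the ambient smooth Hamilton flow $\tilde\Phi_t$ defined in a neighborhood of $\gammabase(0)$ without reference to reflections: any branch of $\gammabase^N$ is then a concatenation of $\tilde\Phi$-segments interleaved with reflections $R\colon\xi_1\mapsto-\xi_1$ applied at those times when the flow crosses $\{x_1=0\}$.

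The argument then rests on three observations. (i) The finite-order contact hypothesis \eqref{contact} persists in a neighborhood of $\gammabase(0)$: some $\hamvf_p^\ell(x_1)$ remains bounded away from zero there, so the number of zeros of $t\mapsto x_1\circ\tilde\Phi_t(z)$ on $[0,t_0]$ is uniformly bounded for $z$ near $\gammabase(0)$ and energies near $E$. (ii) At any hyperbolic crossing near $\gammabase(0)$ one has $\xi_1^2=E'-V(0,x')-\langle K(0,x')\xi',\xi'\rangle$, which tends to $0$ uniformly as $z\to\gammabase(0)$ and $E'\to E$, since the projection of $\gammabase(0)$ to $T^*Y$ lies in $\gl_E$; hence each reflection $R$ is a small perturbation of the identity on phase space. (iii) Between consecutive reflections a Gronwall estimate compares each branch to $\tilde\Phi_t(z)$, which itself stays close to $\gammabase(t)$ by standard ODE theory. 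Combining these, every branch of $\gammabase^N$ remains within any prescribed $\eta$ of $\gammabase(t)$ on $[0,t_0]$ once $U$ and $\epsilon_2$ are small enough; choosing $\eta$ so that the $\eta$-ball around $\gammabase(t_0)$ lies in $V'$ and setting $\epsilon:=\min(\epsilon_1,\epsilon_2)$ yields the lemma upon composition with the flow analysis of the first paragraph.

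The principal obstacle is the initial-segment analysis, where one must close a Gronwall-type argument through possibly several discontinuous reflections accumulating in a short interval. The finite-order contact hypothesis is essential here, supplying the uniform bound on the number of crossings that keeps the total reflection error small; without it, nearby trajectories could cling to $Y$ (as the gliding-ray approximation in Remark~\ref{remark:inscribed} illustrates when the number of reflections is unbounded) and the finitely branching flow would genuinely fail to be continuous in the manner asserted.
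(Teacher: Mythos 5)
Your argument is correct but takes a genuinely different route. The paper proves the lemma by contradiction, with induction on $N$: a hypothetical failing sequence of at-most-$N$-times-reflected trajectories has first-reflection times $T^*_j\to T^*$; $T^*>0$ is impossible because $\pi(\gammabase(T^*))\notin Y$, while $T^*=0$ forces the reflected points to converge to the glancing point $\gammabase(0)$ with vanishing normal-momentum jumps, after which the inductive hypothesis applies to the continuation. You instead run a direct quantitative estimate: split at a short $t_0$, use Gronwall on the smooth segments, and exploit that reflections near the glancing point perturb the identity by $O(|\xi_1|)$, which is small. Both approaches hinge on the same two facts (that all nearby reflections cluster at $t=0$, and that the jumps there are small), and both finish with continuity of the smooth flow away from $Y$. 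The quantitative version is more constructive and avoids sequential compactness, but the error bookkeeping is more delicate than item (iii) lets on: the normal momentum at a hyperbolic crossing a distance $\delta$ from glancing scales like $\delta^{1/2}$ (quadratic contact being the generic case), not linearly, so the deviation degrades from $\delta_0$ to roughly $\delta_0^{1/2^N}$ as you iterate through $N$ reflections. This still tends to zero as $U$ shrinks, so the argument closes, but the fractional-power iteration is the crux and should be made explicit, together with the bootstrap that the crossing points really do lie close to $\gammabase(0)$. Also, observation (i) is dispensable: reflections are already capped at $N$ by hypothesis, and transmissions or tangential contacts are smooth for the bicharacteristic flow, so they do not interfere with the Gronwall comparison between reflections regardless of how many there are.
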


\begin{proof}
We now prove the result by induction on $N$. For $N=1$, suppose for
contradiction that we have a sequence $\gammabase^1_j(t)$ of Hamiltonian
trajectories, which make at most one reflection, and a sequence of
times $T_j$ and energies $E_j$ such that $\gammabase^1_j(0)\to \gammabase(0)$, $T_j\to
T$, and $E_j \to E$ but $\gammabase^1_j(T_j)\not\in V$ for all $j$. We note that if, along
a subsequence, we find a sequence of Hamiltonian trajectories which do
not reflect at all, then along that subsequence the continuity of
ordinary Hamiltonian flow would guarantee that eventually
$\gammabase^1_j(T_j)\in V$, a contradiction. Thus, we may assume after
discarding finitely many trajectories that each trajectory reflects
exactly once. Let $T^*_j$ be the time of reflection. Then, along a
subsequence, $T^*_j$ converges to some $T^*\in[0,T]$.

We now claim that
$T^* = 0$.  If not, we note that $\gammabase^1_j$ follows ordinary
Hamiltonian flow on $[0,T^*_j]$, so by continuity of ordinary
Hamiltonian flow, we would have $\gammabase^1_j(T^*_j)\to \gammabase(T^*)$. By
assumption, $\pi(\gammabase(T^*))\not\in Y$, so since $Y$ is closed, 
$\pi(\gammabase^1_j(T^*_j))\notin Y$ for large enough $j$, contradicting
the assumption that $\gammabase^1_j$ \emph{reflected} at time
$T^*_j$. Thus, we must have $T^*_j\to 0$.

Let $\Phi(t,\mu)$ denote the
ordinary Hamiltonian flow on $\dot\Sigma^{E_j}
_b$; recall that this
is well-defined, up to choice of reflection or transmission, even over
$Y$ since the energy and the tangential momentum determine the
normal momentum up to sign.
Note that 
\[\gammabase^1_j(T_j) = \Phi(T_j-T^*_j,\gammabase^1_j(T^*_j)).\]
Since $T_j\to T$ and $T^*_j\to 0$, we have $T_j-T^*_j\to T$ and $\gammabase^1_j(T^*_j) = \Phi(T^*_j,\gammabase^1_j(0))\to \Phi(0,\gammabase(0)) = \gammabase(0)$. We are done if we can establish that $\gammabase^1_j(T^*_j+)\to \gammabase(0)$, since then
\[\gammabase^1_j(T_j)\to \Phi(T,\gammabase(0)) = \gammabase(T),\]
contradicting the assumption that $\gammabase^1_j(T_j)\not\in V$ for all $j$.

So it remains to establish that $\gammabase^1_j(T^*_j+)\to
\gammabase(0)$.  We note that in normal coordinates, if we let $\xi_1$
be the normal momentum of the trajectory viewed in the ordinary (not
b-) cotangent bundle, then the law of reflection is
$-\xi_1(\gammabase^1_j(T^*_j+))+\xi_1(\gammabase^1_j(T^*_j-))=2\xi_1(\gammabase^1_j(T^*_j-))$
(with all other components being continuous). This jump approaches
zero as $j\to \infty$ since $\gammabase^1_j(T^*_j-) \to
\gammabase^1_j(0)$, and the latter has zero normal component as it is
a glancing point.  Since $\gammabase^1_j(T^*_j-)\to \gammabase^1_j(0)$
as there are no intervening reflections, the convergence thus follows, and we have proved
the result for $N=1$.

Suppose now that the statement is proven for $N-1$.  Thus, let
$U^{(N-1)}$ and $\epsilon^{(N-1)}$ satisfy the claim for $N-1$. Again,
suppose for contradiction that we have a sequence $\gammabase^N_j(t)$ of
Hamiltonian trajectories, with at most $N$ reflections, and a sequence
of times $T_j$ and energies $E_j$ such that $\gammabase^N_j(0)\to\gammabase(0)$ and
$T_j\to T$, $E_j \to E$, but $\gammabase^N_j(T_j)\not\in V$ for all $j$. By the same
arguments above, we may assume that these trajectories have at least
one reflection. Let $T^*_j$ be the time of first reflection; then
along a subsequence, $T^*_j\to T^*\in[0,T]$. By the same arguments
above, we must have $T^* = 0$. Then, it follows that for all $j$ large
enough, we have $|T_j-T^*_j-T|<\epsilon^{(N-1)}$ and
$\gammabase^N_j(T^*_j)\in U^{(N-1)}$. It follows that, for these large
enough $j$, the paths $\tilde\gammabase^{N-1}_j(t) := \gammabase^N_j(t+T_j^*)$
are Hamiltonian trajectories with at most $N-1$ reflections satisfying
${\tilde\gammabase}^{N-1}_j(0) = \gammabase^N_j(T_j^*)\in U^{(N-1)}$;
furthermore we also have
$T_j - T_j^*\in (T-\epsilon^{(N-1)},T+\epsilon^{(N-1)})$. It follows
from the inductive hypothesis that
\[\gammabase^N_j(T_j) = \tilde\gammabase^{N-1}_j(T_j-T_j^*) \in V,\]
yielding the desired contradiction.
\end{proof}

It is helpful to rephrase this in term of the relations
  $\XPhi{E}^N$:
  \begin{corollary}\label{cor:continuity}
For every $\mu \in \dot\Sigma_b^E$, there exists $T>0$
sufficiently small such that for $N\in \NN$ and $V$ open containing $\XPhi{E}_T(\mu)$, there exist
a neighborhood $U$ of $\mu$ in $\Tbstar X$ and $\delta>0$ such that for
and $t \in (T-\delta, T+\delta),$ $E' \in (E-\delta, E+\delta)$,
$\mu' \in U\cap \dot\Sigma_b^{E'}$,
$$
\XPhi{E'}_t^N(\mu') \subset V.
$$
\end{corollary}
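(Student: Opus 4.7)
The plan is to deduce the corollary from Lemma~\ref{lemma:continuity} by performing a trichotomy on the starting point $\mu$, paralleling the one already used in the proof that $\XPhi{E}_t(\mu)$ is a finite set. First, fix $T>0$ small enough that along every compressed branching bicharacteristic in $\XPhi{E}_t(\mu)$ for $t\in[0,T]$, no reflection occurs except possibly at $t=0$; such a $T$ exists by the finite-order contact hypothesis \eqref{contact} (which forces any bicharacteristic to leave $Y$ immediately after contact) together with the finiteness of $\XPhi{E}_0(\mu)$. After this initial instant, every point of the flowout lies over $X\setminus Y$.

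For the case $\pi(\mu)\notin Y$, shrink $T$ further so that the ordinary bicharacteristic from $\mu$ stays uniformly away from $Y$ on $[0,T]$. Continuous dependence of the ordinary Hamiltonian flow on initial conditions, on time, and on the parameter $E$ (via the symbol $p$) then yields a neighborhood $U$ and $\delta>0$ such that the trajectory from $\mu'\in U\cap\dot\Sigma_b^{E'}$ also stays off $Y$ for $t\in(T-\delta,T+\delta)$ and $E'\in(E-\delta,E+\delta)$. In particular no branching is possible, so $\XPhi{E'}_t^N(\mu')$ is a single point close to $\gammabase(T)$ and hence lies in $V$. For the case $\mu\in\hyp_E$, the flowout $\XPhi{E}_T(\mu)$ is a finite collection of two points lying off $Y$ (one reflected, one transmitted). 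Since $\hyp_E$ is open and the reflection map is continuous in $(x',\xi',E)$, for $U$ small and $\mu'\in U\cap\dot\Sigma_b^{E'}$ the (at most two) branches from $\mu'$ track the two branches from $\mu$; by choice of $T$ no further reflections occur, so the finite set $\XPhi{E'}_t^N(\mu')$ remains within $V$.

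For the remaining case $\mu\in\gl_E$, we apply Lemma~\ref{lemma:continuity} directly, taking $\gammabase$ to be the unique bicharacteristic through $\mu$; by the finite-order contact hypothesis \eqref{contact}, $\gammabase$ leaves $Y$ instantaneously, so $\pi(\gammabase(t))\notin Y$ for $t\in(0,T]$ as required by the hypothesis of the lemma. The lemma then supplies $U$ and $\delta$ for every $N$ and every open $V\supset\{\gammabase(T)\}=\XPhi{E}_T(\mu)$, yielding the conclusion.

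The only genuinely subtle case is the glancing one, because nearby hyperbolic points admit reflected branches that a priori could escape any neighborhood of $\gammabase(T)$; this is exactly the content of Lemma~\ref{lemma:continuity}, whose inductive argument shows that any such reflection must occur at a time tending to $0$ and with jump in normal momentum tending to $0$ (since $\mu$ itself has vanishing normal momentum), forcing the post-reflection trajectory to stay close to the unreflected flow. Packaging the three cases together and shrinking $U$ and $\delta$ to their intersection gives the uniform statement of the corollary.
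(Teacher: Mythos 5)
Your proof is correct and follows essentially the same route as the paper's: a trichotomy on $\mu$ (away from $Y$, hyperbolic, glancing), with the glancing case delegated to Lemma~\ref{lemma:continuity} and the other two handled by continuity of the ordinary Hamilton flow after noting that for short time the flow splits at most once. Your write-up is simply more detailed than the paper's two-sentence proof, but the decomposition, the key lemma invoked, and the reasoning are the same.
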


\begin{proof}
If $\mu \in \gl_E$ the result is just a restatement of
Lemma~\ref{lemma:continuity}, while if $\mu\in \hyp$, or $\pi(\mu)
\notin Y$, the result
follows from continuity of the ordinary Hamilton flow, since for short
enough time the flow splits at most once. (This happens
instantaneously for $\mu \in \hyp$.)
      \end{proof}

      More generally, we have a continuity result for the long-time flow:
\begin{corollary}\label{cor:continuity2}
For every $\mu \in \dot\Sigma^E_b$, $T>0$,
$N\in \NN$, and $V$ open containing $\XPhi{E}^N_T(\mu)$, there exist
a neighborhood $U$ of $\mu$ and $\delta>0$ such that for 
and $t \in (T-\delta, T+\delta),$ $E' \in (E-\delta, E+\delta)$, $\mu'
\in U\cap \dot\Sigma_b^{E'}$
$$
\XPhi{E'}_t^N(\mu') \subset V.
$$
  \end{corollary}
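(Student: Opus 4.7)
The plan is to apply Corollary~\ref{cor:continuity} iteratively via a compactness argument on $[0,T]$. First I would observe that the flow tube
\[
\Gamma \;=\; \bigcup_{s\in[0,T]}\XPhi{E}^N_s(\mu) \;\subset\; \Tbstar X
\]
is a finite union of continuous compressed branching bicharacteristic arcs starting at $\mu$ with at most $N$ reflections on $[0,T]$; in particular $\Gamma$ is compact and $\XPhi{E}^N_s(\mu)$ is finite for each $s\in [0,T]$ (both facts being consequences of the finiteness of the branching flow established earlier in this section). For every $\nu \in \Gamma$, Corollary~\ref{cor:continuity} supplies a short time $\tau(\nu) > 0$, independent of the reflection budget, up to which short-time continuity of the branching flow holds from $\nu$.

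Next I would fix a partition $0 = s_0 < s_1 < \cdots < s_J = T$ with $s_{j+1}-s_j < \tau(\nu)$ for every $\nu \in \XPhi{E}^N_{s_j}(\mu)$ (possible since each such set is a finite set of points with positive $\tau$); if needed, I would perturb the $s_j$ slightly so that no point of $\XPhi{E}^N_{s_j}(\mu)$ lies over $Y$, exactly as in the proof of Corollary~\ref{cor:propofsings}. I would then chain backwards from $V_J := V$: for $j = J-1,\,J-2,\,\ldots,\,0$, and for each $\nu \in \XPhi{E}^N_{s_j}(\mu)$, apply Corollary~\ref{cor:continuity} on the subinterval of length $s_{j+1}-s_j$ with target open set $V_{j+1}$ (valid because $\XPhi{E}_{s_{j+1}-s_j}(\nu) \subset \XPhi{E}^N_{s_{j+1}}(\mu)\subset V_{j+1}$), obtaining an open neighborhood $U_\nu \ni \nu$ and tolerance $\delta_{\nu,j} > 0$. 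Setting $V_j := \bigcup_\nu U_\nu$ (a finite union) and $\delta_j := \min_\nu \delta_{\nu,j}$, the output of the chain is the neighborhood $U := V_0$ of $\mu$ and tolerance $\delta := \min_j \delta_j$ (shrunk if necessary so that $t-T$ fits inside the last subinterval's tolerance).

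The key verification is an induction on $j$: for any $\mu' \in U$, $E' \in (E-\delta, E+\delta)$, $t \in (T-\delta, T+\delta)$, and any compressed branching bicharacteristic $\gamma'$ of energy $E'$ starting at $\mu'$ with at most $N$ reflections on $[0,t]$, one shows $\gamma'(s_j) \in V_j$ for all $j$, absorbing the time perturbation $t-T$ into the last step. Indeed, if $\gamma'(s_j) \in V_j$ then $\gamma'(s_j) \in U_\nu$ for some $\nu \in \XPhi{E}^N_{s_j}(\mu)$; the segment $\gamma'|_{[s_j, s_{j+1}]}$ uses at most $N$ reflections (the total budget), so the defining property of $U_\nu$ places $\gamma'(s_{j+1}) \in V_{j+1}$. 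The main obstacle is the bookkeeping of branches at each intermediate time --- a perturbed trajectory may follow a sequence of branches genuinely different from any unperturbed one --- but taking the union over all $\nu \in \XPhi{E}^N_{s_j}(\mu)$ at each stage accommodates every possible branch of the perturbation.
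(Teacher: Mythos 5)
Your approach is a time-slicing/compactness argument: partition $[0,T]$ into subintervals short enough for Corollary~\ref{cor:continuity} to apply and chain backwards. This is genuinely different from the paper's proof, which runs an induction on the number of times the trajectories lie over $Y$ and decomposes at the \emph{first} such time $T_0$. That choice is not cosmetic: it guarantees that the segment $[0,T_0]$ carries no reflections, so that when the inductive hypothesis is invoked from $\gamma_j(T_0)$ the full reflection budget $N$ is still available.

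The crux of your chaining step is the claimed inclusion
\[
\XPhi{E}_{s_{j+1}-s_j}(\nu) \;\subset\; \XPhi{E}^N_{s_{j+1}}(\mu)
\qquad\text{for each } \nu \in \XPhi{E}^N_{s_j}(\mu),
\]
which you use to check the hypothesis of Corollary~\ref{cor:continuity} at the base point $\nu$ with target $V_{j+1}$. This inclusion is false in general. If the (minimum-reflection) branching bicharacteristic from $\mu$ to $\nu$ has already used all $N$ reflections, and the short-time flow from $\nu$ hits $Y$ transversally and reflects, the reflected continuation lies in $\XPhi{E}^{N+1}_{s_{j+1}}(\mu)\setminus\XPhi{E}^N_{s_{j+1}}(\mu)$. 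Since $V_{j+1}$ is only built as a union of neighborhoods of points of $\XPhi{E}^N_{s_{j+1}}(\mu)$, the hypothesis $V_{j+1}\supset \XPhi{E}_{s_{j+1}-s_j}(\nu)$ of Corollary~\ref{cor:continuity} (which demands the \emph{unrestricted} short-time flowout) can fail, and you cannot invoke the corollary to produce $U_\nu$. Your partition points $s_j$ are arbitrary, so nothing prevents them from falling after reflections have been consumed, which is exactly when this happens; the paper's choice of $T_0$ as the first time over $Y$ sidesteps the problem entirely.

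The desired conclusion $\gamma'(s_{j+1})\in V_{j+1}$ is actually true here: a trajectory $\gamma'$ that has exhausted its $N$ reflections by time $s_j$ can only transmit at the next crossing, and by continuity of the ordinary flow lands near the transmitted point, which is in $\XPhi{E}^N_{s_{j+1}}(\mu)$. But the stated form of Corollary~\ref{cor:continuity} is too coarse to deliver this. To make the time-slicing approach rigorous you would need either a sharpened version of Corollary~\ref{cor:continuity} with hypothesis $V\supset \XPhi{E}^N_T(\mu)$ (its proof does support this, since at a glancing point $\XPhi{E}_T=\XPhi{E}^N_T$ and at a hyperbolic point the $N$-restricted branches are exactly the ones the perturbed flow can follow), or to carry the remaining reflection budget along as part of the chained data and apply the corollary with the residual $N-m(\nu)$ rather than $N$. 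As written, the argument has a gap at this verification step.
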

  \begin{proof}
Let $\gamma_1,\dots, \gamma_m$ be all possible branching trajectories
starting at $\mu$ on $t \in [0, T]$.  We proceed by induction on the
maximum number of times $k$ that one of the $\gamma_j$ lies over $Y$ (which may
exceed the number of reflections, owing to the possibilities of
transmission and glancing).  If $k=0$ then the continuity is just continuity
of ordinary bicharacteristic flow, hence the base case is established.

More generally, if $k>0$ consider the first time $T_0$ at which
$\gamma_j$ lies over $Y$ (this would be the same for all $j$).  We
work with a single $\gamma_j$ at a time as we can take 
$U$ to be the intersection of the resulting neighborhoods for each.  If $\gamma_j(T)$ is a \emph{hyperbolic} point
(transverse reflection or transmission) then the continuity follows from the inductive
hypothesis coupled with the continuity of the unbroken flow on the
initial segment: for any neighborhood $V$ of $\gamma_j(T_0)$ in
$\Tbstar_Y X$ consisting only of hyperbolic points and all
$\ep>0$  there exists a neighborhood $U$ of $\mu$  and an energy
neighborhood so that the ordinary
bicharacteristic flow from any point in $U$ at nearby energy hits a point in $V$ at
time $t \in (T_0-\ep,T_0+\ep)$.  As this point furnishes the initial
conditions for the all the branching continuations, the result follows
by the inductive hypothesis.

If $\gamma_j(T_0)$ is a \emph{diffractive} point, i.e., a point of
tangency, then we again employ continuity of the free flow on the
initial segment to know that for any $U_1$ a neighborhood of
$\gamma_j(T_0)$ there exists $U$ a neighborhood of $\mu$ and an energy
neighborhood $E' \in (E-\ep, E+\ep)$ with
$\XPhi{E'}^N_{T_0}(U) \subset U_1$, since the unbroken flow is
continuous and encounters $Y$ for the first time at a time
$T'> T_0-\delta_0$ (with $\delta_0$ as small as we like by
shrinking $U$); the subsequent branching flow for time at most $\delta_0$ results in an
arbitrarily small error, since the flow-speed is locally bounded.
Now we use Corollary~\ref{cor:continuity} with starting point
$\gamma(T_0)$ to ensure that for all $\ep'>0$
and all neighborhoods $U_2$ of $\gamma_j(T_0+\ep')$ in
$\Tbstar X$  we can choose $U_1$ a neighborhood of $\gamma_j(T_0)$ and
$E' \in (E-\ep, E+\ep)$ an energy neighborhood so that
$\XPhi{E}_{\ep'}^N(U_1) \subset U_2$.  The result then follows, with
using the
inductive hypothesis to choose $U_2$ a sufficiently small neighborhood
of $\gamma_j(T_0+\ep')$ (and a sufficiently small energy neighborhood), since the flow from $\gamma_j(T_0+\ep')$ passes
over $Y$ fewer than $k$ times.
  \end{proof}
  
We also establish, for use in proving the Poisson relation, the
  following lemma about approximations of periodic orbits.  Fix any
  metric $d(\bullet,\bullet)$ on $\Tbstar X$.
  \begin{lemma}\label{lemma:approximateflow}
Let $N \in \NN$ and let $K\in \RR$ be compact.  Suppose that $T \notin
\lspec^N_K.$  There exists $\ep>0$ such that for any compressed branching bicharacteristic
$\gamma(t)$ with energy $E
\in K$ and at most $N$ reflections, $d(\gamma(0), \gamma(t))>\ep$ for
$t \in [T-\ep,T+\ep]$.
\end{lemma}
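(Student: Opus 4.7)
The plan is to argue by contradiction, combining compactness of the energy surface over $K$ with the upper semicontinuity of the $N$-reflection branching flow provided by Corollary~\ref{cor:continuity2}. We may assume $T>0$ (otherwise, reverse time). Suppose the conclusion fails. Then for each $j\in\NN$ there is a compressed branching bicharacteristic $\gamma_j$ with at most $N$ reflections and energy $E_j\in K$, together with a time $t_j\in[T-1/j,T+1/j]$, such that $d(\gamma_j(0),\gamma_j(t_j))<1/j$.

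First, I would extract a limit of base points. The relevant portion of $\bigcup_{E\in K}\dot\Sigma_b^E$ is compact: this is automatic when $X$ is closed, and follows from the growth of $V$ at spatial infinity (making $\{p\le \max K\}$ compact) in the scattering setting. Passing to a subsequence we may assume $E_j\to E_*\in K$ and $\gamma_j(0)\to\mu_*\in\dot\Sigma_b^{E_*}$. Since $d(\gamma_j(0),\gamma_j(t_j))\to 0$, we also have $\gamma_j(t_j)\to\mu_*$.

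Next, I would apply Corollary~\ref{cor:continuity2} at the triple $(\mu_*,T,N)$. The set $\XPhi{E_*}^N_T(\mu_*)$ is finite, by the branching flowout lemma established earlier in this section. For any open $V\supset\XPhi{E_*}^N_T(\mu_*)$ the corollary produces a neighborhood $U\ni\mu_*$ and $\delta>0$ such that for all $j$ sufficiently large, $E_j\in(E_*-\delta,E_*+\delta)$, $t_j\in(T-\delta,T+\delta)$, and $\gamma_j(0)\in U$, whence $\gamma_j(t_j)\in\XPhi{E_j}^N_{t_j}(\gamma_j(0))\subset V$. Taking $V$ to be an arbitrarily small union of balls about the points of the finite set $\XPhi{E_*}^N_T(\mu_*)$ and using $\gamma_j(t_j)\to\mu_*$ forces $\mu_*\in\overline V$ for every such $V$, and therefore $\mu_*\in\XPhi{E_*}^N_T(\mu_*)$. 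This exhibits $T$ as the period of a closed branching orbit of energy $E_*\in K$ with at most $N$ reflections, contradicting $T\notin\lspec^N_K$.

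The main subtlety has already been handled in the proof of Corollary~\ref{cor:continuity2}: the full (unfiltered) branching flow is \emph{not} upper semicontinuous, as Remark~\ref{remark:inscribed} shows via inscribed polygons accumulating on a gliding ray that is not itself a branching bicharacteristic; it is exactly the a priori cap $N$ on reflections that rules out this accumulation. Once that continuity statement is in hand, the lemma reduces to the routine compactness-and-continuity argument above.
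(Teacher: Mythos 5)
Your proof is correct, and it reorganizes the argument somewhat differently from the paper's. The paper's proof is much terser: it extracts a convergent subsequence of the branching bicharacteristics themselves---fixing (along a subsequence) the number of reflections, passing to limits of reflection times and locations, and invoking continuity of the \emph{unbroken} flow between reflections to produce a limiting compressed branching bicharacteristic that closes up, contradicting $T\notin\lspec^N_K$ directly. You instead pass to a limit only of the initial data $(\gamma_j(0),E_j)\to(\mu_*,E_*)$ (justified by compactness, which you correctly verify in both the closed and scattering settings) and then invoke Corollary~\ref{cor:continuity2} as a black box to conclude that $\gamma_j(t_j)$ must eventually lie in any prescribed neighborhood of the finite set $\XPhi{E_*}^N_T(\mu_*)$; combining this with $\gamma_j(t_j)\to\mu_*$ and closedness of a finite set forces $\mu_*\in\XPhi{E_*}^N_T(\mu_*)$. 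Both arguments rest on the same essential fact---that the $N$-reflection cap restores the upper semicontinuity that the full branching flow lacks (as you correctly flag via Remark~\ref{remark:inscribed})---but yours delegates the delicate subsequence-of-curves construction to the already-proved continuity corollary, which makes the argument more modular at the price of a slightly longer presentation. One small caveat applies to both proofs at the same level of rigor: if $\mu_*$ should lie over $Y$, the statement $\mu_*\in\XPhi{E_*}^N_T(\mu_*)$ for compressed curves does not literally pin down the normal momentum of the uncompressed periodic lift; the paper's own proof glosses over exactly the same point, so you are not holding yourself to a lower standard.
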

Note once again that this lemma fails if we do not control the number
of reflections: if we compactify the example of
Remark~\ref{remark:inscribed} onto a large torus, we can arrange that
$2\pi \notin \lspec_{1/4}$, while at time $t=2\pi$ there are
nonetheless many-times-reflected inscribed bicharacteristics that have
start- and end-points arbitrarily close to one another.  (Recall that
energy $1/4$ gives unit speed propagation in our normalization.)
\begin{proof}
If not, there is a sequence of $\gamma_j$ with $d(\gamma(0),
\gamma(t_j))\to 0$, $t_j \to T$.  Extracting a subsequence we may
assume that $\gamma_j(0)$ converges, the energies converge, and the
number, times, and locations of reflections all converge.  By
continuity of the unbroken flow, the $\gamma_j$ must then converge to
a limit that is again a compressed branching bicharacteristic of
energy $E \in K$ and length $T$, a contradiction.
  \end{proof}

\subsection{Cauchy data}
  
Finally, we turn from the spacetime description of singularities to
the \emph{Cauchy problem}, so as to understand the mapping properties
of operators of the form
$$
A U(t) B
$$
with $A,B \in \Psibh(X).$

Fix a time $t$ and let $\iota_t: X \to M$ be given by
$$
\iota_t(x,y) =(x,y,t).
$$
Then
$$
\iota_t^*:  \Tbstar M \to \Tbstar X
$$
is projection of one-forms onto their spatial components:
$$
\iota_t^*(\xi dx/x+\eta dy + \tau dt)=\xi dx/x+\eta dy.
$$
As usual we also let $\iota_t^*$ denote the pullback on functions
(i.e., restriction to a fixed time).

We now address the relationship between the spacetime (semiclassical, b-) wavefront set of the
solution to the Cauchy problem and the wavefront set of the Cauchy
data.  The imposition of the dynamical assumption \eqref{contact} here
seems purely a technical convenience in what follows; but without it, we see no simple
alternative to revisiting the basic propagation of singularities
results of \cite{GaWu:23a} in the context of evolution of  Cauchy data for
the time-dependent equation (as opposed to the spacetime approach of \cite{GaWu:21}).
\begin{proposition}\label{proposition:cauchydata}
Assume the dynamical assumption \eqref{contact} holds.
  Let $w = w(h)$ be compactly microsupported such that
        $Q w = \mathcal{O}(h^\infty)_{L^2_\loc}$. Then for each $s \in \RR \cup
        \{+\infty\}$ and $t_0 \in \RR$,
        $$
\iota_{t_0}^* \WFbh^s (w) =\WFbh^s (\iota_{t_0}^*
w),\quad \WFbh^s(w) \subset \dot \Sigma_b.
$$
  \end{proposition}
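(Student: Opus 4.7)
The statement has two components: the containment $\WFbh^s(w)\subset\dot\Sigma_b$ and the equality $\iota_{t_0}^*\WFbh^s(w)=\WFbh^s(\iota_{t_0}^*w)$. The first is an immediate application of Theorem~\ref{theorem:propofsings}, noting that compact microsupport makes $w$ $h$-tempered in $H^1_{h,\loc}$ as required. I then prove the two inclusions of the equality separately.

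The inclusion $\WFbh^s(\iota_{t_0}^*w)\subset\iota_{t_0}^*\WFbh^s(w)$ is a non-characteristic-restriction argument. By contrapositive: if no lift of $(x_0,\bar\xi_0)\in\Tbstar X$ to time $t_0$ lies in $\WFbh^s(w)$, then the segment $\{(x_0,t_0,\bar\xi_0,\bar\tau):\bar\tau\in\RR\}$ intersects the (compact) microsupport of $w$ in a compact set disjoint from $\WFbh^s(w)$, and hence admits an open neighborhood $U\subset\Tbstar M$ also disjoint. Picking $A\in\Psibh(X)$ elliptic at $(x_0,\bar\xi_0)$ and $\chi\in\CI_c(\RR)$ with $\chi(t_0)=1$, both of sufficiently small support, the product $\tilde A:=\chi(t)A$ lies in $\Psibh(M)$ with $\opWFb(\tilde A)\subset U$. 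Hence $\tilde A w=O_{L^2}(h^s)$, and restricting to $t=t_0$ yields $A\iota_{t_0}^*w=O_{L^2}(h^s)$, establishing $(x_0,\bar\xi_0)\notin\WFbh^s(\iota_{t_0}^*w)$.

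The reverse inclusion $\iota_{t_0}^*\WFbh^s(w)\subset\WFbh^s(\iota_{t_0}^*w)$ is the harder direction and genuinely uses $Qw=O(h^\infty)$. My plan is to reduce to a pure Cauchy problem: let $v$ solve $Qv=0$ with $v|_{t=t_0}=\iota_{t_0}^*w$, which exists in $L^\infty L^2$ via standard Schr\"odinger semigroup theory (only $V\in L^\infty_{\loc}$ is needed, which holds since $\kk\geq 2$). An $L^2$ energy estimate applied to $w-v$, which satisfies $Q(w-v)=O(h^\infty)$ with vanishing Cauchy data at $t_0$, gives $w-v=O_{L^2,\loc}(h^\infty)$ on a time neighborhood of $t_0$, so $\WFbh^s(w)=\WFbh^s(v)$ there. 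Suppose now $(x_0,\bar\xi_0)\notin\WFbh^s(\iota_{t_0}^*v)$ and pick $\psi\in\Psibh(X)$ elliptic at $(x_0,\bar\xi_0)$ with microsupport so small that $\psi v|_{t_0}=O_{L^2}(h^s)$. The key step is to upgrade this to a spacetime estimate: setting $\tilde\psi:=\chi(t)\psi\in\Psibh(M)$, one has $Q(\tilde\psi v)=[Q,\tilde\psi]v$ with $\tilde\psi v|_{t_0}=O_{L^2}(h^s)$; combining propagation of b-singularities (Theorem~\ref{theorem:propofsings}) with a commutator/bootstrap argument in the b-calculus, one shows $\tilde\psi v=O_{L^2,\loc}(h^s)$ on a spacetime neighborhood of $\{t=t_0\}$. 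Since the symbol of $\tilde\psi$ is independent of $\bar\tau$, the operator $\tilde\psi$ is elliptic at $(x_0,t_0,\bar\xi_0,\bar\tau)$ for every $\bar\tau\in\RR$, so this estimate rules out $(x_0,t_0,\bar\xi_0,\bar\tau)\in\WFbh^s(v)=\WFbh^s(w)$ for all $\bar\tau$, completing the proof.

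The main obstacle is controlling $\tilde\psi v$ uniformly on a short time interval. A naive Duhamel bound is not enough, because $[Q,\tilde\psi]v\in h\Psibh\cdot v$ is only $O(h)$ in $L^2$, not $O(h^s)$; one has to iterate the microlocalization, using a nested family of b-pseudodifferential cutoffs $\psi=\psi_0,\psi_1,\psi_2,\ldots$ with increasing microsupports all contained in the $\WFbh^s$-avoiding neighborhood, and bootstrap the smallness of $\psi_j v$ from that of $\psi_{j-1} v$ via backwards propagation of singularities. The dynamical hypothesis~\eqref{contact}, together with the continuity of the finitely-branching flow established in Corollary~\ref{cor:continuity2}, is essential at this step to control the behavior of the b-flow near the glancing set over $Y$ so that the iteration closes on a uniform time interval.
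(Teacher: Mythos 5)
Your overall strategy is sound and overlaps with the paper's proof in the first two parts, but the heart of the argument---the inclusion $\iota_{t_0}^*\WFbh^s(w)\subset\WFbh^s(\iota_{t_0}^*w)$---is left as an unexecuted sketch whose proposed mechanism would not close.

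For the easy direction $\WFbh^s(\iota_{t_0}^*w)\subset\iota_{t_0}^*\WFbh^s(w)$, you correctly observe that compact microsupport makes the $\bar\tau$-fiber intersect the microsupport of $w$ in a compact set, so a $\chi(t)A$-type operator (cut down by a compactly microsupported $\Upsilon$) lies in $\Psibh(M)$ with microsupport missing $\WFbh^s(w)$. But ``restricting to $t=t_0$ yields $A\iota_{t_0}^*w=O_{L^2}(h^s)$'' is not automatic: $\chi(t)Aw=O_{L^2(M)}(h^s)$ does not give a pointwise-in-$t$ bound. You need one more time derivative on $\chi(t)Aw$ (note $\partial_t = h^{-1}(h\partial_t)$ lies in the b-calculus after rescaling, so the estimate persists up to a power of $h$), and then a 1D trace inequality; this is the step the paper carries out.

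For the hard direction, the reduction to an exact solution $v$ of $Qv=0$ via Duhamel is fine but inessential. The real difficulty is proving that $\mu\notin\WFbh^s(w(t_0))$ rules out $\WFbh^s$-wavefront set in a spacetime neighborhood of the full $\bar\tau$-fiber over $\mu$, where over $Y$ the projection $\iota^*$ is badly non-injective. Your plan---bootstrap $\tilde\psi_jv=O(h^s)$ over a short time interval using nested cutoffs with growing microsupport plus ``backwards propagation of singularities''---has two problems. First, Theorem~\ref{theorem:propofsings} is a spacetime statement about $\WFbh(w)\subset\Tbstar M$; invoking it to propagate a Cauchy-data bound off the slice $\{t=t_0\}$ presupposes essentially the very link between Cauchy-data and spacetime wavefront sets that you are trying to establish, so the bootstrap as stated is circular. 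Second, with a cutoff $\tilde\psi=\chi(t)\psi$ whose spatial factor is fixed in $t$, the commutator $[Q,\tilde\psi]v$ is only $O(h)$ in $L^2$ and has no sign; enlarging the microsupport of $\psi$ does not improve this, because the largest cutoff still only gives $O(h^s)$ control at the single time $t_0$. The missing idea is a \emph{shrinking-in-time} spacetime commutant, of the form $\chi(\abs{x_1}+\delta^{-1}t)\chi(\abs{x'-y}+\delta^{-1}t)\chi(\abs{\xi'-\eta}+\delta^{-1}t)$: for $\delta$ small the $\delta^{-1}\partial_t$ term dominates and makes $\partial_t(A^*A)+\frac{i}{h}[P_h,A^*A]$ a negative sum of squares modulo lower order, so the energy $\langle A^*Aw,w\rangle$ is monotone and can only grow by $O(h)$ from its $O(h^\infty)$ initial value; iterating on nested such cutoffs yields the closedness of $\WFbh(w(t))$ in $\RR_t\times\Tbstar X$. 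This is a domain-of-dependence argument, not a bootstrapped Duhamel bound. Finally, your attribution of the dynamical assumption~\eqref{contact} is off: the paper uses~\eqref{contact} (and propagation of spacetime $\WFbh$) only to show that any $\mu'\in\WFbh(w)$ over $Y_M$ is a limit of wavefront points over $M\setminus Y_M$, reducing the over-$Y$ case to the interior case plus closedness; the commutator argument itself is local near $Y$ and does not reference the branching flow or Corollary~\ref{cor:continuity2}.
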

  \begin{proof}
We work near $Y$, since the argument in the interior of $X \backslash
Y$ is the same (but simpler).  Since we will be much concerned with
Cauchy data, we use the simplified notation $u(t_0):=
u\rvert_{t=t_0}=\iota_{t_0}^* u$.

We will take $s=+\infty$ throughout; the proof goes through verbatim
for finite values as well.

The containment $\WFbh (w) \subset \dot \Sigma_b$ is the
content of the elliptic estimates in \cite[Section 7.2]{GaWu:23a} (see
\cite[Appendix A]{GaWu:21} for modifications necessary in this
time-dependent setting).

Assume now that $\WFbh^s(w) \cap (\iota_{t_0}^*)^{-1} (\mu)\neq \emptyset$ for some $\mu
\in \Tbstar X$.  By compact microsupport, we may choose
$\Upsilon \in \Psibh(M)$ with compact microsupport and with
$\WF'(I-\Upsilon)\cap \WFbh w=\emptyset$ so that
$$
w=\Upsilon w+O_{L^2}(h^\infty),
$$
(locally in $t$).

Then for $\psi$ a cutoff function supported near $t=t_0$, and a
spatial-variables-only pseudodifferential operator $A \in
\Psibh(X)$ microsupported close to $\mu$,
$$
\psi(t) Aw =\psi(t) A\Upsilon w+O(h^\infty).
$$
Unlike $A$, which is only pseudodifferential in the spatial variables,
$\psi(t) A \Upsilon$ lies in $\Psibh(M)$ (as is easily seen by writing
$A$ as a left quantization and $\Upsilon$ as a right
quantization). Its microsupport lies in the union of
$(\iota_t^*)^{-1}(\WF' A)$ for $t \in \supp \psi(t)$, hence by the assumptions on $w$, if $A$ is
taken to have microsupport sufficiently close to $\mu$ and $\psi$
support sufficiently close to $t_0$, then the
microsupport of this operator is disjoint from $\WFbh(w)$, hence
$$
\psi(t) Aw =O(h^\infty).
$$
Now consider time derivatives of this quantity: we find that
$$
\pa_t^k \psi(t) Aw=h^{-k} (h \pa_t)^k \psi(t) Aw =O(h^\infty)
$$
as well, by the same reasoning. Thus in particular (using $k=0,1$), we find that the
restriction  of $\psi(t) Aw$ to $t=t_0$ is $O(h^\infty)$, and this
yields the desired absence of $\mu$ from $\WFbh (w(t_0))$.  
We have thus established the containment
$$
 \WFbh (\iota_t^* w) \subset\iota_t^* \WFbh (w).
$$
(Note that this did not use the fact that $w$ solves
the PDE: it is a general fact about restrictions.)

We now turn to the reverse containment: we need to know that if $\mu
\notin \WFbh  (w(t_0))$ then $(\iota_{t_0}^*)^{-1} (\mu) \cap \WFbh (w)=\emptyset.$
Over $X\backslash Y$, since $\WFbh(w)$ agrees with the usual
semiclassical wavefront set, this follows from standard results for
hyperbolic systems as in \cite[Section 23.1]{Ho:85}; see in
particular Theorem 23.1.4 and the remarks following it.  An essential
ingredient is the fact that the projection on the characteristic set, $\iota^*\rvert_{\Sigma(q)}$, is
1--1 over $X \backslash Y$.

Over $Y$,
however, we face the complication that $\iota^*$ is no longer 1--1,
since (in the product coordinates of Section~\ref{sec:prelim}) $\xi_1$ is identically zero on $\dot\Sigma_b,$ hence choosing any
value of $\tau$ with $\tau+\abs{\xi'}_k^2 +V\leq 0$ gives a point in $\dot
\Sigma_b$ with the given projection to the $\xi'$ variables.  Here is where the dynamical assumption~\eqref{contact} is
convenient.  If there is $\mu' \in \dot\Sigma_b$ with
$\iota_{t_0}^*\mu'=\mu$ and $\mu' \in \WFbh(w),$ then the propagation
of spacetime singularities results from \cite{GaWu:21} coupled with
the assumption on the dynamics mean that $\mu'$ is a limit point of
points $\mu_j'$ over $M\backslash Y_M$ that lie in $\WFbh (w)$.  Since the
desired result
holds over $X\backslash Y$, $\iota_{t(\mu_j')}^*(\mu_j')\in \WFbh
w(t(\mu_j')).$ Since the
projection $\iota^*$ is continuous, $\iota_{t(\mu_j')}^*(\mu_j') \to
\mu$.

We will thus obtain a contradiction with the hypothesis that $\mu
\notin \WFbh (w(t_0))$ if we can show that $\mu \notin
\WFbh (w(t_0))$ implies that there exists a neighborhood $U$ of $\mu$ in
$\Tbstar X$ and an $\ep>0$ such that
\begin{equation}\label{WFclosed}
\abs{t-t_0}<\ep \Longrightarrow U \cap \WFbh(w(t))=\emptyset.
\end{equation}
Another way of phrasing \eqref{WFclosed} is that $\WFbh (w(t))$ is closed as a
subset of $\RR_t \times \Tbstar X$; this is a very weak form of propagation of (time-parametrized, Cauchy
data) singularities.  To prove it, we will use some commutator
arguments that are simple analogues of the more sophisticated
constructions of \cite{GaWu:23a}; we refer the reader to Section 5 of
that paper for details.

To show \eqref{WFclosed}, first note that the compact
microsupport in phase space assumption shows that $h D_t w$ lies
in $L^2_\loc$ hence $P_h w\in L^2_\loc$ as well, which
in particular shows that $w$ is $h$-tempered with values in
$H^1_h$, locally uniformly in $t$.  Thus Proposition~5.2 of
\cite{GaWu:23a} applies, and shows that $\xi_1=0$ on $\WFbh(w(t))$ for each $t$.

Thus we may assume without loss of generality that
$$
\mu= (x_1=0, x'=y, \xi_1=0, \xi'=\eta),
$$
since there is no wavefront set except at $\xi_1=0$.
For simplicity we also translate to set $t_0=0$.
Let $A\in \Psi_h(Y)$ be a pseudodifferential operator in the $x'$
variables only, given by semiclassical quantization in the $x'$ variables of the symbol
$$
\big(\chi(\abs{x_1}+\delta^{-1} t) \chi(\abs{x'-y}+\delta^{-1} t)\chi(\abs{\xi'-\eta}+\delta^{-1} t)\big)^{1/2},
$$
with $\chi(s)$ a cutoff equal to $1$ for $s<\delta_\chi/2$, and
supported in $s<\delta_\chi$, having smooth square root and with
$(-\chi')^{1/2}$ also smooth.
Such an operator is not (quite) in the semiclassical b-calculus, as
its symbol, which is independent of $\xi_1$, is
therefore of $S(1)$ type rather than
Kohn--Nirenberg, but it turns out we can treat it for practical
purposes as if it were in the calculus: really it is a smooth family
of tangential pseudodifferential operators.  See Section~7.3 of
\cite{GaWu:23a} for details.

If $\delta_\chi$ is taken sufficiently small, $Aw(0) =O(h^\infty)$ since
$w$ has no wavefront set on its microsupport, viewed as a b-operator
(see Lemma~7.8 of \cite{GaWu:23a}).  Writing
$$
P_h=(hD_{x_1})^*(hD_{x_1}) -h^2 \Lap_Y+V(x),
$$ 
where the adjoint indicates use of the metric inner product, we further compute
$$\begin{aligned}
\pa_t\ang{ A^* A w(t),w(t)} &= \ang{(\pa_t (A^*A) + (i/h)[P_h,A^*A])w,w}\\ &=\ang{Bw,w}+\ang{Cw,w} + h\ang{Rw,w}
\end{aligned}
$$
where the operators on the RHS have the following properties (cf.\ Lemma 3.7 of \cite{GaWu:23a}):
\begin{itemize}\item
$\displaystyle
\sigma(B) =
\chi(\abs{x_1}+\delta^{-1} t)(\pa_t +\hamvf_Y)\chi(\abs{x'-y}+\delta^{-1} t)\chi(\abs{\xi'-\eta}+\delta^{-1} t),
$
with $$\hamvf_Y
= 2 \xi'_jk^{j\ell} \pa_{\xi'_\ell} -\frac{\pa k^{j\ell}}{\pa
  x_m'}\xi_j\xi_\ell \pa_{\xi'_m}-\frac{\pa V}{\pa {x'_j}}  \pa_{\xi'_j}
$$ the Hamilton vector field in $(x',\xi')$ of
$\abs{\xi'}^2_k+V(x_1,x')$,
\item
$\displaystyle
C=\big((i/h)[(hD_{x_1})^*(hD_{x_1}), \chi(\abs{x_1}+\delta^{-1}
t)]+\pa_t\chi(\abs{x_1}+\delta^{-1} t) )A'
$
with $A'$ a tangential operator with symbol
$\chi(\abs{x'-y}+\delta^{-1} t)\chi(\abs{\xi'-\eta}+\delta^{-1} t)$.

\item  the $C$ term is supported away from $Y$ for $\abs{t}$ small, and can be viewed as an
ordinary (i.e., not b-) semiclassical pseudodifferential operator (again with an
$S(1)$ symbol, owing to the global support in $\xi_1$).  Better yet,
we can consider it a sum of compositions of tangential pseudodifferential
and normal differential operators.
\end{itemize}

For $\delta$ sufficiently small, since $\chi' \leq 0$ we can
arrange that $\sigma(B)$ is negative, and indeed minus a sum of
squares, so $B=-\sum G_j^*G_j+hR'$;
for simplicity we lump the $hR'$ remainder into the $hR$ term in what follows.  We can also
arrange that the symbol of $C$ be strictly negative on a neighborhood
of $\xi_1=0$ (and hence on $\WFbh(w(t))$): on $\abs{\xi_1}<1$ (say), taking $\delta>0$ sufficiently
small makes the symbol of this operator positive, with the
$\delta^{-1}$ term outweighing $\xi_1/x_1$ terms from the commutator
with $(hD_{x_1})^*(hD_{x_1})$; recall that $\abs{x_1}$ is bounded
below on the support for $\abs{t}$ small.  Hence we can split
the symbol of $C$ into $-f^2 +e$ where $e$ is supported on
$\abs{\xi_1}>1/2,$ and $\abs{x_1}<\delta_\chi$.  Thus on the operator side,
$$
C=-F^*F+E+hR''
$$
with $E w=O(h^\infty)$ uniformly in (small) $t$, since it is an
operator\footnote{Cf.\ Lemma~7.8 of
\cite{GaWu:23a} for the mild subtleties involved in employing
tangential pseudodifferential operators here.} supported away
from $Y$, with $\WF' E$
disjoint from $\WF_h w(t)$ for all $t$. 
 (Again, we will lump
 the $hR''$ remainder term with $hR$ below.)

Assembling the above computations yields
$$
\pa_t\ang{A^*A w,w}\leq Ch
$$
for small time; since the initial data at $t=0$ is $O(h^\infty)$,
this yields
$$
\ang{A^* Aw,w}=O(h)
$$
for $t \in [0,\ep)$ for some $\ep>0$.  Now as usual in positive
commutator arguments
we work iteratively,
shrinking the cutoffs and also $\ep$, while keeping them larger
than some fixed open set in phase space and some $\ep_0$ respectively, to
show that in fact for some $\tilde{A}$ of the same form as $A$ above,
$$
\ang{\tilde{A}^* \tilde{A} w,w}=O(h^\infty),\ t \in [0, \ep_0).
$$
This shows that\footnote{Once again, technically $\tilde A$ is not in the
  calculus, again owing to the global support of its symbol in
  $\xi_1$; however, composition with the quantization of an honest
 semiclassical b-pseudodifferential operator that is elliptic at $\mu$
 does give an operator in $\Psibh(X)$, elliptic at $\mu$, which
 uniformly maps $w(t)$ to be $O(h^\infty)$ for $\abs{t}$ small.  Again
see the proof of Lemma~7.8 of \cite{GaWu:23a} for similar
manipulations that prove the deeper converse result that lack of
$\WFbh (w(t))$ gives boundedness under tangential elements of $\Psi_h(X)$.}
  the elliptic set of $\tilde{A}$ is disjoint from $\WFbh
(w(t))$ for sufficiently small positive $t$, hence we have established
\eqref{WFclosed} for $t \in [0, \ep)$.  A similar, time-reversed, argument then takes
care of $t \in (-\ep, 0]$.  This establishes \eqref{WFclosed}, which
then give the desired contradiction with our assumption that $\mu' \in
\WFbh (w)$.
\end{proof}

In order to use the results above, relating Cauchy data to solution
wavefront sets for compactly microsupported solutions in $H^1_h$,
we will need to show that our spectral cutoff produces such solutions
from $L^2$ data, and characterize its mapping properties.  In
particular, we need to know that it is microlocal in the the context
of the semiclassical b-calculus, despite not globally lying in that
calculus.  (That it is \emph{locally} a pseudodifferential operator away from
$Y$ is also part of the following result.)
\begin{proposition}\label{prop:chi}
  Let $\chi \in \mathcal{C}_c^\infty(\RR).$  For any $f \in L^2(X),$
\begin{equation}\label{wfcontainment}
\chi(P_h) f \in H^1_h(X) \quad \text{and} \quad \WFbh(\chi(P_h) f) \subset \WFbh(f)\cap \bigcup_{E \in \supp \chi}\dot\Sigma^E_b.
\end{equation}
Moreover, for $\psi_1,\psi_2\in \mathcal{C}_c^\infty$ supported away from
  $Y$, $\psi_1 \chi(P_h) \psi_2 \in \Psi_h(X)$.
  \end{proposition}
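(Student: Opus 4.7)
My plan is to reduce all three assertions to resolvent estimates via the Helffer--Sj\"ostrand functional calculus,
\begin{equation*}
\chi(P_h) = -\frac{1}{\pi}\int_{\mathbb{C}} \bar\pa\tilde\chi(z)\,(z - P_h)^{-1}\, dL(z),
\end{equation*}
where $\tilde\chi \in \CI_c(\mathbb{C})$ is an almost analytic extension of $\chi$ supported in a thin complex neighborhood of $\supp\chi$ and satisfies $|\bar\pa\tilde\chi(z)| = O_N(|\mathrm{Im}\,z|^N)$ for every $N$. The $H^1_h$ regularity is then immediate: $\chi(P_h)$ and $P_h\chi(P_h) = (t\chi)(P_h)$ are both bounded on $L^2$ by functional calculus, and since under our standing hypotheses $V$ is bounded below, the identity $\langle (P_h + C) u, u\rangle = \|h\,du\|_g^2 + \int (V + C)|u|^2$ with $V + C \ge 0$ upgrades the bounds $\|\chi(P_h) f\|_{L^2}, \|P_h\chi(P_h) f\|_{L^2} \lesssim \|f\|_{L^2}$ at $u = \chi(P_h) f$ to the desired $H^1_h$ estimate.

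For the b-wavefront containment, given $\mu \in \Tbstar X$ avoiding $\WFbh(f) \cap \bigcup_{E \in \supp\chi}\dot\Sigma^E_b$, I want $A\chi(P_h) f = O_{L^2}(h^\infty)\|f\|_{L^2}$ for some $A \in \Psibh(X)$ b-elliptic at $\mu$, and I would split into two cases. If $\mu \notin \bigcup_{E \in \supp\chi}\dot\Sigma^E_b$, then for every $z \in \supp\tilde\chi$ the operator $P_h - z$ is b-elliptic at $\mu$; the b-elliptic estimates of \cite[\S 7.2]{GaWu:23a}, iterated to produce gain of arbitrary order in $h$, bound $A(z-P_h)^{-1}$ microlocally with only polynomial loss in $|\mathrm{Im}\,z|^{-1}$, and this loss is absorbed by the rapid vanishing of $\bar\pa\tilde\chi$ on $\RR$, yielding $O(h^\infty)$ upon integration. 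If instead $\mu \in \bigcup_E\dot\Sigma^E_b$ but $\mu \notin \WFbh(f)$, pick $A \in \Psibh(X)$ b-elliptic at $\mu$ with $\opWFb(A) \cap \WFbh(f) = \emptyset$ so that $Af = O(h^\infty)$, and decompose
\begin{equation*}
A \chi(P_h) f = \chi(P_h) Af + [A, \chi(P_h)] f.
\end{equation*}
The first term is immediately $O(h^\infty)$ by $L^2$-boundedness of $\chi(P_h)$, and for the commutator, Helffer--Sj\"ostrand combined with the identity $[P_h, A] = h\,B$ (with $B$ b-microlocalized in $\opWFb(A)$, hence $Bf = O(h^\infty)$) gives
\begin{equation*}
[A,\chi(P_h)] f = \frac{h}{\pi}\int \bar\pa\tilde\chi(z)\,(z - P_h)^{-1} B (z - P_h)^{-1} f\, dL(z);
\end{equation*}
iterating the commutation $B(z-P_h)^{-1} = (z-P_h)^{-1} B + h(z-P_h)^{-1}[B,P_h](z-P_h)^{-1}/h\cdots$ produces a further $h$-gain at each step and, in each terminated branch, an occurrence of $B_kf = O(h^\infty)$, yielding $O(h^\infty)$ after finitely many iterations.

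For the pseudodifferential assertion, $V$ is smooth on $\supp\psi_1 \cup \supp\psi_2$ and $P_h$ reduces there to a standard semiclassical differential operator, so the usual parametrix construction yields $\psi_1(z - P_h)^{-1}\psi_2 \in \Psih(X)$ with polynomial bounds in $|\mathrm{Im}\,z|^{-1}$, and integrating against $\bar\pa\tilde\chi$ delivers $\psi_1\chi(P_h)\psi_2 \in \Psih(X)$. The main obstacle is the commutator bookkeeping in the second wavefront-set case: because $(hD_{x_1})^2$ is \emph{not} a b-operator, the identity $[P_h, A] = h \cdot (\text{b-operator microsupported in } \opWFb(A))$ does not follow directly from b-symbol calculus near $Y$, and must instead be established by the positive-commutator and tangential-pseudodifferential manipulations developed in \cite[\S 3.5, 5, 7.3]{GaWu:23a}---the same machinery that drives Theorem~\ref{theorem:propofsings}.
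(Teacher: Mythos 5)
Your overall strategy---reduce everything to resolvent estimates via Helffer--Sj\"ostrand and iterate commutators---is the same as the paper's. The $H^1_h$ bound and the treatment of the off-characteristic-set inclusion via $\psi_1\chi(P_h)\psi_2 \in \Psi_h$ both match. Where you diverge genuinely is in the proof of $\WFbh(\chi(P_h)f)\subset\bigcup_E\dot\Sigma^E_b$: you argue directly via b-ellipticity of $P_h - z$ and the functional calculus (your ``case 1''), whereas the paper routes this inclusion through the spacetime elliptic estimate combined with Proposition~\ref{proposition:cauchydata}, which converts spacetime wavefront information into statements about Cauchy data. Both routes work---the paper's own remark after the proof explicitly says that the characteristic-set containment ``can also be proved directly using the functional calculus''---so this is a legitimate alternative. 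For the containment in $\WFbh(f)$, your decomposition $A\chi(P_h)f = \chi(P_h)Af + [A,\chi(P_h)]f$ with iterated commutation of $B$ through the right resolvent is a rearrangement of the same iterative scheme the paper runs on $(P_h - z)B_1 u = B_1 f + [P_h, B_1]u$; you expand commutators where the paper expands microsupports, and both accumulate $h^k$ gain against $|\Im z|^{-k}$ loss, absorbed by the rapid vanishing of $\bar\pa\tilde\chi$.

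The one place your sketch is off is the closing caveat. You write that the identity $[P_h, A] = h\cdot(\text{b-operator microsupported in }\opWFb(A))$ ``does not follow directly from b-symbol calculus'' and ``must instead be established'' by other means. In fact that identity is simply false near $Y$: since $(hD_{x_1})^2$ is not a b-operator, the commutator has the genuinely mixed form
\begin{equation*}
h^{-1}[(hD_{x_1})^*(hD_{x_1}), A] = (hD_{x_1})^*C_1(hD_{x_1}) + C_2(hD_{x_1}) + C_3,
\end{equation*}
with $C_1\in\Psibh^{-1}$, $C_2\in\Psibh^0$, $C_3\in\Psibh^1$, all microsupported in $\WF'(A)$---this is the paper's \eqref{diffbcommutator}. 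The upshot is not that more work recovers the clean b-operator form, but that the iteration cannot be run purely at the level of $L^2$-bounded b-operators: the $(hD_{x_1})$ factors force you to carry $H^1_h$ norms through the argument (cf.\ the paper's \eqref{H1est} and \eqref{foo.5.13}). Your scheme still works once you replace ``$[P_h,A]=hB$'' by the correct mixed decomposition and upgrade your pairings to $H^1_h$; you just shouldn't expect the naive statement to become true with more effort.
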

  \begin{proof}
The fact that $\chi(P_h) :L^2 \to H^1_h$ simply follows from the fact
that $P_h \chi(P_h)$ is bounded on $L^2(X)$ by the functional calculus,
hence
$$
\ang{P_h \chi(P_h) f , \chi(P_h) f} \lesssim \norm{f}^2.
$$
Thus for some $C>0$, the $H^1_h$ norm of $\chi(P_h) f$ is controlled by $\norm{\chi(P_h)
  f} +C\norm{f}$, which is itself
bounded by $\norm{f}$ by the functional calculus.

We now turn to the
inclusion.
$$
\WFbh(\chi(P_h) f) \subset \WFbh(f).
$$
To evaluate the mapping properties on wavefront set, we proceed using
the Helffer-Sj\"ostrand functional calculus---see \cite{HeSj:89} and
\cite{DiSj:99} for a pedagogical treatment.  Our strategy of proof is
to begin by
establishing estimates for the resolvent applied to $f$ and then
recall that
\begin{equation}\label{HS}
\chi(P_h) =  \frac 1{2\pi i} \int \bar{\pa} \tilde \chi(z)
(z-P_h)^{-1} dz \wedge d\bar{z},
\end{equation}
where $\tilde \chi$ is an almost-analytic extension of $\chi$, so that
$\tilde \chi$ agrees with $\chi$ on the real axis, and $\bar{\pa}
\tilde \chi(z)=O(\abs{\Im z}^\infty)$.  Recall that we may further
take $\tilde \chi$ to be compactly supported.

Thus, we begin with resolvent estimates: suppose
$$
(P_h-z)u=f.
$$
By self-adjointness of $P_h$, pairing with $u$ and taking imaginary part
gives as usual the $L^2$ estimate off
the spectrum
$$
\norm{u}\leq \frac{1}{\abs{\Im z}} \norm{f}.
$$
Moreover, examination of the real part of the pairing gives
$$
\norm{hD_{x_1} u}^2 + \norm{h\nabla_{x'} u}^2 \lesssim \norm{u}^2 +\norm{f}^2
$$
hence
\begin{equation}
\norm{hD_{x_1} u}^2 + \norm{h\nabla_{x'} u}^2 \lesssim\big(1+ \abs{\Im z}^{-2}\big)
\norm{f}^2,
\end{equation}
i.e., we have an $H^1_h$ estimate as well: on $\abs{\Im z}<1$,
\begin{equation}\label{H1est}
\norm{(P_h-z)^{-1} f}_{H^1_h}\lesssim \abs{\Im z}^{-1}
\norm{f}.
  \end{equation}

Now pick any $B_1 \in \Psibh^0(X)$ and compute
\begin{equation}\label{ellipticcommutator}
(P_h-z) B_1 u = B_1 f+[P_h,B_1] u.
\end{equation}
The commutator term above is \emph{not} in the pseudodifferential
calculus $\Psibh(X)$, as $P_h$ contains $hD_{x_1}$ terms that don't lie in the
calculus, but Section 3.3 of \cite{GaWu:23a} shows that these
commutators can still be written in terms of $hD_{x_1}$ and
b-pseudodifferential operators with unchanged microsupport.  In
particular the difficult part of the commutator can be written in the
form $[(hD_{x_1})^* (hD_{x_1}), B_1]$, where the adjoint is with
respect to the metric inner product.  We compute (cf.\ Lemma 3.6 and Lemma 3.7 of \cite{GaWu:23a}))
\begin{equation}\label{diffbcommutator}
h^{-1} [(hD_{x_1})^* (hD_{x_1}), B_1]=(hD_{x_1})^* C_1
(hD_{x_1})+C_2 (hD_{x_1})+C_3
\end{equation}
where $C_1 \in \Psibh^{-1}(X)$,  $C_2 \in \Psibh^{0}(X)$,  $C_3 \in
\Psibh^{1}(X)$, all with microsupport contained in $\WF'(A)$.  (Here
we have crudely lumped together various terms whose principal symbols we can in fact compute
more explicitly as in \cite{GaWu:23a}).
Thus (cf.\ \cite[Lemma 3.9]{GaWu:23a}),
\begin{equation}\label{foo.5.13}
\abs{\ang{[P_h,B_1] u,B_1 u}} \leq \big( h\norm{B_2
  u}_{H^1_h}+O(h^\infty) \norm{u}_{H^1_h}\big) \norm{B_1 u}_{H^1_h},
\end{equation}
where $B_2\in \Psibh^0(X)$ has slightly expanded microsupport, so that
it is elliptic on $\WF' B_1$.  (See Section 5.1 of \cite{GaWu:23a} for
similar computations.)  Now pairing
\eqref{ellipticcommutator} with $B_1 u$ and taking imaginary resp.\
real parts yields
$$
\abs{\Im z} \norm{B_1 u}^2\lesssim \text{RHS},\quad 
\norm{h \nabla B_1 u}^2 \leq C \norm{B_1 u}^2+\text{RHS}
$$
where in both cases, RHS denotes
$$
\abs{\ang{B_1f+[P_h,B_1] u, B_1 u}}.
$$
Putting these facts together gives, for $\abs{\Im z}<1$,
$$
\norm{B_1 u}^2_{H^1_h} \lesssim \abs{\Im z}^{-1} \text{RHS}.
$$
Meanwhile, using \eqref{foo.5.13} and Cauchy--Schwarz yields
$$
\text{RHS}  \leq \big(\norm{B_1 f} + h\norm{B_2 u}_{H^1_h}+O(h^\infty)\norm{u}_{H^1_h}\big)\norm{B_1 u},
$$
so that we now obtain on $\abs{\Im z}<1$
$$
\norm{B_1 u}_{H^1_h} \lesssim \abs{\Im z}^{-1}  \big(\norm{B_1 f} + h\norm{B_2 u}_{H^1_h} +O(h^\infty) \norm{u}_{H^1_h}\big).
$$
Iterating this estimate, using operators with slightly expanding
microsupports (and estimating all the $f$ terms with the single
term $B_k f$ via an elliptic estimate) yields
\begin{multline}
\norm{B_1 u}_{H^1_h} \lesssim \abs{\Im z}^{-1} \big( 1+h\abs{\Im z}^{-1}+\dots + h^{k-1} \abs{\Im z}^{-k+1}\big)
\norm{B_kf}\\ + h^k\abs{\Im z}^{-k} \norm{B_{k+1} u}_{H^1_h}+
O(h^\infty) \abs{\Im z}^{-k} \norm{u}_{H^1_h}
\end{multline}
Finally, we can terminate the iteration by using \eqref{H1est} to get
$$
\norm{B_{k+1} u}_{H^1_h} \lesssim \norm{u}_{H^1_h} \lesssim \abs{\Im z}^{-1}
\norm{f};
$$
hence (controlling the $O(h^\infty)$ term this way as well),
$$
\norm{B_1 u}_{H^1_h} \lesssim \abs{\Im z}^{-1}\big( 1+h \abs{\Im z}^{-1}+\dots + h^{k-1} \abs{\Im z}^{-k+1}\big)
\norm{B_k f} + h^k \abs{\Im z}^{-k-1} \norm{f}.
$$
Consequently, taking $\tilde \chi$ supported in $\abs{\Im z}<1$ we
may then estimate, for $z \in \supp \tilde \chi \backslash \RR$,
$$
\norm{B_1 (P_h-z)^{-1} f}_{H^1_h} \lesssim \abs{\Im z}^{-k}
\norm{B_kf} + h^k\abs{\Im z}^{-(k+1)} \norm{f}.
$$
Inserting this estimate into \eqref{HS} yields a convergent integral
on the RHS owing to the rapid vanishing of $\bar \pa \tilde \chi$ at
$\RR$, hence for any $k$,
$$
\norm{B_1\chi(P_h) f}_{H^1_h} \lesssim \norm{B_k f} + O(h^k) \norm{f}.
$$
This implies the desired mapping property: if $\alpha \notin \WFbh (f)$
we choose the microlocalizers as above so that $B_1$ is elliptic at
$\alpha$ and $\WF' B_k$ is contained in a neighborhood of $\alpha$
disjoint from $\WFbh (f)$.  Then $B_k f=O(h^\infty)$ and the estimate
yields $B_1 \chi(P_h) u=O_{H^1_h}(h^k)$ (for any $k$).

The inclusion
$$
\WFbh(\chi(P_h) f) \subset \bigcup_{E \in \supp \chi}\dot\Sigma^E_b(p)
$$
follows from the combination of the corresponding elliptic estimate in spacetime,
\cite[Proposition 5.2]{GaWu:23a} as revisited in \cite[Appendix
A]{GaWu:21}, together with Proposition~\ref{proposition:cauchydata}
above, which allows us to convert this to a result about Cauchy data.

Finally, the assertion that $f(P_h)$ is a pseuoifferential operator away
from its singularities at $Y$ follows from the results of Section~4 of \cite{Sj:97}.
\end{proof}
  \begin{remark}
The inclusion $\WF\chi(P_h) \subset \bigcup \dot\Sigma^E_b$
can also be proved directly using the the functional calculus.  For
instance, to
show that the wavefront set is contained in $\xi_1=0$ over $Y$,  given $A$
microsupported at $\xi_1\neq 0$ for $x_1$ small, we may factor $P_h-z$
out of $A$ mod $O(h^\infty)$ uniformly in $z \in \supp \tilde \chi$, and then integrate
by parts to move $\bar \partial$ in \eqref{HS} to land on what
is then a holomorphic operator family.
    \end{remark}

    We record for later use the fact that we may regard the cutoff
    $\chi(P_h)$ as being in the time variable instead:
\begin{lemma}\label{lemma:timeorspace}
  For $\chi \in \CcI(\RR)$,
  $$
  \chi(-hD_t) U(t) = \chi(P_h) U(t).
 $$
\end{lemma}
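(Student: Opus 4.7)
The plan is to show both sides agree via the Fourier representation of $\chi$, using the fact that $U(t) = e^{-itP_h/h}$ intertwines the two operators in the most natural way. Since $hD_t U(t) = -P_h U(t)$, on the image of $t \mapsto U(t)$ the operators $-hD_t$ and $P_h$ coincide; the Lemma is the extension of this identity from polynomials to $\CcI$ functions.

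First I would recall that, as $\chi \in \CcI(\RR) \subset \Scal(\RR)$, its Fourier transform (with Hörmander's convention) satisfies $\hat\chi \in \Scal(\RR)$ and
$$
\chi(s) = \frac{1}{2\pi}\int \hat\chi(\tau) e^{i\tau s}\,d\tau.
$$
Applying this to the self-adjoint operator $P_h$ on $L^2(X)$ via the functional calculus gives
$$
\chi(P_h) = \frac{1}{2\pi}\int \hat\chi(\tau) e^{i\tau P_h}\,d\tau,
$$
with convergence in the strong operator topology (this is standard: the RHS and LHS agree on every spectral subspace by the scalar Fourier inversion formula, and $\hat\chi$ is integrable).

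Next I would treat $\chi(-hD_t)$ acting on the $t$-dependent $\mathcal{B}(L^2(X))$-valued function $t\mapsto U(t)$, again via Fourier synthesis:
$$
\chi(-hD_t) F(t) = \frac{1}{2\pi}\int \hat\chi(\tau) \bigl(e^{-i\tau hD_t} F\bigr)(t)\,d\tau = \frac{1}{2\pi}\int \hat\chi(\tau) F(t-\tau h)\,d\tau,
$$
where I use that $e^{-i\tau hD_t} = e^{-\tau h\pa_t}$ is translation by $\tau h$ in $t$. Setting $F(t) = U(t)$ and using the group law $U(t - \tau h) = U(t)\, U(-\tau h) = U(t)\, e^{i\tau P_h}$, I obtain
$$
\chi(-hD_t) U(t) = U(t)\cdot \frac{1}{2\pi}\int \hat\chi(\tau) e^{i\tau P_h}\,d\tau = U(t)\chi(P_h) = \chi(P_h) U(t),
$$
the last equality by commutativity of the functional calculus of $P_h$ with $U(t)$.

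The only mildly delicate point is justifying the interchange of the $\tau$-integral with the operator action and with the evaluation at a fixed $t$. This is harmless since $\hat\chi \in \Scal(\RR)$, $\|U(s)\|_{L^2\to L^2}=1$ uniformly in $s$, and the map $s\mapsto U(s)$ is strongly continuous; thus both Bochner integrals $\int \hat\chi(\tau)U(t-\tau h)\,d\tau$ and $\int \hat\chi(\tau)e^{i\tau P_h}\,d\tau$ converge absolutely in the strong operator topology, and all manipulations above are legitimate when applied to an arbitrary $f \in L^2(X)$. This is the main (very mild) obstacle; the rest is formal.
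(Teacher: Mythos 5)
Your argument is correct and is essentially the same as the paper's: both proofs express $\chi$ by Fourier synthesis (yours via the standard Fourier inversion formula applied to $\hat\chi$, the paper's via the semiclassical Fourier transform written as a convolution with $\F_h^{-1}\chi(-\bullet)$ --- the two differ only by the change of variable $t' = \tau h$), then use the group law $U(t-t') = U(t)e^{it'P_h/h}$ to pull $U(t)$ out of the integral, and finally recognize the remaining integral as $\chi(P_h)$ by the functional calculus. Your closing remarks about strong-operator-topology Bochner integrability and the uniform unitarity of $U(\bullet)$ make precise the justification that the paper leaves implicit.
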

\begin{proof}
Since $U(t)$ is unitary, for $f \in L^2$ we can view $U(t)f \in
L^\infty(\RR_t; L^2)$, and apply the distributional semiclassical
Fourier transform in $t$
to write (using the functional calculus)
\begin{equation*}
\begin{split}
\chi(-hD_t) U(t) f &= (\F_h^{-1} \chi(-\bullet))* (U(\bullet) f)\\
&=
\int (\F_h^{-1} \chi(-\bullet))(t') (U(t-t') f)\, dt'\\
&=
\int (\F_h^{-1} \chi(-\bullet))(t') e^{it'P_h/h}e^{-itP_h/h} f)\, dt'\\
&=
e^{-itP_h/h} \int (\F_h^{-1} \chi(-\bullet))(t') e^{it'P_h/h} f)\, dt'\\
&= e^{-itP_h/h} \chi(P_h) f. \qedhere
\end{split}
\end{equation*}
\end{proof}

We are now finally in a position to describe the mapping properties of the
finite-energy propagator with respect to b-microlocalizers.
\begin{proposition}
\label{prop:wf-relation}
Let $\chi \in \mathcal{C}_c^\infty(\RR)$ and let $A,B \in \Psibh(X)$
have compact microsupport.
Let
$$
S(t) =A \chi(P_h) U(t) B: L^2 \to L^2.
$$
Suppose that for each $E \in \supp \chi$, 
$$
\WF'A \cap \XPhi{E}(\WF'(B))=\emptyset.
$$
Then
$$
S(t)=O_{L^2}(h^\infty).
$$
Suppose that for each $E \in \supp \chi$,
$$
\WF'A \cap \XPhi{E}^N(\WF'(B))=\emptyset.
$$
Then
$$
S(t)=O_{L^2}(h^{(N+1)\kk-0}).
$$
\end{proposition}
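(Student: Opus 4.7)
\emph{Proof plan.} The strategy is to recast $S(t)f$ as $A w(t)$ for $w(t) := \chi(P_h) U(t) B f$ and bound it via the spacetime propagation of singularities results already established. Since $\chi(P_h)$ commutes with both $P_h$ and $hD_t$, the function $w$ solves $Qw=0$ exactly; by Proposition~\ref{prop:chi} together with the unitarity of $U(t)$, $w$ lies in $H^1_h$ uniformly with $\|w(t)\|_{H^1_h} \lesssim \|f\|_{L^2}$, is compactly microsupported (since $B$ is, and $\chi(P_h)$ preserves this while confining into $\bigcup_{E\in\supp\chi} \dot\Sigma^E_b$), and its Cauchy data satisfies
$$
\WFbh(w(0)) \subset \WF'(B) \cap \bigcup_{E \in \supp\chi} \dot\Sigma^E_b.
$$
Proposition~\ref{proposition:cauchydata} equates time-slice and spacetime b-wavefront set, and Corollary~\ref{cor:propofsings} propagates the latter along compressed branching spacetime bicharacteristics. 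Since a spacetime branching bicharacteristic with $\tau=-E$ projects at each instant to a point of $\dot\Sigma^E_b$ evolving under the spatial branching flow $\XPhi{E}_s$, the combination gives
$$
\WFbh(w(t)) \subset \bigcup_{E \in \supp\chi} \XPhi{E}_t\bigl(\WF'(B) \cap \dot\Sigma^E_b\bigr) \subset \bigcup_{E\in\supp\chi} \XPhi{E}(\WF'(B)).
$$
Under the first hypothesis this is disjoint from $\WF'(A)$, and the quantitative positive-commutator estimate underlying Corollary~\ref{cor:propofsings} then delivers $\|Aw(t)\|_{L^2} = O(h^\infty)\|f\|_{L^2}$, as desired.

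For the second assertion, the finite-reflection refinement of Corollary~\ref{cor:propofsings} says that for any $s < (N+1)\kk$, a point is outside $\WFbh^s(w)$ once its backward branching flowout with at most $N$ reflections avoids $\WFbh^s(w)$. Applied to our setting, this propagation yields
$$
\WFbh^s(w(t)) \subset \bigcup_{E\in\supp\chi} \XPhi{E}^N_t\bigl(\WF'(B)\cap \dot\Sigma^E_b\bigr) \subset \bigcup_{E\in\supp\chi} \XPhi{E}^N(\WF'(B)),
$$
which is disjoint from $\WF'(A)$ by the second hypothesis. The same positive-commutator mechanism then gives $\|A w(t)\|_{L^2} \lesssim h^s \|f\|_{L^2}$ for every $s < (N+1)\kk$, i.e.\ $S(t)=O_{L^2}(h^{(N+1)\kk-0})$.

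The main technical obstacle will be converting the \emph{qualitative} wavefront-set containments above into the required \emph{uniform operator} bound on $S(t)$. Concretely, one must extract from the positive commutator proof of Corollary~\ref{cor:propofsings} a linear estimate of the form
$$
\|A w(t)\|_{L^2} \lesssim h^s \|E_1 w(0)\|_{L^2} + O(h^\infty)\|w\|_{H^1_h},
$$
where $E_1 \in \Psibh(X)$ is any b-microlocalizer elliptic on $\WFbh(w(0))$; the uniform $H^1_h$-control of $w(t)$ by $\|f\|_{L^2}$ from Proposition~\ref{prop:chi}, together with $L^2$-boundedness of $E_1$, then close the bound. The compact microsupport of $w$ (inherited from $B$ and preserved through $\chi(P_h)$) is used throughout to ensure the $O(h^\infty)$ remainders are genuine, with no hidden losses at fiber infinity where $\Psibh$ symbols fail to be bounded.
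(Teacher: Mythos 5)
Your proposal matches the paper's proof essentially step for step: set $w(t)=\chi(P_h)U(t)Bf$, use Proposition~\ref{prop:chi} to obtain compact microsupport, $H^1_h$-regularity, and the $\WF'B$-containment of the Cauchy data, then invoke Proposition~\ref{proposition:cauchydata} and Corollary~\ref{cor:propofsings} to propagate along the (finitely) branching flow and conclude $Aw(t)=O(h^\infty)$ (resp.\ $O(h^{(N+1)\kk-0})$). Your closing remark on uniformity in $f$ is a sensible observation that the paper leaves implicit; it is handled automatically because the positive-commutator estimates underlying the propagation theorems are linear in the unknown with $h$-uniform constants, so the qualitative wavefront containments already encode operator bounds.
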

\begin{proof}
We describe the $\XPhi{E}_t$ result, with the proof of the $\XPhi{E}^N_t$
version being analogous.

  Given any $f\in L^2,$ let $w(t) =\chi(P) U(t) Bf=\chi(-hD_t) U(t)
 Bf$.  This is a compactly-microsupported solution to the
 Schr\"odinger equation, since its microsupport lies in $-\tau \in
 \supp \chi$ owing to the frequency cutoff, while the elliptic
 estimate in \eqref{wfcontainment} keeps the spatial
 fiber variables in $\Tbstar M$ in a compact set as well.

The Cauchy data of $w(t)$ is $\chi(P_h) Bf$, hence by
  Proposition~\ref{prop:chi}, the
  wavefront set of the Cauchy data is contained in $\WF' B$.  Thus, by Proposition~\ref{proposition:cauchydata},
  the spacetime wavefront set of $w$ at $t=0$ is contained in
$(\iota_0^*)^{-1}(\WF'B) \cap \dot \Sigma_b$; it also lies in $\{-\tau
\in \supp \chi\}$ since we can write this as $\chi(-hD_t) U(t) Bf$.  Propagation of
singularities (Corollary~\ref{cor:propofsings}) now shows that a
neighborhood of $(\iota_0^*)^{-1}(\WF' A)$ is disjoint from $\WFbh
(w)$.  Hence another application of
Proposition~\ref{proposition:cauchydata} then implies that
\begin{equation*}
    A w(t)=O_{L^2}(h^\infty).\qedhere
\end{equation*}
\end{proof}

\section{The Poincar\'e map}
\label{sec:poincare}

We now turn to some dynamical preliminaries that will be necessary to
identify the term in the trace formula corresponding to the linearized
Poincar\'e map.  (Recall that our analysis of the terms in this formula
will also rely essentially on prior work in \cite{WuYaZo:24} for
the interpretation of the Maslov factor.)

Let $\gamma$ be a closed branching orbit with period
$T$ and energy $-\tau=E_0$, and fix $\alpha
\in T^*(X\backslash Y)$ a
point along $\gamma.$ In this section, as we work at points over
  $X\backslash Y$, we will slightly abuse notation by
  letting $$\Phi_t(\mu)= \XPhi{p(\mu)}_t^\gamma(\mu)$$ denote the branching flow from a point in $T^*(X\backslash
  Y)$ with the energy fixed by its value $p(\mu)$ at the initial point.
  (Remember that the flow in $\Tbstar X$ without a specified energy a priori ``forgets'' its precise
  energy at times of interaction with the boundary.) This flow is thus
perhaps better viewed as a (discontinuous!) branching flow in the
ordinary cotangent bundle, where normal momentum may simply change
sign at reflected points.  Recall that the $\gamma$ superscript means
we view this flow as being the locally single-valued flow close to the
given branching bicharacteristic $\gamma$.

Fix a Hamiltonian flow box chart (cf.~\cite[Section~8.1]{abraham2008foundations}) near $\alpha,$
i.e., a symplectic coordinate system $(H,s,q,p)$ for $T^*X$ in which
the Hamiltonian is simply $p=H$ and where $\alpha=(E_0, 0, 0, 0)$ in
these coordinates. Then, as $\gamma$ is a periodic branching orbit with period $T$ passing through $\alpha$, we have 
$$
\Phi_T(E_0, 0, 0,0)=(E_0, 0, 0,0),
$$
i.e., $\alpha$ is a fixed point of the Hamiltonian flow $\Phi_t$.
(Recall that we are abusing notation by letting $\Phi_t$ denote the
\emph{locally} single-valued flow near the given trajectory $\gamma$,
away from branching points.
At points close to $\alpha$, we write
$$
\Phi_T(H, s, q,p)=(H, s+\tau(H,q,p), Q(H,q,p), P(H,q,p))
$$
for some functions $\tau,Q,P$.  Note that since the Hamilton vector
field is simply given by $\hamvf_P = \pa_s$, if $(Q(H',q',p'),
P(H',q',p'))=(q',p')$ then the point $(H',0,q',p')$ lies along another
closed trajectory with, in general, a distinct period
$T-\tau(H',q',p')$. 

Our nondegeneracy assumption is that 
$$
\frac{\pa (Q,P)}{\pa (q,p)}-\Id
$$
is nonsingular at $\alpha$.
We note that since
\[d\Phi_T = \begin{pmatrix} 
\begin{matrix}
1 & 0 \\
* & 1
\end{matrix} &
\begin{matrix}
0 & 0 \\
* & *
\end{matrix} \\
\begin{matrix}
* & 0 \\
* & 0
\end{matrix} &
\frac{\pa (Q,P)}{\pa (q,p)}
\end{pmatrix},\]
it follows that
\[\begin{aligned}\det\left(\lambda-d\Phi_T\right) = \det\begin{pmatrix}
\lambda-1 & 0 & \begin{matrix} 0 & 0 \end{matrix} \\
* & \lambda-1 & \begin{matrix} * & * \end{matrix} \\
\begin{matrix}
* \\ *
\end{matrix} &
\begin{matrix}
0 \\ 0
\end{matrix} &
\lambda-\frac{\pa (Q,P)}{\pa (q,p)}
\end{pmatrix} &= (\lambda-1)\det\begin{pmatrix}
\lambda-1 & \begin{matrix} * & * \end{matrix} \\
\begin{matrix} 0 \\ 0 \end{matrix} & \lambda-\frac{\pa (Q,P)}{\pa (q,p)}
\end{pmatrix} \\ &= (\lambda-1)^2\det\left(\lambda-\frac{\pa (Q,P)}{\pa (q,p)}\right).\end{aligned}\]
Thus $\frac{\pa (Q,P)}{\pa (q,p)}-\Id$ is nonsingular iff the multiplicity of $1$ in $d\Phi_T$ is exactly $2$.

Given nondegeneracy, we remark that by the Implicit Function
Theorem we may solve the equations
$$
(Q(H,q,p),P(H,q,p))=(q,p)
$$
locally for $(q,p)$ in terms of $H$ close to $E_0$; let
$(\qcal(H),\pcal(H))$ denote the resulting functions of $H$.
In other words, we have obtained an
\emph{orbit cylinder}, a family of closed orbits with, in general,
varying periods; the cylinder is transverse to the energy surface
since its tangent space at $\alpha$ is spanned by $\pa_s$ (which lies tangent to
individual orbits) and $V=\pa_H+ (\pa_H \qcal) \pa_q+(\pa_H \pcal) \pa_p$.
Let $\Ccal$ denote the orbit cylinder.  We note further that
differentiating the equation
$$
\Phi_T(H, 0,\qcal(H), \pcal(H)) =(H, \tau(H,\qcal(H),\pcal(H)),  \qcal(H), \pcal(H))
$$
yields
$$
d\Phi_T(V) =V+c\pa_s
$$
for some $c$; hence
$d\Phi_T$ acting on $T_\alpha\Ccal=\operatorname{span}(\pa_s, V)$ has matrix representation
$$
\begin{pmatrix}
  1 & *\\
  0 & 1
  \end{pmatrix},
$$
i.e.\ $T_\alpha\Ccal$ is the generalized eigenspace for $d\Phi_T$
corresponding to eigenvalue $1$.

\begin{figure}
    \centering
    \includegraphics[width=0.6\linewidth]{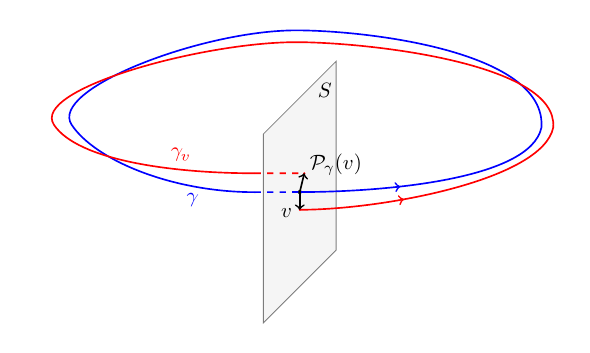}
    \caption{Poincare map $\mathcal P$ of the closed branching orbit $\gamma$.}
    \label{fig:poincare}
\end{figure}

We can now define the Poincar\'e map contribution to the trace
formula, which is
$$
\det (\Id-\Pcal)
$$
where $\Pcal$ is the map
$$
\Pcal=(d\Phi_T)\rvert_{T\Ccal^\perp},
$$
and $T\Ccal^\perp$ denotes the symplectic orthocomplement of the orbit
cylinder, which is conserved by $d\Phi_T$ since this map is symplectic
and preserves $T\Ccal$.

We now turn to computations that relate the linearized Poincar\'e
determinant on the symplectic orthocomplement of the orbit cylinder to
derivatives of the classical action.

Suppose $\alpha=(x_0,\xi_0)$ lies on the closed branching
orbit $\gamma$.  For $x,y$ near $x_0$ and $t$ near the
period $T$, there is a unique trajectory going from $y$ to $x$ in time
$t$ which is close to the given periodic trajectory. Denote $S(t,x,y)$
the classical action along such a trajectory. 

Using canonical coordinates on $T^*X$, let us write
\[\Phi_t(y,\eta) = (X(t,y,\eta),\Xi(t,y,\eta)).\]
Then as noted above,
\[(d\Phi_T)|_{x_0,\xi_0}\]
has $1$ as an eigenvalue, with eigenspace of dimension $2$ under non-degeneracy assumptions. Such a space can be identified with $T\Ccal$, the tangent space to an orbit cylinder. We would like to compute
$$
\det (\Id-\Pcal)
$$
where $\Pcal$ is the map
$$
\Pcal=(d\Phi_T)\rvert_{T\Ccal^\perp},
$$
and $T\Ccal^\perp$ denotes the symplectic orthocomplement of the orbit
cylinder, which is conserved by $d\Phi_T$ since this map is symplectic
and preserves $T\Ccal$.

Since $d\Phi_T$ has $1$ as an eigenvalue with algebraic multiplicity 2, if we write
\begin{equation}
\label{eq:qlambda}
\det(\lambda-d\Phi_T) = (\lambda-1)^2q(\lambda),
\end{equation}
then $q(1) = \prod(1-\lambda_i) = \det(\Id-\Pcal)$ where $\lambda_i$ are the eigenvalues of $d\Phi_T$ not equal to $1$, and hence $q(1)$ is the desired Poincar\'e determinant. So we aim to compute $\det(\lambda-d\Phi_T)$, factor out $(\lambda-1)^2$, and find the value of the remaining polynomial at $\lambda=1$.

Writing
\[d\Phi_T = \begin{pmatrix} \partial_yX & \partial_{\eta}X \\ \partial_y\Xi & \partial_{\eta}\Xi\end{pmatrix}\]
in block form, the Schur complement formula yields
\begin{align*}
\det(\lambda-d\Phi_T) &= \det\begin{pmatrix} \lambda-\partial_yX & -\partial_{\eta}X \\ -\partial_y\Xi & \lambda-\partial_{\eta}\Xi\end{pmatrix} \\
&= (-1)^n\det\begin{pmatrix} \partial_{\eta}X & \partial_yX-\lambda \\ \partial_{\eta}\Xi-\lambda & \partial_y\Xi\end{pmatrix} \\
&= (-1)^n\det\partial_{\eta}X\det\begin{pmatrix} \partial_y\Xi - \left(\partial_{\eta}\Xi-\lambda\right)\left(\partial_{\eta}X\right)^{-1}\left(\partial_yX-\lambda\right)\end{pmatrix}.
\end{align*}
The last matrix can be written as
\[A + (\lambda-1)B + (\lambda-1)^2C\]
where
\begin{equation}
\label{eq:ABC}
\begin{aligned}
A &= \partial_y\Xi - \left(I-\partial_{\eta}\Xi\right)\left(\partial_{\eta}X\right)^{-1}\left(I-\partial_yX\right),  \\
B &= -\left(\partial_{\eta}X\right)^{-1}\left(I-\partial_yX\right) -\left(I-\partial_{\eta}\Xi\right)\left(\partial_{\eta}X\right)^{-1}, \\
C &= -\left(\partial_{\eta}X\right)^{-1}.
\end{aligned}
\end{equation}
It follows that
\[\det(\lambda-d\Phi_T) = \left(\det C\right)^{-1}\det(A+(\lambda-1)B+(\lambda-1)^2C).\]

We now proceed via a series of lemmas whose proofs are postponed to
later in this section.
We begin by employing the following general fact about determinants:
\begin{lemma}
\label{lem:detexp}
Suppose $A$, $B$, and $C$ are $n\times n$ matrices, where $A_{1j} = A_{i1} = 0$ for all $i,j\in\{1,\dots,n\}$, and $B_{11} = 0$. Then
\[\det(A+tB+t^2C) = t^2\left(\det\begin{pmatrix} C_{11} & B_{12} & \dots & B_{1n} \\
B_{21} & A_{22} & \dots & A_{2n} \\
\vdots & \vdots & \ddots & \vdots \\
B_{n1} & A_{n2} & \dots & A_{nn} \end{pmatrix} + O(t)\right).\]
\end{lemma}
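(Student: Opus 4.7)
My plan is to prove the lemma by a row-and-column rescaling argument that pulls the factor $t^2$ outside the determinant and identifies the stated matrix as the $t\to 0$ limit of the rescaled determinant.

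Concretely, the key observation is that under the given hypotheses, the first row and first column of $M(t):=A+tB+t^2C$ are each divisible by $t$: every entry $M(t)_{1j}$ with $j\geq 2$ equals $tB_{1j}+t^2C_{1j}$ (since $A_{1j}=0$), every entry $M(t)_{i1}$ with $i\geq 2$ equals $tB_{i1}+t^2C_{i1}$ (since $A_{i1}=0$), and the corner entry $M(t)_{11}=t^2C_{11}$ (since $A_{11}=B_{11}=0$). Multilinearity of the determinant in rows and columns therefore allows one to factor out $t^{-1}$ from row $1$ and $t^{-1}$ from column $1$, producing $\det M(t)=t^2\det N(t)$, where the new matrix $N(t)$ has entries
\[
N(t)_{11}=C_{11},\quad N(t)_{1j}=B_{1j}+tC_{1j},\quad N(t)_{i1}=B_{i1}+tC_{i1}\ (i,j\geq 2),
\]
and $N(t)_{ij}=A_{ij}+tB_{ij}+t^2C_{ij}$ for $i,j\geq 2$.

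From here the result is immediate: $N(t)$ depends smoothly on $t$, and $N(0)$ is precisely the matrix displayed in the statement of the lemma. Continuity of the determinant then gives
\[
\det M(t)=t^2\det N(t)=t^2\bigl(\det N(0)+O(t)\bigr),
\]
which is the claimed expansion. The only step requiring any care is the bookkeeping of the two row/column scalings: one must check that the $(1,1)$ entry picks up the factor $t^{-2}$ exactly once (not twice), which is why the hypotheses $A_{11}=B_{11}=0$ are precisely what is needed — they guarantee that $M(t)_{11}$ starts at order $t^2$, so after the two rescalings the resulting entry is finite and contributes $C_{11}$.

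There is essentially no obstacle here, as the computation is purely algebraic. If one prefers to avoid the rescaling language, an alternative is to expand $\det M(t)$ via the Leibniz formula and collect the lowest-order contributions in $t$: permutations fixing $1$ contribute $t^2C_{11}\det(A_{ij})_{i,j\geq 2}$ at leading order, while permutations with $\sigma(1)=j\neq 1$ and $\sigma(i)=1$ for some $i\neq 1$ contribute $t^2B_{1j}B_{i1}\prod_{k\neq 1,i}A_{k\sigma(k)}$; reassembling these is exactly the cofactor expansion of $\det N(0)$ along the first row. Either route gives the stated formula.
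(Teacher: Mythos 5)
Your proof is correct and follows essentially the same route as the paper: both factor $t$ out of the first row and first column of $A+tB+t^2C$ (using that $A_{1j}=A_{i1}=0$ and $B_{11}=0$ to guarantee divisibility), then set $t=0$ in the remaining determinant. Your remark about the $(1,1)$ entry absorbing both scalings is a helpful clarification, and the Leibniz-formula alternative you sketch is a valid but not needed second route.
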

(That is: it equals $t^2$ times the determinant of the matrix where the top-left entry is that of $C$, the other entries in the first row and column are that of $B$, and the remaining entries are that of $A$, up to $O(t^3)$.)

We now give alternative expressions for the matrices in \eqref{eq:ABC}:
\begin{lemma}
\label{lem:ABC}
We have
\begin{align*}
A &= \partial^2_{xx}S + \partial^2_{xy}S + \partial^2_{yx}S + \partial^2_{yy}S = \frac{\partial^2}{\partial x^2}\Big|_{x=x_0}[S(T,x,x)], \\
B &= \partial^2_{xx}S + 2\partial^2_{yx}S + \partial^2_{yy}S,\\
C &= \partial^2_{yx}S.
\end{align*}
Here, the notation $\partial^2_{xy}S$ refers to the $n\times n$ matrix whose $(i,j)$ entry is $\partial^2_{x^iy^j}S$, and similarly for the other subscripts, and all second derivative matrices not explicitly evaluated are evaluated at $(T,x_0,x_0)$.
\end{lemma}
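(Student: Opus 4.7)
The plan is to exploit the fact that $S(T,x,y)$ is a generating function for the canonical transformation $\Phi_T$. Concretely, if $\Phi_t(y,\eta)=(X(t,y,\eta),\Xi(t,y,\eta))$, then along the trajectory one has the standard generating-function relations
\[
\Xi(t,y,\eta) = (\partial_x S)(t,X(t,y,\eta),y), \qquad \eta = -(\partial_y S)(t,X(t,y,\eta),y).
\]
These determine the four first-order derivatives $\partial_y X$, $\partial_\eta X$, $\partial_y \Xi$, $\partial_\eta \Xi$ in terms of the Hessian of $S$ at $(T,x_0,x_0)$ via implicit differentiation.

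First I would differentiate both identities with respect to $y$ and $\eta$. Abbreviating $S_{xx} = \partial^2_{xx}S$, etc., and with all derivatives evaluated at $(T,x_0,x_0)$, the chain rule yields
\[
\partial_y\Xi = S_{xx}\,\partial_yX + S_{xy},\quad \partial_\eta\Xi = S_{xx}\,\partial_\eta X, \quad 0 = S_{yx}\,\partial_yX + S_{yy}, \quad I = -S_{yx}\,\partial_\eta X.
\]
The fourth equation identifies $(\partial_\eta X)^{-1} = -S_{yx}$, which both provides the desired formula $C = -(\partial_\eta X)^{-1} = S_{yx}$ and shows $S_{yx}$ is invertible under the nondegeneracy hypothesis. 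Solving the remaining three equations gives
\[
\partial_y X = -S_{yx}^{-1}S_{yy}, \qquad \partial_\eta\Xi = -S_{xx}S_{yx}^{-1}, \qquad \partial_y\Xi = S_{xy} - S_{xx}S_{yx}^{-1}S_{yy}.
\]

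Next I would substitute these into the definitions \eqref{eq:ABC}. For $B$, the two terms collapse via $S_{yx}(I + S_{yx}^{-1}S_{yy}) = S_{yx} + S_{yy}$ and $(I + S_{xx}S_{yx}^{-1})S_{yx} = S_{yx} + S_{xx}$, yielding $B = S_{xx} + 2S_{yx} + S_{yy}$. For $A$, the product $(I-\partial_\eta\Xi)(\partial_\eta X)^{-1}(I-\partial_y X)$ produces a term $-S_{xx}S_{yx}^{-1}S_{yy}$ that cancels the matching term in $\partial_y\Xi$, leaving $A = S_{xx} + S_{xy} + S_{yx} + S_{yy}$. Finally, a direct chain-rule computation on $\phi(x) := S(T,x,x)$ gives $\phi''(x_0) = S_{xx} + S_{xy} + S_{yx} + S_{yy}$, identifying $A$ with $\partial^2_x|_{x=x_0}[S(T,x,x)]$ and matching the first displayed formula for $A$.

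The computation is entirely algebraic once the generating-function relations are in place, so there is no real obstacle; the main point requiring care is adherence to a consistent sign convention for $\eta = -\partial_y S$ and to the matrix convention $(S_{xy})_{ij} = \partial^2 S/\partial x^i \partial y^j$ (so that $S_{yx} = S_{xy}^T$), so that the chain-rule products compose in the correct order. The conceptual input is recognizing $S$ as the generating function of $\Phi_T$; everything else is bookkeeping.
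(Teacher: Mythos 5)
Your proof is correct and takes essentially the same approach as the paper: differentiate the generating-function relations $\Xi=\partial_xS(t,X,y)$ and $\eta=-\partial_yS(t,X,y)$ in $y$ and $\eta$ to express $\partial_yX,\partial_\eta X,\partial_y\Xi,\partial_\eta\Xi$ in terms of the Hessian blocks of $S$, then substitute into the definitions of $A$, $B$, $C$. The paper performs the substitutions a bit more incrementally rather than first isolating all four first-order derivatives, but the algebra and the underlying idea are identical.
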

We now employ Fermi normal coordinates along $\gamma$ near $x_0$,
given by $(x^1,\dots,x^n)$, which enjoy the following properties:
\begin{itemize}
\item $(x^1,\dots,x^n)(x_0) = (0,\dots,0)$, and for $|t|<\ep$ (and hence for $|t-T|<\ep$ as well), the trajectory $X(t,x_0,\xi_0)$ lies in the curve $\{x^2=\dots=x^n=0\}$, i.e., it only has a nonzero $x^1$ component.
\item $g_{ij}|_{\{x^2=\dots=x^n=0\}} = \delta_{ij}$, i.e.\ the Riemannian metric is Euclidean along the trajectory.
\end{itemize}
Note that, with such choice of coordinates, we have
\[\dot X^i = 2g^{ij}\Xi_j = 2\Xi_i\]
along the flow, and since $X^i = 0$ for $i\ge 2$, this implies that $\Xi_i = 0$ for $i\ge 2$. Hence
\[\dot X = (\dot X^1)e_1,\quad \dot\Xi = (\dot\Xi_i)e_1.\]
We can then check that the matrices $A$, $B$, and $C$ in \eqref{eq:ABC} satisfy the assumptions in Lemma \ref{lem:detexp}. Indeed, $\frac{\partial}{\partial x}|_{x=X(s,x_0,\xi_0)}[S(T,x,x)] = 0$ for all $s$ (near $0$) due to the periodicity of the bicharacteristic, from which we have
\[0 = \frac{\partial}{\partial s}\Big|_{s=0}\left(\frac{\partial}{\partial x}\Big|_{x=X(s,x_0,\xi_0)}[S(T,x,x)]\right) = \frac{\partial^2}{\partial x^2}\Big|_{x=x_0}[S(T,x,x)]\cdot \dot{X} = (\dot X^1)Ae_1.\]
Hence, $Ae_1=0$, i.e. the first column of $A$ is zero. Since $A$ is symmetric, the first row is also zero. Finally,
\begin{align*}
B_{11} &= \partial^2_{x^1x^1}S + 2\partial^2_{y^1x^1}S + \partial^2_{y^1y^1}S \\
&=\partial^2_{x^1x^1}S + \partial^2_{x^1y^1}S + \partial^2_{y^1x^1}S + \partial^2_{y^1y^1}S=0,
\end{align*}
the last equality following since it equals $A_{11}$. It follows, from Lemma \ref{lem:detexp}, that
\[\det(A+(\lambda-1)B+(\lambda-1)^2C) = (\lambda-1)^2\left(\det\begin{pmatrix} C_{11} & B_{12} & \dots & B_{1n} \\
B_{21} & A_{22} & \dots & A_{2n} \\
\vdots & \vdots & \ddots & \vdots \\
B_{n1} & A_{n2} & \dots & A_{nn} \end{pmatrix} + O(\lambda-1)\right).\]
Thus, with $q(\lambda)$ as in \eqref{eq:qlambda}, we have
\[q(\lambda) = \left(\det C\right)^{-1}\left(\det\begin{pmatrix} C_{11} & B_{12} & \dots & B_{1n} \\
B_{21} & A_{22} & \dots & A_{2n} \\
\vdots & \vdots & \ddots & \vdots \\
B_{n1} & A_{n2} & \dots & A_{nn} \end{pmatrix} + O(\lambda-1)\right).\]
Hence, with $\det(I-\Pcal) = q(1)$, it follows that
\begin{equation}
\label{eq:poincare1}
\det(I-\Pcal) = (\det C)^{-1}\det\begin{pmatrix} C_{11} & B_{12} & \dots & B_{1n} \\
B_{21} & A_{22} & \dots & A_{2n} \\
\vdots & \vdots & \ddots & \vdots \\
B_{n1} & A_{n2} & \dots & A_{nn} \end{pmatrix}.
\end{equation}
We now look for a different way to express the above matrix when $A$, $B$, and $C$ are as in Lemma \ref{lem:ABC}. Indeed, we now consider the Hessian of the function
\[(t,x^2,\dots,x^n)\mapsto S(t,(0,x^2,\dots,x^n),(0,x^2,\dots,x^n))\]
at $t=T$, $x^2=\dots=x^n=0$.
In matrix form, this is
\begin{align*}
\frac{\partial^2}{\partial(t,\hat x)^2}\Big|_{t=T,x=0}\left[S(t,x,x)\right] = 
\begin{pmatrix}
\partial^2_{tt}S & \partial^2_{tx^2}S + \partial^2_{ty^2}S & \dots & \partial^2_{tx^n}S + \partial^2_{ty^n}S  \\
\partial^2_{x^2t}S + \partial^2_{y^2t}S & A_{22} & \dots & A_{2n} \\ 
\vdots & \vdots & \ddots & \vdots \\
\partial^2_{x^nt}S + \partial^2_{y^nt}S & A_{n2} & \dots & A_{nn}
\end{pmatrix}
\end{align*}
(recalling $A = \frac{\partial^2}{\partial x^2}\Big|_{x=x_0}[S(T,x,x)]$; here, $\hat x = (x^2,\dots,x^n)$).
\begin{lemma}
\label{lem:partialtS}
We have
\[\partial^2_{tt}S = -(\dot X^1)^2C_{11}\]
and
\[\partial^2_{xt}S+\partial^2_{yt}S = B^T\dot X = (\dot X^1)B^Te_1,\]
where $B$, $C$ are as in Lemma \ref{lem:ABC}.
\end{lemma}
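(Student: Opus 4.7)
The plan is to derive both identities by differentiating the Hamilton--Jacobi equation
\[\partial_t S(t,x,y) + H(x, \partial_x S(t,x,y)) = 0\]
and evaluating at $(T, x_0, x_0)$, exploiting two features of Fermi normal coordinates along the orbit: at $(x_0, \xi_0)$ one has $\partial_{\xi_j} H = \dot X^j = \dot X^1 \delta^{1j}$ (since the trajectory moves only in the $x^1$-direction), and $\partial_{x^i} H = 0$ for $i \geq 2$ (forced by Hamilton's equation $\dot \Xi_i = -\partial_{x^i} H$ together with $\Xi_i \equiv 0$ along the orbit for $i \geq 2$). For the $\partial^2_{tt} S$ identity, I would fix $x = y = x_0$ and view $S(t, x_0, x_0)$ as a function of $t$ near $T$: the implicit function theorem defines $\eta(t)$ by $X(t, x_0, \eta(t)) = x_0$, and Hamilton--Jacobi together with $\partial_y S = -\eta$ gives $\partial_t S(t, x_0, x_0) = -H(x_0, \eta(t)) =: -E(t)$. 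Differentiating the defining relation in $t$ yields $\eta'(T) = -(\partial_\eta X)^{-1} \dot X$, so
\[\partial^2_{tt} S = -E'(T) = -\partial_\eta H \cdot \eta'(T) = \dot X \cdot (\partial_\eta X)^{-1} \dot X,\]
which in Fermi coordinates reduces to $(\dot X^1)^2 ((\partial_\eta X)^{-1})_{11} = -(\dot X^1)^2 C_{11}$ via the formula $C = -(\partial_\eta X)^{-1}$ from \eqref{eq:ABC}.

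For the second identity, I would differentiate the Hamilton--Jacobi equation in $y^i$ and in $x^i$ separately to obtain, at $(T, x_0, x_0)$,
\[\partial^2_{y^i t} S = -\dot X^1 C_{i1}, \qquad \partial^2_{x^i t} S = -\partial_{x^i} H - \dot X^1 (\partial^2_{xx} S)_{1i}.\]
For $i \geq 2$, the first term of the latter vanishes, and the remaining entry $(\partial^2_{xx} S)_{1i} = \partial^2_{x^1 x^i} S$ is then pinned down by a \emph{shift-along-the-orbit} observation: for small $\sigma$, the trajectory from $X(\sigma, x_0, \xi_0)$ back to itself in time exactly $T$ is the same closed orbit restarted, so its endpoint momentum equals $\Xi(\sigma, x_0, \xi_0)$. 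For $i \geq 2$ this component is identically zero along the orbit, and differentiating the identity $\partial_{x^i} S(T, X(\sigma), X(\sigma)) = 0$ in $\sigma$ at $\sigma = 0$ yields $\partial^2_{x^1 x^i} S + \partial^2_{y^1 x^i} S = 0$, i.e. $\partial^2_{x^1 x^i} S = -C_{1i}$. Combining these pieces gives $\partial^2_{x^i t} S + \partial^2_{y^i t} S = \dot X^1(C_{1i} - C_{i1})$ for $i \geq 2$; the $i=1$ component follows from the parallel observation that $\partial_t S(T, X(\sigma), X(\sigma)) = -E$ is constant in $\sigma$, so $\partial^2_{x^1 t} S + \partial^2_{y^1 t} S = 0$. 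Finally, using Lemma \ref{lem:ABC} together with the already-established identity $A \dot X = 0$ (the vanishing of the first row and column of $A$ in Fermi coordinates), one verifies that $B_{1i} = C_{1i} - C_{i1}$ (and in particular $B_{11} = 0$), whence $\partial^2_{x^i t} S + \partial^2_{y^i t} S = \dot X^1 B_{1i} = (B^T \dot X)_i$ for all $i$.

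The main obstacle I expect is bookkeeping the transpose conventions: the paper defines $(\partial^2_{yx} S)_{ij} = \partial^2_{y^i x^j} S$, so that $\partial^2_{xy} S = C^T$, and the entries $C_{1i}$ and $C_{i1}$ must be distinguished carefully throughout. This bookkeeping is essential when reconciling the $\partial^2_{xy}$ entries produced naturally by differentiating Hamilton--Jacobi with the $\partial^2_{yx}$-based expressions in Lemma \ref{lem:ABC} that define $B$ and $C$, and in particular in recognizing the ``off-diagonal'' difference $B_{1i} = C_{1i} - C_{i1}$ at the final step.
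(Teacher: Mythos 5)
Your argument is correct, and it gets to the same destination by a noticeably different route, so a comparison is warranted. The paper differentiates the \emph{flow-parametrized} identities $\partial_t S(t,X(t,y,\eta),y)=-H(y,\eta)$ and $\partial_x S(t,X(t,y,\eta),y)=\Xi(t,y,\eta)$ with respect to $t$, $\eta$, $y$; this yields coordinate-free matrix identities ($\partial^2_{tx}S=\dot X^T\partial^2_{yx}S$, $\partial^2_{ty}S=\dot\Xi^T+\dot X^T\partial^2_{yy}S$, plus $(\partial^2_{xx}S+\partial^2_{xy}S)\dot X=\dot\Xi$ from a single shift-along-the-orbit observation), and Fermi normal coordinates are invoked only in the last line to contract against $\dot X=(\dot X^1)e_1$. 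You instead differentiate the Hamilton--Jacobi PDE in the \emph{spatial} variables $(t,x,y)$, which pulls Fermi coordinates into the argument from the outset (to read off $\partial_\eta H=\dot X^1 e_1$ and to deduce $\partial_{x^i}H=0$ for $i\ge 2$ from $\dot\Xi_i\equiv 0$), and then requires a componentwise split ($i=1$ vs.\ $i\ge 2$) plus \emph{two} separate shift identities --- one for $\partial_{x^i}S$ to pin down $(\partial^2_{xx}S)_{1i}$ when $i\ge 2$, and one for $\partial_t S$ to handle $i=1$ --- where the paper needs only one. Your route also passes through the antisymmetry relation $B_{1i}=C_{1i}-C_{i1}$ (i.e.\ $B-A=C-C^T$, which the paper records immediately after the lemma but does not use inside the proof). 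Minor stylistic note: in your $\partial^2_{tt}S$ derivation you state the Hamilton--Jacobi equation in the $x$-form $\partial_t S+H(x,\partial_x S)=0$ but then actually invoke the $y$-form $\partial_t S=-H(y,-\partial_y S)$ via $\partial_y S=-\eta$; this is fine by energy conservation, but worth saying explicitly. On balance the paper's flow-parametrized computation is shorter, keeps the matrix structure visible, and avoids the case split; your spatial computation trades compactness for a more elementary step-by-step mechanism and makes the dynamical vanishing $\partial_{x^i}H=0$, $i\ge 2$, do explicit work.
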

Thus this matrix is
\[\begin{pmatrix} -(\dot X^1)^2C_{11} & (\dot X^1)B_{12} & \dots & (\dot X^1)B_{1n} \\
(\dot X^1)B_{12} & A_{22} & \dots & A_{2n} \\
\vdots & \vdots & \ddots & \vdots \\
(\dot X^1)B_{1n} & A_{n2} & \dots & A_{nn} \end{pmatrix}.\]
We note that $B_{1j} = -B_{j1}$ for $1\le j\le n$. Indeed,
\[B_{ij} = A_{ij} + \partial^2_{y^ix^j}S - \partial^2_{x^iy^j}S,\]
so using that $A_{1j} = A_{j1} = 0$ for $1\le j\le n$, it follows that $B_{1j}+B_{j1} = 0$. It follows that
\begin{align*}
\det\frac{\partial^2}{\partial(t,\hat x)^2}|_{t=T,x=0}\left[S(t,x,x)\right] &= \det\begin{pmatrix} -(\dot X^1)^2C_{11} & (\dot X^1)B_{12} & \dots & (\dot X^1)B_{1n} \\
-(\dot X^1)B_{21} & A_{22} & \dots & A_{2n} \\
\vdots & \vdots & \ddots & \vdots \\
-(\dot X^1)B_{n1} & A_{n2} & \dots & A_{nn} \end{pmatrix} \\
&= -(\dot X^1)^2\det\begin{pmatrix} C_{11} & B_{12} & \dots & B_{1n} \\
B_{21} & A_{22} & \dots & A_{2n} \\
\vdots & \vdots & \ddots & \vdots \\
B_{n1} & A_{n2} & \dots & A_{nn} \end{pmatrix}.
\end{align*}
Combining this with \eqref{eq:poincare1}, we arrive at the result
that allows us to identify Hessian quantities arising in our
stationary phase computations as dynamical invariants:
\begin{proposition}
\[\det\frac{\partial^2}{\partial(t,\hat x)^2}|_{t=T,x=0}\left[S(t,x,x)\right] = -(\dot X^1)^2\left(\det\partial^2_{yx}S\right)\det(I-\Pcal).\]
\end{proposition}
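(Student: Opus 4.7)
The plan is to compute the Hessian of $(t,\hat x)\mapsto S(t,(0,\hat x),(0,\hat x))$ entry-by-entry using Lemma \ref{lem:partialtS} and Lemma \ref{lem:ABC}, and then massage the resulting determinant into the form already identified in \eqref{eq:poincare1}. First I would write out the Hessian in block form: the $(1,1)$ entry is $\partial^2_{tt}S = -(\dot X^1)^2 C_{11}$ by Lemma \ref{lem:partialtS}, while the first row and column entries (off-diagonal) are $\partial^2_{t x^j}S+\partial^2_{t y^j}S = (\dot X^1)B_{1j}$, again by Lemma \ref{lem:partialtS} together with symmetry of second derivatives. For $i,j\geq 2$, the lower-right $(n-1)\times(n-1)$ block is just $A_{ij}$: this is immediate from the definition $A = \frac{\partial^2}{\partial x^2}|_{x=x_0}[S(T,x,x)]$ in Lemma \ref{lem:ABC}, since differentiating the restricted function in $x^i,x^j$ for $i,j\geq 2$ gives exactly the $A$-entries.

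Next I would invoke the antisymmetry relation $B_{1j}+B_{j1}=0$. This follows by writing
\[
B_{ij}=A_{ij}+\partial^2_{y^ix^j}S-\partial^2_{x^iy^j}S
\]
and using that the first row and column of $A$ vanish in Fermi normal coordinates (already established while verifying the hypotheses of Lemma \ref{lem:detexp}, via the periodicity identity $\frac{\partial}{\partial x}|_{x=X(s,x_0,\xi_0)}[S(T,x,x)]=0$). Rewriting the first column entries of the Hessian as $-(\dot X^1)B_{j1}$ rather than $(\dot X^1)B_{1j}$, I then factor $-(\dot X^1)$ out of the first column and $(\dot X^1)$ out of the first row. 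This extracts an overall factor of $-(\dot X^1)^2$ and leaves the determinant
\[
\det\begin{pmatrix} C_{11} & B_{12} & \dots & B_{1n} \\
B_{21} & A_{22} & \dots & A_{2n} \\
\vdots & \vdots & \ddots & \vdots \\
B_{n1} & A_{n2} & \dots & A_{nn} \end{pmatrix}.
\]

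Finally, this is precisely the matrix whose determinant appeared in \eqref{eq:poincare1}, where it was shown to equal $(\det C)\det(I-\Pcal)$. Since $C=\partial^2_{yx}S$ by Lemma \ref{lem:ABC}, combining the identifications yields
\[
\det\frac{\partial^2}{\partial(t,\hat x)^2}\bigg|_{t=T,x=0}[S(t,x,x)] \;=\; -(\dot X^1)^2(\det \partial^2_{yx}S)\det(I-\Pcal),
\]
which is the claim. The only real obstacle is careful sign bookkeeping: one must ensure that $B_{1j}=-B_{j1}$ is applied to exactly one of the first row or first column (not both) before factoring out $\dot X^1$, so that the signs of the scalar prefactors combine correctly. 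Once this is done, the identification with \eqref{eq:poincare1} is automatic, and no further dynamical input is required beyond what has already been assembled in the section.
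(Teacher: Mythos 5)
Your proposal is correct and follows the paper's own argument essentially verbatim: you assemble the Hessian in $(t,\hat x)$ using Lemma \ref{lem:partialtS} for the first row and column, identify the lower-right block with $A_{ij}$ ($i,j\ge 2$) from Lemma \ref{lem:ABC}, invoke the antisymmetry $B_{1j}+B_{j1}=0$ to rewrite the first column, factor out $-(\dot X^1)^2$, and then apply \eqref{eq:poincare1} together with $C=\partial^2_{yx}S$. The sign bookkeeping you flag at the end is indeed the only subtlety and you handle it as the paper does.
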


We now turn to the proofs of the lemmas used above.

\begin{proof}[Proof of Lemma \ref{lem:detexp}]
The assumptions give $A$ and $B$ to be of the form
\[A = \begin{pmatrix} 0 & 0 & \dots & 0 \\ 
0 & A_{22} & \dots & A_{2n} \\
\vdots & \vdots & \ddots & \vdots \\
0 & A_{n2} & \dots & A_{nn}
\end{pmatrix},\quad B = \begin{pmatrix} 0 & B_{12} & \dots & B_{1n} \\ 
B_{21} & B_{22} & \dots & B_{2n} \\
\vdots & \vdots & \ddots & \vdots \\
B_{n1} & B_{n2} & \dots & B_{nn}
\end{pmatrix}.\]
It follows that
\[A+tB+t^2C = \begin{pmatrix} t^2C_{11} & tB_{12} + O(t^2) & \dots & tB_{1n}+O(t^2) \\ 
tB_{21}+O(t^2) & A_{22} + O(t) & \dots & A_{2n} + O(t) \\
\vdots & \vdots & \ddots & \vdots \\
tB_{n1}+O(t^2) & A_{n2} + O(t) & \dots & A_{nn} + O(t)
\end{pmatrix}.\]
Factoring $t$ out of the first row, and the first column, we thus have
\[\det(A+tB+t^2C) = t^2\det\begin{pmatrix} C_{11} & B_{12} + O(t) & \dots & B_{1n}+O(t) \\ 
B_{21}+O(t) & A_{22} + O(t) & \dots & A_{2n} + O(t) \\
\vdots & \vdots & \ddots & \vdots \\
B_{n1}+O(t) & A_{n2} + O(t) & \dots & A_{nn} + O(t)
\end{pmatrix},\]
the last determinant equaling the determinant of the matrix in the statement, up to $O(t)$.
\end{proof}

\begin{proof}[Proof of Lemma \ref{lem:ABC}]
We use
\[\partial_yS(t,X(t,y,\eta),y) = -\eta.\]
Taking derivatives in $\eta$ yields
\[\partial^2_{yx}S\partial_{\eta}X = -I.\]
This shows that
\[C = -\left(\partial_{\eta}X\right)^{-1} = \partial^2_{yx}S.\]
Taking derivatives in $y$ yields
\[\partial^2_{yx}S\partial_yX + \partial^2_{yy}S = 0\implies \partial^2_{yy}S = -\partial^2_{yx}S\partial_yX = \left(\partial_{\eta}X\right)^{-1}\partial_yX.\]
This gives
\[-\left(\partial_{\eta}X\right)^{-1}\left(I-\partial_yX\right) = -\left(\partial_{\eta}X\right)^{-1} + \left(\partial_{\eta}X\right)^{-1}\partial_yX = \partial^2_{yx}S + \partial^2_{yy}S.\]
We also have
\[\partial_xS(t,X(t,y,\eta),y) = \Xi(t,y,\eta).\]
Taking derivatives in $\eta$ yields
\[\partial^2_{xx}S\partial_{\eta}X = \partial_{\eta}\Xi\implies \partial^2_{xx}S = \partial_{\eta}\Xi\left(\partial_{\eta}X\right)^{-1}.\]
Thus
\[-\left(I-\partial_{\eta}\Xi\right)\left(\partial_{\eta}X\right)^{-1} = \partial^2_{yx}S + \partial^2_{xx}S.\]
Hence
\begin{align*}
B &= \left(-\left(\partial_{\eta}X\right)^{-1}\left(I-\partial_yX\right)\right) + \left( -\left(I-\partial_{\eta}\Xi\right)\left(\partial_{\eta}X\right)^{-1}\right) \\
&= \left(\partial^2_{yx}S + \partial^2_{yy}S\right) + \left(\partial^2_{yx}S + \partial^2_{xx}S\right),
\end{align*}
giving the desired statement. Finally, taking derivatives in $y$ yields
\[\partial^2_{xx}S\partial_yX + \partial^2_{xy}S = \partial_y\Xi.\]
Thus
\begin{align*}
A &=\partial_y\Xi - \left(I-\partial_{\eta}\Xi\right)\left(\partial_{\eta}X\right)^{-1}\left(I-\partial_yX\right)\\
&=\partial^2_{xx}S\partial_yX + \partial^2_{xy}S + \left(\partial^2_{yx}S + \partial^2_{xx}S\right)\left(\partial^2_{yx}S\right)^{-1}\left(\partial^2_{yx}S + \partial^2_{yy}S\right) \\
&=\partial^2_{xx}S\left(-\partial^2_{yx}S\right)^{-1}\partial^2_{yy}S + \partial^2_{xy}S + \partial^2_{yx}S + \partial^2_{yy}S + \partial^2_{xx}S + \partial^2_{xx}S\left(\partial^2_{yx}S\right)^{-1}\partial^2_{yy}S \\
&=\partial^2_{xx}S + \partial^2_{xy}S + \partial^2_{yx}S + \partial^2_{yy}S,
\end{align*}
as claimed.
\end{proof}

\begin{proof}[Proof of Lemma \ref{lem:partialtS}]
We use that $\partial_tS = -H$, i.e.
\begin{equation}
\label{eq:partialtS}
\partial_tS(t,X(t,y,\eta),y) = -H(y,\eta).
\end{equation}
Taking derivatives in $t$ in \eqref{eq:partialtS} gives
\[\partial^2_{tt}S + \partial^2_{tx}S\dot X = 0.\]
Taking derivatives in $\eta$ in \eqref{eq:partialtS} gives
\[\partial^2_{tx}S\partial_{\eta}X = -\frac{\partial H}{\partial\eta} = -\dot X^T.\]
Thus
\[\partial^2_{tx}S = -\dot X^T\left(\partial_{\eta}X\right)^{-1} = \dot X^T\partial^2_{yx}S.\]
In particular, this gives
\[\partial^2_{tt}S = -\partial^2_{tx}S\dot X = -\dot X^T\partial^2_{yx}S\dot X = -(X^1)^2\frac{\partial^2S}{\partial y^1\partial x^1} = -(X^1)^2C_{11}.\]
Taking derivatives in $y$ in \eqref{eq:partialtS} gives
\[\partial^2_{tx}S\partial_yX + \partial^2_{ty}S = -\frac{\partial H}{\partial y} = \dot\Xi^T\]
and hence
\[\partial^2_{ty}S = \dot\Xi^T-\partial^2_{tx}S\partial_yX = \dot\Xi^T-\dot X^T\partial^2_{yx}S\partial_yX = \dot\Xi^T+\dot X^T\partial^2_{yy}S.\]
It follows that
\[\partial^2_{tx}S+\partial^2_{ty}S = \dot\Xi^T + \dot X^T\left(\partial^2_{yx}S + \partial^2_{yy}S\right),\]
or equivalently
\[\partial^2_{xt}S+\partial^2_{yt}S = \dot\Xi + \left(\partial^2_{xy}S + \partial^2_{yy}S\right)\dot X.\]
Finally, from
\[\partial_xS(t,X(t+s,y,\eta),X(s,y,\eta)) = \Xi(t+s,y,\eta),\]
taking the derivative in $s$ at $s=0$ yields
\[\left(\partial^2_{xx}S+\partial^2_{xy}S\right)\dot X = \dot \Xi.\]
Hence
\[\partial^2_{xt}S+\partial^2_{yt}S = \left(\partial^2_{xx}S+\partial^2_{xy}S + \partial^2_{xy}S + \partial^2_{yy}S\right)\dot X,\]
which gives the desired result.
\end{proof}

\section{Interior and short-time reflective propagator}

In this section, we recall the oscillatory integral description of
interior propagators following Meinrenken \cite{Me:92} and compute the
short-time reflective propagator near $Y$ using a WKB-type
method. Recall that
$$U(t) := e^{-itP_h/h}$$
denotes the Schr\"odinger propagator of $P_h$. As is usual in
  such computations, we will switch now to taking the propagator to
  act on half-densities in $X$, as this simplifies much of the
  bookkeeping without affecting the trace (as we may trivialize the
  half-density bundle with a choice of global half-density that does
  not affect the trace.)
  
\subsection{Interior propagators}
\label{sec:int-pgtr}
Let $A,B \in \Psi_h(X)$ be compactly microsupported, with the microsupport  
$\opWF_h(A),\opWF_h(B)\in T^*( X\backslash Y)$. Assume that for any
$(z,\xi) \in \WF'(B)$, $(w,\eta) \in \WF'(A)$,
and $t \in [t_0-\ep, t_0+\ep]$, there
exists at most one branching null bicharacteristic
$\gamma=\gamma_{z,w}$ of $P_h$ with $\gamma(0)=(z,\xi)$,
$\gamma(t)=(w,\eta)$, and that $z,w$ are not conjugate points.
\emph{Assume that $\gamma_{z,w}$ lies entirely over $X \backslash Y$}.  We summarize
\cite[Theorem~1]{Me:92} as the following lemma on the interior
microlocal propagator.  Here $\mu_\gamma$ is the number of conjugate
points along $\gamma$ (hence by the Morse index theorem is the Morse index of the variational problem with fixed
endpoints), and $S_\gamma$ is the classical action along the trajectory
$\gamma$:
\begin{equation}\label{actiondef}
S_\gamma(z,w) = \int_{\gamma_{z,w}} \frac 14 \abs{\dot{z}}^2-V \, dt.
\end{equation}
The $\Delta_{\gamma}$ term in the formula below is known as the van
Vleck determinant, and is given in terms of the action by
$$
\Delta_{\gamma} = \det \frac{\pa^2 S_{\gamma_{z,w}}}{\pa x \pa y};
$$
it can be alternatively interpreted in terms of the derivative of the
exponential map, and its square root frequently arises in Fourier
integral operator constructions as a half-density factor.

\begin{lemma}
\label{lem:int-pgtr}
    For any $A,B \in \Psi^0_h(X)$ satisfies the above conditions, the Schwartz kernel of interior microlocal propagator $AU(t)B$ is given by
    \begin{gather*}
    {(2\pi i h)^{-\frac{n}{2}}} e^{iS_\gamma/h}
 \abs{\Delta_\gamma}^{1/2} e^{-i\frac{\pi}{2}\mu_\gamma} a(x,\pa_x S_{\gamma}) b(y,-\pa_y S_{\gamma}) \abs{dz\, dw}^{1/2} \big(1+O(h)\big),
\end{gather*}
where the $O(h)$ error term has a
full asymptotic expansion in powers of $h$.
\end{lemma}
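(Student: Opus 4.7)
The plan is to construct a semiclassical WKB parametrix for $U(t)$ microlocally between $\opWF'(B)$ and $\opWF'(A)$ and then verify that, after composing on each side with the pseudodifferential factors, the resulting Schwartz kernel takes the claimed form. Since the hypothesis places $\gamma_{z,w}$ entirely over $X\setminus Y$, the potential $V$ is smooth on a neighborhood of every relevant classical trajectory, so the construction takes place within the standard interior semiclassical calculus and no branching or reflection phenomena enter.

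First, I would seek a half-density kernel of the form
\[
K(t,x,y;h) = (2\pi i h)^{-n/2} e^{i S(t,x,y)/h}\, a(t,x,y;h)\, \abs{dx\, dy}^{1/2},\qquad a(t,x,y;h) \sim \sum_{j\geq 0} h^j a_j(t,x,y),
\]
and plug into $(hD_t + P_h)K = O(h^\infty)$. The $h^0$ equation gives the Hamilton--Jacobi equation $\partial_t S + p(x, \partial_x S) = 0$; the non-conjugacy hypothesis together with the absence of contact with $Y$ guarantees, via the method of characteristics, a smooth solution $S(t,x,y)$ on a neighborhood of $(t_0,w,z)$ satisfying $\partial_x S = \xi(t)$, $-\partial_y S = \eta(0)$. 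By the standard identification of the Hamilton--Jacobi action with the Lagrangian $\int \frac14 \abs{\dot z}^2 - V\, dt$, this $S$ agrees with $S_\gamma$. The $h^1$ equation is a first-order transport equation for $a_0$ along the lifted characteristics; integrating it via the divergence of the characteristic flow identifies $a_0$ with $\abs{\Delta_\gamma}^{1/2}$ (the van Vleck half-density), and the higher $a_j$ are determined inductively by transport equations of the same shape.

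Next, I would pin down the overall constant and initial conditions. Near $t=0$, the kernel should reproduce $\delta(x-y)$; comparing with the free (Euclidean) Schrödinger kernel via stationary phase in a small-$t$ regularization forces the prefactor $(2\pi i h)^{-n/2}$. Composing the WKB kernel on the left by $A$ and on the right by $B$ then proceeds by applying the semiclassical composition formula for a pseudodifferential operator with a Lagrangian distribution whose canonical relation is $\operatorname{graph}(dS)$. Stationary phase yields that the leading symbol is multiplied by the values of the full symbols at the appropriate momenta, which are precisely $\partial_x S_\gamma$ on the left and $-\partial_y S_\gamma$ on the right, producing the factors $a(x,\partial_x S_\gamma)\, b(y,-\partial_y S_\gamma)$ together with the full asymptotic expansion in $h$.

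The main obstacle is the Maslov phase $e^{-i\pi\mu_\gamma/2}$, which arises from caustic crossings along $\gamma$. Although the endpoints $z,w$ are assumed non-conjugate so that the position-position generating function $S$ exists on a neighborhood of the endpoints, at intermediate times there may be caustics where the local generating function in the $(x,y)$ representation fails, and each such crossing contributes a factor $e^{-i\pi/2}$ from the signature change of a Hessian. The cleanest way to handle this is to cover the trajectory by finitely many short-time intervals on each of which the WKB ansatz is smooth, compose the resulting half-density parametrices via stationary phase, and accumulate the signature contributions; by the Morse index theorem this signature sum equals $\mu_\gamma$. Alternatively, one may invoke the global Fourier integral operator machinery of Duistermaat in which the Maslov factor arises from a fixed trivialization of the Maslov bundle over the canonical relation. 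Either approach requires care but is standard, and completes the identification with the formula stated in the lemma.
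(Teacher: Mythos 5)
The paper does not actually prove this lemma; it is stated as a direct citation of \cite[Theorem~1]{Me:92}. Your sketch therefore supplies a proof that the paper delegates elsewhere. The core steps --- WKB ansatz, eikonal equation identifying the phase with $S_\gamma$, leading transport equation yielding the van Vleck half-density $|\Delta_\gamma|^{1/2}$, and stationary-phase composition with the pseudodifferential cutoffs $A$ and $B$ --- are correct and standard. Your first route through the Maslov factor (cover $\gamma$ by short caustic-free segments, compose by stationary phase, and accumulate signature contributions) is in fact the strategy this paper adopts for its branching analogues: compare Lemma~\ref{lemma:vanvleck} for the van Vleck composition and the appeal to Theorem~5.7 of \cite{WuYaZo:24} in the proofs of Lemma~\ref{lem:ref-pgtr1} and Proposition~\ref{prop:pgtr-m}. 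One imprecision worth correcting: the claim that ``by the Morse index theorem this signature sum equals $\mu_\gamma$'' collapses two distinct facts. The accumulated signature change across a chain of short-time compositions equals the number of conjugate points crossed --- this is a local computation about the Hessian of the composed action at a caustic, not the Morse index theorem itself. The Morse index theorem then equates the number of conjugate points with the Morse index of the fixed-endpoint variational problem, which is why $\mu_\gamma$ carries the dual interpretation the paper notes. Both ingredients are needed, and a careful write-up should keep them separate.
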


\subsection{Reflective propagators}
\label{sec:ref-pgtr}

We now derive the form of the reflective and transmitted propagators
near hyperbolic points over $Y$ (i.e., transverse interaction with the
interface).  The
parametrix construction below is due to Oran Gannot \cite{gannot}, to whom we are
grateful for permission to use this computation.

We study the structure of the Schr\"odinger propagator microlocally
near an interaction with a hyperbolic point. Recall that 
in Riemannian normal coordinates $(x_1,x')$ near $Y$ with respect to $g$,
\[
p(x,\xi) = \xi_1^2 + \langle K(x)\xi',\xi' \rangle + V(x)
\]
for a positive definite matrix $K(x)$. Fix a point $(0,x',\xi_1,\xi')$ with $\xi_1 \neq 0$, and then let $E_0 = p(q_0)$.

Let $B \in \Psi_h^\Comp(X)$ have support disjoint from $T^*_YX$
  and wavefront set near a point $q_0 \in T^*X$ close to $\hyp$ satisfying $x_1(q_0) < 0$. We seek an oscillatory integral representation for 
\[
U_B(t) = e^{-itP/h}B,
\]
at least for small $|t|$. We construct the kernel of $U_B(t,x,y)$ separately for $x_1 < 0$ and $x_1 > 0$, subject to a matching condition along $Y = \{x_1 = 0 \}$.

Our ansatz for the propagator will be defined piecewise as follows:
\begin{equation}\label{propagatoransatz}
U_B(t,x,y)  = \begin{cases} \sum_{\bullet \in \{\INC,\REF\}} e^{\sfrac{i}{h}
  \phi^{\bullet}(t,x, y)}a^\bullet(t,x,y)\, |dx\, dy|^{1/2},
&x_1<0,\\
e^{\sfrac{i}{h}
  \phi^{\TRAN}(t,x, y)}a^\TRAN(t,x,y)\, |dx\, dy|^{1/2}, &x_1>0.
\end{cases}\end{equation}
The amplitudes are denoted $a^\bullet$ with $\bullet=\INC,\REF,\TRAN$
for ``incident, reflected, transmitted;'' they are assumed to have asymptotic expansions
\[
a^\bullet \sim a^\bullet_0 + h a^\bullet_1 + \ldots.
\]
Thus the phases $\phi^\bullet$ and amplitude $a^\bullet$ must solve the usual eikonal and transport equations.
The continuity conditions across $\{x_1 = 0\}$ required
to make the ansatz $\mathcal{C}^1$ are
\begin{equation}
    \label{eq:trans}
\begin{cases} 
U_B^\mathsf{I}(t,0,x',y)  + U_B^\mathsf{R}(t,0,x',y) = U_B^\mathsf{T}(t,0,x',y),\\
\pa_{x_1} U_B^\mathsf{I}(t,0,x',y)  + \pa_{x_1} U_B^\mathsf{R}(t,0,x',y) = \pa_{x_1} U_B^\mathsf{T}(t,0,x',y).
\end{cases} 
\end{equation}

\subsubsection{Construction of phase function}

Fix any point $\overline{y} \in Y$.
Let $U_1\subset X$ be a small neighborhood of $\overline{y}$ and let $U_0$ be a connected
subneighborhood such that there exists $T_0 \ll 1$ so that
for all $t \in (0, T_0)$, for all $x=(x_1,x'),\ y=(y_1,y')$ both in
$U_0$  there exists exactly
one classical bicharacteristic (i.e., an ordinary solution to Hamilton's equations) $\gamma(t)$ such that
\begin{equation}\label{unique}\begin{aligned}
  \pi(\gamma(0))=y,\quad  
  \pi(\gamma(t))=x,\quad
\pi(\gamma(s)) \in U_1,\ s \in [0, t].
\end{aligned}\end{equation}
We may further assume that if $x_1$ and $y_1$ are both negative, then
there exists at most one \emph{reflected} bicharacteristic satisfying
\eqref{unique}; recall that this is a concatenation of ordinary
bicharacteristics along which $(x_1,x',\xi')$ are continuous, while
$\xi_1$ jumps (between nonzero values) by switching sign.
Such a bicharacteristic may of course fail to exist,
depending on the convexity of $Y$ with respect to the
flow.

We now take $S$ to be the classical action
\begin{equation}
\label{eq:act}
    S(t,x,y)=t\cdot \tau (t, x, y)+ \int_{\gamma}\xi dx
\end{equation}
where $\gamma$ is the unique ordinary (i.e., unbroken) integral curve of Hamiltonian vector
field $H_p$ with $p = |\xi|^2_g +V(x) = -\tau$ from $T^*_yX$ to
$T^*_xX$ (cf.\ \cite{Ch:80}). The action thus satisfies the eikonal equation 
\begin{equation}
    \label{eq:eik}
    \partial_t S + |\nabla S|^2 + V(x) = 0.
  \end{equation}
We further decorate $S$ with the superscript $\TRAN$ or $\INC$ to
denote ``transmitted'' or ``incident'' according to whether the signs
of $x_1,y_1$ are identical ($\INC$) or opposite ($\TRAN$).  Likewise,
if $x_1,y_1<0$ are sufficiently small and $x'$ and $y'$ are close,
we define $S^\REF(t,x,y)$ to be the corresponding action integral
along the unique \emph{reflected} bicharacteristic connecting $x$ and $y$
(which can equivalently be defined as stationary point of the action along
broken trajectories with fixed endpoints as in \cite{WuYaZo:24}).
In particular, then,
\begin{equation}
    \label{eq:ref-act}
    S^{\REF}_{\gamma} (t, x, y)= t\cdot \tau (t, x, y)+ \int_{\gamma_1}\xi dx + \int_{\gamma_2}\xi dx
\end{equation}
where $\gamma_1$ is the integral curve of $H_p$ from $(y,\eta)$ to
$w_-=\gamma\cap T^*_{Y}X\cap \{\xi_1> 0\}$ and $\gamma_2$ is the
integral curve of $H_p$ from
$w_+=\gamma\cap T^*_{Y}X \cap \{\xi_1< 0\}$ to $(x,\xi)$. The reflected phase function \eqref{eq:ref-act}
satisfies the eikonal equation \eqref{eq:eik} along the generalized
branching null-bicharacteristic $\gamma$ as
$|\xi_{w_+}|^2=|\xi_{w_-}|^2$, hence energy is conserved in the
reflection, and the differential of the action still lies in
the characteristic set at each time.

Taking
$$
\phi^\bullet=S^\bullet,\ \bullet=\INC,\TRAN,\REF
$$
in the ansatz thus solves the eikonal equation \eqref{eq:eik} for each piece of the
propagator; we note further that this choice interacts well with the
matching conditions \eqref{eq:trans}.  In particular, we certainly obtain
\begin{equation} \label{eq:phaseequality}
\action^\INC(t,0,x',y) = \action^\REF(t,0,x',y) = \action^\TRAN(t,0,x',y),
\end{equation}

Since 
$\partial_{x_1}S^{\INC} = \xi_{w-,1} = -\xi_{w+,1} =
-\partial_{x_1}S^{\REF}$,
we also have the simple relationship
\begin{equation}\label{incref}
    \pa_{x_1}\action^\REF(t,0,x',y) = -\pa_{x_1}\action^\INC(t,0,x',y).
  \end{equation}
  Since $S^\INC$ and $S^\TRAN$ both satisfy the same eikonal
    equation at $x_1=0$ and have the same value there, we likewise
    obtain
    \begin{equation}\label{inctran}
    \pa_{x_1}\action^\TRAN(t,0,x',y) = \pa_{x_1}\action^\INC(t,0,x',y).
      \end{equation}

\subsubsection{Construction of Schr\"odinger kernels}
Now we consider the amplitude
equations. By matching the values of $U_B(t,x,y)$ along $\{x_1 = 0\}$
as required by \eqref{eq:trans},
  we obtain the initial condition at $Y$
\begin{equation}\label{ampvalue}
a^\INC_k(t,0,x',y) + a^\REF_k(t,0,x',y) = a^\TRAN_k(t,0,x',y)\text {
  for all } k.
\end{equation}
By matching normal derivatives (the second requirement of
\eqref{eq:trans}), we likewise obtain by \eqref{incref}, \eqref{inctran}
\begin{align}
\pa_{x_1}\action^\INC(t,0,x',y) & (a^\INC_k(t,0,x',y) - a^\REF_k(t,0,x',y)) - i \pa_{x_1} a^\INC_{k-1}(t,0,x',y) - i \pa_{x_1} a^\REF_{k-1}(t,0,x',y) \nonumber \\
= & \pa_{x_1}\action^\INC(t,0,x',y)a^\TRAN_k(t,0,x',y) - i \pa_{x_1}
a^\TRAN_{k-1}(t,0,x',y)\text { for all } k \label{ampderiv}
\end{align}
(with the convention that $a^\bullet_{-1}=0$).
We remark, crucially, that we can now produce a parametrix by solving transport
equations to any desired order: we solve for $a^{\INC}$ up to
$Y=\{x_1=0\}$, giving smooth data on this hypersurface at every order
in $k$.  This gives initial data for both $a^{\TRAN}_k$ and
$a^{\REF}_k$ for each $k$ by \eqref{ampvalue}, \eqref{ampderiv} (see
also the reformulation \eqref{abc} below); the transport equations
may then be solved in turn on $x_1\geq 0$ resp.\ $x_1\leq 0$.  Borel
summing the resulting series gives a solution to the Schr\"odinger
equation modulo $O(h^\infty)$, valid across $Y$.  In what follows we
elucidate the structure of this parametrix, and in particular, the
threshold $k$ up to which $a^{\INC}_k$ and $a^{\TRAN}_k$ agree across $Y$ and
$a^{\REF}_k$ vanishes. There is of course no a priori guarantee that
this parametrix is a good approximation to the actual solution simply
by virtue of solving the equation modulo $O(h^\infty)$, but we 
take up this question below in the proof of Proposition~\ref{prop:ref-pgtr},
the main result of this section, where we use the parametrix construction to deduce the
microlocal structure of the propagator.

Now recall that $\pa^{k}_{x_1} V(0-,x') = \pa^{k}_{x_1}V(0+,x')$
for $k < \kk$, but that in general,
\[
\pa^{\kk}_{x_1} V(0-,x') \neq \pa^{\kk}_{x_1}V(0+,x').
\]
\emph{Our immediate goal is to show that $a^\REF_k(t,x,y) =0$ for $0
  \leq k < \kk$, and that incident and transmitted coefficients match
at these orders.} We begin with
for $k=0$.  Subtracting \eqref{ampvalue} from
\eqref{ampderiv} with $k=0$, we obtain  
\[
a^\REF_0(t,0,x',y) = 0.
\]
We now turn to the transport equations to extend this equation into
$X\backslash Y$.

We record the form of the transport equations in general for a
Schr\"odinger equation whose Laplacian is with respect to an arbitrary
metric. Recalling that $Q := hD_t -h^2\Delta_g + V$, we have
 \[
 e^{-i\action/h}Q e^{i\action/h} = (hD_t + \pa_t \action) +
 \frac{1}{\sqrt{g}}((h/i) \pa_i + \pa_i \action)g^{ij}\sqrt{g} ((h/i) \pa_j +
 \pa_j \action) + V 
 \]
 Expanding out the spatial derivative part gives
 \[
 -h^2\Lap_g + 2(h/i)g^{ij} (\pa_i \action) \pa_j + (h/i)\Lap_g\action
 +g^{ij}\pa_i \action \pa_j\action.
 \]
As $$\pa_t \action + g^{ij}\pa_i \action \pa_j\action+V=0$$ by the eikonal
 equation, the transport equations giving the vanishing of the
 asymptotic expansion of the remaining terms are 
\begin{align*}
&(\pa_t + 2g^{ij} (\pa_i \action)\pa_j+ \Delta_g \action)a_0 = 0, \\
&(\pa_t + 2g^{ij} (\pa_i \action)\pa_j + \Delta_g \action)a_k = i \Delta_g a_{k-1};
\end{align*}
here we have dropped (and will continue to omit) the superscripts on
the $S$ and $a_j,$ as we will be working at the interface where the
incident and transmitted actions agreed, and each
  $a_j^\bullet$, $\bullet\in \{\TRAN,\INC,\REF\}$ must satisfy the
  transport equation (on the relevant side of the interface $\{x_1=0\}$).

We introduce two pieces of notation to streamline the
  bookkeeping what follows.  First, let $\patan f$ stand for all
  possible products of vector fields $\pa_t,
  \pa_{x'}$ applied to $f$, i.e., all derivatives of $f$ in
  tangential variables only.  We will not track orders of such
  operators; they are merely taken to be finite.  (Recall that
  tangential derivatives of $V$ are all continuous.) Second, we further
  write
  $$
\mathrm{Fun}(f_1,\dots f_n)
$$
to denote some function of the arguments $\patan f_j$, i.e., \emph{we allow
dependence on tangential derivatives} without writing it explicitly in
the notation.  (The point is to track just the crucial $\pa_{x_1}$
derivatives.)
\begin{lemma}\label{lemma:phiderivs}
	Let $j\geq 2$. Then
	\begin{equation}\label{woodshole8.5}
	\partial_{x_1}^{j} \action = -\frac{\pa_{x_1}^{j-1}
          V}{2\pa_{x_1}\action} + G_j(\action, V, \pa_{x_1} V,\ldots,
       \pa_{x_1}^{j-2}V)=\mathrm{Fun}(\action, V, \pa_{x_1} V, \dots, \pa_{x_1}^{j-2}V).
	\end{equation}	
\end{lemma}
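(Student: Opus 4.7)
\medskip

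\noindent\textbf{Proof plan.} I will proceed by induction on $j \geq 2$, using the eikonal equation \eqref{eq:eik} as the source of relations between normal derivatives of $\action$ and derivatives of $V$. Writing the metric in normal coordinates as $g = dx_1^2 + k_{ij}(x_1,x')\,dx'_idx'_j$, the eikonal equation reads
\[
\partial_t \action + (\partial_{x_1}\action)^2 + \langle K(x)\nabla'\action,\nabla'\action\rangle + V = 0,
\]
where $K = (k^{ij})$ is the inverse of $k_{ij}$ and $\nabla'$ denotes the $x'$-gradient. This is the only relation I will exploit; the rest is bookkeeping.

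For the base case $j=2$, apply $\partial_{x_1}$ to the eikonal equation and solve for $\partial_{x_1}^2 \action$, which gives
\[
2(\partial_{x_1}\action)(\partial_{x_1}^2 \action) = -\partial_{x_1} V - \partial_t \partial_{x_1}\action - \partial_{x_1}\langle K\nabla'\action,\nabla'\action\rangle.
\]
The right-hand terms other than $-\partial_{x_1} V$ involve only tangential derivatives of $\action$, tangential derivatives of $\partial_{x_1}\action$, and the coefficients (and their $\partial_{x_1}$-derivatives) of $K$, which are smooth; so after dividing by $2\partial_{x_1}\action$ these contribute to $G_2$ while $-\partial_{x_1}V/(2\partial_{x_1}\action)$ is the advertised main term. (Recall $\partial_{x_1}\action\neq 0$ near the hyperbolic point.)

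For the inductive step, assume the formula for $\partial_{x_1}^{j}\action$. Apply $\partial_{x_1}$ once more. From the main term,
\[
\partial_{x_1}\!\Bigl(-\tfrac{\partial_{x_1}^{j-1}V}{2\partial_{x_1}\action}\Bigr)
= -\tfrac{\partial_{x_1}^{j}V}{2\partial_{x_1}\action} + \tfrac{\partial_{x_1}^{j-1}V\cdot \partial_{x_1}^2\action}{2(\partial_{x_1}\action)^2},
\]
and I substitute the base case expression for $\partial_{x_1}^2 \action$ into the second summand; the result only involves $V,\partial_{x_1}V,\dots,\partial_{x_1}^{j-1}V$ together with tangential derivatives of $\action$ and $\partial_{x_1}\action$, so it folds into $G_{j+1}$. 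For the $G_j$ piece, observe that $\partial_{x_1}$ interacts with its three types of arguments as follows: (a) it turns a tangential derivative of $\action$ into a tangential derivative of $\partial_{x_1}\action$ by commuting; (b) it turns a tangential derivative of $\partial_{x_1}\action$ into the same tangential derivative applied to $\partial_{x_1}^2 \action$, which I again replace using the base case; (c) it increases the normal-derivative order on the $V$-arguments by one, raising the top order from $j-2$ to $j-1$, still below the $\partial_{x_1}^j V$ that appears in the new main term. Thus everything coming out of $\partial_{x_1} G_j$ fits into the $G_{j+1}$ slot.

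The only real subtlety is the potential appearance of $\partial_{x_1}^2 \action$ when we hit a tangential derivative of $\partial_{x_1}\action$ with $\partial_{x_1}$ (case (b) above), but this is resolved cleanly by reinvoking the base case to express $\partial_{x_1}^2 \action$ in terms of $V,\partial_{x_1}V,\partial_{x_1}\action$, and tangential derivatives---exactly the quantities allowed in $G_{j+1}$. Once the induction closes, the second equality in \eqref{woodshole8.5} is automatic, since $-\partial_{x_1}^{j-1}V/(2\partial_{x_1}\action)$ is itself of the form $\fun(\action,\partial_{x_1}^{j-1}V)$ (read generously to allow $\partial_{x_1}\action$ as argument), completing the proof.
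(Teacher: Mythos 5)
Your overall approach — induction on $j$ by differentiating the eikonal equation in $x_1$ and solving for the top normal derivative of $\action$ — is the same as the paper's. But there is a real gap: the key observation that the eikonal equation can be solved \emph{algebraically for $\partial_{x_1}\action$ itself}, namely
\[
\partial_{x_1}\action = \bigl(-\partial_t \action - \langle K(x)\nabla'\action,\nabla'\action\rangle - V\bigr)^{1/2},
\]
so that $\partial_{x_1}\action$ is a smooth function of $\patan\action$ and $\patan V$ alone, is never stated. Without this, the $G_j$ you carry through the induction is genuinely allowed to depend on $\partial_{x_1}\action$ (as you concede in items (a) and (b) and in the parenthetical ``read generously'' at the end), and the lemma's second equality — that the whole expression is $\mathrm{Fun}(\action, V, \ldots, \partial_{x_1}^{j-2}V)$ in the paper's strict sense — is not established. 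That equality is precisely the content that makes the lemma useful downstream (it controls which jump terms of $V$ can appear in the reflection amplitudes), so ``read generously'' here is not a notational nicety but an unproven claim.

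The fix is short: insert the solved form of the eikonal equation once and invoke it whenever a $\partial_{x_1}\action$ (or, via the base case, a $\partial_{x_1}^2\action$) appears; then all such occurrences are replaced by legitimate $\mathrm{Fun}(\action, V)$ quantities, your cases (a) and (b) close inside the allowed class, and the second equality of \eqref{woodshole8.5} follows with no hand-waving. That is exactly the step the paper makes explicit, and it is the only genuinely missing ingredient in your argument.
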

\begin{proof}
	The proof is by induction, beginning with $j=2$. First consider the eikonal equation
\[
\partial_t \action + (\partial_{x_1}\action)^2 + k^{\alpha\beta}(x)\partial_{x^\alpha}\action \cdot  \partial_{x^\beta}\action + V(x)  = 0.
\] 
First note that we can explicitly solve this equation for $\pa_{x_1}\action$, yielding
\begin{equation}\label{pa1phi}
\partial_{x_1}\action = (-\partial_t \action - k^{\alpha\beta}(x)\partial_{x^\alpha}\action \cdot  \partial_{x^\beta}\action - V)^{1/2}.  
\end{equation}
Thus we can express $\pa_{x_1}\action$ as a smooth function of
$(\patan \action,\patan V)$ in a neighborhood of $(\action(x_0), V(x_0))$. 

Now to compute the $j$'th normal derivative of $\action$ for $j\geq 2$, differentiate the eikonal equation in $x_1$. Thus, to handle the term $j=2$, we obtain
\[
\pa_{x_1}\pa_t \action  + 2 \pa^2_{x_1} \action \cdot \pa_{x_1}\action + 2k^{\alpha\beta} \pa_{x^\alpha}\action \cdot \pa_{x_1}\pa_{x^\beta}\action + (\pa_{x_1}k^{\alpha\beta})\partial_{x^\alpha}\action \cdot  \partial_{x^\beta}\action + \pa_{x_1}V.
\]
Now we need to solve for $\pa_{x_1}^2\action$. This easily yields an expression of the form
\[
\pa_{x_1}^2\action = -\frac{ \pa_{x_1}V}{2\pa_{x_1} \action} +
\fun(x,\action,\pa_{x_1}\action)
\]
for an appropriate smooth function.
Note now that we can eliminate the $\pa_{x_1}\action$ dependence in $F_2$ in favor of $(\action,V)$ dependence using the equation above. Thus we indeed have
\[
\pa_{x_1}^2\action = -\frac{ \pa_{x_1}V}{2\pa_{x_1} \action} +
G_2(\patan\action,\patan V).
\]
In the inductive step, suppose that
\[
\partial_{x_1}^{j} \action = -\frac{\pa_{x_1}^{j-1}
  V}{2\pa_{x_1}\action} + G_j(\patan\action, \patan V, \pa_{x_1}
\patan V,\ldots, \patan\pa_{x_1}^{j-2}V).
\]	
Differentiating both sides with respect to $x_1$,
\begin{equation}
    \begin{split}
\partial_{x_1}^{j+1} \action & = -\frac{\pa_{x_1}^{j} V}{2\pa_{x_1}\action} 
+ \frac{\pa_{x_1}^{j-1} V}{2(\pa_{x_1}\action)^2} \pa_{x_1}^2 \action + \pa_{x_1}\action \cdot \pa_{t_0} G_j(\patan\action, \patan V, \patan\pa_{x_1} V,\cdots, \patan\pa_{x_1}^{j-2}V) \\ 
& \quad \quad \quad \quad \quad+ F_j(\patan\action, \patan V, \patan\pa_{x_1} V,\ldots, \patan\pa_{x_1}^{j-2}V, \patan\pa_{x_1}^{j-1}V).
    \end{split}
\end{equation}
In the second line we can use the inductive hypothesis to exchange
$\pa_{x_1}\action$ and $\pa_{x_1}^2\action$ for $(\action, V,\pa_{x_1}V)$
dependence, thus completing the proof, since the third line is already
in the desired form (and \eqref{pa1phi} yields the cruder functional
dependence given by the second inequality in \eqref{woodshole8.5}).
\end{proof}

Next we look at the structure of the transport equations. In our normal coordinates they take the form 
\begin{equation}\label{transport1}
(\pa_t + 2\pa_{x_1} \action \cdot \pa_{x_1} + 2k^{\alpha\beta} \pa_{x^\alpha} \action \cdot  \pa_{x^\beta} + \Delta_g \action)a_k = i\Delta_g a_{k-1}.
\end{equation}
Consider the structure of the first equation (i.e., $k=0$). Here we are interested in computing $\pa_{x_1}a_0$. Notice that we can write
\begin{equation}\label{Deltaphi}
\Delta_g \action = E(x) \pa_{x_1} \action + \pa_{x_1}^2 \action +  \Delta_k \action, \quad E(x) = (\det k)^{-1/2} \pa_{x_1} (\det k)^{1/2}.
\end{equation}
Thus if we solve for $\pa_{x_1} a_0$, we get
\begin{equation}\label{a0}
\pa_{x_1} a_0 = -\frac{\pa_{x_1}^2 \action}{2\pa_{x_1}\action}a_0 +
\fun( a_0,\action).
\end{equation}
We also need to compute higher order derivatives of $a_0$. For this,
we note inductively that
\begin{equation}\label{a0j}
\pa_{x_1}^ja_0 = -\frac{\pa_{x_1}^{j+1} \action}{2\pa_{x_1}
  \action}a_0 +\fun(a_0, \action,\cdots,\pa_{x_1}^{j}\action),
\end{equation}
where $j \geq 1$. Notice that we can also write this in the form
\[
\pa_{x_1}^ja_0 = \fun(a_0, \action,\cdots, \pa_{x_1}^{j+1}\action).
\]

Now we compute $\pa_{x_1}a_1$. Returning to \eqref{transport1} and
proceeding as for $a_0,$ we find that
\begin{equation}\label{pa1a1}
\pa_{x_1} a_1 =  \fun(a_1,\action, \pa_{x_1}\action, \pa_{x_1}^2\action)+ \frac{i \Delta_g a_{0}}{2 \pa_{x_1}\action}.
\end{equation}
Consider the Laplacian term on the right hand side. Note that
\begin{align*}
\frac{i\Delta_g a_0}{2\pa_{x_1}\action}  &= \frac{i\pa_{x_1}^2
                                        a_0}{2\pa_{x_1}\action} +
                                        \mathrm{Fun}(a_0,\pa_{x_1}a_0,
                                        \pa_{x_1}\action) \\ 
&= \frac{i\pa_{x_1}^2 a_0}{2\pa_{x_1}\action} + \mathrm{Fun}(a_0,\action,\pa_{x_1}\action,\pa_{x_1}^2\action) \\
& = -\frac{i\pa_{x_1}^3\action}{(2\pa_{x_1}\action)^2}a_0 + \mathrm{Fun}(a_0,\action,\cdots, \pa_{x_1}^2\action)
\end{align*}
by \eqref{a0}, \eqref{a0j}.
Returning to \eqref{pa1a1} we now obtain
\[
\pa_{x_1}a_1 = -\frac{i\pa_{x_1}^3\action}{(2\pa_{x_1}\action)^2}a_0 +
\mathrm{Fun}(a_0,a_1,\action,\pa_{x_1} \action,\pa_{x_1}^2\action).
\]
We now continue inductively to show
\begin{equation}\label{pa1ak}
\begin{aligned}
\pa_{x_1} a_k &= -\frac{i^k \pa_{x_1}^{k+2}\action}{(2\pa_{x_1} \action)^{k+1}}a_0 + \mathrm{Fun}(a_0,a_1,\cdots a_k,\action,\cdots,\pa_{x_1}^{k+1}\action) \\
&= \mathrm{Fun}(a_0,a_1,\cdots a_k,\action,\cdots,\pa_{x_1}^{k+2}\action). 
\end{aligned}
\end{equation}
Notice in particular that the inductive hypothesis \eqref{pa1ak} implies that
\[
\pa_{x_1}^2 a_k = -\frac{i^k \pa_{x_1}^{k+3}\action}{(2\pa_{x_1} \action)^{k+1}}a_0 + \mathrm{Fun}(a_0,\cdots,a_k,\pa_{x_1}a_0,\cdots\pa_{x_1}a_k,\action,\cdots,\pa_{x_1}^{k+2}\action).
\]
The last term can be written as
\[ \mathrm{Fun}(a_0,\cdots,a_k,\action,\cdots,\pa_{x_1}^{k+2}\action).
\]
To establish \eqref{pa1ak} inductively, note that the transport equation
\eqref{transport1} yields
\[
\pa_{x_1}a_{k+1} = \mathrm{Fun}(a_{k+1},\action,\pa_{x_1}\action,\pa_{x_1}^2\action) + \frac{i \Delta_g a_{k}}{2 \pa_{x_1}\action}.
\]
We use the inductive hypothesis for the last term to write
\begin{align*}
\frac{i\Delta_g a_k}{2\pa_{x_1}\action}  &= \frac{i\pa_{x_1}^2 a_k}{2\pa_{x_1}\action} + \mathrm{Fun}(a_k,\pa_{x_1}a_k) \\ 
&= \frac{i\pa_{x_1}^2 a_k}{2\pa_{x_1}\action} + \mathrm{Fun}(a_0,\ldots,a_k,\action,\ldots,\pa_{x_1}^{k+2}\action) \\
& = -\frac{i^{k+1}\pa_{x_1}^{k+3}\action}{(2\pa_{x_1}\action)^{k+2}}a_0 + \mathrm{Fun}(a_0,\ldots,a_k,\action,\cdots,\pa_{x_1}^{k+2}\action).
\end{align*}
This completes the proof by induction. The final step is to replace
the dependence on $\action$ with dependence on $V$ using
Lemma~\ref{lemma:phiderivs}. This tells us that
\begin{equation}
	\label{eq:d1ak}
	\pa_{x_1} a_k =
        \frac{i^{k}\pa_{x_1}^{k+1}V}{(2\pa_{x_1}\action)^{k+2}}a_0
        +
        \mathrm{Fun}(a_0,a_1,\cdots,a_k,\action,V,\ldots,\pa_{x_1}^kV). 
\end{equation}

Now we come back to the matching conditions \eqref{ampvalue}, \eqref{ampderiv}.
Let $\psi$ denote the restriction of
$\pa_{x_1}\action^\INC$ to $\{x_1 =0\}$. Then the conditions read
\[
a^{\INC}_k + a^{\REF}_k = a^{\TRAN}_k, \quad \psi(a^{\INC}_k -
a^{\REF}_k) - i\pa_{x_1}(a^{\INC}_{k-1} + a^{\REF}_{k-1}) = \psi
a^{\TRAN}_k - i\pa_{x_1} a^{\TRAN}_{k-1},\text{ on } Y.
\]

If we multiply the first equation by $\psi$ and add, resp. subtract the second equation we obtain
\begin{equation}\label{abc}
\begin{aligned}
2\psi (a^{\INC}_k - a^{\TRAN}_k) &= i\pa_{x_1}(a^{\INC}_{k-1} + a^{\REF}_{k-1} - a^{\TRAN}_{k-1}), \\
2\psi a^{\REF}_k &= i\pa_{x_1}(a^{\TRAN}_{k-1} - a^{\INC}_{k-1}- a^{\REF}_{k-1}).
\end{aligned}
\end{equation}
Now we start with $k=0$, which tells us that along $\{x_1 =0 \}$, we
have $a^{\REF}_0 = 0$ and $a^{\INC}_0 = a^{\TRAN}_0$. Observe that this implies $a^{\REF}_0= 0$
identically, since $a^{\REF}_0$ satisfies the transport equation
  \eqref{transport1} with vanishing initial data at the hypersurface
  $\{x_1=0\}$.

We now further claim that if 
$V,\dots, \pa_{x_1}^{k}V$ are
  continuous across $Y$ then $a^{\INC}_j=a^{\TRAN}_j$ and $a^{\REF}_j=0$ on $Y$ for all $j
  \leq k$.  We show this inductively, having established it above for
  $k=0$; the assumed continuity of $V$ was of course tacitly employed in this argument.

\emph{Suppose then that $V,\dots, \pa_{x_1}^{k+1}V$ are
  continuous} and that $a^{\INC}_j = a^{\TRAN}_j$ and $a^{\REF}_j = 0$ on $Y$ for $j \leq
  k$. We would like to conclude that $a^{\INC}_{k+1} = a^{\TRAN}_{k+1}$ and
  $a^{\REF}_{k+1} = 0$ on $Y$. Since the $a^{\REF}_j$ satisfy \eqref{transport1}, the inductive hypothesis implies
  that $a^{\REF}_j = 0$ identically for $j \leq k$. In particular
  $\pa_{x_1}a^{\REF}_k = 0$ identically, and thus in particular along $Y$. Equation \eqref{eq:d1ak}  then yields
\[
\pa_{x_1} a^{\INC}_k = \pa_{x_1} a^{\TRAN}_k \text{ along } Y
\]
since $\pa_{x_1}^{k+1}V$ is continuous and $a^{\INC}_j = a^{\TRAN}_j$ for $j \leq
k$. Equation \eqref{abc} then yields $a^{\INC}_{k+1} = a^{\TRAN}_{k+1}$ along $Y$ and
$a^{\REF}_{k+1}=0$  (globally, by the transport
equation). Since the continuity of normal derivatives of
  $V$ holds up to the $\kk-1$'th derivative, we
  have thus established that along $Y$,
\begin{equation}\label{eq:continuity}
a^{\INC}_{k} = a^{\TRAN}_{k}, \quad a^{\REF}_{k}=0\ \text{ for
  all } k \leq \kk-1.
\end{equation}

Consequently, no reflection or jump between incident or
  transmitted waves occurs up to and including $O(h^{\kk-1})$ terms.
    We complete this section by giving an explicit expression of the
  leading order reflection coefficient $a_{\kk}^{\REF}$ in terms of the potential $V$.

Let $J(x')$ be the function on $Y=\{x_1=0\}$ given by the jump in the
$\kk$'th normal derivative of $V$:
$$
J(x') = \pa_{x_1}^{\kk}V(0+,x')-\pa_{x_1}^{\kk}V(0-,x').
$$
As this function on
$Y$ manifestly depends on the chosen orientation of $NY$, we remark
that the more correct global notation (as used in the introduction) is
$J(x',\bv),$ where $\bv \in T_YX$ denotes a vector transverse to $Y$ and
positively oriented in the normal coordinate system in which the jump
is computed.  Hence in particular, $J(x',-\bv)=-J(x',\bv)$.

By \eqref{abc},
$$2\psi a^{\REF}_k = i\pa_{x_1}(a^{\TRAN}_{k-1} - a^{\INC}_{k-1}- a^{\REF}_{k-1}).$$
Since $a_k^{\REF}(t,x, y)=0$ for $0\leq k<\kk$, we have 
$$2\psi a^{\REF}_{\kk} = i\pa_{x_1}(a^{\TRAN}_{\kk-1} - a^{\INC}_{\kk-1}).$$
Now apply \eqref{eq:d1ak} to $a^{\INC}_{\kk-1}$ and $a^{\TRAN}_{\kk-1}$
and note that the second part in the equation \eqref{eq:d1ak} is the
same for $a^{\INC}_{\kk-1}$ and $a^{\TRAN}_{\kk-1}$. This yields
\begin{equation}
	\label{eq:ref-coeff0}
	a_{\kk}^{\REF} (t,0,x',y)= \frac{i^{\kk} J(x')}{(2 \pa_{x_1}\action)^{\kk+2}} a^{\INC}_0(t,0,x',y);
      \end{equation}
solving the transport equation of course extends this to a function
defined on $x_1<0,$ the leading order amplitude of the reflected
wave.  
Recalling that $\action=\action^{\INC}$ along $Y$ and that this is a
      generating function of the symplectomorphism given by
      bicharacteristic flow, we may of course write
      $$
\pa_{x_1} \action=\xi_1 =\xi_1(t,0,x',y),
$$
where $\xi_1(t,0,x',y)$ is the normal momentum (in the incident direction)
of the bicharacteristic connecting $y$ to $(0,x')$ in time $t$.
Thus, we finally arrive at 
\begin{equation}
	\label{eq:ref-coeff}
	a_{\kk}^{\REF} (t,0,x',y)= \frac{i^{\kk} J(x')}{(2 \xi_1)^{\kk+2}} a^{\INC}_0(t,0,x',y)
      \end{equation}
as the leading-order nonvanishing term in the reflected
      propagator.
      
   \begin{definition}
Let the \emph{reflection coefficient} be the quantity
\begin{equation}\label{reflectioncoefficient}
    \rc(t,x,y): =  \frac{i^{\kk} J(x')}{(2 \xi_1)^{\kk+2}}
\end{equation}
evaluated at the point of
reflection $(x',\xi)\in T^*_Y X$ of the reflected bicharacteristic from $y$ to $x$ in time $t$.
      \end{definition}

 We can now
collect the outcome of our parametrix construction in the following
result about the structure of the microlocalized
propagator for a single reflection or transmission:
\begin{proposition}
\label{prop:ref-pgtr} For $T>0$ sufficiently small,
	for $A,B\in \Psi^{0}_{h,\Comp}(X)$ near $Y$ with
        points in $\opWF_h(A)$ and $\opWF_h(B)$ related by at most one
        reflected branching bicharacteristic of length in $(0, T)$, the microlocalized
        reflective propagator is given on $t\in (0,T)$ by 
	\begin{equation}
		\label{eq:ref-pgtr}
        \begin{split}
        A e^{-itP_h/h} B = (2\pi &i h)^{-\frac{n}{2}}h^\kk
        e^{\sfrac{i}{h} S^\REF_{\gamma}(t,x,y) }
        \abs{\Delta_{\gamma}}^{\frac{1}{2}} \\ & a(x,\partial_x
        S^\REF_{\gamma})\rc(t,x,y) b(y,-\partial_y
        S^\REF_{\gamma})\, \abs{ dx\, dy}^{\frac{1}{2}}(1+O(h)), 
        \end{split}
	\end{equation} 
	with $\rc(t,x,y)$ the reflection coefficient
        \eqref{reflectioncoefficient}, evaluated at the point $x'$ and
        normal momentum $\xi_1$ of
        reflection of the bicharacteristic from $y$ to $x$, and $a(x,\xi)$ and $b(y,\eta)$ are symbols of $A$ and $B$ correspondingly.

        For $A,B\in \Psi^{0}_{h,\Comp}(X)$ near $Y$ and
          supported over $A\backslash Y$ with
        points in $\opWF_h(A)$ and $\opWF_h(B)$ related by at most one
        transmitted (i.e., not reflected) branching bicharacteristic of length in $(0, T)$, the microlocalized
        reflective propagator is given on $t\in (0,T)$ by the same
        expression as in Lemma~\ref{lem:int-pgtr}.
      \end{proposition}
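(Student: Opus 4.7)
The plan is to upgrade the formal parametrix just constructed into a microlocal description of $A e^{-itP_h/h}B$ by combining Borel summation with a uniqueness argument based on an energy estimate. The ingredients on the formal side are already in place: we have piecewise expansions
\[
\tilde U_B^{\bullet}(t,x,y)\sim e^{iS^{\bullet}(t,x,y)/h}\sum_{k\geq 0}h^k a^{\bullet}_k(t,x,y),\qquad \bullet\in\{\INC,\REF,\TRAN\},
\]
satisfying the eikonal and transport equations and the matching relations \eqref{ampvalue}--\eqref{ampderiv}, together with the key consequences \eqref{eq:continuity} and \eqref{eq:ref-coeff} that $a^{\REF}_k\equiv 0$ for $k<\kk$ and $a^{\REF}_{\kk}|_Y = \rc\cdot a^{\INC}_0|_Y$.

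First I would Borel sum each of the three series in $h$ and glue them into a single kernel $\tilde U_B(t,x,y)$, which by \eqref{eq:trans} is $C^1$ across $Y$ and satisfies $Q_h\tilde U_B=O(h^\infty)$ pointwise. I would fix the still-free leading incident amplitude $a^{\INC}_0$ (and iteratively the higher $a^{\INC}_k$) by demanding that $\tilde U_B$ match the interior parametrix of Lemma~\ref{lem:int-pgtr} for $U(t)B$ on small positive times before the bicharacteristic from $y$ reaches $Y$; this forces
\[
a^{\INC}_0(t,x,y) = (2\pi ih)^{-n/2}\abs{\Delta_\gamma}^{1/2}\,b(y,-\partial_y S^{\INC})
\]
at the interface, with no Maslov phase since short-time non-conjugate incident trajectories carry no conjugate points.

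Next I would compare $\tilde U_B$ to the true propagator. The difference $V(t):=U(t)B-\tilde U_B(t)$ satisfies $Q_h V=O(h^\infty)$ with Cauchy data $O(h^\infty)$ microlocally on $\opWF_h(B)$, so the standard $L^2$ energy estimate for the semiclassical Schr\"odinger equation yields $V(t)=O_{L^2}(h^\infty)$ uniformly on $t\in(0,T]$; composing on the left with $A$ (which has compact microsupport and is $L^2$-bounded) preserves this $O(h^\infty)$ error. It then remains to read off the form of $A\tilde U_B(t)$ restricted to each side of $Y$ via the standard composition of a semiclassical pseudodifferential operator with an oscillatory-integral operator whose phase $S^{\bullet}_\gamma$ is non-degenerate (guaranteed by the non-conjugacy hypothesis). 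On the reflected side, the stationary-phase evaluation samples $a(x,\partial_x S^{\REF}_\gamma)$ from $A$, while the leading reflected amplitude appears at order $h^\kk$ because $a^{\REF}_k=0$ for $k<\kk$, with value $\abs{\Delta_\gamma}^{1/2}\rc(t,x,y)\,b(y,-\partial_y S^{\REF}_\gamma)$ after transporting $a^{\REF}_\kk$ along the reflected leg from $Y$ to $x$; the transmitted side yields Lemma~\ref{lem:int-pgtr} verbatim, since $a^{\TRAN}_0 = a^{\INC}_0$ on $Y$ and no $h^\kk$ suppression occurs.

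The main obstacle I anticipate is the last step: verifying that transporting the leading reflected coefficient $\rc\cdot a^{\INC}_0$ from $Y$ along the reflected leg under the transport equation produces exactly the van Vleck half-density $\abs{\Delta_\gamma}^{1/2}$ for the broken trajectory times $\rc$, with no spurious factor introduced at the reflection. Geometrically this should follow because the reflection map on $T^*Y$ induced by the branching flow is symplectic, so the Jacobian of the broken flow factors cleanly into incident and reflected Jacobians, and the transport equation's interpretation as parallel transport of a half-density along Hamiltonian flow applies to each leg separately. Making this precise and keeping careful track of half-densities and orientations across $Y$ (e.g.\ in Fermi normal coordinates along $\gamma$, in the spirit of \cite{WuYaZo:24}) is where the bulk of the technical bookkeeping will lie.
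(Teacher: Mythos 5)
Your overall plan---construct the piecewise parametrix, Borel sum, match to the interior parametrix for short times, and then argue that the difference from the true propagator is negligible---is in the same spirit as the paper's proof. But the central step of your argument has a genuine gap that the paper works specifically to avoid.

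You propose to set $V(t)=U(t)B-\tilde U_B(t)$, observe that $Q_h V = O(h^\infty)$ with negligible Cauchy data, and conclude $V(t)=O_{L^2}(h^\infty)$ by the standard $L^2$ energy estimate. The difficulty is that the formal parametrix $\tilde U_B$ is only defined locally near $Y$, where the eikonal/transport construction makes sense and where the trajectory from $y$ to $x$ stays in a coordinate chart and avoids conjugate points. To obtain a globally defined object to which an $L^2$ estimate can be applied, you must truncate the parametrix in space, e.g.\ multiply by a cutoff $\Upsilon(x_1)$ supported near $Y$. But then
\[
Q_h\bigl(\Upsilon V_B\bigr)=[P_h,\Upsilon]V_B+O(h^\infty),
\]
and since the kernel $V_B$ is $O(h^{-n/2})$ and $[P_h,\Upsilon]$ is an $O(h)$ differential operator, the commutator term is only $O(h^{1-n/2})$ in $L^2$. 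Feeding this into the energy estimate produces an error of size $O(h^{-n/2})$---useless. Your claim that $Q_h\tilde U_B=O(h^\infty)$ ``pointwise'' does not rescue this: the energy estimate requires an $L^2$ bound, and pointwise control away from $Y$ does not exist because the expansions are not defined there.

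The paper instead exploits the \emph{microlocal location} of the cutoff error: the form of the three phases shows that $\WF_h([P_h,\Upsilon]V_B)$ lies over $\supp\nabla\Upsilon$ in the outgoing direction $\xi_1 x_1>0$, so propagation of singularities (for the conormal potential) keeps $U(t-s)[P_h,\Upsilon]V_B$ microlocally disjoint from $\WF'A$. One then writes $U(t)B-V_{\Upsilon B}$ by Duhamel and applies $A$ on the left, at which point the Duhamel integral is $O(h^\infty)$. This left-composition with $A$ before estimating is essential and is exactly what a direct energy estimate omits. The transport bookkeeping you flag at the end as the ``main obstacle'' (carrying $\rc\cdot a_0^\INC$ along the reflected leg while recovering the van Vleck half-density) is, by contrast, handled straightforwardly by the transport equations and does not need the symplectic-reflection-map argument you sketch; the real work in completing the proof is the truncation-and-propagation step above.
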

            \begin{proof}
We must of course truncate the parametrix construction, as we cannot
solve our equations along the long-time flow if the flow is trapped.
Suppose $\abs{x_1}<\ep_1$ on $\pi\WF'A \cup\pi \WF'B.$
Let $\Upsilon(x_1) \in \CcI(X)$ be chosen to be supported in
$\abs{x_1}< 2\ep_1$ and equal to $1$ on $\abs{x_1}<\ep_1$.  Let
$$
V_{\Upsilon B}=\begin{cases} 
V_{\Upsilon B}^{\INC}(t) + V_{\Upsilon B}^{\REF}(t) ,\quad & x_1<0,\\
V_{\Upsilon B}^{\TRAN}(t) ,\quad & x_1>0,
\end{cases}
  $$
denote the parametrix construction above, multiplied on the left by
$\Upsilon (x_1)$, and applied to agree with the short-time interior
propagator from Lemma~\ref{lem:int-pgtr} for small time.  Thus,
$$
V_{\Upsilon B}^\REF=
(2\pi  h)^{-\frac{n}{2}+\kk} e^{\sfrac{i}{h} S^\REF_{\gamma}(t,x,y) }
          \abs{\Delta_{\gamma}}^{\frac{1}{2}}  \Upsilon(x_1)  a^\REF(t,x,y) b(y,-\pib\partial_y
          S^\REF_{\gamma}) \, \abs{dx\, dy}^{\frac{1}{2}}
          (1+O(h)),
$$
$$
V_{\Upsilon B}^\INC=
(2\pi  h)^{-\frac{n}{2}} e^{\sfrac{i}{h} S^\INC_{\gamma}(t,x,y) }
          \abs{\Delta_{\gamma}}^{\frac{1}{2}}  \Upsilon(x_1) b(y,-\pib\partial_y
          S^\INC_{\gamma}) \, \abs{dx\, dy}^{\frac{1}{2}}
          (1+O(h)),
$$
$$
V_{\Upsilon B}^\TRAN=
(2\pi  h)^{-\frac{n}{2}} e^{\sfrac{i}{h} S^\TRAN_{\gamma}(t,x,y) }
          \abs{\Delta_{\gamma}}^{\frac{1}{2}}  \Upsilon(x_1) b(y,-\pib\partial_y
          S^\TRAN_{\gamma}) \, \abs{dx\, dy}^{\frac{1}{2}}
          (1+O(h))
          $$
          (with $O(h)$ terms here and below denoting terms with a full
          asymptotic expansion in integer powers of $h$).  Here we
          have used the fact that we know the solutions to the
          transport equations away from $Y$ are given by the standard
          formula for the amplitude from Lemma~\ref{lem:int-pgtr}.
          
          Applying the Schr\"odinger operator to the parametrix
          yields, by the parametrix construction
          $$
(hD_t+P_h) V_{\Upsilon B} =[P_h, \Upsilon]V_B+O(h^\infty),
$$
where $V_B$ denotes the parametrix without the factor of $\Upsilon$.
Now note that $\WF_h ([P_h, \Upsilon]V_B)$ lies over $\supp \nabla
\Upsilon$ and, by the the form of the phases \eqref{incref},
\eqref{inctran}, \eqref{pa1phi},
lies in the ``outward'' direction $\xi_1 x_1>0$; note that there is no
contribution from the incident phase (which would otherwise be incoming) since $\nabla
\Upsilon=0$ on $\pi \WF' A$.  By the propagation
of singularities results of \cite{GaWu:23a}\footnote{This is overkill,
  as the singularities in question are oriented away from $Y$, so
  ordinary propagation of singularities together with a localization
  argument suffice.}(as revisited in the time
dependent setting in \cite{GaWu:21}),
  $\WF U(t) [P_h, \Upsilon]V_B$
remains disjoint from $\WF'A \cup \WF' B$ for $t\in [0, T]$ (if $T$ is
sufficiently small).
In particular, for small $\varpi>0$ and $t \in [\varpi,T]$ we note that 
\begin{equation}\label{outgoingremainder}
\int_{\varpi}^t A U(t-s) [P_h, \Upsilon]V_B \, ds=O(h^\infty).
\end{equation}
Note also that $V_{\Upsilon B}(t)$ differs from $U(t) B$ by $O(h^\infty)$ for
$t \in [0, \varpi]$ (where $\varpi>0$ is taken sufficiently small)
by Lemma~\ref{lem:int-pgtr}.

In summary, then,
\begin{align*}
(hD_t +P_h) (V_{\Upsilon B}-U(t)B)&= [P_h,
\Upsilon]V_B+O(h^\infty),\\
(V_{\Upsilon B}-U(t)B)\big\rvert_{t=\varpi} &=O(h^\infty).
\end{align*}
Hence by Duhamel's Principle and unitarity of $U(\bullet)$,
$$
U(t)B -V_{\Upsilon B}(t) = \int_{\varpi}^t  U(t-s) [P_h, \Upsilon]V_B \, ds +O(h^\infty),\quad t \in [0, T].
$$
Applying $A$ to this equation yields the desired result on the
parametrix by \eqref{outgoingremainder}.

The proof that the transmitted propagator agrees with the free one to
top order and admits its own asymptotic expansion follows from the
identification of the transmitted phase function as the ordinary
classical action (\eqref{eq:eik} et seq.); the identification of the
transmitted amplitude with the usual solution to the transport
equations modulo $O(h^\kk)$ follows from\eqref{eq:continuity}.
\end{proof}

\section{Propagators with multiple reflections}
\label{sec:multi}
In this section, we compose the form of the ``free'' propagator (i.e.,
the propagator for smooth potentials of Lemma~\ref{lem:int-pgtr})
with the short-time reflected propagator to
get (long-time) propagators with one reflection, then we iteratively
compose free propagators and reflected propagators to get the
propagators along the branching flow with multiple reflections.

\subsection{Microlocal propagator with one reflection}
\label{sec:one-ref}
We compute a parametrix for a (long-time) reflected propagator by
decomposing it to an interior propagator and a short time reflected
propagator.  This computation can also be seen very easily by
employing standard FIO techniques, but we include it for the sake of
exposition, and to introduce some tools for the
dynamical interpretations of stationary phase expansions.

In this and following sections, as we will frequently be concerned with behavior modulo $O(h^\infty)$, we denote 
 \begin{equation}
 \label{eq:equiv}
f\equiv g \Longleftrightarrow f=g+O(h^\infty).
\end{equation}

Note that for any branching null bicharacteristic $\gamma$ with length
$t_0$, $\gamma(0)= (y_0,\eta_0)$ and $\gamma(t_0)=(x_0,\xi_0)$ in
$T^*(X\backslash Y)$, we can associate it with $\ep>0$
  and two (sufficiently
small) microlocal cutoffs $A_i,A_e\in\Psi_h(X)$ such that
$A_i$ and $A_e$ are elliptic at $(x_0,\xi_0)$ and $(y_0, \eta_0)$
respectively.  In
particular, if $\gamma$ is a branching null bicharacteristic with length
in $(t-\ep,t+\ep)$ and with exactly
one reflection, we can choose
$A_i,A_e\in\Psi_h(X)$ such that any branching null bicharacteristic
starting from $\opWF_h(A_i)$ and ending in $\opWF_h(A_e)$ has exactly
one reflection, if their microsupport is small enough.  (If there
were multiply reflected trajectories arbitrarily close, passing to a
subsequence at which two reflection times coalesce would show that
$\gamma$ would have to
have a glancing point.)
 By
Proposition \ref{prop:ref-pgtr} and the stationary phase lemma, we
obtain the following lemma on the propagator with one reflection.

\begin{lemma}
  \label{lem:ref-pgtr1}
  Suppose $\gamma$ is a branching null bicharacteristic with time
  $t_0$ and exactly one reflection. Then there exist $A_i,A_e\in
  \Psi_{h}(X)$ elliptic at the start- and end-points of $\gamma$ and $\ep>0$ such that for $t\in(t_0-\ep, t_0+\ep)$, (the kernel of) the microlocal reflection propagator $A_eU(t)A_i(z,w)$ is given by
  \begin{equation}\label{onereflectionlong}
    (2\pi i h)^{-\frac{n}{2}} h^\kk e^{\frac{i}{h}S^{\REF}_\gamma}
\abs{\Delta_\gamma}^{\frac{1}{2}} e^{-i\frac{\pi}{2}\mu_\gamma} a_e(z,\partial_z S^\REF_{\gamma})\rc(t,x,y) a_i(w,-\partial_w S^\REF_{\gamma})  \abs{ dz\, dw}^{\frac{1}{2}},
  \end{equation}
  where the reflection coefficient $\rc(t,x,y)$ is evaluated at the
  unique reflected point and $\mu_{\gamma}$ is the Morse index of the reflected physical path $\gamma$.
\end{lemma}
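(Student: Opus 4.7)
The plan is to decompose $A_e U(t) A_i$ into three microlocally-supported factors corresponding to the three legs of the branching trajectory $\gamma$: the incoming interior leg, a short segment straddling the reflection point over $Y$, and the outgoing interior leg. We then compose the three explicit oscillatory-integral representations from Lemma~\ref{lem:int-pgtr} and Proposition~\ref{prop:ref-pgtr} via stationary phase.

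First I would fix intermediate times $0<t_1<t_2<t_0$ so that $\gamma(t_1),\gamma(t_2)$ lie close to $Y$ on the incoming and outgoing sides respectively, with $t_2-t_1$ small enough that the short-time reflective propagator in Proposition~\ref{prop:ref-pgtr} applies between them. Choose $B_1,B_2\in\Psi_h^\Comp(X)$, elliptic at $\gamma(t_1),\gamma(t_2)$ with small enough microsupport that any branching bicharacteristic from $\WF'(B_1)$ to $\WF'(B_2)$ in time close to $t_2-t_1$ is (uniquely) singly reflected, and that $\pi\WF'(B_j)$ avoids $Y$. Using the uniqueness of the branching flow near $\gamma$ and Proposition~\ref{prop:wf-relation} twice to close up the complementary pieces of a microlocal partition, one obtains
\[
A_e U(t) A_i \;\equiv\; \bigl(A_e U(t-t_2)B_2\bigr)\,\bigl(B_2 U(t_2-t_1)B_1\bigr)\,\bigl(B_1 U(t_1)A_i\bigr).
\]
Each of the three factors is already given by an explicit half-density oscillatory integral, the outer two by Lemma~\ref{lem:int-pgtr} and the middle one by Proposition~\ref{prop:ref-pgtr} (which is where the factor $h^{\kk}\rc(t,x,y)$ is produced).

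Next, composing the three kernels gives a multiple oscillatory integral over the two intermediate variables. I apply stationary phase. The critical-point equations assert that the three intermediate momenta match the differentials of the three leg actions, hence pick out a triple of interior phase-space points linked by ordinary Hamilton flow on each leg and matching at the reflection point, i.e.~the points of $\gamma$ itself; this is where the assumed uniqueness of the singly-reflected trajectory connecting $(y_0,\eta_0)$ to $(x_0,\xi_0)$ in time near $t_0$ is crucial. At this critical point the total phase is the sum of the three leg actions, which is by definition $S^\REF_\gamma(z,w)$. The van Vleck half-densities compose according to the standard rule for composition of Lagrangian intersections (which is the purpose of the appendix on van Vleck determinants cited in the Outline), producing $|\Delta_\gamma|^{1/2}$; the reflection coefficient $\rc$ survives from the middle factor, evaluated at the reflection point of $\gamma$; and the principal symbols $a_i,a_e$ are evaluated at the endpoint covectors $-\pa_w S^\REF_\gamma,\pa_z S^\REF_\gamma$.

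The main bookkeeping obstacle is the Morse/Maslov factor $e^{-i\frac{\pi}{2}\mu_\gamma}$. Each interior leg contributes a Maslov index equal to its own number of conjugate points (via Lemma~\ref{lem:int-pgtr}), the short reflection segment has none for $t_2-t_1$ small, and each of the two stationary phase compositions produces a signature correction from the Hessian in the intermediate variables. The delicate point is to identify the sum of these contributions with the Morse index of the broken variational problem along $\gamma$. This identification is exactly the content of the authors' earlier work \cite{WuYaZo:24}: the signature contributions from the compositions across the reflection are precisely what converts the naive count of interior conjugate points into the Morse index of the reflected periodic/endpoint variational problem. Granting that identification, we obtain the stated formula \eqref{onereflectionlong}.
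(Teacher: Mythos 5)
Your proposal is correct and follows essentially the same route as the paper: factor $A_eU(t)A_i$ through microlocalizers straddling the reflection, apply Lemma~\ref{lem:int-pgtr} and Proposition~\ref{prop:ref-pgtr}, and compose by stationary phase, with Lemma~\ref{lemma:vanvleck} and \cite[Theorem~5.7]{WuYaZo:24} supplying the van Vleck and Morse/Maslov compositions respectively. Two small remarks: the paper organizes the argument as a two-factor composition when $\WF'A_i$ lies near $Y$ (reflection-plus-short-time followed by outgoing interior propagation), adding one more composition for the general case, rather than your direct three-factor split---this is cosmetic---and the justification for discarding the complementary microlocal pieces (and the incident part of the middle factor) should invoke plain propagation of singularities for compactly microsupported solutions, as in the paper's observation that $A_eU(t_2)(\id-B)=O(h^\infty)$, rather than Proposition~\ref{prop:wf-relation}, which concerns the spectrally localized propagator $A\chi(P_h)U(t)B$ and is not directly what is needed here.
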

As noted above, the result in fact follows directly from the invariance of the symbol
of the semiclassical Lagrangian distribution $U(t)$ along the Hamilton
flow generated by $p$, with the interesting subtlety being (as usual)
the
inclusion of a Maslov factor as the flow encounters conjugate points.
We include the sketch of a stationary phase proof
here for completeness; the Maslov contribution is the
subject of the authors' previous paper \cite{WuYaZo:24}.
\begin{proof}
 Recall now that we use $\XPhi{E}^\gamma_t$ to denote the time $t$ flow along branching null
    bicharacteristics \emph{near $\gamma$} (and at fixed energy $E=p(\gamma(0))$), which can be made a well defined single-valued
    flow on $\WF' (A_i)$: here since $\gamma$ undergoes reflection, we are
    requiring that the flow be reflected rather than transmitted upon
    hitting $Y$, i.e.\ we insist, in addition to the requirements of
    Definition~\ref{def:bnb}, that the sign of the defining function
    of $Y$ remain constant along the flow near the reflection.

We begin with the case where $\WF' A_i$ is contained in a small
coordinate neighborhood of $Y$:
    we shrink $\WF' A_i$ as needed, and break the propagation time into $t=t_1+t_2$ such that under the
    flow $\XPhi{E}^\gamma_t$, all reflections occur before (but close
    to) time $t_1$, so that $\XPhi{E}^\gamma_{t_1}(\WF' A_i)$ also lies in a coordinate neighborhood
of $A_i$ and the results of Proposition~\ref{prop:ref-pgtr} apply for
the short-time propagator from $\WF A_i$ to $\XPhi{E}^\gamma_{t_1}(\WF' A_i)$

    We now take $B=(B')^2 \in\Psi_{h}(X\backslash Y)$ compactly
    microsupported close to $Y$, such that its principal
    symbol $\sigma_h(B)= 1$ on $\XPhi{E}^\gamma_{-t_2}(\WF'(A_e))$ and
    $\WF'(\id-B)$ is disjoint from the incident flowout of
    $\WF'(A_i)$.  (The notions of incident and reflected
    notions make sense locally on $\WF' B$ since $B$ is microsupported near $Y$).  We
    consider the microlocal propagator $A_e U(t_2)BU(t_1)A_i$. This is
    the composition of a reflected propagator $B'U(t_1)A_i$ and an
    interior propagator $A_e U(t_2)B'$. By Proposition
    \ref{prop:ref-pgtr}, using $\equiv$ denote to equivalence mod $O(h^\infty)$,
    \begin{equation}
    \label{eq:pgtr-1}
        \begin{split}
            A_e U(t)A_i 
            &\equiv A_e U(t_2)BU(t_1)A_i\\
            &\equiv A_e U(t_2)B'(U_{B'A_i}^{\INC}(t_1) + U_{B'A_i}^{\REF}(t_1))\\
            & \equiv A_e U(t_2)B' U_{B'A_i}^{\REF}(t_1),
        \end{split}
    \end{equation}
    where the first equation is due to the identity
    $A_eU(t_2)(\id-B) = \mathcal{O}(h^{\infty})$ 
   by propagation of singularities, and the last equation
    holds since $\WF' B'$ is
    disjoint from the wavefront set of the incident propagator. We now
    use the method of stationary phase to compute the composition of
    the propagators in \eqref{eq:pgtr-1} with that in Lemma
    \ref{lem:int-pgtr}.
The stationary points are points such that
    \[\pa_{z'}S_{\gamma_2}(t_2,z,z')+ \pa_{z'}S^{\REF}_{\gamma_1}(t_1,z',w)=0.\]
In analyzing the result of stationary phase, we use two key
composition identities for van Vleck determinant and Morse index as follows:
    \[e^{-i\frac{\pi}{2}\mu_\gamma} = e^{-i\frac{\pi}{2}\mu_{\gamma_1}} \cdot e^{-i\frac{\pi}{4}n} \cdot e^{i\frac{\pi}{4}\sgn \pa^2_{z'z'}(S_{\gamma_1}+S_{\gamma_2})}, \quad \sgn (\cdot) = n - 2\ind(\cdot) \]
    \[\abs{\Delta_\gamma}^{\frac{1}{2}} = \abs{\Delta_{\gamma_1}}^{\frac{1}{2}} \abs{\Delta_{\gamma_2}}^{\frac{1}{2}} \abs{\det \pa^2_{z'z'}(S^{\REF}_{\gamma_1}+S_{\gamma_2})}^{-\frac{1}{2}}\]
    The first identity is a special case of \cite[Theorem
    5.7]{WuYaZo:24} while the second identity is proved in Lemma~\ref{lemma:vanvleck}.  Note that the symbol of $B'$ is
    identically $1$ at the critical set (in phase space), hence does
    not appear in the composition, so stationary phase shows that (the
    Schwartz kernel of) $A_e
    U(t)A_i$ is given by \eqref{onereflectionlong}.

    We now turn to the general case, where $\WF' A_i$ is not
    necessarily close to $Y$.  This is accomplished by a further
    composition with the smooth propagator from
    Lemma~\ref{lem:int-pgtr}, this time on the right; the stationary
    phase computation is identical to the one performed above.
       \end{proof}

\subsection{Microlocal propagator with multiple reflections}\label{sec:multiplereflection}
In this section, we construct a parametrix for the microlocalized propagator $A_e U_{\gamma}(t) A_i$ associated with a branching null bicharacteristic triple $(\gamma, A_i, A_e)$ under multiple reflections. 

Recall for a branching null bicharacteristic $\gamma\subset T^*X$ with
length $t$, starting and ending over $X \backslash Y$, we may choose
$A_i, A_e\in \Psi_h(X)$ supported away from $Y$ such that $A_i$ and
$A_e$ are elliptic at $\gamma(0)$ resp.\ $\gamma(t)$.  In addition, we can
demand $A_i,A_e\in\Psi_h(X)$ such that their microsupports $\opWF_h
A_i$ and $\opWF_h A_e$ are as small as we want. To construct
a parametrix for the microlocal propagator, we want to insert a microlocal cutoff between
each reflection along the branching null bicharacteristic
$\gamma$ and compose the resulting single-reflection parametrices.

Assume there are $m\in\mathbb{N}$ reflections along $\gamma$ at times
$0<S_1<S_2<\cdots<S_m<t$. Take $T_i\in (S_i,S_{i+1})$ for $1\leq i\leq
m-1$ and $T_0=S_0=0$. Define $t_i := T_i-T_{i-1}$ to be the
propagation time between $(T_{i-1},T_i)$. We first construct $B_1$
near $\gamma(T_1)$; the construction of the rest of the intermediate microlocalizers $B_i$ can be carried out inductively.

Consider two sufficiently small neighborhoods $U_1, V_1$ such that
$$\XPhi{E}^\gamma_{T_1}(\WF' A_i) \subset U_1 \subset V_1.$$  Choose $B_1\in \Psi_h(X)$ such that
$$
\WF' B_1 \subset V_1,\quad \WF' (\id-B_1) \cap U_1=\emptyset.
$$
Then assuming $B_{k-1}$ has
been constructed, $B_k$ can be constructed similarly such that
$$
\WF' B_k \subset V_k,\quad \WF' (\id-B_K) \cap U_k=\emptyset.
$$
where we have chosen $U_k\subset V_k$ with
$$
\XPhi{E}^\gamma_{T_k}(V_{k-1})\subset U_k.
$$

A \emph{microlocal propagator} associated with a branching
null bicharacteristic triple $(\gamma,A_i,A_e)$ is thus defined as
\begin{equation}
\label{eq:ML-pgtr}
A_e U_{\gamma}(t) A_i := A_e U(t-T_m)B_{m}U(t_m)B_{m-1}\cdots B_1 U(t_1) A_i.
\end{equation}
We can further arrange (and will use below) that we may take
  each $B_j$ to be a square of another operator: $B_j=(B_j')^2.$

\begin{remark}
  If there is only one branching null bicharacteristic connecting any
  pair of points in $\WF' A_i$ and $\WF' A_e$, then
  the definition of microlocalized propagators is
  independent of the interim microlocalizers modulo
  $\mathcal{O}(h^{\infty})$; it only depends on the initial and the
  final microlocal cutoffs.  More generally, though, these internal
  cutoffs can separate multiple ways of getting from $\WF A_i$ and
  $\WF A_e$ in time $t$.
\end{remark}

Now we compute the microlocal propagator associated with $(\gamma,A_i,A_e)$ using composition of FIOs. Assume that the length of the branching null bicharacteristic $\gamma$ is $L$.
\begin{proposition}
\label{prop:pgtr-m}
For $t>0$ sufficiently closed to $L$, the Schwartz kernel of the microlocal propagator $A_e U_{\gamma}(t) A_i$ (with $m$ reflections) associated with $(\gamma,A_i,A_e)$ is given by
\begin{equation}
\label{eq:pgtr-m}
  (2\pi i h)^{-\frac{n}{2}} h^{m\kk} e^{\frac{i}{h}S^{\REF}_\gamma}
\abs{\Delta_\gamma}^{\frac{1}{2}} e^{-i\frac{\pi}{2}\mu_\gamma}
a_e(z,\partial_z S^\REF_{\gamma})\rtot(t,z,w)
a_i(w,-\partial_w S^\REF_{\gamma}) \abs{dz\, dw}^{\frac{1}{2}}(1+O(h)). 
  \end{equation}
where
\begin{gather}
    \label{eq:ref-coeff-m}
     \rtot(t,z,w) \equiv \prod_{j=1}^m \tfrac{i^\kk  J_j}{(2\xi_N^j)^{\kk+2}} 
\end{gather}
is the product of all the individual reflection coefficients $\rc(t,x,y)$ of $\gamma$ given in \eqref{eq:ref-coeff}.
\end{proposition}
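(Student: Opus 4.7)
The plan is to proceed by induction on the number of reflections $m$, using the single-reflection result (Lemma~\ref{lem:ref-pgtr1}) as the base case and iterating the stationary phase composition argument already employed there. The base case $m=1$ is exactly Lemma~\ref{lem:ref-pgtr1}. For the inductive step, write
\[
A_e U_\gamma(t) A_i \equiv A_e\, U(t-T_m)B_m\, \big(B_{m-1}' \cdot B_{m-1}' U(t_m) B_{m-1} U(t_{m-1})\cdots B_1 U(t_1) A_i\big)
\]
using $B_j=(B_j')^2$ and the fact (by propagation) that we may insert $B_{m-1}'$ factors without changing the composition modulo $O(h^\infty)$. Reassociating, we view the expression as the composition of the microlocal propagator $A_e U_{\gamma''}(t-T_{m-1}) B_{m-1}'$, where $\gamma''$ denotes the final segment of $\gamma$ of length $t-T_{m-1}$ undergoing its one and only reflection, with the microlocal propagator $B_{m-1}' U_{\gamma'}(T_{m-1}) A_i$ associated with the initial $(m-1)$-reflection portion $\gamma'$ of $\gamma$. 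The inductive hypothesis provides an oscillatory integral formula of the form \eqref{eq:pgtr-m} (with $m-1$ reflections, contributing $h^{(m-1)\kk}$ and the product $\prod_{j=1}^{m-1} \rc_j$) for the latter, and Lemma~\ref{lem:ref-pgtr1} does the same for the former (contributing one more factor of $h^\kk$ and the reflection coefficient $\rc_m$).

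Next I would compose these two oscillatory integrals by stationary phase in the intermediate variable $z'$ (the spatial variable at time $T_{m-1}$), exactly as in the proof of Lemma~\ref{lem:ref-pgtr1}. The critical point equation
\[
\pa_{z'} S^{\REF}_{\gamma'}(T_{m-1}, z', w) + \pa_{z'} S^{\REF}_{\gamma''}(t-T_{m-1}, z, z') = 0
\]
picks out the point $z' = \pi\gamma(T_{m-1})$, where the outgoing momentum of $\gamma'$ matches the incoming momentum of $\gamma''$; nondegeneracy of the Hessian at this critical point follows from the non-self-conjugacy assumption along $\gamma$. The resulting phase at the critical point is the total reflected action $S^\REF_\gamma$, and the symbols of the interior cutoffs $B_{m-1}, B_{m-1}'$ evaluate to $1$ on the critical set and thus drop out.

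The key composition identities, both already used in the proof of Lemma~\ref{lem:ref-pgtr1}, let us combine the remaining factors. The van Vleck determinants compose by Lemma~\ref{lemma:vanvleck} as
\[
|\Delta_{\gamma'}|^{1/2}|\Delta_{\gamma''}|^{1/2}\,\bigl|\det \pa^2_{z'z'}(S^{\REF}_{\gamma'} + S^{\REF}_{\gamma''})\bigr|^{-1/2} = |\Delta_\gamma|^{1/2},
\]
and the Morse indices combine by the additive formula of \cite[Theorem~5.7]{WuYaZo:24} (extended inductively across all reflections) so that the phases $e^{-i\pi\mu_{\gamma'}/2}\,e^{-i\pi\mu_{\gamma''}/2}\,e^{-i\pi n/4}\,e^{i\pi\,\mathrm{sgn}(\cdot)/4}$ collapse to $e^{-i\pi\mu_\gamma/2}$. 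The $h$-powers combine as $h^{-n/2}\cdot h^{-n/2}\cdot h^{n/2}\cdot h^{(m-1)\kk}\cdot h^{\kk} = h^{-n/2}\cdot h^{m\kk}$, matching the claimed normalization. The reflection coefficients enter multiplicatively because each $\rc_j$ is evaluated at its own reflection point along $\gamma$, which is unaffected by the stationary phase composition; this yields the product \eqref{eq:ref-coeff-m}.

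The main technical obstacle is purely bookkeeping: correctly pairing the sign conventions in the Morse index composition formula across a reflection (so that the contribution of the reflection itself, already encoded inside $\mu_{\gamma''}$ via \cite{WuYaZo:24}, is not double-counted), and verifying that the intermediate cutoff $B_{m-1}$ truly drops out. Both issues are handled uniformly by the same mechanism as in the single-reflection case, so once the $m=2$ step is carried out explicitly the general $m$ case follows by a routine induction.
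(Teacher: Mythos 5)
Your proof follows essentially the same approach as the paper: induction on the number of reflections $m$, with Lemma~\ref{lem:ref-pgtr1} as the base case, decomposing the $m$-fold propagator into a composition of an $(m-1)$-reflection piece (handled by the inductive hypothesis) and a single-reflection piece (handled by Lemma~\ref{lem:ref-pgtr1}), then applying stationary phase in the intermediate spatial variable and invoking Lemma~\ref{lemma:vanvleck} for the van Vleck determinants and Theorem~5.7 of \cite{WuYaZo:24} for the Morse indices. The paper's published proof text has a small index slip (writing $T_m$ and $B'_m$ where the splitting time should be $T_{m-1}$ and the cutoff $B'_{m-1}$); your version uses the intended, consistent indexing.
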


\begin{proof}
    We work inductively. For $\gamma$ with one reflection, this result
    is just Lemma \ref{lem:ref-pgtr1}. Assume equation
    \eqref{eq:ref-coeff-m} holds for any branching null
    bicharacteristic with at most $m-1$ reflections. We can break a
    branching null bicharacteristic $\gamma$ with $m$ reflections into
    two pieces: a piece with a single reflection and another piece
    with $m-1$ reflections, then compute their composition using the
    stationary phase lemma.  
    Note that the microlocal propagator $A_e U_{\gamma}(t) A_i$ can be written as 
    $$ (A_e U(t-T_{m})B'_{m}) ( B'_{m} U(t_{m}) \cdots B_1 U(t_1) A_i),$$
    where $B'^2_{m-1} = B_{m-1}.$ The above two propagators are given by 
    \begin{gather*}
      {(2\pi i h)^{-\frac{n}{2}}} e^{\frac{i}{h}S^{\REF}_{\gamma_1}}
      \abs{\Delta_{\gamma_1}}^{\frac{1}{2}}
      e^{-i\frac{\pi}{2}\mu_{\gamma_1}} b'_{m}(z',\partial_{z'}
      S^\REF_{\gamma_1})\rtot (T_{m},z',w) a_i(w,-\partial_w S^\REF_{\gamma_1}) h^{k(m-1)} \abs{ dz'\, dw}^{\frac{1}{2}}, \\
      {(2\pi i h)^{-\frac{n}{2}}} e^{\frac{i}{h}S^{\REF}_{\gamma_2}}
      \abs{\Delta_{\gamma_2}}^{\frac{1}{2}}
      e^{-i\frac{\pi}{2}\mu_{\gamma_2}} a_e(z,\partial_z
      S^\REF_{\gamma})\rtot(t-T_{m},z,z') b'_{m}(z',-\partial_{z'}
      S^\REF_{\gamma_2}) h^{k} \abs{ dz\, dz'}^{\frac{1}{2}}.
    \end{gather*}
Critical points in $z'$ of the phase of the composition are given by 
$$\pa_{z'}\phi = \partial_{z'}S^{\REF}_{\gamma_1}(t_1,z',w) + \partial_{z'} S^{\REF}_{\gamma_2}(t_2,z, z') =0, $$
and by construction of $B'_{m}$ we have $\sigma_h(B'_{m}) = 1$ at critical points. Therefore, the microlocal propagator $A_e U_{\gamma}(t) A_i$ is given by
\begin{equation}
    {(2\pi i h)^{-\frac{n}{2}}} h^{m\kk}
    e^{\frac{i}{h}S^{\REF}_\gamma} \abs{\Delta_\gamma}^{\frac{1}{2}} e^{-i\frac{\pi}{2}\mu_\gamma} a_e(z,\partial_z S^\REF_{\gamma})\rtot(t,z,w) a_i(w,-\partial_w S^\REF_{\gamma}) \abs{ dz\, dw}^{\frac{1}{2}},
\end{equation}
where again we used Lemma~\ref{lemma:vanvleck} to obtain the $\Delta_\gamma$ term, as well as
the result on composition of
Morse indices from Theorem~5.7 of \cite{WuYaZo:24} to identify the new
Maslov index $\mu_\gamma$ as the number of conjugate points (in the
sense of broken trajectories introduced in \cite{WuYaZo:24}
encountered between $w$ and $z$ along $\gamma$.
The reflection coefficient factors compose by definition:
\begin{equation*}
    \rtot(t,z,w) = \rtot(t-T_{m},z,z') \rtot(T_{m},z',w)=
 \prod_{j=1}^m \tfrac{i^\kk J_j}{(2\xi_N^j)^{k+2}} \qedhere
\end{equation*}
\end{proof}

In the next lemma we show that if  for each
$(w,\xi)\in\opWF_h A_i$ and $(z,\eta)\in\opWF_h A_e$, there exist at
most one branching null bicharacteristic connecting them in time
$t$, then the propagator $U(t)$ is equivalent to
microlocal propagators $U_{\gamma}(t)$. We assume that $\gamma$ is one of these bicharacteristics
  (with length $L$)
  and that the $B_j$ are constructed according to the algorithm given above.

\begin{lemma}
\label{lemma:pgtr_equiv}
Near $t=L$, 
\begin{equation}
    \label{eq:pgtr_equiv}
    A_e U(t) A_{i} \equiv  A_e U_{\gamma}(t) A_{i} \mod \mathcal{O}(h^{\infty})
\end{equation}
\end{lemma}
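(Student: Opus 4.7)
My plan is to telescope over the cutoffs $B_j$, inserting them one by one into $A_e U(t) A_i$ while showing each insertion costs only $O(h^\infty)$. For $0 \leq k \leq m$, define
\[
W_k := A_e U(t-T_k) B_k U(t_k) B_{k-1} U(t_{k-1}) \cdots B_1 U(t_1) A_i,
\]
with the conventions $W_0 := A_e U(t) A_i$ (no $B$'s inserted) and $W_m = A_e U_\gamma(t) A_i$. Since $U(t - T_{k-1}) = U(t-T_k) U(t_k)$, we have
\[
W_{k-1} - W_k = A_e U(t-T_k)(I - B_k) U(t_k) B_{k-1} U(t_{k-1}) \cdots B_1 U(t_1) A_i,
\]
and the lemma reduces to showing $W_{k-1} \equiv W_k$ for each $k$.

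The vanishing of this difference hinges on the following observation. Iteratively tracking b-wavefront sets using Propositions \ref{prop:chi} and \ref{proposition:cauchydata} together with Corollary \ref{cor:propofsings}, the partial composition $B_{k-1} U(t_{k-1}) \cdots B_1 U(t_1) A_i$ applied to any $f \in L^2$ produces output that is compactly b-microsupported in a small neighborhood of $\XPhi{E}^\gamma_{T_{k-1}}(\WF'\,A_i) \subset U_{k-1}$: each preceding $B_j$ has excised the non-$\gamma$ branches created by earlier reflections. Propagating by $U(t_k)$ yields b-wavefront set in $\XPhi{E}_{t_k}$ applied to this set, which contains the $\gamma$-branch inside $U_k$ together with possibly a companion branch generated at the reflection $S_k$. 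Applying $(I - B_k)$ removes the $\gamma$-branch, since $\WF'(I-B_k) \cap U_k = \emptyset$ while $\XPhi{E}^\gamma_{T_k}(\WF'\,A_i) \subset U_k$, leaving only non-$\gamma$ branches. By the uniqueness hypothesis -- that at most one branching null bicharacteristic connects $\WF'\,A_i$ to $\WF'\,A_e$ in time near $L$, namely $\gamma$ -- no such non-$\gamma$ branch flows into $\WF'\,A_e$ under further propagation by $U(t - T_k)$, so applying $A_e$ kills the whole composition modulo $O(h^\infty)$.

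The main technical obstacle I expect is that Proposition \ref{prop:wf-relation} is formulated for a composition $A \chi(P_h) U(t) B$ with a single pseudodifferential cutoff $B$, whereas here the ``initial data'' for the final propagator $U(t - T_k)$ is the longer composition $(I - B_k) U(t_k) B_{k-1} U(t_{k-1}) \cdots B_1 U(t_1) A_i$. The fix is to invoke Proposition \ref{proposition:cauchydata} directly: insert an innocuous $\chi(P_h)$ via Proposition \ref{prop:chi} to enforce compact microsupport in the energy direction, establish that the Cauchy data at time $T_k$ of the function $(I - B_k) U(t_k) B_{k-1} \cdots U(t_1) A_i f$ is compactly b-microsupported with b-wavefront set disjoint from $\XPhi{E}_{-(t-T_k)}(\WF'\,A_e)$, then apply the spacetime propagation of Corollary \ref{cor:propofsings} to transport this wavefront-set information forward in time and restrict at time $t$, concluding via Proposition \ref{proposition:cauchydata} that applying $A_e$ at $t$ yields $O(h^\infty)$.
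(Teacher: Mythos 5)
Your proof is correct, and it is essentially a re-packaging of the paper's argument. The paper proves the lemma by induction on the number $m$ of reflections: the base case $m=1$ is $A_e U(t_2)(\id - B_1) U(t_1) A_i = \mathcal{O}(h^\infty)$ (Lemma~\ref{lem:ref-pgtr1}); the inductive step strips off the outermost cutoff $B_m$ and invokes the inductive hypothesis to replace $U_\gamma(T_m)$ by $U(T_m)$. Your telescoping, inserting $B_1, B_2, \dots, B_m$ one at a time, is what you get if you unwind that induction, and the content of each telescoping step (propagation of b-wavefront set from the partial composition, $(\id - B_k)$ excising the $\gamma$-branch confined to $U_k$, and the at-most-one-connecting-bicharacteristic hypothesis killing the residual non-$\gamma$ branches on the way to $\WF' A_e$) is exactly what drives both the base case and the inductive step of the paper. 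One organizational tradeoff: in the paper's step the factor $(\id - B_m)$ hits $U_\gamma(T_m) A_i$, whose b-wavefront set is already confined to $U_m$ by the previously inserted cutoffs, so the $\mathcal{O}(h^\infty)$ vanishing there is immediate, while the propagation-plus-uniqueness argument is deferred to the inductive hypothesis; your telescoping step requires the full propagation-plus-uniqueness argument at every $k$, but avoids the base case and the mild bookkeeping of the recursion. Your observation that Proposition~\ref{prop:wf-relation} is stated for a single pseudodifferential $B$ rather than a composite input, and that the fix is to route through Propositions~\ref{prop:chi} and~\ref{proposition:cauchydata} and Corollary~\ref{cor:propofsings}, is a genuine technical point the paper handles only implicitly under the phrase ``by propagation of singularities''; making it explicit is a small but real improvement in clarity. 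One caution: the containment you want is $\XPhi{E}^\gamma_{T_k}(V_{k-1}) \subset U_k$, not merely $\XPhi{E}^\gamma_{T_k}(\WF' A_i) \subset U_k$, since the accumulated b-wavefront set at time $T_{k-1}$ may spread out to all of $V_{k-1}$; the paper's construction of the $B_j$'s arranges precisely this.
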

\begin{proof}
The proof is by induction on the number of
reflections. If $A_e U(t) A_{i}$ involves at most one reflection along
all possible branching null bicharacteristic, this is established in
Lemma \ref{lem:ref-pgtr1}:
$$A_e U(t_2) (\id-B_1) U(t_1) A_{i} = \mathcal{O}(h^{\infty}).$$

Assume we have showed \eqref{eq:pgtr_equiv} for at most $m-1$
reflections. For $m$ reflections, note that $\WF(\id-B_{m})\cap
\XPhi{E}^\gamma_{T_{m}}(\WF'A_i)$, hence by propagation of singularities,
$$A_e U(t-T_{m}) (\id-B_{m}) U_{\gamma}(T_{m}) A_{i} = \mathcal{O}(h^{\infty}).$$
Thus (employing the inductive hypotheses on the penultimate line),
\begin{equation*}
    \begin{split}
        A_e U_\gamma(t) A_i &= A_e U(t-T_{m}) B_{m} U_{\gamma}(T_{m})
A_{i}\\
&\equiv
A_e U(t-T_{m})  U_{\gamma}(T_{m}) A_{i}\\
&\equiv
A_e U(t-T_{m})  U(T_{m}) A_{i}\\
&=
A_e U(t) A_{i}.\qedhere
    \end{split}
\end{equation*}
\end{proof}

\section{Microlocal partitions and Poisson relations}
\label{sec:poisson}

In this section we introduce a simple microlocal partition of unity
that decomposes the energy-localized trace.  Using this decomposition
and propagation of singularities, we prove the Poisson relation,
Theorem 
\ref{thm:poisson}.  

Fix $\chi\in\CcI(\RR)$.  Let $W\in \Psibh(X)$ be a compactly
microsupported operator with 
\begin{equation}\label{cond-W}
\WF'(I-W) \cap K_{\chi} =\emptyset, \quad K_{\chi}\coloneqq  \bigcup_{E \in \supp \chi} \dot\Sigma^E_b.
\end{equation}
Recall that over $Y$, the set $K_{\chi}$ lives inside \{$\xi_1=0\}$, as it is contained in the compressed characteristic set. 
Note also that in obtaining compact microsupport of $W$ we are using
compactness of the energy surfaces, guaranteed by our hypotheses on
$P_h$ near infinity in case $X$ is not compact (which required $V\to
+\infty$ at infinity).  Then we have
\begin{equation}
\label{eq:W}
(I-W) \chi(P_h)=O(h^\infty) \implies \chi(P_h)U(t)\equiv W \chi(P_h) U(t),
  \end{equation}
where $\equiv$ denotes equivalence modulo $O(h^\infty)$ as defined in \eqref{eq:equiv}.
Now we take $\{A_j\} \subset \Psibh(X)$ compactly microsupported, such that
\begin{equation}
\label{eq:partition-W}
    \sum A_j^2W-W\equiv 0.
\end{equation}
Such a microlocal partition of unity over a compact set in phase
space is easily constructed via an iterative procedure: we can indeed
ask that
$$
\WF' (\id-\sum A_j^2) \cap \WF' W=\emptyset.
$$
By equation \eqref{eq:W}, we have the microlocal decomposition of the
propagator:
$$
\chi(P_h)U(t)\equiv \sum A_j^2 \chi(P_h)U(t).
$$
Hence, taking the trace and using the cyclicity of the trace, we have established the following result on microlocal partitions of the spectrally localized trace. 
\begin{proposition}\label{prop:cyclicity}
   Let $A_j \in \Psibh(X)$
be a partition of unity comprised of semiclassical b-operators, as
defined above. Then we have
\begin{equation}\label{PoI1} \Tr [\chi(P_h) U(t)] \equiv \sum_j \Tr
  [A_j^2 \chi(P_h) U(t)]=\sum_j \Tr [A_j \chi(P_h) U(t) A_j]
\end{equation}
\end{proposition}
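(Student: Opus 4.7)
The plan is to chain together the two microlocal identities already set up in the exposition preceding the statement and then invoke cyclicity of the trace. First I would note that by Proposition~\ref{prop:chi} and the elliptic hypothesis \eqref{cond-W} on $W$, we have $(I-W)\chi(P_h) = O(h^\infty)$ as a bounded operator on $L^2$, since $\WFbh(\chi(P_h)f) \subset \WFbh(f) \cap K_\chi$ and $W$ is elliptic on $K_\chi$. Multiplying on the right by the unitary $U(t)$ preserves this bound, and combining with the partition identity \eqref{eq:partition-W} gives the chain
\[
\chi(P_h)U(t) \equiv W\chi(P_h)U(t) \equiv \sum_j A_j^2 W \chi(P_h) U(t) \equiv \sum_j A_j^2 \chi(P_h) U(t),
\]
where the sum is finite because $W$ (and hence the partition) is compactly microsupported.

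Second, I would verify that all operators under consideration are trace class with trace norms bounded by a fixed polynomial in $h^{-1}$. The operator $\chi(P_h)$ is trace class---on a compact manifold this is immediate from the Weyl law, and in the scattering case the growth of $V$ at infinity combined with the compact support of $\chi$ gives the same conclusion. Consequently $A_j^2 \chi(P_h) U(t)$ and $A_j \chi(P_h) U(t) A_j$, being compositions of trace-class and bounded operators, are trace class with trace norm controlled by $\|A_j\|^2 \|\chi(P_h)\|_{\mathrm{Tr}}$.

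Third, I would upgrade the operator-norm equivalence above to a trace-norm equivalence. Writing $R := \chi(P_h) U(t) - \sum_j A_j^2 \chi(P_h) U(t)$, one has $R = O(h^\infty)$ as a bounded operator; but $R$ can equivalently be written as a finite sum of terms each of which factors through $\chi(P_h)$, so each summand has trace norm bounded by the $L^2$ operator norm of the $O(h^\infty)$ remainder times $\|\chi(P_h)\|_{\mathrm{Tr}}$. Hence $\Tr R = O(h^\infty)$. This gives the first equality
\[
\Tr[\chi(P_h)U(t)] \equiv \sum_j \Tr[A_j^2 \chi(P_h) U(t)].
\]
The second equality follows by applying cyclicity of trace within each summand: since $A_j$ is bounded and $A_j \chi(P_h) U(t)$ is trace class, one has $\Tr[A_j \cdot A_j \chi(P_h) U(t)] = \Tr[A_j \chi(P_h) U(t) \cdot A_j]$.

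The main technical obstacle is precisely the passage from operator-norm $O(h^\infty)$ to trace-norm $O(h^\infty)$; this is handled cleanly here because $\chi(P_h)$ carries trace class regularity throughout, so no delicate kernel estimates on the $A_j$'s themselves are needed---the trace-class control is inherited entirely from the energy cutoff factor.
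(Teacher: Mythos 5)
Your proposal is correct and follows essentially the same route as the paper: chain the two microlocal identities $(I-W)\chi(P_h)=O(h^\infty)$ and $\sum_j A_j^2 W - W \equiv 0$ to decompose the propagator, then take the trace and apply cyclicity. The paper treats the trace-class bookkeeping more casually (it defers the quantitative $\Tr W' = O(h^{-n})$ estimate to the proof of Theorem~\ref{thm:poisson}), so your explicit observation that the $O(h^\infty)$ remainder inherits trace-norm smallness by factoring a spectral cutoff through $\chi(P_h)$ is a useful clarification rather than a different argument.
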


\begin{proof}[Proof of Theorem \ref{thm:poisson}]
Using Proposition \ref{prop:cyclicity}, we prove for $T\notin \lspec^N_{\supp\chi}$
$$\Tr [\chi(P_h) e^{-itP_h/h}] = O(h^{(N+1)\kk-n-0})$$ in a neighborhood
of $T$.  The other statement of the theorem (involving $\lspec_{\supp
  \chi}$ with no restriction on the number of reflections) then
follows from this one, since certainly if $T\notin \lspec_{\supp\chi}$
then $T\notin \lspec^N_{\supp\chi}$ for every $N$.

Fixing $T\notin \lspec_{\supp\chi}$, we construct a semiclassical
microlocal partition $\{A_k\} \subset \Psibh(X)$ as in \eqref{eq:partition-W}. Hence Proposition~\ref{prop:cyclicity} applies.
By
Lemma~\ref{lemma:approximateflow}, taking the partition sufficiently
fine by shrinking $\WF'A_k$, we can arrange that for an open interval $I \ni T$, there are no
$N$-fold branching bicharacteristics starting and ending in $\WF'A_k$ for
any $k$, by Lemma~\ref{lemma:approximateflow}.

  Consequently, Corollary~\ref{cor:propofsings} shows
that
$$
A_k \chi(P_h) U(t) A_k=O_{L^2\to L^2}(h^{(N+1)\kk-0}).
$$
By compactness of microsupport, we may factor out a compactly microsupported
elliptic b-pseudodifferential operator $W'$ to obtain
$$
A_k \chi(P_h) U(t) A_k\equiv A_k \chi(P_h) U(t) A_kW',
$$
and we easily compute $\Tr W'=O(h^{-n})$ by integrating the Schwartz
kernel over the diagonal (see Theorem~C.18 of \cite{Zw:12}),
so that
\begin{equation*}
    \Tr [A_k \chi(P_h) U(t) A_k]=O(h^{(N+1)\kk-n-0}).\qedhere
\end{equation*}
\end{proof}

\begin{remark}\label{remark:mismatch}
The reader might object that we end up requiring $N$ to be large
before we know that in our Poisson relation the singularities in the
trace from closed orbits with $N$
reflections are in fact smaller than the ``main'' unreflected
Gutzwiller contributions as given e.g.\ in \cite[Theorem 3]{Me:92} or
in our Theorem~\ref{thm:trace} with $N=0$.
This is inevitable, however, as we employ no \emph{structural}
information about the propagator in the Poisson relation---merely its
crude mapping properties---while our
(and others') computations of trace asymptotics for nondegenerate
closed orbits use
strongly the semiclassical Lagrangian
structure of the propagator.
  \end{remark}

\section{The trace formula}
\label{sec:tracenew}
Recall that the trace we seek to compute is given (up to a standard $2
\pi$ normalizing factor) by
\begin{equation}
    \label{FTtrace}
    I_E \coloneqq \int_{\mathbb{R}} e^{iEt/h}\hat\rho(t) \Tr [\chi(P_h) e^{-itP_h/h}] dt 
\end{equation}
In proving the trace formula, it suffices to work with an arbitrarily small neighborhood of a fixed energy $E_0$; we will therefore use the freedom to shrink $\supp \chi$ in order to constrain the dynamics to be close to the dynamics at energy $E_0$.

We use a more refined microlocal partition of unity of $K_\chi$
(actually $\WF' W$), to decompose the trace into finitely many
microlocal traces. The microlocal traces away from $Y$ can be
obtained using the standard stationary phase method. For the microlocal
traces near $Y$, we use cyclicity of the trace to push the computation
of the trace away from the interface $Y$, so as to avoid stationary
phase computations on manifolds with boundary (which would be
necessary on the two sides of $Y$).  Similar techniques have been
previous employed in the setting of conic singularities in, e.g.,
\cite{Hi:05,Wu:02,FoWu:17,Ya:22}. The dynamics are, however, considerably more
complicated in the setting considered here.

\subsection{A refined microlocal partition of unity}
\label{sec:partitionnew}

We again build a microlocal partition of $W$ as in
Section~\ref{sec:poisson} but now require the partition to have more
sophisticated dynamical properties.

 Let $\indexset$
denote the (finite) index set for the partition; we will later split $\indexset$ into
two subsets denoted\footnote{Notwithstanding the notation, we remind
  the reader that the interface $Y$ is an interior hypersurface, not a
  boundary.} 
  $$\indexset =\indexY \sqcup \indexint,$$ 
  according to
whether or not the element of the partition overlaps $Y$. Let
$\{A_k\}_{k\in\indexset}\subset\Psi_{b,h}(X)$ be a finite collection
of semiclassical b-pseudodifferential operators on $X$ satisfying the
following basic properties.  In what follows we use the notation ``$A
\equiv B$ on $K$'' as a more human-readable shorthand for $\WF' (A-B)\cap
K=\emptyset$, i.e. as meaning that the two operators are microlocally
equivalent on $K$.

First, we fix a nonnegative cutoff function $\psi_Y$ 
supported close to $Y$ and having smooth square root, such that
$\psi_Y=1$ on a neighborhood of $Y$.  The partition we take has the
following properties:
\begin{enumerate}\renewcommand{\theenumi}{\alph{enumi}}
    \item Each $\WF' A_k$ is compact and $A_k$ has
      real principal symbol.
    \item There exists a fixed small constant $\delta$ such that $\opWF A_k\subset \prescript{b}{}{T}^*X$ is contained in a small ball of radius $\delta$ with respect to a metric on $\prescript{b}{}{{T}^*X}$;
    \item $\sum_{j \in \indexset} A_j^* A_j \equiv I$ on $\WF' W$.
      \item $\sum_{j \in \indexY} A_j^* A_j\equiv \psi_Y$ on
        $\WF' W$ and $\sum_{j \in \indexint} A_j^* A_j\equiv (1-\psi_Y)$ on $\WF' W$
\end{enumerate}
Note that if $k \in \indexint$, then $A_k$ is a usual semiclassical $\Psi$DO. 
Such microlocal partition of unity can be constructed in the following
way: we first construct a microlocal partition of unity $C_j \in
\Psibh(X)$ over $\WF' W$, so that $C_j$ are self-adjoint and
$$
\sum C_j^2\equiv I \text{ on } \WF'W.
$$
This is accomplished by a standard iterative process in the symbol
calculus.  Then we set
$$
B_j =C_j \sqrt{\psi_Y},\ B_j'=C_j \sqrt{1-\psi_Y}
$$
so that
$$
\sum B_j^* B_j +\sum (B_j')^* B_j'\equiv I \text{ on } \WF'W.
$$
Finally, let the partition $\{A_j\}_{j\in\indexset}$ consist of the $B_j$'s for $j \in \indexY$ and $(B_j')$'s for $j \in \indexint$.

As we are interested in the trace near some energy $E_0$ of a specific
closed branching orbit (with $N$ reflections) as in Theorem \ref{thm:trace}, we fix a nondegenerate closed orbit cylinder $\gamma$ near the 
energy $E_0\in I$. By the dynamical
assumption \eqref{contact}, shrinking $I$
if necessary as discussed
  in the beginning of this section, there exists a time $\shorttime>0$ such that for each $E\in I$ if
$\gamma = \gamma_E \subset \dot\Sigma^E_b(p)$ is a closed branching orbit and
$\pi (\gamma(0)) \in Y$ then $\pi(\gamma(\shorttime))\notin
Y$. Note that owing to the assumption \eqref{contact}, we can take $\shorttime$ as small as desired. Now we use the freedom to shrink the $\delta$ in the definition of the
partition as well as the size of $\psi_Y$ and $I=\supp \chi \ni E_0$  so that the partition of
unity additionally enjoys the following dynamical properties.  Fix
any $k \in \NN$ (which will be taken large later on).
    \begin{enumerate}\renewcommand{\theenumi}{\Alph{enumi}}
    \item \label{p:iii} For all $j \in \indexY$ satisfying
        $\gamma \cap \WF'A_j \neq \emptyset$,  there exist
        $\shorttime$ such that $\pi\big(\XPhi{E}^{N+k}_{\shorttime} (\WF'
        A_j)\big) \cap \supp \psi_Y=\emptyset$ for all $E \in \supp \chi$.
\item \label{p:ii} For all $j \in \indexint$ and $E
      \in \supp \chi$, we have either $\pi(\XPhi{E}^{N+k}_{\shorttime} (\WF'
      A_k)) \subset \{\psi_Y=1\}$ or $\pi\big(\XPhi{E}^{N+k}_{\shorttime} (\WF'A_j)
      \big)\cap Y =\emptyset$.
        \end{enumerate}
      
Property \eqref{p:iii} says that we can refine the
          boundary microlocal cutoffs such that, if they are close to
          the orbit cylinder $\gamma$, their $(N+k)$-branching flow
          with energy in $\supp\chi$ stay away from the boundary in
          some time $\shorttime>0$. Note that for each of the finitely
          many points over $Y$ in a closed branching orbits
          $\gamma_E$ in the orbit cylinder $\gamma$, our dynamical
          hypothesis \eqref{contact} shows that there is a time
          $\Hat{T}_E$ such that the flow starting from
          these points (which undergoes a single branching at time $0$) is disjoint from $\pi^{-1}(Y)$ for all
          $t\in (0,\Hat{T}_E)$.  Now we take
          $\Hat{T}\coloneqq \min_{E\in I} \Hat{T}_E >0$. Thus,
          Property \eqref{p:iii} simply follows from shrinking the
          support of $\psi_Y$ and $\chi$, taking
          $\shorttime<\frac{1}{2}\Hat{T}$ small and using continuity
          of the flow (Corollary~\ref{cor:continuity2}) to construct
          boundary microlocal cutoffs $A_j$'s.

Property \eqref{p:ii} says that the partition of unity is sufficiently
fine that if the time-$\shorttime$ flow of the (micro)support of one
of the microlocal cutoffs touches the interface $Y$, it lives in a
small 
enough neighborhood of the interface to be in the set where 
$\psi_Y=1$. This relies only on the continuity of the branching flow,
    and it can be achieved by shrinking the energy window $I$ and refining the interior part of the
    partition sufficiently after we have fixed the $\psi_Y$ and  $A_j$, $j
    \in \indexY$.

  \begin{figure}[h]
    \centering
    \includegraphics[width=0.44\linewidth]{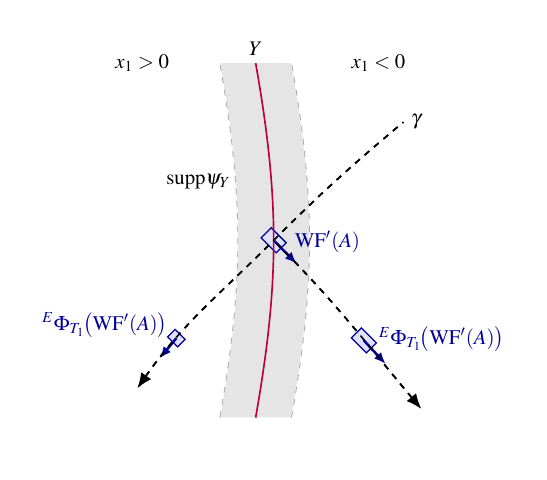}
    \quad 
    \includegraphics[width=0.50\linewidth]{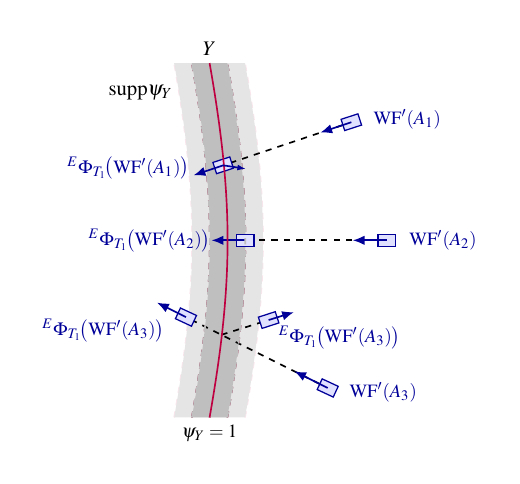}
    \caption{The picture on the left illustrates the microlocal partition property \eqref{p:iii}, where the 		branching flow with energy $E\in\supp\chi$ of the boundary microlocal cutoffs close to
          the orbit cylinder $\gamma$ leave the vicinity of the boundary in
          $\shorttime>0$. The picture on the right illustrates the
          microlocal partition property \eqref{p:ii}, where the time
          $\shorttime$ flow of the microsupport of interior
          microlocalizers either in $\{\psi_Y=1\}$ (e.g.~$A_1$), or
          stay away from $Y$ (e.g.~$A_2,A_3$).} 
    \label{fig:partition}
\end{figure}

\begin{remark}
  Note that the parameter $k$ is inessential here: we will simply
  end by taking this finite number sufficiently large that the
  $O(h^{k_0(N+k+1)-n-0})$ error terms that appear in the trace
  computation below are guaranteed to be smaller than the main term.
  A more detailed analysis would allow hypotheses in our theorem on the dynamics in
  terms of just the $\XPhi{E}^{N+k}_t$-flow for sufficiently large $k$
  rather than on the full $\XPhi{E}_t$ flow: there might be more
  closed orbits for very large $k$ and these would not interfere
  with the leading-order trace asymptotics.
\end{remark}

\subsection{Reduction of the trace}
\label{sec:reductionnew}
We use the microlocal partition of unity introduced in Section \ref{sec:partitionnew} to reduce the trace to pieces which we can compute individually. Note that 
\[\label{trsplit}
\begin{split}
    \Tr [ \chi(P) U(t) ] 
    \equiv \Tr [ W \chi(P) U(t) ]
    &\equiv \sum_{j\in\indexint} \Tr [ A_j^* A_j \chi(P) U(t) ] + \Tr [ \psi_Y W \chi(P) U(t) ] \\
&:= \Tr_\circ + \Tr_\pa.
  \end{split}
\]
This gives a microlocal partition of the trace. We will treat these
terms in separate sections below.  In Section~\ref{sec:int-trace} we
treat $\Tr_\circ$: we
use cyclicity of the trace to
write these terms as
$$
\Tr  A_j \chi(P) U(t)A_j^*
$$
and we will then employ stationary
phase directly.

For the boundary
cutoff contributions $\Tr_\pa$, we begin by using the property that the
$(N+k)$-branching flow (microlocally near the orbit cylinder $\gamma$) from $Y$ does
not come back to $Y$ in time $\shorttime$: we rewrite
(dropping the factor of $W$, which makes no difference)
$$\begin{aligned}
\Tr_\pa
&\equiv \Tr [ \psi_Y \chi_1(P) U(T_1) ( \psi_Y
+\Sigma_{j\in\indexint} A_j^* A_j )\chi(P) U(t-T_1) ]\\
&\equiv \Tr [\chi(P) U(t-T_1)  \psi_Y \chi_1(P) U(T_1) ( \psi_Y +\Sigma_{j\in\indexint} A_j^* A_j )]\\
&\equiv \Tr [\chi(P) U(t-T_1)  \psi_Y \chi_1(P) U(T_1) \psi_Y] + \sum_{j\in\indexint} \Tr [\chi(P) U(t-T_1) \psi_Y \chi_1(P) U(T_1)  A_j^* A_j],
\end{aligned}
$$
where we take the cutoff $\chi_1$ such that $\chi_1 \chi=\chi$ and
$\supp \chi_1\subset I$.

The first term on the RHS is $O(h^{k_0(N+k+1)-n-0})$: this follows
by the Partition Property
\eqref{p:iii}, as there is no $(N+k)$-branching flow starting in $\WF'
A_j^* = \WF' A_j$ and staying in $\supp \psi_Y$ at time $T_1$; we
then apply the same trace estimate as in the proof of
Theorem~\ref{thm:poisson}.
Thus,
\begin{equation}\label{trdelta}
\Tr_\pa =\sum_{j \in \indexint}\Tr [A_j\chi(P) U(t-T_1) \psi_Y \chi_1(P) U(T_1) A_j^* ]+O(h^{k_0(N+k+1)-n-0}).
\end{equation}
Note that by the Partition Property \eqref{p:ii}, the interior
partitions are refined such that their flowouts are either in
$\{\psi_Y\equiv 1\}$ or stay away from $Y$. The leading order contributions to the
trace from these terms will be evaluated in Section \ref{sec:traceatY}.

\subsection{Microlocalized trace in the interiors}
\label{sec:int-trace}
We assume that the function $\hat{\rho}(t)\in\CI_c(\RR)$ is supported
near $T(E_0)$, $E_0 \in \supp \chi$  where $T(E)$ is the period of a
unique periodic branching orbit (with $N$-fold reflection) in
$\dot \Sigma^E_b$. (If there is more than one such orbit cylinder, the argument applies below separately at
each.)

Fix any $E \in \supp \chi$.  By our hypotheses, there is a unique
$N$-fold reflected periodic branching orbit with period $T(E) \in
\supp \hat\rho$.  Choose
$(x^E_0,\xi^E_0) \in \dot \Sigma^E_b$ along this orbit, and suppose $\gamma^E(t) = (X(t,x^E_0,\xi^E_0),\Xi(t,x^E_0,\xi^E_0))\in {}^bT^*X$ is
the orbit, with 
$\gamma^E(0)=(x^E_0,\xi^E_0)$. We simply write $(X(t),\Xi(t))$ when there is
no ambiguity, and moreover omit all the $E$ superscripts in what follows for brevity
of notation, simply remembering that all the quantities have a
smooth parametric dependence on $E$.

We now introduce a general proposition that systematizes the stationary
phase computation of the energy-localized
trace for propagators of the form considered here. For brevity, in
  what follows we only keep track of the leading order in the
  semiclassical parameter $h$ in the trace computation.  Hence we
  employ the notation $\doteq$ to denote ``modulo higher order terms
  in $h$.''  The stationary phase expansions developed below in fact all have
  one-step asymptotic expansions in powers of $h$, hence so do the
  traces we are computing.
Recall that we assume no points along the periodic orbit are
  self-conjugate, so the action $S(t,x,y)$ is locally well defined for $x,y$
  close to the orbit, and is a generating function for the
  locally-defined flow $\XPhi{E}_t^\gamma$ (away from $Y$).  We use
  the notation of Section~\ref{sec:poincare} for the Poincar\'e map
  and the Fermi normal coordinates $\hat x$ to
  the orbit.
\begin{proposition} 
\label{prop:stationary}
With the notation above, consider the operator $K(t)$ with Schwartz kernel
\[K(t,x,y) = e^{\frac{i}{h}S(t,x,y)}\abs{\det\partial_{yx}^2S}^{\frac{1}{2}}a(t,x,y),\]
where $a(t,x,y)$ is supported near $x,y=x_0$, $t=T$. Then for $\hat\rho\in C_c^\infty(\mathbb{R})$ with $\hat\rho$ supported near $t=T$,
\[I_K \coloneqq \int_{\mathbb{R}} e^{iEt/h}\hat\rho(t) \Tr K(t)\, dt \doteq \frac{(2\pi h)^{\frac{n}{2}}e^{i\frac{\pi}{4}\sgn(\mathrm{Hess}_{S(t,\hat x)})}e^{\frac{i}{h}(ET+S_\gamma)}}{\abs{\det(I-\Pcal)}^{\frac{1}{2}}}\int a(T,X(s),X(s))\,ds\]
\end{proposition}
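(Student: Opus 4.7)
The trace is the integral of the kernel over the diagonal, so setting $\Psi(t,x) := Et + S(t,x,x)$ and $b(t,x) := |\det \partial^2_{yx}S(t,x,x)|^{1/2}\, a(t,x,x)$, we write
\[
I_K = \int\!\!\int e^{\frac{i}{h}\Psi(t,x)} \hat\rho(t)\, b(t,x)\, dx\, dt.
\]
The critical points of $\Psi$ are characterized by $\partial_t\Psi = E + \partial_t S(t,x,x) = E - H(x,\partial_xS) = 0$ (using $\partial_t S = -H$) and $\partial_{x^i}\Psi = (\partial_x S + \partial_y S)_i = \xi - \eta = 0$, which together say exactly that $(x,\partial_x S)$ is a periodic point of the flow with period $t$ at energy $E$. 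By our nondegeneracy hypothesis and the choice of $\supp\hat\rho$, the critical set (for $E$ fixed) is the single closed trajectory $\gamma = \gamma_E$, a smooth one-parameter family parametrized by its time coordinate $s$, with $t = T(E)$.

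To handle the remaining one-dimensional degeneracy I would introduce Fermi normal coordinates $(x^1,\hat x)$ along $\gamma$ as in Section~\ref{sec:poincare}, so that the orbit itself is the set $\{\hat x = 0\}$ with $x^1$ running along it. Viewing $x^1$ as a parameter and doing stationary phase in the $n$ remaining variables $(t,\hat x)$: for each $x^1$ near the orbit, the unique critical point is $(t,\hat x) = (T,0)$, and there the critical value of $\Psi$ equals $ET + S_\gamma$, independent of $x^1$, since $S(T, X(s), X(s))$ is the action along the closed orbit and is $s$-independent.

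The key input is the Hessian identity established in Section~\ref{sec:poincare},
\[
\det \mathrm{Hess}_{(t,\hat x)}\,[S(t,x,x)]\big|_{t=T,\,\hat x=0}\; =\; -(\dot X^1)^2\,\bigl(\det \partial^2_{yx}S\bigr)\,\det(I-\Pcal),
\]
so that the stationary phase prefactor $|\det \mathrm{Hess}|^{-1/2}$ exactly cancels the $|\det\partial^2_{yx}S|^{1/2}$ appearing in $b$, leaving $|\dot X^1|^{-1}|\det(I-\Pcal)|^{-1/2}$. Applying the stationary phase lemma (in dimension $n$) to the inner integral thus yields
\[
\int\!\!\int e^{\frac{i}{h}\Psi}\hat\rho\, b\, d\hat x\, dt \;\doteq\; \frac{(2\pi h)^{n/2} e^{i\frac{\pi}{4}\sgn(\mathrm{Hess})}}{|\det(I-\Pcal)|^{1/2}}\, e^{\frac{i}{h}(ET+S_\gamma)}\, \frac{\hat\rho(T)\, a(T,X(x^1),X(x^1))}{|\dot X^1|},
\]
and then changing variables $dx^1 = |\dot X^1|\, ds$ along the orbit (with $s$ the natural arclength-like parameter $X(s)$ used in the statement) absorbs the $|\dot X^1|^{-1}$ factor and produces the invariant integral $\int a(T,X(s),X(s))\, ds$, giving the stated formula (with the implicit $\hat\rho(T)$ factor).

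The main conceptual input here is not the stationary phase itself, which is essentially routine once the degenerate direction is separated, but the recognition of the Hessian determinant as a dynamical quantity; this identification was the purpose of the preparatory work in Section~\ref{sec:poincare}, and is what allows the amplitude $|\det\partial^2_{yx}S|^{1/2}$ built into the Lagrangian half-density to conspire with the stationary-phase Jacobian to produce the linearized Poincaré determinant $|\det(I-\Pcal)|^{1/2}$.
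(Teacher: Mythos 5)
Your proof is correct and follows essentially the same route as the paper: write the trace as a diagonal integral, pass to Fermi normal coordinates $(x^1,\hat x)$ along $\gamma$, treat $x^1$ as a parameter and apply stationary phase in $(t,\hat x)$, use the Hessian identity from Section~\ref{sec:poincare} to replace $\det\mathrm{Hess}_{(t,\hat x)}S(t,x,x)$ by $-(\dot X^1)^2(\det\partial^2_{yx}S)\det(I-\Pcal)$ so that the half-density factor cancels, and finally change variables $dx^1=|\dot X^1|\,ds$. (You are also right that the $\hat\rho(T)$ factor should appear after stationary phase; the paper's displayed formula in this proposition omits it but reinstates it when the result is applied later.)
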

\begin{proof}
We take local coordinates, as described after Lemma \ref{lem:ABC}. Then
\[\int_{\mathbb{R}} e^{iEt/h}\hat\rho(t) \Tr K(t) \,dt =  \int\left(\int e^{\frac{i}{h}(Et+S(t,x,x))} \hat\rho(t) \abs{\det\partial_{yx}^2S}^{\frac{1}{2}} a(t,x,x)\,dt\,d\hat{x}\right)\,dx_1.\]
We now apply the stationary phase lemma to the inside integral. For any fixed $x_1$, the phase function is given by
\[\phi(t,\hat x) = Et+S(t,x,x)\]
where $x = (x_1,\hat x)$ with $x_1$ being a parameter. The critical points are given by
\[\partial_t\phi = E+\partial_tS =0,\quad \partial_{\hat{x}}\phi = \partial_{\hat{x}}[S(t,x,x)] =0.\]
These quantities are zero precisely if there exists a periodic
trajectory of time $t$ with energy $E$ (the latter since $S$ satisfies
the Hamilton--Jacobi equation). As $\gamma$ is the only bicharacteristic with energy $E$, for fixed $x_1$, this happens only at $t=T(E)$, $\hat{x}=0$. The determinant of the Hessian of the phase is 
\begin{align*}
\det\mathrm{Hess}_{\phi(t,\hat x)}|_{\substack{t=T\\ \hat x=0}} = \det\mathrm{Hess}_{S(t,\hat x)}\rvert_{\substack{t=T\\ \hat x=0}} = -(\dot X^1)^2\cdot\det\partial_{yx}^2S \cdot \det(I-\Pcal).
\end{align*}
by the results of Section~\ref{sec:poincare}. It follows that
\begin{align*}
&\int e^{\frac{i}{h}(Et+S(t,x,x))} \hat\rho(t) \abs{\det\partial_{yx}^2S}^{\frac{1}{2}} a(t,x,x)\,dt\,d\hat{x} \\
&\doteq  {(2\pi h)^{\frac{n}{2}} e^{i\frac{\pi}{4}\sgn(\mathrm{Hess}_{S(t,\hat x)})}e^{\frac{i}{h}(ET+S_\gamma)}}{\abs{ \det\mathrm{Hess}_{S(t,\hat x)}\rvert_{\substack{t=T\\ \hat x=0}}}^{-\frac{1}{2}}}\abs{\det\partial_{yx}^2S}^{\frac{1}{2}} a(T,x,x)|_{\hat{x}=0}\\
& = \frac{(2\pi h)^{\frac{n}{2}} e^{i\frac{\pi}{4}\sgn(\mathrm{Hess}_{S(t,\hat x)})}e^{\frac{i}{h}(ET+S_\gamma)}}{|\dot X^1|\left|\det(I-\Pcal)\right|^{\frac{1}{2}}} a(T,x,x)|_{\hat{x}=0}
\end{align*}
where $S_{\gamma} = S(T,x_1,0,x_1,0)$ is the action along $\gamma$ near the point $x_0$. Thus the leading order term in stationary phase gives
\[ I_K \doteq \frac{(2\pi h)^{\frac{n}{2}} e^{i\frac{\pi}{4}\sgn(\mathrm{Hess}_{S(t,\hat x)})}e^{\frac{i}{h}(ET+S_\gamma)}}{\left|\det(I-\Pcal)\right|^{\frac{1}{2}}}\int {|\dot X^1|^{-1}}a(T,(x_1,0),(x_1,0))\,dx_1.\]
Changing variables $x_1 = X^1(s,x_0,\xi_0)$, $dx_1 = |\dot X^1|\,ds$,
\[ I_K \doteq \frac{(2\pi h)^{\frac{n}{2}} e^{i\frac{\pi}{4}\sgn(\mathrm{Hess}_{S(t,\hat x)})}e^{\frac{i}{h}(ET+S_\gamma)}}{\left|\det(I-\Pcal)\right|^{\frac{1}{2}}}\int_0^T a(T,X(s,x_0,\xi_0),X(s,x_0,\xi_0))\,ds.\]
This proves the proposition.
\end{proof}

Now we are ready to consider the contribution of the term
  $\Tr_\circ$ to the trace formula.
First, note that if
  $$
  \XPhi{E}^{N+k}_t(\WF'A_j) \cap \WF'A_j=\emptyset,\ E \in I,
  $$
  then
  $$
\Tr  A_j \chi(P) U(t)A_j^*=O(h^{k_0(N+k+1)-n-0}),
$$
by the proof of the Poisson relation above.  Thus, we consider only the
elements in the partition where the flow is recurrent.  At such points
we will use our parametrix for the propagator.

Consider in particular an operator $A = A_j$, $j \in \indexint$ such
that $  \XPhi{E}_t(\WF'A_j) \cap \WF'A_j\neq \emptyset$; by
hypothesis, for pairs of points in $\WF A_j$
there is a unique branching bicharacteristic of length $t$ and energy
$E$ connecting them.
 By Proposition
\ref{prop:pgtr-m} and Lemma \ref{lemma:pgtr_equiv}, the leading term
of the Schwartz kernel of the resulting microlocal propagator $A 
e^{-itP_h/h} A^*$ is given by\footnote{We abuse notation throughout
  this section by conflating operators with their Schwartz kernels.}  
\begin{equation}
         {(2\pi i h)^{-\frac{n}{2}}} e^{\frac{i}{h}S^{\REF}_\gamma(t,z,w)}
\abs{\Delta_\gamma}^{\frac{1}{2}} e^{-i\frac{\pi}{2}\mu_\gamma} \rtot(t,z,w)  h^{N\kk}  a(z,\partial_z S^\REF_{\gamma}) a(w,-\partial_w S^\REF_{\gamma}) \abs{dz\, dw}^{\frac{1}{2}},
\end{equation}
where $\gamma = \gamma_{t,z,w}$ is the (locally) unique branching
  bicharacteristic with at most $N$ reflections close to the given
  orbit cylinder $\gamma$ such that
$\gamma(0),\gamma(t)\in \opWF A$ with time $t$ and $\pi(\gamma(0))=w$,
$\pi(\gamma(t))=z$.  (Here we are abusing notation since the given
orbit cylinder is $\gamma_{t,z,w}$ for special values of $(t,z,w)$
where $z$ and $w$ are time-$t$ apart on one of the closed orbits;
the
more general $\gamma_{t,z,w}$ extends this family of orbits to nearby
nonperiodic ones.)
Note that by Lemma~\ref{lemma:timeorspace}, we may turn the
  spectral cutoff $\chi(P_h)$ into a (semiclassical) Fourier
  multiplier in time, hence
  $$\begin{aligned}
  \int_{\mathbb{R}}  e^{iEt/h}  \hat\rho(t) \Tr [A \chi(P_h)
  e^{-itP_h/h} A^*] dt &\equiv  \int_{\mathbb{R}}  e^{iEt/h}
  \hat\rho(t) \Tr [A \chi(-hD_t) e^{-itP_h/h} A^*] dt \\
  &=\chi(E) \int_{\mathbb{R}}  e^{iEt/h}
  \hat\rho(t) \Tr [A e^{-itP_h/h} A^*] dt .
\end{aligned}
  $$

By Proposition~\ref{prop:pgtr-m} and Proposition
\ref{prop:stationary}, since $S_\gamma^{\REF}$ parametrizes the flow
and $A$ has real principal symbol,
\begin{equation*}
    \begin{split}
        \int_{\mathbb{R}}  e^{iEt/h}  &\hat\rho(t) \Tr [A \chi(P_h) e^{-itP_h/h} A^*] dt \\
        \doteq & \frac{ i^{-\frac{n}{2}}e^{i\frac{\pi}{4}\sgn(\mathrm{Hess}_{S(t,\hat x)})} e^{-i\frac{\pi}{2}\mu_\gamma} e^{i(ET+S_\gamma)/h}} {\left|\det(I-\Pcal)\right|^{1/2}}\chi(E) \hat{\rho}(T) \rtot  h^{N\kk}
         \int_0^T a^2(X(s),\Xi(s)) \,ds,
    \end{split}
  \end{equation*}
where here and henceforth
  $$
\rtot(E)=\rtot(T(E), x_0, x_0)
$$
(with the notation of  \eqref{eq:ref-coeff-m})
is the product of all the reflections coefficients around the closed
orbit in question.
By \eqref{eq:stx-index}, we have
\[\frac{n}{2} - \frac{\sgn(\mathrm{Hess}_{S(t,\hat x)})}{2} + \mu_\gamma = \ind(\mathrm{Hess}_{S(t,\hat x)}) + \mu_\gamma = \sigma_\gamma,\]
and hence
\[ i^{-n/2}e^{i\frac{\pi}{4}\sgn(\mathrm{Hess}_{S(t,\hat x)})} e^{-i\frac{\pi}{2}\mu_\gamma} = i^{-\left(\frac{n}{2} - \frac{\sgn(\mathrm{Hess}_{S(t,\hat x)})}{2} + \mu_\gamma\right)} = i^{-\sigma_\gamma}.\]
Thus,
\begin{equation}
\label{eq:ml-trace}
    \begin{split}
        \int_{\mathbb{R}} & e^{iEt/h}  \hat\rho(t) \Tr [A \chi(P_h) e^{-itP_h/h} A^*] dt \\
        &\quad \doteq  \frac{i^{-\sigma_\gamma} e^{i(ET+S_\gamma)/h}} {\left|\det(I-\Pcal)\right|^{1/2}}\chi(E) \hat{\rho}(T) \rtot(E)  h^{N\kk}
         \int_0^T a^2(X(s),\Xi(s)) \,ds. 
    \end{split}
\end{equation}
Hence the total interior contribution to the trace is
\begin{equation}\label{interiortrace}
\int e^{iEt/h} \hat \rho(t) \Tr_\circ\big[ \chi(P_h) U(t)\big] \, dt
\doteq
  \frac{ i^{-\sigma_\gamma} e^{i(ET+S_\gamma)/h}}{\left|\det(I-\Pcal)\right|^{1/2}}\chi(E)  \hat{\rho}(T)\, \rtot(E)  h^{N\kk} \sum_{j\in\circ} \int_0^T 
    a_j^2( X(s) ,\Xi(s))\, ds.
\end{equation}

\subsection{Microlocalized trace near the hypersurface} 
\label{sec:traceatY}
We now turn to the contribution to the trace of the boundary term
$\Tr_\pa$ in \eqref{trsplit}. 
By \eqref{trdelta},
$$
\int e^{iEt/h} \hat \rho(t) \Tr_\pa(t) \, dt  =I_\pa+ O(h^{k_0(N+k+1)-n-0})
$$
where
\[ I_{\partial} = \int_{\mathbb{R}} e^{iEt/h}\hat\rho(t) \sum_{k \in
      \indexint}\Tr [A_k \chi(P_h) U(t-\shorttime) \psi_Y \chi_1(P_h)  U(\shorttime) A_k^*] dt. \]
By the Partition Property \eqref{p:ii}, there are two types of interior microlocal cutoffs $\indexint = \indexint_1\cup \indexint_2$:
\begin{enumerate}
    \item $\indexint_1$: $A_k$ such that $\XPhi{E}^{N+k}_{\shorttime} (\WF' A_k) \subset \{\psi_Y=1\}$;
    \item $\indexint_2$: $A_k$ such that $\pi\big(\XPhi{E}^{N+k}_{\shorttime} (\WF'
A_k) \big) \cap Y = \emptyset$.
\end{enumerate}
{(In both cases, the property is to hold for all $E \in I$.)
We  further split $I_{\partial} = I_{\partial_1} + I_{\partial_2}$, where $I_{\partial_i}$ corresponds to the summation over $\circ_i, i=1,2$. For $k\in\indexint_1$, we have
by propagation of singularities \[\Tr [A_k \chi(P_h) U(t-\shorttime) \psi_Y \chi_1(P_h)  U(\shorttime) A_k^*] = \Tr [A_k \chi(P_h) U(t) A_k^*]+O(h^{k_0(N+k+1)-n-0}). \]
By equation \eqref{eq:ml-trace}, then,
\begin{equation}
\label{woodshole1}
    \begin{split}
      I_{\partial_1} \doteq \frac{ i^{-\sigma_\gamma} e^{i(ET+S_\gamma)/h}} {\left|\det(I-\Pcal)\right|^{1/2}}\chi(E) \hat{\rho}(T) \rtot(E)  h^{N\kk}
         \sum_{k \in
      \indexint_1} \int_0^T a_k^2(X(s,x_0,\xi_0),\Xi(s,x_0,\xi_0))\,ds.
    \end{split}
\end{equation}

For $A_k\in\indexint_2$, we need to compute
\[ \int_{\mathbb{R}} e^{iEt/h} \hat\rho(t) \Tr [A_k \chi(P_h) U(t-\shorttime) \psi_Y \chi_1(P_h)  U(\shorttime) A_k^*] dt\]
by applying Proposition \ref{prop:stationary} to the propagator 
$A_k \chi(P_h) U(t-\shorttime) \psi_Y \chi_1(P_h)  U(\shorttime)
A_k^*$.

We now claim that we may drop the $\chi_1(P_h)$ spectral cutoff
  from this expression modulo an $O_{L^2}(h^{\kk(N+k+1)-0})$ error:
  First we note that since, by construction and propagation of
  singularities, $U(\shorttime) A_k^*= O_{L^2}(h^{\kk(N+k+1)-0})$ in a
  neighborhood of $Y$, we may restrict our attention to $X
  \backslash Y$ at the cost of such an error, i.e.\ we may insert a
  microlocalizer $B \in \Psi_h(X)$, supported away from $Y$ and
  microlocally equal to the identity on $\WFh^{\kk(N+k+1)-0}
  U(\shorttime) A_k^*$
  (and commute $\chi(P_h)$ through the
  first propagator) and analyze
  $$
A_k U(t-\shorttime) \chi(P_h)\psi_Y B \chi_1(P_h)  U(\shorttime)
A_k^*.
 $$
Note that while $\WF' B$ may have multiple connected components due to
   potential branching within time $T_1$, only one of them contributes
   to the main singularity due to our assumption on the unique closed
   branching orbit of time $t$.  (In fact we can arrange that at most one reflection can occur owing
   to the
   transversality of $\gamma$ to $Y$.)
 On $\WF' B$,
  $\chi_1(P_h)$ and $\chi(P_h)$ are
  semiclassical pseudodifferential operators, with the former
  microlocally equal to $1$ on the microsupport of the latter. Thus we have 
  \[\chi(P_h)\psi_Y B (I -\chi_1(P_h)) \equiv 0.\]
  Hence
  $$\begin{aligned}
A_k U(t-\shorttime) \chi(P_h)\psi_Y B \chi_1(P_h)  U(\shorttime)
A_k^*&\equiv A_k U(t-\shorttime) \chi(P_h)\psi_Y B U(\shorttime)
A_k^*\\ &= A_k \chi(P_h) U(t-\shorttime) \psi_Y B U(\shorttime)
A_k^*.
\end{aligned}$$
The virtue of this expression is that it is a composition of two
branching propagators, just as we have analyzed in
Section~\ref{sec:multiplereflection}. 
Thus we may finally apply stationary phase as in that section to find that the
  kernel of such a microlocal propagator is given by
\begin{equation}
    \begin{split}
A_k U(t-\shorttime) &\psi_Y \chi_1(P_h)  U(\shorttime) A_k^*\\ 
  &= A_k U(t-\shorttime) \psi_Y B  U(\shorttime)
A_k^*  + O_{L^2}(h^{k_0(N+k+1)-0)})\\ 
&\doteq  {(2\pi i h)^{-\frac{n}{2}}}  e^{\frac{i}{h}S^{\REF}_\gamma(t,z,w)}
\abs{\Delta_\gamma}^{\frac{1}{2}} e^{-i\frac{\pi}{2}\mu_\gamma} \rtot(E)  h^{N\kk} \,  a_k(z,\partial_z S^\REF_{\gamma})
\\
& \quad \quad \quad \psi_{Y}(\pi(\XPhi{E}^\gamma_{T_1}(w,-\partial_w S^\REF_{\gamma}))) a_k(w,-\partial_w S^\REF_{\gamma}) \abs{dz\, dw}^{\frac{1}{2}}.
    \end{split}
\end{equation}
  (Note that the symbol of $B$ is equal to $1$ on the region in
  question, so this plays no role.)
Inserting the other spectral cutoff $\chi(P_h)=\chi(-hD_t)$ when composing with the propagator $U(t)$ (cf.~Lemma~\ref{lemma:timeorspace}), and as before
taking $k$ sufficiently large to make the error term smaller than the
main one, we find that the corresponding leading order of the trace is given by
\begin{equation}
\label{woodshole3}
    \begin{split}
I_{\pa_2} \doteq   \sum_{k \in \indexint_2}\frac{i^{-\sigma_\gamma}e^{i(ET+S_\gamma)/h}}{\left|\det(I-\Pcal)\right|^{1/2}}\chi(E)
\hat{\rho}(T) \rtot(E)  h^{N\kk} \int_0^T
&\psi_{Y}(\pi(\XPhi{E}^\gamma_{T_1}(X(s), \Xi(s))))  \\ 
         & \cdot a_k^2(X(s),\Xi(s)) \,ds.
    \end{split}
  \end{equation} 
  Now we reassemble $I_\pa=I_{\pa_1}+I_{\pa_2}$. We first insert a
  $\psi_{Y}(\pi(\XPhi{E}^\gamma_{T_1}(X(s), \Xi(s))))$ factor in the integral term in
  \eqref{woodshole1}, since this factor is identically $1$ on the
  support of the integrals in the first case. Combining them with
  terms in \eqref{woodshole3}, we obtain, by pulling back under the
  locally well-defined map $\XPhi{E}^\gamma_{T_1}$,
\begin{equation*}
    \begin{split}
I_\pa & \doteq \int e^{iEt/h} \hat \rho(t) \Tr_\pa\big[ \chi(P_h) U(t)\big] \,dt \\
&\doteq  \frac{ i^{-\sigma_\gamma} e^{i(ET+S_\gamma)/h}}{\left|\det(I-\Pcal)\right|^{1/2}}\chi(E)  \hat{\rho}(T) \rtot(E)  h^{N\kk} \sum_{k\in\indexint} \int_0^T \psi_{Y}(\pi(\XPhi{E}^\gamma_{T_1}( X(s) ,\Xi(s)))) a_k^2(X(s),\Xi(s))\,ds\\
& = \frac{i^{-\sigma_\gamma} e^{i(ET+S_\gamma)/h}}{\left|\det(I-\Pcal)\right|^{1/2}}\chi(E)  \hat{\rho}(T) \rtot(E)  h^{N\kk} \sum_{k\in\indexint, j\in\indexY} \int_0^T a_j^2( X(s) ,\Xi(s))\, 
a_k^2(\XPhi{E}^\gamma_{-T_1}(X(s),\Xi(s))) \,ds\\
& \doteq  \frac{ i^{-\sigma_\gamma} e^{i(ET+S_\gamma)/h}}{\left|\det(I-\Pcal)\right|^{1/2}}\chi(E)  \hat{\rho}(T)\, \rtot(E)  h^{N\kk} \sum_{j\in\indexY} \int_0^T a_j^2( X(s) ,\Xi(s))\, ds;
    \end{split}
\end{equation*}
in the last equality we use the property that
$\{A_j^*A_j\}_{j\in\indexint}$ is an interior partition of unity and
the fact that if $\gamma(s)$ is in $\WF' A_j$ with $j \in \indexY$
then $\gamma(s-\shorttime)$ cannot lie in $\supp \psi_Y$ by Partition
Property \eqref{p:iii}.

Finally, adding this boundary contribution to
\eqref{interiortrace} yields the total trace contribution in \eqref{FTtrace} from the orbit cylinder $\gamma$
\begin{equation}
  \begin{aligned}
  I_E & \equiv \int e^{iEt/h} \hat \rho(t) \Tr_\circ\big[ \chi(P_h) U(t)\big] \,
  dt+\int e^{iEt/h} \hat \rho(t) \Tr_\pa\big[ \chi(P_h) U(t)\big] \,  dt\\ &\doteq
  \frac{ i^{-\sigma_\gamma} e^{i(ET+S_\gamma)/h}}{\left|\det(I-\Pcal)\right|^{1/2}}\chi(E)  \hat{\rho}(T)\, \rtot(E)  h^{N\kk} \sum_{j\in\indexset} \int_0^T 
    a_j^2( X(s) ,\Xi(s))\, ds.
\end{aligned}
  \end{equation}
The microlocal partition of unity then gives the leading order of the trace 
\begin{equation}
\label{eq:trace}
    \begin{split}
        \int_{\mathbb{R}} e^{iEt/h} \hat\rho(t) \Tr [\chi(P_h) e^{-itP_h/h}] dt 
        \doteq \frac{ i^{-\sigma_\gamma} e^{i(ET+S_\gamma)/h}} {\left|\det(I-\Pcal)\right|^{1/2}} \rtot(E)  h^{N\kk} \chi(E) \hat{\rho}(T) T^\sharp_\gamma 
    \end{split}
\end{equation}
where $T^\sharp_\gamma$ is the primitive period of $\gamma$.

\appendix
				
\section{Periodic variational problem for mechanical systems with reflection}
\label{appendix:periodic}

In this appendix, we extend the results of \cite{WuYaZo:24} on
variational problems for periodic branching trajectories to periodic
variational problems with varying time. This is used to compute the
Morse indices in the Gutzwiller trace formula.  The
results here are minor variants of those in \cite{WuYaZo:24}, and we
have given references to that paper in lieu of repeating proofs.

Consider \emph{branching loops} (i.e, ``reflected paths'' in the sense
of Definition~2.1 of \cite{WuYaZo:24}): these are projections of
branching bicharacteristics that have the same start- and endpoint in
$X$ but
are not necessarily periodic, i.e., 
$\alpha:[0,T]\to M$
with the property that $\alpha(0)=\alpha(T)$ and with the possibility
of reflections at the hypersurface $Y$. Here, in contrast to
\cite{WuYaZo:24}, we do not fix $T$ ahead of time. If we consider
variations of such loops, with $T$ allowed to vary as well, we then
obtain
\[\alpha(T(\ep),\ep) = \alpha(0,\ep) \implies W(T) + T'_\ep\dot\alpha = W(0),\quad W:=\frac{\partial\alpha}{\partial\ep}.\]
Thus the tangent space of such variations is
\[T_\alpha\Omegaq = \{W\,:\,W(T)-W(0)\in\text{span }\dot\alpha\}.\]
Given $E\in\mathbb{R}$, we consider the functional
\[J[\alpha] = \int_0^T \frac{1}{4}|\dot\alpha|^2_g - V(\alpha(t)) + E\,dt.\]
Note that we have changed the normalization of the classical
  Lagrangian from  that used in 
  \cite{WuYaZo:24} where there is a $1/2$ instead of $1/4$.,  Note also that we will use the terminology
  ``reflected physical path'' from \cite{WuYaZo:24}; these paths are
  precisely the projections to the base of branching bicharacteristics.
\begin{lemma}
A loop $\alpha$ is stationary for $J$ iff $\alpha$ is a reflected
physical path in the sense of Definition~2.5 of \cite{WuYaZo:24}, and $(\alpha,\dot\alpha)$ lies on the energy surface $\frac{1}{4}|\dot\alpha|^2_g+V(\alpha) = E$.
\end{lemma}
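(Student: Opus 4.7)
The plan is a first-variation computation, adapted to allow both the curve and the period to vary. I take a one-parameter family $\alpha(t,\epsilon)$ with variable period $T(\epsilon)$ subject to the loop constraint $\alpha(T(\epsilon),\epsilon)=\alpha(0,\epsilon)$, set $W=\partial_\epsilon\alpha|_{\epsilon=0}$, and recall that this constraint forces $W(0)=W(T)+T'(0)\dot\alpha(T)$. Differentiating under the integral sign and picking up a boundary contribution from the moving upper limit gives
\[\frac{d}{d\epsilon}\bigg|_{\epsilon=0}J \;=\; T'(0)\,L\bigl(\alpha(T),\dot\alpha(T)\bigr) \;+\; \int_0^T\!\Bigl(\tfrac12\langle\dot\alpha,\nabla_t W\rangle-\langle\nabla V,W\rangle\Bigr)\,dt,\]
with $L=\tfrac14|\dot\alpha|_g^2-V+E$.

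I would then integrate by parts on each of the finitely many smooth segments of $\alpha$, assembling the matching contributions at the reflection points on $Y$ exactly as in Section~2 of \cite{WuYaZo:24}. Interior variations (those with $W(0)=W(T)=0$ and $T'(0)=0$) isolate the Euler--Lagrange integrand $\langle-\tfrac12\nabla_t\dot\alpha-\nabla V,W\rangle$ together with the reflection-point contributions; vanishing of the first variation on this class is exactly the defining condition for $\alpha$ to be a reflected physical path in the sense of Definition~2.5 of \cite{WuYaZo:24}. Once this is established, what is left of $dJ/d\epsilon$ reduces, after substituting the loop constraint on $W(0)$, to
\[T'(0)\bigl(L(T)-\tfrac12\langle\dot\alpha(0),\dot\alpha(T)\rangle\bigr) \;+\; \tfrac12\bigl\langle\dot\alpha(T)-\dot\alpha(0),W(T)\bigr\rangle.\]
Choosing $T'(0)=0$ and $W(T)$ arbitrary forces $\dot\alpha(0)=\dot\alpha(T)$, so that the loop lifts to a genuine periodic curve in phase space; then choosing $W(T)=0$ and $T'(0)\neq 0$ forces $L(T)=\tfrac12|\dot\alpha(T)|_g^2$, which unpacks to $\tfrac14|\dot\alpha(T)|_g^2+V(\alpha(T))=E$. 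Combined with conservation of the classical energy along reflected physical paths (which itself follows from the Euler--Lagrange equations and the fact that specular reflection preserves $|\dot\alpha|_g^2$), this is the energy-surface condition in the statement of the lemma.

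The converse is immediate: if $\alpha$ is a reflected physical path with matching tangent at the endpoints lying on the energy surface $\tfrac14|\dot\alpha|_g^2+V=E$, then reading the identity above in reverse shows that $dJ/d\epsilon=0$ for every admissible variation. The one nontrivial analytical ingredient---the careful handling of integration by parts across reflection points on $Y$, including the derivation of the specular matching condition---is inherited wholesale from Section~2 of \cite{WuYaZo:24}; the genuinely new element here is only the bookkeeping associated to the variable endpoint time, which produces the scalar Hamiltonian constraint $H=E$ as an additional Euler--Lagrange-type condition beyond those already treated there. I expect this bookkeeping (disentangling the $T'(0)L(T)$ contribution from the $W$-endpoint contributions under the coupling imposed by $W(T)-W(0)\in\operatorname{span}\dot\alpha$) to be the only real obstacle.
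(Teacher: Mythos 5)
Your proof is correct and follows exactly the route the paper intends: the paper gives only the one-line citation ``Cf.\ Lemma~3.2 of \cite{WuYaZo:24},'' which is the fixed-time first-variation computation, and you have supplied the required adaptation to a variable period. The bookkeeping is right: after imposing the interior Euler--Lagrange and specular-reflection conditions, substituting $W(0)=W(T)+T'(0)\dot\alpha(T)$ decouples the residual first variation into $T'(0)\bigl(L(T)-\tfrac12\langle\dot\alpha(0),\dot\alpha(T)\rangle\bigr)+\tfrac12\langle\dot\alpha(T)-\dot\alpha(0),W(T)\rangle$, so varying $W(T)$ with $T'(0)=0$ gives $\dot\alpha(T)=\dot\alpha(0)$ and then varying $T'(0)$ gives $L(T)=\tfrac12|\dot\alpha(T)|^2_g$, i.e.\ $\tfrac14|\dot\alpha|^2_g+V=E$ at the endpoint and hence everywhere by energy conservation.
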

\noindent (Cf.\ Lemma~3.2 of \cite{WuYaZo:24}.)

Let $\alpha$ be such a path which is also periodic:
$\alpha(0)=\alpha(T)=p$, $\dot\alpha(0)=\dot\alpha(T)$. Suppose that
$p$ is not conjugate to itself along $\alpha$ (in the sense of
Definition~4.6 of \cite{WuYaZo:24}). Let $\Omega_0(M;p,p)$ denote the space of paths with fixed endpoints at $p$ and $p$. Let
\[\mathcal{J} = \{W\in T_\alpha\Omegaq\,:\,W|_{(0,T)}\text{ is a
    Jacobi field}\}.\]
(For the definition of Jacobi fields in this context, see
  Definition~4.4 of \cite{WuYaZo:24}.)
Note that $\dim\mathcal{J} = n+1$. Indeed, Jacobi fields are uniquely determined (due to non-conjugacy) by their values at $t=0,T$; the tangent condition $W(T)-W(0)\in\text{span }\dot\alpha$ is an $(n-1)$-dimensional condition on the $2n$-dimensional space of Jacobi fields without boundary conditions.
\begin{lemma}
For a periodic loop $\alpha$ such that $J$ is stationary in $T_\alpha\Omegaq$, the second variation $J''$ is a well-defined bilinear form on $T_\alpha\Omegaq$. Moreover,
\[T_\alpha\Omegaq = \mathcal{J}\oplus T_\alpha\Omega_0,\]
with the direct sum orthogonal with respect to $J''$.
\end{lemma}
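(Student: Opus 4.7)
The plan is to adapt the proof of the corresponding fixed-period decomposition in \cite[Section~4]{WuYaZo:24} to accommodate the enlarged tangent space $T_\alpha\Omegaq$, in which variations are permitted to change the period $T$. Concretely, the admissible boundary data $(W(0), W(T))$ now span the $(n+1)$-dimensional subspace $\{(u,v)\in T_pM\oplus T_pM : v-u\in\text{span}\,\dot\alpha(0)\}$ of $T_pM\oplus T_pM$, rather than the $n$-dimensional diagonal. For the well-definedness of $J''$ on $T_\alpha\Omegaq$, I would invoke the standard fact that at a critical point of a variational functional the Hessian is independent of the choice of extension of a tangent vector: given $W\in T_\alpha\Omegaq$ and two extensions $\alpha_\ep$, $\tilde\alpha_\ep$ in $\Omegaq$ (possibly with different variations $T(\ep)$ of the period), the discrepancy between $\frac{d^2}{d\ep^2}\big|_{\ep=0}J(\alpha_\ep)$ and $\frac{d^2}{d\ep^2}\big|_{\ep=0}J(\tilde\alpha_\ep)$ is $J'(\alpha)$ applied to the difference of second $\ep$-derivatives of the extensions, and hence vanishes by stationarity of $\alpha$.

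For the direct-sum decomposition, the non-conjugacy hypothesis ensures that the boundary-value map $\mathcal{J}\to T_pM\oplus T_pM$, $W_J\mapsto (W_J(0), W_J(T))$, is injective. Since $\dim\mathcal{J}=n+1$ coincides with the dimension of the admissible subspace of boundary data, the map is an isomorphism onto that subspace. Hence for any $W\in T_\alpha\Omegaq$ there exists a unique $W_J\in\mathcal{J}$ matching the boundary data of $W$, and $W_0 := W - W_J$ has vanishing boundary values, placing it in $T_\alpha\Omega_0$. The intersection $\mathcal{J}\cap T_\alpha\Omega_0$ is trivial for the same injectivity reason, so the sum is direct.

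Finally, I would establish $J''$-orthogonality by the same integration-by-parts argument as in \cite{WuYaZo:24}. Writing $J''(W_J, W_0)$ as an integral over $[0,T]$ of a quadratic form in $\nabla_t W_J$, $\nabla_t W_0$ together with curvature and $\mathrm{Hess}\,V$ terms, and shifting derivatives onto $W_J$, three types of terms arise: a bulk integral, which vanishes since $W_J$ satisfies the Jacobi equation on each smooth segment; jump terms at the reflection points of $\alpha$, which vanish by the reflection-compatibility condition built into the Jacobi-field definition (Definition~4.4 of \cite{WuYaZo:24}); and boundary contributions at $t=0,T$, which vanish because $W_0(0)=W_0(T)=0$. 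The main obstacle I anticipate is ensuring that the variable-period framework does not introduce a hidden boundary correction tied to the $\text{span}\,\dot\alpha(0)$ direction along which $W_J(T)-W_J(0)$ may be nonzero; however, since all such correction terms are paired with $W_0$ at the endpoints and $W_0$ vanishes there, the argument of \cite{WuYaZo:24} carries through verbatim.
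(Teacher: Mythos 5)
Your argument is essentially correct and fills in the steps the paper leaves implicit by citing the proof of Theorem~5.2 of \cite{WuYaZo:24}: the well-definedness of $J''$ via the stationarity of $\alpha$, the dimension count $\dim\mathcal{J}=n+1$ matching the admissible boundary-data subspace $\{(u,v):v-u\in\operatorname{span}\dot\alpha\}$, the direct-sum decomposition via non-conjugacy, and the integration-by-parts argument for orthogonality all carry over exactly as you describe.

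The one imprecision is in your handling of the ``hidden boundary correction.'' When one differentiates the variable upper limit $T(\ep_1)$ in the mixed second variation, the extra term takes the form $T'(0)\,\tfrac12\langle\dot\alpha(T),\dot W_0(T)\rangle$; this involves $\dot W_0(T)$ rather than $W_0(T)$, so it does \emph{not} vanish merely because $W_0(0)=W_0(T)=0$, as your argument asserts. Instead it is cancelled by an equal and opposite boundary term arising from the second-order mixed variation $\partial_{\ep_1}\partial_{\ep_2}\alpha$, which satisfies $\partial_{\ep_1}\partial_{\ep_2}\alpha|_{t=T}=-T'(0)\,\dot W_0(T)$ as a consequence of the periodicity of the two-parameter family and the vanishing of the $\ep_2$-variation at the endpoints. (Equivalently, one can sidestep this bookkeeping entirely by reparametrizing every loop to $[0,1]$, so that $T$ becomes an explicit scalar variable in a fixed-domain variational problem and no variable-endpoint boundary term ever arises.) With that emendation the proof is complete and matches the paper's intent.
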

\noindent(Cf.\ the proof of Theorem~5.2 of \cite{WuYaZo:24}.)

Let
\begin{equation}
\label{eq:sigma-path}
\sigma_\alpha := \ind(J''|_{T_\alpha\Omegaq}).
\end{equation}
The direct sum decomposition above directly gives:
\begin{corollary}
\label{cor:sigma}
\[\sigma_\alpha = \ind(J''|_{\mathcal{J}}) + \ind(J''|_{T_\alpha\Omega_0}) = \ind(J''|_{\mathcal{J}}) + \text{\# of conjugate points along }\alpha.\]
\end{corollary}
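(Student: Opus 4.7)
The first equality is immediate from the orthogonal direct sum decomposition established in the preceding lemma. Since $T_\alpha\Omegaq = \mathcal{J}\oplus T_\alpha\Omega_0$ with the two summands $J''$-orthogonal, for any $W = W_1 + W_2$ with $W_1 \in \mathcal{J}$ and $W_2 \in T_\alpha\Omega_0$ we have
\[ J''(W,W) = J''(W_1,W_1) + J''(W_2,W_2). \]
It is then a standard fact from linear algebra that the index (dimension of a maximal negative-definite subspace) of a quadratic form on such an orthogonal direct sum is the sum of the indices on the summands; so I would simply invoke this principle, noting that a maximal negative-definite subspace of $J''|_{T_\alpha\Omegaq}$ can be taken to respect the decomposition.

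For the second equality, the plan is to identify $\ind(J''|_{T_\alpha\Omega_0})$ with the number of conjugate points along $\alpha$ by appealing to the Morse index theorem for the fixed-endpoint variational problem on reflected physical paths already proved in \cite{WuYaZo:24}. Specifically, $T_\alpha\Omega_0$ is exactly the tangent space to variations with fixed endpoints $p$ and $p$ which may reflect off $Y$, and with our modified Lagrangian normalization (the $1/4$ rather than $1/2$ multiplier, which is a harmless rescaling that does not affect indices) the second variation restricted to $T_\alpha\Omega_0$ coincides with the object whose index is computed in that work. I would cite the appropriate theorem from \cite{WuYaZo:24} — the Morse index theorem for reflected paths — and observe that none of the reflection points are self-conjugate along $\alpha$ by our standing assumption, so that the count of conjugate points is unambiguous and finite.

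The main (and essentially only) subtlety is verifying that the Morse index theorem from \cite{WuYaZo:24}, which was formulated for the variational problem with fixed time $T$, applies verbatim to the restriction $J''|_{T_\alpha\Omega_0}$ here. This is unproblematic because $T_\alpha\Omega_0$ consists of variations fixing both the initial and final points (and hence implicitly fixing the parameter interval $[0,T]$): the extra freedom in $T_\alpha\Omegaq$ coming from allowing $T$ to vary is precisely what is captured in the complementary space $\mathcal{J}$. Thus the restriction of $J''$ to $T_\alpha\Omega_0$ is exactly the second variation of the fixed-time, fixed-endpoint problem studied in \cite{WuYaZo:24}, and the Morse index theorem established there yields the conjugate point count.
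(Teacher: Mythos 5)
Your proposal is correct and follows precisely the reasoning the paper intends (the paper supplies only the one-line justification ``the direct sum decomposition above directly gives'', and you have filled in the details). The first equality is additivity of the negative index across a $J''$-orthogonal splitting, and the second is the Morse index theorem for reflected paths with fixed endpoints from \cite{WuYaZo:24}, which indeed applies to $J''|_{T_\alpha\Omega_0}$ because fixed-endpoint variations can be taken with fixed parameter interval, so the restricted second variation coincides with that of the fixed-time problem.

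One small caution on your phrasing: the change from a $1/2$ to a $1/4$ coefficient in the kinetic term is not a mere rescaling of the Lagrangian by a constant (that would multiply $V$ too); it comes from the Legendre transform of $|\xi|^2$ versus $\tfrac12|\xi|^2$. Nevertheless, it amounts to a uniform reparametrization of time along bicharacteristics, which leaves the count of conjugate points and the index of the second variation unchanged, so your conclusion stands.
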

Finally, for $(t,x)\in\mathbb{R}\times M$, let $\alpha_{t,x}$ be the loop which starts and ends at $x$ with period $t$. Note that
\[S(t,x,x) + Et = J[\alpha_{t,x}].\]
For $(\delta t,\delta x)\in T_{(t,x)}(\mathbb{R}\times M)$, let $W_{\delta t,\delta x} = \frac{d}{d\ep}|_{\ep=0}[\alpha_{t(\ep),x(\ep)}]$, where $(t(\ep),x(\ep))$ is a path in $\mathbb{R}\times M$ such that $(t(0),x(0)) = (t,x)$, $(t'(0),x'(0)) = (\delta t,\delta x)$. Note that $W_{\delta t,\delta x}\in\mathcal{J}$, and $(\delta t,\delta x)\mapsto W_{\delta t,\delta x}$ is an isomorphism $T_{(t,x)}(\mathbb{R}\times M)\cong \mathcal{J}$. Then
\[J''(W_{\delta t,\delta x},W_{\delta t,\delta x}) = \partial^2_{t,x}[S(t,x,x)]((\delta t,\delta x),(\delta t,\delta x));\]
here both $J''$ and $\partial^2_{t,x}[S(t,x,x)]$ are interpreted as bilinear forms. It then follows that
\[\ind(J''|_{\mathcal{J}}) = \ind(\partial^2_{t,x}[S(t,x,x)]).\]
Note that $\partial^2_{t,x}[S(t,x,x)]$ is degenerate along the direction $(\delta t = 0, \delta x = \dot X)$. Thus, on a complement, we have
\[\ind(\partial^2_{t,x}[S(t,x,x)]) = \ind(\partial^2_{t,\hat x}[S(t,x,x)]).\]
It follows, from Corollary \ref{cor:sigma}, that the index of the space-time Hessian $\partial^2_{t,\hat x}[S(t,x,x)]$ appearing in the final stationary phase can be identified dynamically, via the equation
\begin{equation}
\label{eq:stx-index}
\ind(\partial^2_{t,\hat x}[S(t,x,x)]) = \sigma_\alpha - \text{\# of conjugate points along }\alpha.
\end{equation}

\section{Composition of Van Vleck determinants}
\label{sec:det}

Let $X=Y=Z=\mathbb{R}^{n}$, and
let $S_{1}: X \times Y \rightarrow \mathbb{R}$ and $S_{2}: Y \times Z
\rightarrow \mathbb{R}$ be ``phase functions.'' We suppose, for every
$(x, z) \in X \times Z$, that there exists a unique $y=Y(x, z) \in Y$
such that 
$$
\left.\partial_{y}\left(S_{1}(x, y)+S_{2}(y, z)\right)\right|_{y=Y(x, z)}=0
$$
Suppose as well that $Y$ is smooth in $(x, z)$. Let
$$
\widetilde{S}(x, z)=S_{1}(x, Y(x, z))+S_{2}(Y(x, z), z)
$$
Let $\partial_{x y}^{2} S_{1}$ denote the $n \times n$-matrix whose $(i, j)$ th entry is $\partial_{x_{i} y_{j}}^{2} S_{1}$, and similarly with the other functions/variables. Note that $\partial_{y x}^{2} S_{1}=\left(\partial_{x y}^{2} S_{1}\right)^{\top}$, etc.; in particular the matrices with the order of differentiation changed have the same determinant.

\begin{lemma}\label{lemma:vanvleck}
We have
$$
\left|\operatorname{det} \partial_{x y}^{2} S_{1}(x, y)\right|\left|\operatorname{det} \partial_{y z}^{2} S_{2}(y, z)\right|=\left|\operatorname{det} \partial_{y y}^{2}\left(S_{1}(x, y)+S_{2}(y, z)\right)\right|\left|\operatorname{det} \partial_{x z}^{2} \widetilde{S}(x, z)\right|
$$
when evaluated at $y=Y(x, z)$.
\end{lemma}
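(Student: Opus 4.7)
The plan is to compute $\partial^2_{xz}\widetilde S(x,z)$ explicitly by differentiating the defining formula and invoking the critical point equation together with the implicit function theorem.

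First I would compute the first derivative: by the chain rule,
\[\partial_x\widetilde S(x,z) = \partial_x S_1(x,Y(x,z)) + \partial_y\bigl(S_1(x,y)+S_2(y,z)\bigr)\Big|_{y=Y(x,z)} \cdot \partial_x Y(x,z).\]
The second term vanishes identically because $Y(x,z)$ is defined by the critical-point equation $\partial_y(S_1+S_2)|_{y=Y(x,z)}=0$. Hence
\[\partial_x\widetilde S(x,z) = \partial_x S_1\bigl(x,Y(x,z)\bigr).\]
Differentiating once more in $z$ (noting that the $x$-dependence through the first slot of $S_1$ contributes nothing since we are differentiating in $z$),
\[\partial^2_{xz}\widetilde S(x,z) = \partial^2_{xy}S_1\bigl(x,Y(x,z)\bigr)\cdot \partial_z Y(x,z).\]

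Next I would compute $\partial_z Y$ by implicitly differentiating the critical-point equation $\partial_y S_1(x,Y)+\partial_y S_2(Y,z)=0$ in $z$:
\[\bigl(\partial^2_{yy}S_1 + \partial^2_{yy}S_2\bigr)\cdot \partial_z Y + \partial^2_{yz}S_2 = 0,\]
all evaluated at $y=Y(x,z)$. This yields
\[\partial_z Y(x,z) = -\bigl[\partial^2_{yy}(S_1+S_2)\bigr]^{-1}\cdot \partial^2_{yz}S_2.\]
Combining with the previous formula gives the matrix identity
\[\partial^2_{xz}\widetilde S(x,z) = -\,\partial^2_{xy}S_1\cdot \bigl[\partial^2_{yy}(S_1+S_2)\bigr]^{-1}\cdot \partial^2_{yz}S_2.\]

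Taking absolute values of determinants and using multiplicativity of $\det$ on products of square matrices (all of size $n\times n$), we obtain
\[\bigl|\det \partial^2_{xz}\widetilde S\bigr| = \bigl|\det \partial^2_{xy}S_1\bigr| \cdot \bigl|\det \partial^2_{yy}(S_1+S_2)\bigr|^{-1} \cdot \bigl|\det \partial^2_{yz}S_2\bigr|,\]
which rearranges into the claimed identity. The computation is entirely algebraic once the critical-point equation is used to eliminate the first-order term, so I do not expect any serious obstacle; the only minor point requiring care is that the invertibility of $\partial^2_{yy}(S_1+S_2)$ at $y=Y(x,z)$ is guaranteed by the smoothness of $Y$ in $(x,z)$ via the implicit function theorem applied to the critical point equation.
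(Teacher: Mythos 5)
Your computation is essentially the same as the paper's: both differentiate the critical-point equation to obtain $\partial_z Y$, both show $\partial^2_{xz}\widetilde S = \partial^2_{xy}S_1 \cdot \partial_z Y$, and both combine these to relate the Hessians. The paper, however, avoids inverting $\partial^2_{yy}(S_1+S_2)$ directly: it multiplies determinant identities and handles by a separate case analysis the situation where $\det\partial^2_{xy}S_1$ or $\det\partial^2_{yz}S_2$ vanishes.

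There is a gap in your justification of the one step where you do invert. You assert that invertibility of $\partial^2_{yy}(S_1+S_2)$ at $y=Y(x,z)$ is ``guaranteed by the smoothness of $Y$ via the implicit function theorem.'' This is the converse of the implicit function theorem, and it is false: nondegeneracy of $\partial^2_{yy}(S_1+S_2)$ is a hypothesis for IFT to yield a smooth implicit function, but a smooth critical point $Y(x,z)$ can exist even when this Hessian degenerates. For example (in one dimension) $S_1\equiv 0$, $S_2(y,z)=y^4$ gives the smooth and unique $Y\equiv 0$, yet $\partial^2_{yy}(S_1+S_2)=12y^2$ vanishes there. When $\partial^2_{yy}(S_1+S_2)$ is singular your inversion step fails. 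The lemma still holds in that case---the identity $\partial^2_{yx}S_1 = -\bigl(\partial^2_{yy}(S_1+S_2)\bigr)\partial_x Y$ shows that a left null vector of $\partial^2_{yy}(S_1+S_2)$ is also a left null vector of $\partial^2_{yx}S_1$, so $\det\partial^2_{xy}S_1=0$ and both sides of the claimed equality vanish---but your proof as written does not cover it. To close the gap you should either add this degenerate case explicitly, or follow the paper's tack: first dispose of the case where $\det\partial^2_{xy}S_1$ or $\det\partial^2_{yz}S_2$ vanishes, and then observe that in the remaining case the relations you derived \emph{force} $\partial^2_{yy}(S_1+S_2)$ to be invertible.
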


The utility of this result in stationary phase computations is as follows: if $a_{1}: X \times Y \rightarrow \mathbb{C}$ and $a_{2}: Y \times Z \rightarrow \mathbb{C}$ are ``amplitude'' functions, and
\begin{gather*}
   u_{1}(x, y)=(2\pi h)^{-n / 2} e^{\frac{i}{h} S_{1}(x, y)} a_{1}(x, y)\left|\operatorname{det} \partial_{x y}^{2} S_{1}(x, y)\right|^{1 / 2}|d x d y|^{1 / 2} \\
u_{2}(y, z)=(2\pi h)^{-n/ 2} e^{\frac{i}{h} S_{2}(y, z)} a_{2}(y, z)\left|\operatorname{det} \partial_{y z}^{2} S_{2}(y, z)\right|^{1 / 2}|d y d z|^{1 / 2} 
\end{gather*}
then the composition $u(x, z):=\int_{Y} u_{1}(x, y) u_{2}(y, z)$ equals
$$
u(x, z)=(2\pi h)^{-n} \tilde{u}(x, z)|d x d z|^{1 / 2}
$$
where
$$\tilde{u}(x, z)=\int_{Y} e^{\frac{i}{h}\left(S_{1}(x, y)+S_{2}(y, z)\right)} a_{1}(x, y) a_{2}(y, z)\left|\operatorname{det} \partial_{x y}^{2} S_{1}(x, y)\right|^{1 / 2}\left|\operatorname{det} \partial_{y z}^{2} S_{2}(y, z)\right|^{1 / 2}|d y|$$
From stationary phase, we have
$$\tilde{u}(x, z)=(2\pi h)^{n / 2} e^{\frac{i}{h} \widetilde{S}(x, z)}\left(\left.\left(e^{i \pi \sigma / 4} \frac{\left|\operatorname{det} \partial_{x y}^{2} S_{1}\right|^{1 / 2}\left|\operatorname{det} \partial_{y z}^{2} S_{2}\right|^{1 / 2}}{\left|\operatorname{det} \partial_{y y}^{2}\left(S_{1}+S_{2}\right)\right|^{1 / 2}} a_{1} a_{2}\right)\right|_{y=Y(x, z)}+O\left(h\right)\right)$$
where $\sigma=\operatorname{sgn} \partial_{y y}^{2}\left(S_{1}+S_{2}\right)$. Thus, modulo Maslov factors, if the claim holds, we can write

$$
u(x, z)=(2\pi h)^{-n / 2}\left(e^{\frac{i}{h} \widetilde{S}(x, z)} a(x, z)\left|\operatorname{det} \partial_{x z}^{2} \widetilde{S}(x, z)\right|^{1 / 2}+O\left(h\right)\right)|d x d z|^{1 / 2}
$$
where $a(x, z)=a_{1}(x, Y(x, z)) a_{2}(Y(x, z), z)$.

\begin{proof}
For each $1 \leq i \leq n$ we have
$$
\partial_{y_{i}} S_{1}(x, Y(x, z))+\partial_{y_{i}} S_{2}(Y(x, z), z)=0 .
$$
Taking the $x_{j}$ partial derivative of the equation yields
$$
\partial_{y_{i} x_{j}}^{2} S_{1}(x, Y(x, z))+\sum_{k=1}^{n}\left(\partial_{y_{i} y_{k}}^{2} S_{1}(x, Y(x, z))+\partial_{y_{i} y_{k}}^{2} S_{2}(Y(x, z), z)\right) \partial_{x_{j}} Y_{k}(x, z)=0
$$
Taking the $z_{j}$ partial derivative instead yields
$$
\sum_{k=1}^{n}\left(\partial_{y_{i} y_{k}}^{2} S_{1}(x, Y(x, z))+\partial_{y_{i} y_{k}}^{2} S_{2}(Y(x, z), z)\right) \partial_{z_{j}} Y_{k}(x, z)+\partial_{y_{i} z_{j}}^{2} S_{2}(x, Y(x, z))=0
$$
In matrix notation, we thus have
\begin{gather}
\label{eq:B1}
\partial_{y x}^{2} S_{1}(x, Y(x, z))=-\left(\partial_{y y}^{2} S_{1}(x, Y(x, z))+\partial_{y y}^{2} S_{2}(Y(x, z), z)\right) \partial_{x} Y(x, z)\\
\label{eq:B2}
\partial_{y z}^{2} S_{2}(Y(x, z), z)=-\left(\partial_{y y}^{2} S_{1}(x, Y(x, z))+\partial_{y y}^{2} S_{2}(Y(x, z), z)\right) \partial_{z} Y(x, z)
\end{gather}
On the other hand, from
$$
\widetilde{S}(x, z)=S_{1}(x, Y(x, z))+S_{2}(Y(x, z), z)
$$
taking the $x_{i}$ derivative yields
$$
\partial_{x_{i}} \widetilde{S}(x, z)=\partial_{x_{i}} S_{1}(x, Y(x, z))+\sum_{k=1}^{n}\left(\partial_{y_{k}} S_{1}(x, Y(x, z))+\partial_{y_{k}} S_{2}(Y(x, z), z)\right) \partial_{x_{i}} Y_{k} .
$$
By the definition of $Y(x, z)$, we have that $\partial_{y_{k}} S_{1}(x, Y(x, z))+\partial_{y_{k}} S_{2}(Y(x, z), z)=0$, and hence
$$
\partial_{x_{i}} \widetilde{S}(x, z)=\partial_{x_{i}} S_{1}(x, Y(x, z)).
$$
It follows by taking the $z_{j}$ derivative that
$$
\partial_{x_{i} z_{j}}^{2} \widetilde{S}(x, z)=\sum_{k=1}^{n} \partial_{x_{i} y_{k}}^{2} S_{1}(x, Y(x, z)) \partial_{z_{j}} Y_{k}
$$
or, in terms of matrices,
\begin{equation}
\label{eq:B3}
\partial_{x z}^{2} \widetilde{S}(x, z)=\partial_{x y}^{2} S_{1}(x, Y(x, z)) \partial_{z} Y(x, z).
\end{equation}
Similar logic yields
$$
\partial_{z_{i}} \widetilde{S}(x, z)=\partial_{z_{i}} S_{2}(Y(x, z), z)
$$
from which we obtain
\begin{equation}
\label{eq:B4}
\partial_{z x}^{2} \widetilde{S}(x, z)=\partial_{z y}^{2} S_{2}(Y(x, z)) \partial_{x} Y(x, z).
\end{equation}
From Equations \eqref{eq:B1}-\eqref{eq:B3}, we note that if either the determinant  $\operatorname{det} \partial_{x y}^{2} S_{1}(x, Y(x, z))$ or the determinant $\operatorname{det} \partial_{y z}^{2} S_{2}(Y(x, z), z)$ is zero, then \eqref{eq:B3} or \eqref{eq:B4} yields that $\operatorname{det} \partial_{x z}^{2} \widetilde{S}(x, z)=0$ as well,
and hence the desired lemma holds. Otherwise, assume $\operatorname{det} \partial_{x y}^{2} S_{1}(x, Y(x, z))$ and $\operatorname{det} \partial_{y z}^{2} S_{2}(Y(x, z), z)$ are both nonzero. Then \eqref{eq:B1} and \eqref{eq:B2} yield
$$
\operatorname{det}\left(\partial_{y y}^{2} S_{1}(x, Y(x, z))+\partial_{y y}^{2} S_{2}(Y(x, z), z)\right),\  \operatorname{det} \partial_{x} Y(x, z),\  \operatorname{det} \partial_{z} Y(x, z)
$$
are all nonzero. Then, taking the absolute value of the determinants of both sides of Equations \eqref{eq:B1}-\eqref{eq:B3}, and multiplying the LHS of  \eqref{eq:B1} and  \eqref{eq:B2} with the RHS of \eqref{eq:B3} and  \eqref{eq:B4} (and vice-versa) yields
$$
\begin{aligned}
& \left|\operatorname{det} \partial_{x y}^{2} S_{1}\right|^{2}\left|\operatorname{det} \partial_{y z}^{2} S_{2}\right|^{2}\left|\operatorname{det} \partial_{x} Y\right|\left|\operatorname{det} \partial_{z} Y\right| \\
& =\left|\operatorname{det} \partial_{y y}^{2}\left(S_{1}+S_{2}\right)\right|^{2}\left|\operatorname{det} \partial_{x z}^{2} \widetilde{S}\right|^{2}\left|\operatorname{det} \partial_{x} Y\right|\left|\operatorname{det} \partial_{z} Y\right|
\end{aligned}
$$
evaluated at $(x, y)=(Y(x, z), z)$. Dividing by $\left|\operatorname{det} \partial_{x} Y\right| \left| \operatorname{det} \partial_{z} Y\right|$ on both sides (which is allowable since these quantities are nonzero in this situation) and taking square roots yields the desired determinant equality.
\end{proof}

\bibliographystyle{alpha}
\bibliography{sing-gutz}

\begin{thebibliography}{dHUV15}

\bibitem[AM08]{abraham2008foundations}
Ralph Abraham and Jerrold~E Marsden.
\newblock {\em Foundations of mechanics}.
\newblock Number 364. American Mathematical Soc., 2008.

\bibitem[Ber82]{Be:82}
M.~V. Berry.
\newblock Semiclassically weak reflections above analytic and nonanalytic
  potential barriers.
\newblock {\em J. Phys. A}, 15(12):3693--3704, 1982.

\bibitem[CdV72]{Co:72}
Yves Colin~de Verdi\`ere.
\newblock Spectre du laplacien et longueurs des g\'eod\'esiques p\'eriodiques.
\newblock {\em C. R. Acad. Sci. Paris S\'er. A-B}, 275:A805--A808, 1972.

\bibitem[Cha74]{Ch:74}
J.~Chazarain.
\newblock Formule de {P}oisson pour les vari\'et\'es riemanniennes.
\newblock {\em Invent. Math.}, 24:65--82, 1974.

\bibitem[Cha80]{Ch:80}
J.~Chazarain.
\newblock Spectre d'un hamiltonien quantique et m\'ecanique classique.
\newblock {\em Comm. Partial Differential Equations}, 5(6):595--644, 1980.

\bibitem[CRR99]{CoRaRo:99}
Monique Combescure, James Ralston, and Didier Robert.
\newblock A proof of the {G}utzwiller semiclassical trace formula using
  coherent states decomposition.
\newblock {\em Comm. Math. Phys.}, 202(2):463--480, 1999.

\bibitem[DG75]{DuGu:75}
J.J. Duistermaat and V.W. Guillemin.
\newblock The spectrum of positive elliptic operators and periodic geodesics.
\newblock {\em Invent. Math.}, 29:39--79, 1975.

\bibitem[dHUV15]{DeUhVa:15}
Maarten de~Hoop, Gunther Uhlmann, and Andr\'as Vasy.
\newblock Diffraction from conormal singularities.
\newblock {\em Ann. Sci. \'Ec. Norm. Sup\'er. (4)}, 48(2):351--408, 2015.

\bibitem[DL23]{demanet2023reflection}
Laurent Demanet and Olivier Lafitte.
\newblock The reflection coefficient of a fractional reflector.
\newblock {\em arXiv preprint arXiv:2305.04071}, 2023.

\bibitem[DS99]{DiSj:99}
M.~Dimassi and J.J. Sj\"ostrand.
\newblock {\em Spectral asymptotics in the semi-classical limit}, volume 268 of
  {\em London Mathematical Society Lecture Note Series}.
\newblock Cambridge University Press, Cambridge, 1999.

\bibitem[DZ19]{DyZw:19}
Semyon Dyatlov and Maciej Zworski.
\newblock {\em Mathematical theory of scattering resonances}, volume 200.
\newblock American Mathematical Soc., 2019.

\bibitem[FW17]{FoWu:17}
G.~Austin Ford and Jared Wunsch.
\newblock The diffractive wave trace on manifolds with conic singularities.
\newblock {\em Adv. Math.}, 304:1330--1385, 2017.

\bibitem[Gan23]{gannot}
Oran Gannot.
\newblock Private communication.
\newblock 2023.

\bibitem[GU89]{GuUr:89}
V.~Guillemin and A.~Uribe.
\newblock Circular symmetry and the trace formula.
\newblock {\em Invent. Math.}, 96(2):385--423, 1989.

\bibitem[Gut71]{Gu:71}
Martin~C Gutzwiller.
\newblock Periodic orbits and classical quantization conditions.
\newblock {\em Journal of Mathematical Physics}, 12(3):343--358, 1971.

\bibitem[GW21]{GaWu:21}
Oran Gannot and Jared Wunsch.
\newblock Resonance-free regions for diffractive trapping by conormal
  potentials.
\newblock {\em Amer. J. Math.}, 143(5):1339--1360, 2021.

\bibitem[GW23]{GaWu:23a}
Oran Gannot and Jared Wunsch.
\newblock Semiclassical diffraction by conormal potential singularities.
\newblock {\em Ann. Sci. \'{E}c. Norm. Sup\'{e}r. (4)}, 56(3):713--800, 2023.

\bibitem[Hil05]{Hi:05}
Luc Hillairet.
\newblock Contribution of periodic diffractive geodesics.
\newblock {\em J. Funct. Anal.}, 226(1):48--89, 2005.

\bibitem[H{\"o}r85]{Ho:85}
L.~H{\"o}rmander.
\newblock {\em The Analysis of Linear Partial Differential Operators},
  volume~3.
\newblock Springer-Verlag, Berlin{,} Heidelberg{,} New York{,} Tokyo, 1985.

\bibitem[H{\"o}r90]{Ho:90}
Lars H{\"o}rmander.
\newblock {\em The analysis of linear partial differential operators. {I}},
  volume 256 of {\em Grundlehren der Mathematischen Wissenschaften [Fundamental
  Principles of Mathematical Sciences]}.
\newblock Springer-Verlag, Berlin, second edition, 1990.
\newblock Distribution theory and Fourier analysis.

\bibitem[HS89]{HeSj:89}
B.~Helffer and J.~Sj\"ostrand.
\newblock \'equation de {S}chr\"odinger avec champ magn\'etique et \'equation
  de {H}arper.
\newblock In {\em Schr\"odinger operators ({S}\o nderborg, 1988)}, volume 345
  of {\em Lecture Notes in Phys.}, pages 118--197. Springer, Berlin, 1989.

\bibitem[HV18]{HiVa:18}
Peter Hintz and Andr\'as Vasy.
\newblock The global non-linear stability of the {K}err--de {S}itter family of
  black holes.
\newblock {\em Acta Math.}, 220(1):1--206, 2018.

\bibitem[Mei92]{Me:92}
Eckhard Meinrenken.
\newblock Semiclassical principal symbols and {G}utzwiller's trace formula.
\newblock {\em Rep. Math. Phys.}, 31(3):279--295, 1992.

\bibitem[Mel93]{Me:93}
Richard~B Melrose.
\newblock {\em The Atiyah-Patodi-singer index theorem}, volume~4.
\newblock AK Peters Wellesley, 1993.

\bibitem[Mel94]{Me:94}
Richard~B. Melrose.
\newblock Spectral and scattering theory for the {L}aplacian on asymptotically
  {E}uclidian spaces.
\newblock In {\em Spectral and scattering theory (Sanda, 1992)}, pages 85--130.
  Dekker, New York, 1994.

\bibitem[PU91]{PaUr:91}
Thierry Paul and Alejandro Uribe.
\newblock Sur la formule semi-classique des traces.
\newblock {\em C. R. Acad. Sci. Paris S\'er. I Math.}, 313(5):217--222, 1991.

\bibitem[Sj{\"o}97]{Sj:97}
J.~Sj{\"o}strand.
\newblock A trace formula and review of some estimates for resonances.
\newblock In {\em Microlocal analysis and spectral theory ({L}ucca, 1996)},
  volume 490 of {\em NATO Adv. Sci. Inst. Ser. C: Math. Phys. Sci.}, pages
  377--437. Kluwer Acad. Publ., Dordrecht, 1997.

\bibitem[SZ21]{StZe:21}
Alexander Strohmaier and Steve Zelditch.
\newblock A gutzwiller trace formula for stationary space-times.
\newblock {\em Advances in Mathematics}, 376:107434, 2021.

\bibitem[VS88]{VaSa:88}
D.~G. Vasil{'}ev and Yu.~G. Safarov.
\newblock Branching {H}amiltonian billiards.
\newblock {\em Dokl. Akad. Nauk SSSR}, 301(2):271--275, 1988.

\bibitem[Wun02]{Wu:02}
Jared Wunsch.
\newblock A {P}oisson relation for conic manifolds.
\newblock {\em Math. Res. Lett.}, 9(5-6):813--828, 2002.

\bibitem[WYZ24]{WuYaZo:24}
Jared Wunsch, Mengxuan Yang, and Yuzhou Zou.
\newblock The {M}orse index theorem for mechanical systems with reflections.
\newblock {\em Nonlinearity}, 37(8):Paper No. 085006, 32, 2024.

\bibitem[Yan22]{Ya:22}
Mengxuan Yang.
\newblock The wave trace and resonances of the magnetic {H}amiltonian with
  singular vector potentials.
\newblock {\em Communications in Mathematical Physics}, 389(2):1099--1133,
  2022.

\bibitem[Zou25]{Zo:24}
Yuzhou~Joey Zou.
\newblock Asymptotic expansion of the eigenvalues of a bathtub potential with
  quadratic ends.
\newblock {\em Communications in Mathematical Physics}, 406(9):220, 2025.

\bibitem[Zwo12]{Zw:12}
Maciej Zworski.
\newblock {\em Semiclassical analysis}, volume 138 of {\em Graduate Studies in
  Mathematics}.
\newblock American Mathematical Society, Providence, RI, 2012.

\end{thebibliography}

\end{document}